\def\l@section{\@tocline{1}{12pt plus2pt}{0pt}{}{\bfseries}}
\def\l@subsection{\@tocline{2}{0pt}{2pc}{2pc}{}}
\def\subsection{\@startsection{subsection}{2}{\z@}%
	{-3.25ex\@plus -1ex \@minus -.2ex}%
	{1.5ex \@plus .2ex}%
	{\normalfont\bfseries\boldmath}}
\def\subsubsection{\@startsection{subsubsection}{3}%
	\z@{.5\linespacing\@plus.7\linespacing}{-.5em}%
	{\normalfont\bfseries\boldmath}}
\renewcommand\paragraph{\@startsection{paragraph}{4}{\z@}%
	{3.25ex \@plus1ex \@minus.2ex}%
	{-1em}%
	{\normalfont\normalsize\bfseries}}
\numberwithin{equation}{section}
\newtheorem{theorem}{Theorem}[section]
\newtheorem*{theorem*}{Theorem}
\newtheorem{corollary}[theorem]{Corollary}
\newtheorem{observation}[theorem]{Observation}
\newtheorem{proposition}[theorem]{Proposition}
\theoremstyle{definition}
\newtheorem{remark}[theorem]{Remark}
\newcommand{\R}{\mathbb{R}}
\newcommand{\C}{\mathbb{C}}
\newcommand{\Z}{\mathbb{Z}}
\newcommand{\N}{\mathbb{N}}
\newcommand{\s}{\mathcal{S}}
\renewcommand{\P}{\mathbb{P}}
\newcommand{\e}{\varepsilon}
\newcommand{\jx}{j(x)}
\newcommand{\um}{\underline{m}}
\newcommand{\urho}{\underline{\rho}}
\newcommand{\uL}{\underline{\Lambda}}
\newcommand{\sdfrac}[2]{\mbox{\small$\displaystyle\frac{#1}{#2}$}}
\def\beq{\begin{equation}}
	\def\eeq{\end{equation}}
\def\beq{\begin{equation}}
	\def\eeq{\end{equation}}
\def\a{\alpha}
\def\r{\mathcal{R}}
\def\g{\gamma}
\def\b{\beta}
\def\F{\mathcal{F}}
\def\G{\mathcal{G}}
\def\H{\mathcal{H}}
\def\v{\varepsilon}
\def\l{\lambda}
\def\o{\omega}
\def\J{\mathcal{J}}
\def\A{\mathcal{A}}
\def\B{\mathcal{B}}
\def\beq{\begin{equation}}
	\def\eeq{\end{equation}}
\def\beq{\begin{equation}}
	\def\eeq{\end{equation}}
\def\M{\mathcal{M}}
\def\d{\delta}
\def\ep{\epsilon}
\def\L{\Lambda}
\def\O{\Omega}
\def\I{\mathcal{I}}
\DeclareMathOperator{\supp}{supp}
\DeclareMathOperator{\sgn}{sgn}
\title[Operators along hybrid curves]{\Large{Non-zero to zero curvature transition: Operators along hybrid curves with no quadratic (quasi-)resonances}}
\author[A. Gaitan]{Alejandra Gaitan}
\address[A. Gaitan]{Department of Mathematics, Purdue University, IN 47907, USA}
\email{alejandra.gaitanm@alumni.purdue.edu}
\author[V. Lie]{Victor Lie}
\address[V. Lie]{Department of Mathematics, Purdue University, IN 47907, USA \& Institute of Mathematics of the Romanian Academy, Bucharest, RO 70700, P.O. Box 1-764, Romania.}
\email{vlie@purdue.edu}
\thanks{\textit{Key words and phrases.} Time-frequency analysis, zero/non-zero curvature, hybrid operators, LGC method, sparse-uniform decomposition, time-frequency correlation set.}
\date{\today}
\begin{document}
\maketitle

\begin{abstract} Building on \cite{Lie19}, this paper develops a unifying study on the boundedness properties of several representative classes of \emph{hybrid} operators, \textit{i.e.} operators that enjoy both zero and non-zero curvature features. Specifically, via the LGC-method, we provide suitable $L^p$ bounds for three classes of operators: (1) Carleson-type operators, (2) Hilbert transform along variable curves, and, taking the center stage, (3) Bilinear Hilbert transform and bilinear maximal operators along curves. All these classes of operators will be studied in the context of hybrid curves with no quadratic resonances.

The above study is interposed between two naturally derived topics:

i) A prologue providing a first rigorous account on how the presence/absence of a higher order modulation invariance property interacts with and determines the nature of the method employed for treating operators with such a property.

ii) An epilogue revealing how several key ingredients within our present study can blend and inspire a short, intuitive new proof of the smoothing inequality that plays the central role in the analysis of the curved version of the triangular Hilbert transform treated in \cite{CDR20}.
\end{abstract}

\setcounter{tocdepth}{2}
\tableofcontents
\section{Introduction}

\subsection{A unified theory: context}\label{cont}
In this paper we continue the complex program focusing on the interplay between zero and non-zero curvature features in harmonic analysis. Among the fundamental themes that stay at the heart of this program we mention the study of:
\begin{itemize}
	\item (A) the \emph{(generalized) Carleson operator} -- in relation to Luzin's conjecture on the pointwise convergence of Fourier series;
	\item (B) the \emph{Hilbert transform and maximal operator along variable curves} -- in relation to Zygmund's conjecture on differentiation along vector fields; and
	\item (C) the \emph{bilinear Hilbert transform along curves} -- stemming from Calderon's study of the Cauchy transform on Lipschitz curves.
\end{itemize}

In the zero curvature context, besides translation and dilation symmetries, all of the above mentioned operators also enjoy some suitable modulation invariant structure. It is this last feature that: 1) is responsible for the level of difficulty and, in many cases, still open character of some of the problems addressing the items (A), (B) or (C), and 2) requires the involvement of (generalized/higher order) wave-packet analysis in order to achieve any relevant progress on such problems.

In contrast with the first scenario, in the non-zero curvature context the modulation invariance is absent, thus conveying a simpler though, for some of the most interesting problems, still difficult (or yet unknown) treatment. Originally arising independently of the zero-curvature context--as is the case of the item (B) above\footnote{For more on this please see the extended historical review provided in Section 1.6. of \cite{Lie19}.}, the non-zero curvature themes developed later in parallel with, and often serving as toy-models for, their zero-curvature counterparts. However, this latter direction was very limited in scope, since, until very recently, the known approaches for the non-zero curvature problems were quite different and often ad-hoc in nature relative to their zero-curvature analogues, thus providing very little insight into the modulation invariant situations.

It is thus natural to seek a unified framework for the seemingly quite distinct approaches to the zero/nonzero curvature end-points of the spectrum that would offer a natural treatment for the intricate class of \emph{hybrid} problems, that is, problems that include both zero and non-zero curvature features.
As will be detailed soon in Section \ref{Unif}, such a unified framework involves two possible resolutions according to the nature of the modulation invariance properties of the operator under analysis:

\begin{itemize}
	\item \textsf{[\underline{Linear but \textbf{no} higher modulation invariance}: LGC methodology]}:

Introduced in \cite{Lie19}, this method combines several key ingredients: (1) a time-frequency partition of the \emph{ambient} universe that has as an effect a \emph{linearized} behavior of the phase of the multiplier; (2) a time-frequency discretization of the \emph{input} function(s) using \emph{Gabor frames} (windowed Fourier decomposition/micro-local analysis); (3) the analysis of some suitably defined \emph{time-frequency correlation set} derived via a \emph{sparse-uniform dichotomy}. This method provides a unified approach for problems having zero and/or non-zero curvature features that \emph{do not} have generalized (higher than linear) modulation invariance properties---for some revelatory examples, see I and II in Section \ref{compdisc};

	\item \textsf{[\underline{Linear \textbf{and} higher modulation invariance}: Relational time-frequency analysis]}:

This method was first introduced in the realm of wave-packet analysis---and in particular modulation invariant operators---in \cite{Lie09}, \cite{Lie09Thesis} and \cite{Lie20} in connection with the study of the Polynomial Carleson operator; the origin of this approach goes back to the work of C. Fefferman on the space-phase localization properties of differential operators, \cite{Fefunc}. At the heart of the method lies a \emph{generalized Heisenberg principle interpretation associated with higher order wave-packets} that proves essential in addressing hybrid problems that have both linear and higher order modulation invariance properties---for a prototypical example, see III in Section \ref{compdisc}.
\end{itemize}

\subsection{A brief outlook}

In the previous section we have seen the motivation behind developing a unified theory for the zero/nonzero curvature paradigm in the context brought by the three central themes stated as (A), (B) and (C). Since the historical background and evolution of these themes was addressed in great detail in the introductory section of  \cite{Lie19} we will not provide here any further historical context, motivation or references; instead, we directly address the main goals of our current paper:

A first, preliminary and more philosophical goal is to substantiate the dichotomy above and provide a first rigorous account of how the presence of quadratic (higher) order modulation invariance property impacts and conditions the nature of the approach. This will be achieved via the discussions in Sections \ref{Unif} and \ref{H2discussion} (see also Section \ref{Hywithout} and Observation \ref{quadratict} therein).

The second, more concrete goal of this paper is to supply further evidence on the versatility and unifying character of the LGC-method by offering a global treatment for the three classes of operators mentioned earlier---see (A), (B) and (C)---in the hybrid, no-quadratic resonance case (for a concrete description of the results the reader is invited to consult the next section). This contrasts with the previous work in  \cite{Lie19} in two ways: (1) therein the author studies the purely non-zero curvature case as opposed to the hybrid curve case here, and, (2), there the proof focuses on the classes of operators (A) and (B), while in the present paper our main interest will be in the (C) theme (Sections \ref{BilHilb}--\ref{BiMax}) with only brief accounts for the remaining themes (A) and (B) (Section \ref{Hywithout}).

Finally, a third goal of our paper is to show how several main constitutive ingredients of the LGC-method serve as inspiration for and become consequential in providing a new, concise, and self-contained proof of the key smoothing inequality encapsulating the behavior of the main oscillatory component of the curved triangular Hilbert transform analyzed in \cite{CDR20}.

\subsection{Main results}

As briefly mentioned before, the central aim of our paper is to present a unifying perspective on three main classes of operators---see (A), (B) and (C) at the beginning of the Introduction---in the case of \emph{hybrid} curves with no quadratic (or higher order) resonances.

Generically speaking, if $\g$ is a planar, finitely piecewise smooth curve, we say that $\g$ is \emph{hybrid} iff there exist $a_0,\,a_{+},\,a_{-}\in \R\setminus\{0\}$ and $\a_0,\a_{+},\a_{-}\in (0,\infty)$ such that
	\begin{equation}\label{asym}
\exists\:\lim_{t\rightarrow 0}\frac{\g(t)}{t^{\a_0}}=a_0\qquad\textrm{and}\qquad \exists\:\lim_{t\rightarrow \pm\infty}\frac{\g(t)}{t^{\a_{\pm}}}=a_{\pm}\,,
	\end{equation}
and either $[\a_0=1\:\textrm{and}\:1\not\in\{\a_{-},\a_{+}\}]$ or $[\a_0\not=1\:\textrm{and}\:1\in\{\a_{-},\a_{+}\}]$.

The main result of our paper focuses on the boundedness properties of
\begin{itemize}
	\item the bilinear Hilbert transform\footnote{Throughout this paper for notational simplicity we omit the principal value symbol.} along $\g$:
	\begin{equation}\label{bht-gamma}
		H_{\g}(f,g)(x):= \int_{\R} f(x-t)\, g(x+\g(t))\,\frac{dt}{t}\:,
	\end{equation}
	\item the bilinear Maximal operator along $\g$:
	\begin{equation}\label{bmt-gamma}
		M_{\g}(f,g)(x):= \sup_{\ep>0} \frac{1}{2\ep}\,\int_{|t|\leq \ep} |f(x-t)\, g(x+\g(t))|\,dt\:.
	\end{equation}
\end{itemize}
 More precisely, we have

\begin{theorem}\label{main1} Let $a\in\R\setminus\{-1\}$, $b\in \R$ and\footnote{For a discussion on the necessity of imposing the restriction $\a\not=2$ please see Observation \ref{alfa2} and Section \ref{H2discussion} below.} $\a\in(0,\,\infty)\setminus\{1,2\}$. Then, taking $\g(t):=a t\,+\,b t^{\a}$, we have that the operator
	\begin{equation}\label{bht}
		H_{a,b}^\a(f,g)(x):=\int_\R f(x-t)\, g(x+a\,t+b\,t^\a)\,\frac{dt}{t}
	\end{equation}
	obeys the bound
	\begin{equation}\label{mainrmon}
		\|H_{a,b}^\a(f,g)\|_{L^r}\lesssim_{a,b,\a,p,q} \|f\|_{L^p}\,\|g\|_{L^q}\:,
	\end{equation}
	where here $\frac{1}{p}+\frac{1}{q}=\frac{1}{r}$, $p,\,q\geq 1$ and $r>\frac{2}{3}$.
\end{theorem}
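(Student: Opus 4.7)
The plan is to implement the LGC-method of \cite{Lie19} adapted to the present hybrid setting. I begin with a Littlewood--Paley decomposition in the $t$-variable, writing $H_{a,b}^{\a}(f,g) = \sum_{k \in \Z} H_k(f,g)$ with $H_k$ supported on $|t| \sim 2^{-k}$. Passing to the Fourier side and rescaling $t \mapsto 2^{-k} t$, the symbol of $H_k$ takes the form
\[
m_k(\xi,\eta) \;=\; \int \rho(t)\, e^{-2\pi i\, [\,2^{-k}(\xi - a\eta)\, t\,-\,2^{-k\a}\, b\eta\, t^{\a}\,]}\,\frac{dt}{t},
\]
and a stationary-phase analysis shows that $m_k$ is non-negligible only on the resonance region
\[
\Omega_k \;=\; \bigl\{\, (\xi, \eta)\,:\, |\xi - a\eta| \,\sim\, |b\eta|\, 2^{-k(\a-1)} \,\bigr\},
\]
with rapid decay off $\Omega_k$. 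This identifies, at each scale $k$, the quasi-linear correspondence between the relevant frequencies of $f$ and $g$ carried by the operator.

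On $\Omega_k$ I linearize the phase around the stationary point and discretize $f$ and $g$ via Gabor frames $\{\phi_P\}_P$, $\{\phi_Q\}_Q$ at scales dictated by Heisenberg: the $f$-tiles $P$ have spatial side $\sim 2^{-k}$ with frequency side matched to the $\xi$-width of $\Omega_k$, and analogously for the $g$-tiles $Q$. Expanding $f=\sum_P\langle f,\phi_P\rangle\phi_P$, $g=\sum_Q\langle g,\phi_Q\rangle\phi_Q$, and testing against $h=\sum_R\langle h,\phi_R\rangle\phi_R$, the trilinear form $\Lambda_k(f,g,h)=\langle H_k(f,g),h\rangle$ collapses into
\[
\Lambda_k(f,g,h) \;=\; \sum_{(P,Q,R)\in \mathcal{C}_k} c_{P,Q,R}\,\langle f,\phi_P\rangle\,\langle g,\phi_Q\rangle\,\langle h,\phi_R\rangle,
\]
where the \emph{time-frequency correlation set} $\mathcal{C}_k$ collects those tile triples that communicate non-trivially through $m_k$. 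It is precisely here that $\a\neq 2$ plays its role: the resonance constraint $\xi-a\eta\sim b\eta\cdot 2^{-k(\a-1)}$ furnishes a genuinely non-affine relation between $P$- and $Q$-frequency parameters, so that $\mathcal{C}_k$ is a well-structured one-dimensional graph with clean separation between adjacent scales $k$. The companion assumption $a\neq -1$ is needed to rule out a BHT-type modulation invariance in the regime where the curved term $bt^{\a}$ is negligible---such an invariance would force the higher-order wave-packet machinery that lies outside the LGC framework.

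The heart of the argument is then to subject $\mathcal{C}_k$ to a \emph{sparse-uniform dichotomy}: the triples $(P,Q,R)$ are split, according to the $L^2$-mass densities of the three coefficient sequences, into a \emph{sparse} part---where isolated tiles are handled by direct $L^1$-type estimates---and a \emph{uniform} part---where $TT^*$/Plancherel arguments, exploiting the non-degeneracy of $\mathcal{C}_k$ afforded by $\a\neq 2$, deliver an $L^2$ bound with a polynomial gain in the tile-density parameter. Summing these single-scale estimates across $k\in\Z$ and interpolating should yield the advertised $L^p\times L^q\to L^r$ bound in the full range $\tfrac{1}{p}+\tfrac{1}{q}=\tfrac{1}{r}$, $p,q\geq 1$, $r>\tfrac{2}{3}$.

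I expect the main obstacle to be reaching below $r=1$, down to $r>2/3$: here no naive trilinear interpolation is available, and one must extract the sharpest possible decay from the combinatorics of $\mathcal{C}_k$, quantitatively realized as a precise power of the density parameter in the uniform $L^2$ bound. This bookkeeping is further complicated by the hybrid nature of $\g(t)=at+bt^{\a}$: the scales $k$ for which the linear part dominates (small $|t|$ if $\a>1$, or large $|t|$ if $\a<1$) behave structurally differently from those dominated by the curved term $bt^{\a}$, and the two regimes must be reconciled without forfeiting the gain, then threaded through the sub-$L^1$ machinery (outer measures, restricted-weak-type interpolation) needed to cross the barrier $r=1$.
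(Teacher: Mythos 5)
Your plan reproduces the paper's high-level architecture: scale decomposition, stationary-phase identification of the resonance region, Gabor-frame discretization, a time--frequency correlation set, and a sparse--uniform dichotomy followed by interpolation. So the framework is essentially the paper's LGC strategy. However, two points in your sketch misidentify what actually makes the argument work, and both are substantive.

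First, you locate the role of $\a\neq 2$ in the ``non-affine'' shape of the resonance constraint $\xi-a\eta\sim b\eta\,2^{-k(\a-1)}$, but that constraint is non-affine for \emph{every} $\a\neq 1$ and does not distinguish $\a=2$. The place where $\a=2$ genuinely breaks the argument is inside the estimate on the \emph{size} of the time--frequency correlation set: after passing to the diagonal TF-correlation condition and differentiating the resulting correlation function $\zeta$ twice (in the $u$- and then $v$-parameters), one obtains a quantity whose size is proportional to $|2-\a|$. When $\a=2$ this derivative vanishes, the level-sets of $\zeta$ lose transversality, and the sparse--uniform dichotomy no longer forces the heavy set to be small. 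Equivalently, in the hybrid regime $H^{\a,S}_{j,m,l}$ asymptotically reproduces (after modulating by Gaussians) a genuinely quadratically-modulation-invariant operator $BC^2$, and one can construct explicit $f,g$ showing the $m$-decay fails. Without this derivative computation, the uniform component of your dichotomy would not deliver the exponential decay in the Gabor-block parameter, which is what powers the whole stationary analysis. (A related slip: $a\neq-1$ is needed not because of a lurking ``BHT modulation invariance'' but because the change-of-variable Jacobian $K(t)=a+1+\a b t^{\a-1}$ in the single-scale estimate degenerates as $t\to 0$ or $t\to\infty$ when $a=-1$, so the single-scale bounds stop being uniform in the scale.)

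Second, you correctly flag that getting below $r=1$ down to $r>2/3$ is the hard part, but you offer no concrete mechanism. The paper's route is to prove \emph{two} structurally different single-block estimates and interpolate: (i) an $L^2\times L^2\times L^\infty$ bound for the stationary block $\L^S_m$ with \emph{exponential} decay $2^{-\d m}$, coming from the TF-correlation-set analysis described above; and (ii) a restricted-weak-type estimate $|\L^S_m(f,g,h)|\lesssim m\,|F|^{1/p}|G|^{1/q}|H|^{1/r'}$ valid for \emph{all} $r>2/3$ but only with \emph{polynomial} $m$-loss, obtained by a completely different method---wave-packet square functions, exceptional-set removal via maximal functions, and a geometric-mean argument over the three inputs. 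Multilinear interpolation for sublinear forms then converts the exponential decay in (i) into the absorption of the polynomial loss in (ii), giving the full range. Your ``uniform part $\Rightarrow$ $L^2$ bound with polynomial gain, then outer measures'' only captures the $L^2$ leg; without an explicit construction of (ii) and its polynomial loss, there is nothing to interpolate against, and the plan has a real hole at exactly the point you identify as the main obstacle.
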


We also have the maximal analogue of Theorem \ref{main1}:

\begin{theorem} \label{main2} Let $a,b, \a$ and $\g$ be as in the previous statement. Then, defining
	\begin{equation}\label{mht}
		M_{a,b}^\a(f,g)(x):=\sup_{\ep>0}\frac{1}{2\ep}\,\int_{-\ep}^{\ep} |f(x-t)\, g(x+a\,t+b\,t^\a)|\,dt\:,
	\end{equation}
	we have that
	\begin{equation}\label{mainrmonmax}
		\|M_{a,b}^\a(f,g)\|_{L^r}\lesssim_{a,b,\a,p,q} \|f\|_{L^p}\,\|g\|_{L^q}\:,
	\end{equation}
	where as before $\frac{1}{p}+\frac{1}{q}=\frac{1}{r}$, $p,\,q\geq 1$ and $r>\frac{2}{3}$.
\end{theorem}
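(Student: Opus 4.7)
\emph{Plan.} I would deduce Theorem~\ref{main2} from Theorem~\ref{main1} by combining a standard dyadic reduction with pointwise Hardy--Littlewood bounds on each scale, reserving an appeal to Theorem~\ref{main1} for the range $r\leq 1$ where the Hölder argument breaks down. Select $\phi\in C_c^\infty(\R)$ nonnegative, supported in $[1/2,2]$, with $\sum_{k\in\Z}\phi(|t|/2^k)\equiv 1$ on $\R\setminus\{0\}$, and set
\begin{equation*}
A_k(f,g)(x):=2^{-k}\int_\R f(x-t)\,g(x+\g(t))\,\phi(|t|/2^k)\,dt.
\end{equation*}
A routine linearization gives $M_{a,b}^\a(f,g)(x)\lesssim \sup_{k\in\Z}A_k(|f|,|g|)(x)$, so the task reduces to controlling this supremum in $L^r$.

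Since $\g'(t)=a+b\a t^{\a-1}$ has at most one real zero, there is a bounded set of critical scales $k^\ast=k^\ast(a,b,\a)$ outside of which $\g$ restricts to a bi-Lipschitz diffeomorphism on $\{|t|\sim 2^k\}$ with Jacobian of constant sign and comparable size. For such $k$, Hölder's inequality applied to $A_k(|f|,|g|)$ at a fixed conjugate pair $(p_0,q_0)$ with $p_0<p$ and $q_0<q$, followed by the change of variables $s=\g(t)$ in the $g$-factor, yields the pointwise estimate
\begin{equation*}
A_k(|f|,|g|)(x)\;\lesssim\;[M(|f|^{p_0})(x)]^{1/p_0}\,[M(|g|^{q_0})(x)]^{1/q_0}
\end{equation*}
uniformly in $k$, where $M$ denotes the Hardy--Littlewood maximal operator. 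Combined with the $L^{p/p_0}$- and $L^{q/q_0}$-boundedness of $M$ and a final Hölder, this handles the range $r>1$, within which such $(p_0,q_0)$ can be chosen.

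To cover $\tfrac{2}{3}<r\leq 1$, where the Hölder argument degenerates, I would invoke Theorem~\ref{main1} together with an $\ell^2$-refinement of its LGC proof. The LGC machinery naturally produces single-scale bounds $\|A_k(f,g)\|_{L^r}\lesssim c_k\|f\|_{L^p}\|g\|_{L^q}$ with $\{c_k\}_{k\in\Z}\in\ell^2$, the $\ell^2$-decay in $k$ coming from the genuine curvature of $\g$ at scales away from $k^\ast$ together with the absence of quadratic resonances encoded by $\a\neq 2$. The trivial bound $\|\sup_k A_k\|_{L^r}\leq\|(\sum_k|A_k|^2)^{1/2}\|_{L^r}$ then completes the argument, while the $O(1)$ critical scales near $k^\ast$ are absorbed by a direct application of Theorem~\ref{main1}. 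The main obstacle is extracting these $\ell^2$-summable single-scale bounds from the LGC analysis: this hinges on the no-quadratic-resonance hypothesis $\a\neq 2$, whose failure would create exactly the sort of second-order modulation invariance that, per the paper's prologue, the LGC method is unable to neutralize.
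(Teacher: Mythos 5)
Your dyadic reduction and your $r>1$ argument---H\"older at $(p_0,q_0)$ followed by a change of variables and Hardy--Littlewood---are sound for scales away from the unique degenerate scale $k^\ast$ where $\gamma'$ vanishes, and the bounded set of degenerate scales can indeed be absorbed by the single-scale estimate (the paper's Theorem \ref{mainsinglsecale} via a Whitney decomposition around the zero of the Jacobian). But the substance of Theorem \ref{main2} lies in $\frac{2}{3}<r\leq 1$, and there your outline has a genuine gap.

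The claimed single-scale bounds $\|A_k(f,g)\|_{L^r}\lesssim c_k\|f\|_{L^p}\|g\|_{L^q}$ with $\{c_k\}\in\ell^2$ do not exist. The curvature of $\gamma$ produces no size decay in the spatial scale $k$: the single-scale operators are \emph{uniformly} bounded (Theorem \ref{mainsinglsecale}), and the gain extracted by the LGC method is exponential decay in the \emph{frequency} parameter $m$ measuring the dyadic height of the multiplier's phase (Propositions \ref{L2mdecay}, \ref{prop:MS2}), uniformly over $j$. For fixed $m$ the $j$-sum is governed by Littlewood--Paley almost orthogonality, not by decay. Moreover, even granting hypothetical $\ell^2$-decay, the step from $\|(\sum_k|A_k|^2)^{1/2}\|_{L^r}$ to single-scale $L^r$ information fails for $r<2$: one would need $\{c_k\}\in\ell^r$, which is strictly stronger than $\ell^2$ in the subquadratic range, so the square-function majorization cannot be closed with an $\ell^2$ hypothesis.

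The paper's Section \ref{BiMax} instead keeps the frequency decomposition $M^S=\sum_m M^S_m$ in the foreground: the exponential decay sits in $m$, and for each fixed $m$ the supremum over $j$ is passed through $\sup_j|M_{j,m}|\leq(\sum_j|M_{j,m}|^2)^{1/2}$ and then \emph{linearized} by a dual sequence $\{\e_j(x)\}$ with $\sum_j|\e_j(x)|^2\leq 1$, producing the trilinear forms $\uL_{j,m}(f,g,h)=\int M_{j,m}(f,g)\,\e_j h$. The $\ell^2$-constraint on $\{\e_j\}$ re-enters via Parseval as an $L^2$ bound on $\{\e_j h\}$, which transfers the Hilbert-transform estimates---the $m$-decaying $L^2\times L^2\to L^1$ bound (Proposition \ref{prop:MS2}) and the restricted weak-type bounds with polynomial $m$-loss (Proposition \ref{prop:MSfull})---to the maximal setting, and interpolation then closes the range $r>\frac{2}{3}$. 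This linearization/dualization of the square function is the mechanism your ``trivial bound'' elides.
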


Moreover, based on the ideas in \cite{Lie19} and the present paper, and at the expense of some unavoidable technicalities that will not be detailed here but which are worked out in extenso in the context of \cite{Lie19}, one can obtain the following

\begin{corollary}\label{extens} Fix $d\in\N$ and consider the planar curve $\g(t)=\sum_{k=0}^d b_k\,t^{\a_k}$ with $\a_0=1$, $\{\a_k\}_{k=1}^d\subset (0,\infty)\setminus\{1,2\}$,  $b_0\in \R\setminus\{-1\}$ and  $\{b_k\}_{k=1}^d\subset\R$. If $p,\,q,\,r$ obey $\frac{1}{p}+\frac{1}{q}=\frac{1}{r}$, with $p,\,q> 1$ and $r> \frac{d}{d+1}$ then the operators \eqref{bht} and \eqref{mainrmon} extend continuously from $L^p(\R)\times L^q(\R)$ into $L^r(\R)$.
\end{corollary}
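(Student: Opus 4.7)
The plan is to reduce the multi-monomial curve to the hybrid single-monomial setting of Theorems \ref{main1}--\ref{main2} via a dyadic decomposition in the $t$-variable, apply the LGC-analysis scale-by-scale, and sum. I would write
\[
H_\g(f,g)(x) = \sum_{j\in\Z} H_\g^{(j)}(f,g)(x),
\]
where $H_\g^{(j)}$ restricts the defining integral to $|t|\sim 2^j$; a similar decomposition handles the bilinear maximal operator. After rescaling $t\mapsto 2^j s$, the curve at scale $j$ reads $\g(2^j s)=b_0 2^j s+\sum_{k=1}^d b_k 2^{j\a_k} s^{\a_k}$, and the non-linear part is governed by the index $k^\ast(j)$ maximizing $|b_k|\,2^{j\a_k}$. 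Away from a bounded-in-$d$ number of ``transition'' scales $j$, this single index dominates strictly, so the integrand becomes a controlled perturbation of the single-monomial hybrid model $b_0 s+b_{k^\ast(j)} s^{\a_{k^\ast(j)}}$.

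On each such scale I would run the LGC-machinery from the proofs of Theorems \ref{main1}--\ref{main2}: a time-frequency partition of the $(\xi,\eta)$-plane that linearizes the phase of the multiplier around the dominant monomial, a Gabor expansion of $f$ and $g$ at the corresponding scale, and the sparse-uniform dichotomy on the associated time-frequency correlation set. The hypothesis $\a_k\not\in\{1,2\}$ for every $k\geq 1$ is what makes this go through uniformly in $j$: whichever index dominates, the linearized phase never enters a quadratic resonance with the modulation symmetry, so the stationary-phase and non-degeneracy estimates underpinning the LGC-method apply just as in the single-term case, yielding a single-scale bound with geometric gain $2^{-j\ep(\a_{k^\ast(j)})}$ after a H\"older factorization.

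Summing across scales is where the sharper threshold $r>\frac{d}{d+1}$ emerges. At most $d$ distinct indices can ever be dominant, and a $(d+1)$-fold interpolation between the per-scale gain, the trivial $L^\infty$-bounds controlling the extreme scales, and the scalar restrictions $p,q>1$ forces $r>\frac{d}{d+1}$ in order to obtain a convergent geometric series; this degradation in the range of $r$ relative to Theorem \ref{main1} (which corresponds to $d=1$) is the cost of handling multiple non-linear monomials simultaneously.

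The principal obstacle, and the source of the ``unavoidable technicalities'' mentioned in the statement, lies in the treatment of the transition scales where two or more monomials have comparable magnitude: there the linearization must simultaneously absorb several non-degenerate contributions, and one has to verify that the sparse-uniform dichotomy remains stable under this balancing. I would handle this by further dyadically partitioning each transition scale according to the competing pair of exponents and running the per-scale LGC-analysis on the sub-pieces, using $\{1,2\}\cap\{\a_k\}_{k\geq 1}=\emptyset$ to prevent a mixed pair from producing an artificial quadratic resonance. Since the number of transition scales is bounded in terms of the $b_k$'s and $\a_k$'s alone, these extra contributions stay summable in the stated range.
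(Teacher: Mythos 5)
Your overall outline — dyadic scale decomposition, identification of a dominant monomial at each scale, scale-by-scale LGC — is in the right spirit; the paper does not prove this corollary in detail (it refers to the technicalities worked out in \cite{Lie19}), but its framework in Sections \ref{BilHilb}--\ref{NSIterm} is indeed along the lines you describe. The genuine gap lies in your account of where the threshold $r>\frac{d}{d+1}$ comes from. You attribute it to a ``$(d+1)$-fold interpolation'' among per-scale gains when summing across dyadic scales, but the stationary LGC estimates and their summation give $r>\frac{2}{3}$ exactly as in Theorem \ref{main1}, essentially independently of $d$; nothing in the oscillatory part of the argument degrades as more monomials are added, so summing over scales cannot produce a $d$-dependent restriction.

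The binding $d$-dependent constraint arises already at a \emph{single} scale, in the non-singular piece $\L^{NSI}$, and it is treated by the non-oscillatory change-of-variables argument of Theorem \ref{mainsinglsecale}, not by LGC. The Jacobian of the substitution $y=x-t$, $z=x+\g(t)$ is $K(t)=1+\g'(t)$, a generalized polynomial with $d+1$ distinct exponents (including the constant one), and such a function can vanish at a point to order as high as $d$ regardless of the sizes of the exponents — precisely the ``number of fewnomials, not degree'' phenomenon that Remark \ref{main3} emphasizes and for which Observation 51 of \cite{Lie19} is cited. Redoing the Whitney decomposition of Case 3 of Theorem \ref{mainsinglsecale} around a root $t_0$ of $K$ of multiplicity $m$ gives $B_{j_0}^s\lesssim 2^{ms}\|f\|_1\|g\|_1$ alongside $B_{j_0}^s\lesssim 2^{-s}\|f\|_p\|g\|_{p'}$, and demanding geometric decay in $s$ after interpolation forces $r>\frac{m}{m+1}$; taking the extremal multiplicity $m=d$ yields the stated threshold. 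Your sketch never engages with this degeneration: your proposal to handle the ``transition scales'' by further partitioning and running LGC on the sub-pieces applies the wrong tool — at those scales the oscillatory machinery is not what limits the range, the degenerate Jacobian is — and without the change-of-variables estimate the $d$-dependent threshold never materializes.
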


\begin{remark} \label{main3}
i) The boundedness ranges in both Theorem \ref{main2} and Corollary \ref{extens} are sharp up to endpoints. This extends the result in \cite{LX16} which treated the purely curved case $b_0=0$ and $\a_k=k$ for $k\in\{1,\ldots, d\}$. Notice that the endpoint $\frac{d}{d+1}$ is dictated by the number of the nonzero fewnomials/monomials in the definition of $\g$ and not by the degree of $\g$. For a hint justifying this latter distinction it might be useful to consult Observation 51 in \cite{Lie19}.

ii) The result in  Corollary \ref{extens} can be further extended to more general types of curves---see e.g. \cite{Lie15}, \cite{GL20}, \cite{Lie19}---which allow for example linear combinations of terms of the form $t^{\a} (\log t)^{\b}$ or/and remove the restriction that $\{\a_k\}_{k=1}^d$ are non-negative. In the interest of brevity we will not elaborate here on either of these situations.
\end{remark}

Next, we approach the themes (A) and (B). Based on the intuition presented in Section \ref{Unif}, in Section \ref{Hywithout} we will provide a brief argumentation for the following

\begin{theorem}\label{main4} Fix $d\in\N$ and let $\a_0=1$, $\{\a_k\}_{k=1}^d\subset (0,\infty)\setminus\{1,\,2\}$ be pairwise distinct and $\{a_k(\cdot)\}_{k=0}^{d}$ be some arbitrary real measurable functions. Then, given the variable planar curve $\g(x,t)=\sum_{k=0}^d a_k(x)\,t^{\a_k}$, we have that
\begin{itemize}
\item the $\g-$Carleson operator as defined as
		\begin{equation}\label{C-gamma}
			C_{\g}(f)(x):=\int_{\R} f(x-t)\,e^{i\,\g(x,t)}\,\frac{dt}{t}
		\end{equation}
is bounded from $L^p$ to $L^p$ for any $1<p<\infty$.
\item the linear Hilbert transform along $\g$ defined as
		\begin{equation}\label{HL-gamma}
			\H_{\g}(f)(x,y):= \int_{\R} f(x-t,\, y+\g(x,t))\,\frac{dt}{t}
		\end{equation}
is bounded from $L^2$ to $L^2$.		
\end{itemize}
\end{theorem}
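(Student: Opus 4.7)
The plan is to deploy the LGC methodology of \cite{Lie19} scale by scale, after first reducing both statements to a common generalized Carleson-type estimate. The ``no quadratic resonance'' hypothesis $\a_k\neq 2$ is what prevents the emergence of a higher order modulation invariance property and thus keeps us inside the LGC regime (no relational time-frequency analysis needed).

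First, I would reduce $\H_\g$ to $C_\g$: taking Fourier transform in $y$ and applying Plancherel, the $L^2(\R^2)$ bound for $\H_\g$ becomes the uniform-in-$\eta$ $L^2(\R)$ bound for $f\mapsto \int f(x-t)\,e^{i\eta\g(x,t)}\,\frac{dt}{t}$. Since the phase $\eta\g(x,t)=\sum_k (\eta\, a_k(x))\, t^{\a_k}$ has the same hybrid form as $\g$, and any bound for $C_\g$ produced by the LGC method is uniform in the (merely measurable) coefficient functions $a_k(\cdot)$, the problem collapses to the $L^p$ bounds for $C_\g$.

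Next, I would perform a dyadic splitting $C_\g=\sum_{j\in\Z} C_\g^j$ in $|t|\sim 2^j$ and rescale $t=2^j s$, producing for each $j$ an operator with phase $\Phi_j(x,s)=\sum_k \l_k^j(x)\, s^{\a_k}$, where $\l_k^j(x):=a_k(x)\, 2^{j\a_k}$, on $|s|\sim 1$. For each $j$ I would then partition $\R_x$ into the measurable regions $E_k^j$ on which $|\l_k^j|$ dominates the other $|\l_{k'}^j|$. On $E_k^j$ the phase is, up to a bounded additive error, the single monomial $\l_k^j(x)\, s^{\a_k}$---this is the ``ambient linearization'' step. On $E_0^j$ the piece $C_\g^j$ is pointwise dominated by the classical Carleson maximal operator (Carleson--Hunt supplies $L^p$, $1<p<\infty$). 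On each $E_k^j$ with $k\neq 0$, the exponent $\a_k\in(0,\infty)\setminus\{1,2\}$ guarantees the non-vanishing of both the second \emph{and} the third derivative of $s\mapsto s^{\a_k}$; this is exactly the curvature input required to run the LGC-method from \cite{Lie19}: Gabor-frame expansion of $f$ at scale $2^j$, recasting of $C_\g^j\big|_{E_k^j}$ as a bilinear form in the Gabor coefficients, and analysis of the associated time-frequency correlation set via the sparse-uniform dichotomy. This yields a per-scale decay of the form $2^{-\d j}$ with some $\d=\d(\a_k)>0$, which can be summed geometrically over $j\in\Z$ (the regimes $j\to\pm\infty$ being structurally symmetric and handled by $a_0,a_{\pm}\neq 0$ at the two ends).

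The main obstacle is the treatment of the transition regions where two or more of the magnitudes $|\l_k^j(x)|$ are comparable, since a crude indicator partition would destroy the structure needed by the Gabor analysis. The remedy is the standard LGC device of a smooth cutoff partition at each scale coupled with a pigeonhole on the ``resonance sets'' $\{x:|\l_k^j(x)|\sim |\l_{k'}^j(x)|\}$; the decay factor $2^{-\d j}$ survives because the exponents $\a_k$ are pairwise distinct and none equals $2$. A subtler point is that the $a_k$'s are merely measurable, so no smoothness of the phase can be used---but this is exactly the level of generality at which the LGC-method was designed to operate, so the analysis transfers essentially verbatim from the purely non-zero curvature results of \cite{Lie19} to the hybrid setting considered here, which is why only a ``brief argumentation'' is needed.
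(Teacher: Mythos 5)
The reduction of $\H_\g$ to $C_\g$ via a Fourier transform in $y$ and Plancherel, and the dyadic splitting in $|t|\sim 2^j$, are the same as in the paper. Beyond that, the proposal departs from the paper's argument in ways that are not merely cosmetic but contain genuine gaps.

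The most serious issue is the claim that, after partitioning $\R_x$ according to which $|\l_k^j(x)|$ dominates, ``the phase is, up to a bounded additive error, the single monomial $\l_k^j(x)s^{\a_k}$.'' This is false: if $|\l_{k}^j(x)|$ merely \emph{dominates} $|\l_{k'}^j(x)|$, the subdominant terms can still be of comparable order and contribute unbounded oscillation, so you cannot throw them into an $O(1)$ error. Had that reduction been valid, the $\a_k\neq 2$ restriction would become unnecessary: a single nonlinear monomial $\l_k^j(x)s^{\a_k}$ with no linear shift falls under the purely non-zero curvature LGC regime (case I in Section~\ref{compdisc}), which works for \emph{all} $\a_k\neq 1$, including $\a_k=2$. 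The whole point of the hybrid setting is the interaction of the linear term $a_0(x)t$ with the nonlinear ones, and your reduction discards this interaction. That your approach does not actually use $\a_k\neq 2$ anywhere concretely---you only gesture at it---is a structural symptom of this gap.

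A second gap concerns the parameter controlling the decay. You claim a ``per-scale decay of the form $2^{-\d j}$'' to be ``summed geometrically over $j\in\Z$.'' This cannot be right as stated (decay in $j$ is growth for $j\to-\infty$, and the problem has no single-sided scale asymmetry of this kind). What the paper actually does is introduce a \emph{second} parameter $m$, the dyadic size of $\|\g^{NL}(x,2^{-j})\|$, via the cutoff $\phi\big(\|\g^{NL}(x,2^{-j})\|/2^m\big)$ in \eqref{C-gammajm}. The estimate established in Theorem~\ref{cjmbound} is $\|C_{j,m}f\|_2\lesssim 2^{-\ep m}\|f\|_2$, \emph{uniformly in $j$}; the $j$-sum is then handled by almost orthogonality, not by geometric decay. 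The phase-linearization scale in the Gabor analysis is $2^{-j-m/2}$, so without the parameter $m$ you cannot even set up the wave-packet decomposition correctly. Finally, the role of $\a_k\neq 2$ in the paper is very specific: in the time-frequency correlation analysis (Step III, using Lemmas~50 and~51 of \cite{Lie19}), the coefficients $\tilde a_k$ are bounded iteratively, and if some $\a_{k_0}=2$ then the relation \eqref{0degreeMod} couples $\tilde a_0$ and $\tilde a_{k_0}$ and blocks the contradiction in \eqref{a9} (see Observation~\ref{quadratict}). Your proposal contains no analogue of this mechanism, and the ``transition regions'' you mention would require controlling precisely these coupled coefficients---the hardest part of the argument---rather than being a routine smooth-cutoff technicality.
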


Finally, building on some key elements of the LGC method, in particular on the sparse-uniform dichotomy used in the analysis of the time-frequency correlation set, in Section \ref{TrHC}--see Proposition \ref{MSTrHC} therein--we provide a simple, concise and intuitive proof of the main ingredient used in order to establish the following

\begin{theorem}  (\cite{CDR20}) \label{consec} Let $\g(t)=t^2$ and consider the following (parabolic) curved model for the triangular Hilbert transform:
	\begin{equation}\label{Trcurv}
		T^{\bigtriangleup}_{\g}(f,g)(x,y):=\,\int_{\R} f(x+t, y)\, g(x,\,y+\,\g(t))\,\frac{dt}{t}\:,\qquad\textrm{with}\:\:f,\,g\in S(\R^2)\;.
	\end{equation}
	Then $T^{\bigtriangleup}_{\g}$ extends to a bounded operator from $L^p\times L^q\,\rightarrow\,L^r$ with $\frac{1}{p}+\frac{1}{q}=\frac{1}{r}$, $1<p,\,q<\infty$ and $r\in[1,2)$.
\end{theorem}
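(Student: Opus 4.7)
The plan is to follow the strategy articulated in Section \ref{TrHC}: reduce the desired boundedness to a \emph{single-scale smoothing estimate} for the main oscillatory component of the trilinear form naturally associated with $T^{\bigtriangleup}_{\g}$, and obtain that estimate via the sparse-uniform dichotomy at the heart of the LGC-method.

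First I would dualize by pairing against a third input $h(x,y)$ to form the trilinear functional
\begin{equation*}
\Lambda(f,g,h) \,:=\, \int_{\R^3} f(x+t,y)\,g(x,y+t^2)\,h(x,y)\,\frac{dt\,dx\,dy}{t}\,.
\end{equation*}
A Littlewood--Paley decomposition of $f$ and $g$ in the two variables, combined with a dyadic splitting of the $t$-integral, produces pieces indexed by frequency and spatial scales. Standard paraproduct and Coifman--Meyer reasoning, together with the classical Hilbert transform bound in $t$, dispatches every non-balanced frequency interaction; the essential difficulty is concentrated in the \emph{balanced} piece $\Lambda_k(f,g,h)$ at scale $2^k$, where the $y$-frequencies of $g$ live near $2^{2k}$ (so as to resonate with $t^2$ for $t\sim 2^{-k}$) and the $x$-frequencies of $f$ live near $2^k$. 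Theorem \ref{consec} then follows from a smoothing inequality of the form
\begin{equation*}
|\Lambda_k(f,g,h)| \,\lesssim\, 2^{-\d k}\,\|f\|_{p_1}\|g\|_{p_2}\|h\|_{p_3}
\end{equation*}
for some fixed $\d>0$, by summing in $k$ and interpolating against trivial $L^2\times L^2\times L^\infty$ and Coifman--Meyer-type bounds to recover the full range $r\in[1,2)$.

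To prove this single-scale smoothing estimate, which is the content of Proposition \ref{MSTrHC}, I would implement the LGC ingredients. First, a \emph{time-frequency partition} of the ambient $(x,y,t)$-universe at scale $2^{-k}$ linearizes the quadratic phase $e^{i\eta\,t^2}$, $|\eta|\sim 2^{2k}$, on each cell. Next, expanding $f$, $g$ and $h$ in \emph{Gabor wave packets} adapted to the matched spatial and frequency scales rewrites $\Lambda_k$ as a discrete sum over tuples of wave packets whose time-frequency parameters are constrained by a resonance relation dictated by the quadratic phase; the surviving tuples form the \emph{time-frequency correlation set}, whose cardinality and geometry quantitatively record the curvature effect.

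The decisive step, and in my view the main obstacle, is the \emph{sparse-uniform dichotomy} applied to this correlation set. Tuples are split into a \emph{sparse} subfamily, for which $L^2$-orthogonality and Cauchy--Schwarz furnish the smoothing gain directly, and a \emph{uniform/dense} subfamily, for which the nondegeneracy $\g''(t)=2\neq 0$ is exploited through a $TT^*$ and almost-orthogonality argument to produce the quantitative power saving $2^{-\d k}$. Relative to the one-dimensional hybrid settings treated in Theorems \ref{main1}--\ref{main2}, the subtlety here is that the free variable $y$ participates in the resonance, so the counting estimate on the correlation set must combine the quadratic nondegeneracy in $t$ with an $L^2$ averaging in $y$; once this combined estimate is in place, the smoothing inequality, and hence Theorem \ref{consec}, follows.
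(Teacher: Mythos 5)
Your reduction to a single-scale smoothing inequality, and the identification of Proposition~\ref{MSTrHC} as the core estimate, match the paper. The genuine gap is in your proposed \emph{proof} of Proposition~\ref{MSTrHC}. You outline a direct transplant of the Rank~I LGC machinery: a first-order phase linearization at scale $2^{-m/2}$, an adapted Gabor wave-packet expansion, and a two-way sparse/uniform dichotomy whose crux is a size bound on a time-frequency correlation set obtained from a $TT^*$-type almost-orthogonality argument. You then write, of the decisive counting estimate, that it ``must combine the quadratic nondegeneracy in $t$ with an $L^2$ averaging in $y$; once this combined estimate is in place, the smoothing inequality\ldots follows.'' That combined estimate is precisely where the difficulty sits, and it is asserted rather than proved. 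Worse, there is a concrete reason to doubt that the Rank~I package delivers it as stated: the relevant phase here is \emph{quadratic} (after stationary phase it is $-\xi^2/(4\eta)$), and the paper's own Section~\ref{compdisc} (case~III, Observation~\ref{quadratict}, Proposition~\ref{hybcurv2}) explains that for quadratic resonances first-order linearization blurs the Heisenberg localization of the generalized wave-packets and the correlation counting \emph{fails}. The extra $y$-variable is the way out, but exploiting it requires a new argument, not merely an adjective on the counting lemma.

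The paper's Epilogue deliberately departs from your template at each step, and this is worth seeing clearly. Instead of Gabor frames, it uses partial Fourier transforms in one variable to form the mixed space--frequency objects $\F(\xi,y)=\widehat{f}^1(\xi,y)$ and $\G(x,\eta)=\widehat{g}^2(x,\eta)$. Instead of the first-order (linearizing) Taylor expansion on $2^{m/2}\times 2^{-m/2}$ Heisenberg boxes, it retains only the \emph{zero-order} approximation of the dual phase on \emph{unit} frequency boxes $[j,j+1]$ and $2^{-m}$ spatial intervals $[k/2^m,(k+1)/2^m]$ --- the paper flags exactly this contrast with the LGC method. And instead of a binary sparse/uniform split of a correlation set, it performs a \emph{three-way} split of the index set into heavy $y$-fibers, heavy masses, and light masses, handling the first by a support estimate, the second by a decoupling of $\xi$ from $\eta$ followed by Van der Corput on the phase $\Phi(s)=s(k-k_1)+\tfrac{j^2(k)-j^2(k_1)}{2^m}\bigl(\tfrac{1}{4\eta}-\tfrac{1}{4(\eta+s)}\bigr)$, and the third by a $TT^*$ on the full twisted phase $\Psi(\xi)$ with curvature $|\Psi''(\xi)|\gtrsim|\mu(\eta-\eta_1)|$. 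So either carry out the correlation counting honestly in the Gabor setting (not obviously possible for $\g(t)=t^2$), or restructure your Step~2 and Step~3 along the zero-order, partial-Fourier, three-way-split lines of Section~\ref{TrHC}.
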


We end this section with several commentaries:

\begin{remark} \label{ExtTr}[\textsf{Curved model for the triangular Hilbert transform--Extensions}] In a soon to follow work, \cite{HsL24}, the authors therein introduce a continuous and further simplified version of the discretized approach to Proposition \ref{MSTrHC} presented here. This continuous version adds some more flexibility in the choice of the curves $\g$ for which the stated bounds on \eqref{Trcurv} in Theorem \ref{consec} remain valid.
\end{remark}

\begin{observation} \label{alfa2}[\textsf{Quadratic resonances}]
	In both Theorems \ref{main1} and \ref{main2} the condition $\a\not=2$---see also the analogous situation for Corollary \ref{extens} and Theorem \ref{main4}---is not a mere technical restriction but rather the landmark of a conceptual, philosophical difficulty derived from the presence of a \emph{(quasi) quadratic resonance}\footnote{See Remark \ref{Quasi}.} that also manifests when treating\
	\begin{itemize}
		\item the Polynomial Carleson operator--see the distinction between the presence of the quadratic monomial (\cite{Lie09},\cite{Lie20}) versus the lack thereof in \cite{Lie19} and the present paper;
		\item the Bilinear Hilbert--Carleson operator $BC^{\a}$, which in the nonresonant case $\a\in(0,\infty)\setminus\{1,2\}$ is treated in \cite{BBLV21}, while in the resonant case $BC^{2}$ appears as an open problem in Section 1.4 of the same paper.
	\end{itemize}
	 Moreover, as we will show in Section \ref{H2discussion}, the case $\a=2$ cannot follow the same strategy as that developed for the proof of our main results above. This is because both Theorems \ref{main1} and \ref{main2} rely on the $m$-decaying estimate provided in Proposition \ref{L2mdecay} which fails to be true for $\a=2$. For a revealing discussion on the topic of the presence/absence of quadratic resonances one is invited to consult Section \ref{Unif}.
\end{observation}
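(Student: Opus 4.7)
The observation has two substantive claims that call for verification: first, that Proposition \ref{L2mdecay} genuinely breaks down at $\a=2$, and second, that this breakdown is structural rather than cosmetic, reflecting a quadratic (quasi) modulation invariance of $H^{2}_{a,b}$. I would prove the observation in two steps: (i) produce an explicit counterexample to any $m$-decaying estimate at $\a=2$, and (ii) identify the precise algebraic feature of the symbol responsible for this failure.

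For step (i), I would unfold the LGC machinery one layer: after dyadic decomposition of the Hilbert kernel at scale $2^{-m}$ and Gabor-type decomposition of $f,\,g$ at frequency scales conjugate to $2^m$, the resulting bilinear form becomes an oscillatory integral with phase
\[
\Phi(t;\eta,\xi)\,=\,(a\xi-\eta)\,t\,+\,b\,\xi\,t^{\a}
\]
integrated against a smooth cutoff supported on $|t|\sim 2^{-m}$. For generic $\a\notin\{1,2\}$, a stationary-phase/van der Corput analysis provides a gain of the form $2^{-\d m}$ as soon as the frequencies $(\eta,\xi)$ lie outside a thin resonant tube, while the tube itself is controlled by the sparse-uniform dichotomy. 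When $\a=2$, however, $\Phi$ is purely quadratic in $t$ and the associated Gaussian integral is computable in closed form; the putative gain degenerates into a unit-modulus phase $e^{i\Psi(\eta,\xi)}$, so no decay in $m$ can be extracted from the $t$-integration.

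To make this failure definitive, I would test the alleged estimate against normalized $L^2$ Gabor atoms $f_m,\,g_m$ whose frequency supports are centered on the resonance line $a\xi=\eta$ and whose spatial supports are adapted to the scale $2^m$; along this line the linear component of $\Phi$ vanishes, and the remaining quadratic chirp $b\,\xi\,t^{2}$ can be neutralized by a companion quadratic modulation built into $g_m$. A direct computation then reduces the bilinear pairing to a harmless constant multiple of $\|f_m\|_{L^2}\,\|g_m\|_{L^2}$, independent of $m$; this is incompatible with any inequality of the shape $O(2^{-\d m})$ and so disproves Proposition \ref{L2mdecay} in the regime $\a=2$.

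For step (ii), note that the same computation reveals the underlying symmetry: the quadratic factor in $\Phi$ generates joint quadratic modulations, so $H^{2}_{a,b}$ commutes, up to the linear modulations already absorbed by the LGC framework, with the family $(f,g)\mapsto(e^{i\lambda(\cdot)^{2}}f,\,e^{i\mu(\cdot)^{2}}g)$ restricted to the resonant diagonal. This is exactly the quadratic modulation invariance that forces the relational higher-order time-frequency analysis of \cite{Lie09,Lie20} for the Polynomial Carleson operator and that leaves $BC^{2}$ open in \cite{BBLV21}. The main obstacle in executing this plan is not an estimate but a conceptual delineation: one must carefully track which piece of $\Phi$ is killed by ambient modulations and which piece survives as a genuine higher-order symmetry, so that the counterexample targets only the latter and is not an artifact of a suboptimal parameterization; this delicate bookkeeping is precisely what Section \ref{H2discussion} is designed to formalize.
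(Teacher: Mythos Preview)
Your intuition is correct---the failure at $\a=2$ is witnessed by test functions carrying quadratic chirps---but the proposal as written has several concrete gaps and diverges from the paper's argument in ways worth noting.

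First, the bookkeeping is off in places that matter. The phase of the multiplier (see \eqref{phasefunction}) is $-\frac{\xi-a\eta}{2^j}t+\frac{\eta}{2^{\a j}}t^{\a}$, so it is $\eta$ (the frequency of $g$), not $\xi$, that multiplies $t^{\a}$; your formula $\Phi(t;\eta,\xi)=(a\xi-\eta)t+b\xi t^{\a}$ has the roles reversed. More seriously, you conflate the spatial scale $j$ with the oscillation parameter $m$: the kernel is localized at $|t|\sim 2^{-j}$, not $2^{-m}$, and the relevant counterexample lives in the \emph{hybrid} regime $(\a-1)j>m$ where the two scales decouple. Without separating these parameters you cannot see why the failure is specific to $\L^{S}_{H,m}$ and does not contaminate $\L^{S}_{NL,m}$.

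Second, and more substantively, the paper's argument in Section~\ref{H2discussion} does not proceed by a direct stationary-phase computation on Gabor atoms. Instead it exploits the structural identity \eqref{Tmjeq}: in the hybrid regime, the piece $H^{2,S}_{j,m,l}$ is (after the approximation \eqref{s2jm4ss}) exactly a localized Bilinear Hilbert--Carleson piece $T^{l,2}_{m,j}=BC^{2}_{m/2,-j}$ acting on frequency-localized inputs. The counterexample then modulates \emph{both} inputs by the same quadratic chirp $e_{m+2j}(x)=e^{-i2^{m+2j-1}x^{2}}$ (not just $g$), with $\hat f,\hat g$ taken to be characteristic functions of adjacent intervals of length $2^{j}$ near $2^{2j+m}$; an explicit computation \eqref{appr}--\eqref{appr01} shows the $L^{1}$ output is $\approx 2^{j}\approx\|f\|_{2}\|g\|_{2}$ with no $m$-decay. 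This reduction to $BC^{2}$ is the point: it explains why the obstruction here is the \emph{same} obstruction that leaves $BC^{2}$ open in \cite{BBLV21}, rather than merely analogous to it.

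Finally, your claim that $H^{2}_{a,b}$ ``commutes'' with joint quadratic modulations overstates the situation. As Remark~\ref{Quasi} makes precise, there is no global quadratic modulation invariance of $H_{2}$; what happens is that the hybrid piece $H^{2,S}_{j,m,l}$ \emph{asymptotically approaches} an operator with genuine quadratic invariance as $j$ grows relative to $m$. This ``quasi-invariance'' is exactly what your last paragraph gestures at, but the distinction between an exact symmetry and an asymptotic one is the conceptual content of the observation, so it should be stated carefully rather than elided.
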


Finally, in light of this last observation, we are naturally brought to discussing two unusual but very intriguing open problems:
\medskip

\noindent\textbf{Open Problem 1.} \emph{Show that both Theorems \ref{main1} and \ref{main2} (or for that matter---with the obvious adaptations---Corollary \ref{extens}) remain true in the case $\a=2$}.
\medskip

Based on the elements provided in Section \ref{cont}, Section \ref{Unif}, and most importantly Section \ref{H2discussion},  it is expected that a positive resolution to the above problem will rely on a relational time-frequency analysis approach in the spirit of \cite{Lie09} and \cite{Lie20} and not on the LGC methodology.

Next, for $c\in\R$, we define the (kernel) $c-$shifted expression
\begin{equation}\label{bhtrefc}
H_{\mathcal{P}_{[c]}}(f,g)(x):= \int_\R f(x-t)\, g(x+t^2)\,\frac{dt}{t-c}\:,
\end{equation}
and, denoting the translation symmetry by $\tau_{s}f(x):=f(x-s)$, we notice that
\begin{equation}\label{bhtref}
		H_{2,1}^2(f,g)(x)=H_{\mathcal{P}_{[1]}}(\tau_{-1}f,\tau_{1}g)(x)\:.
\end{equation}
Thus, regarded via formulation \eqref{bhtref}, the first open problem above is the source of inspiration for, as well as a very particular case of the following striking
\medskip

\noindent\textbf{Open Problem 2.} \emph{Study the boundedness properties of the \emph{bilinear Hilbert $C$-shifted maximal operator along} $\g$, defined by
\begin{equation}\label{bhtrefc}
\mathcal{H}_{\g}^{C}(f,g)(x):=\sup_{c\in C} \left|H_{\g_{[c]}}(f,g)(x)\right| := \sup_{c\in C} \left|\int_\R f(x-t)\, g(x+\g(t))\,\frac{dt}{t-c}\right|\:,
\end{equation}
where here $\g$ is some suitable piecewise smooth curve and $C$ is a subset\footnote{It might be worth mentioning here that if one considers the case $C=\R$, then, depending on the curvature of $\g$, the function $c(x)$ obtained after linearizing the supremum in \eqref{bhtrefc} is expected to obey some suitable nondegeneracy and regularity conditions in order for $\mathcal{H}_{\g}^{\R}$ to be well behaved.} of $\R$. Of particular interest are---in the increasing order of complexity---the cases $\g(t)=t^3$, $\g(t)=t^2$ and $\g(t)=t$.}
\medskip

Remark here that \eqref{bhtrefc} gives birth to a completely new category of objects in which, in a bilinear context, one creates a maximal structure relative to the \emph{translation} symmetry as opposed to the more familiar \emph{modulation} symmetry. For the latter situation, significant progress has been made only recently through the works on the bilinear Hilbert-Carleson operator\footnote{For its definition and an outline of the proof in \cite{BBLV21} see Section \ref{BHCalfa} in the present paper.} $BC^{\a}$ introduced in \cite{BBLV21} and on the curved trilinear Hilbert transform, \cite{HL23}, see in particular Theorem 4.3. therein.
$\newline$

\noindent\textbf{Acknowledgments.} The authors would like to thank Martin Hsu for carefully reading parts of this paper, for several useful comments--see in particular Observation \ref{Unifb}, and for pointing out a gap--together with a simple and elegant solution to it--in an earlier version of our work's Epilogue. The second author was partially supported by the NSF grant DMS-1900801.

\section{Prologue: A unified perspective on the zero/non-zero curvature paradigm}\label{Unif}

\subsection{The two end-points of the spectrum: a philosophical overview}

We open this section with a brief discussion\footnote{This follows the spirit of the description made in Section 1.2 of \cite{Lie19}.}
on the generic strategy involved in treating the two extreme situations within the zero/nonzero curvature spectrum. For streamlining our discussion we take $T$ to be our generic operator whose boundedness properties are investigated in what follows:
$\newline$

\noindent{\textsf{Panoramic view on the strategy involved for treating the zero/nozero curvature cases} }

\begin{itemize}
	
	\item \textsf{the zero curvature case}: This corresponds to the situation when our operator $T$ has a modulation invariance property on top of possible other symmetries (\emph{e.g.} dilation or translation). As a consequence, we are required to use wave-packet analysis in order to capture the time-frequency properties of the input/output of $T$. Thus, one is naturally led to a phase-space discretization of $T$ guided by the Heisenberg localization principle, thus achieving a representation of $T$ as a superposition of well-localized time-frequency suboperators, \textit{i.e.}
	\begin{equation}\label{zdec}
		T=:\sum_{P\in\P} T_{P}\:.
	\end{equation}
	Once at this point, one needs to identify the relevant quantities that govern the behavior of each suboperator $T_P$--see e.g. the concepts of mass and size--and then accomplish the difficult task of recombining all the elementary building blocks $\{T_P\}_{P\in\P}$ based on the behavior of these quantities in order to obtain a global estimate on the original operator $T$. In this reconstruction process a key role is played by the geometry and combinatorics of the tiles $P$ enclosing the (Heisenberg) time-frequency representation of $T_P$ with elementary structures such as trees playing a fundamental role. The basic outline of the reconstruction process may now be summarized as follows:
	\begin{itemize}
		\item Firstly, one divides the family of all tiles, referred to as $\P$ in \eqref{zdec}, into subfamilies having uniform mass and/or size, \textit{i.e.}
		\begin{equation}\label{zdec1}
			\P=\bigcup_{n\in\N}\P_n\qquad\textrm{with}\qquad\P_n:=\{P\in\P\,|\,\textrm{mass}\,(P)\approx 2^{-n}\}\:.
		\end{equation}
		\item Next, each such subfamily is further organized in forests, that is, suitable collections of almost disjoint trees.
		
		\item  Finally, by further exploiting the almost orthogonality among the forests within each $\P_n$, one obtains the control over $T_{\P_n}:=\sum_{P\in\P_n} T_P$ by showing that for a given $1<p<\infty$ there exists $\d(p)>0$ such that
		\begin{equation}\label{zdec2}
			\|T_{\P_n}\|_{L^p}\lesssim_p 2^{-\d(p)\,n}\:.
		\end{equation}
		Combining \eqref{zdec}, \eqref{zdec1} and \eqref{zdec2}  one derives the global control over $T$ via a simple telescoping argument.
	\end{itemize}
	$\newline$
	
	\item \textsf{the nonzero curvature case}:  in this situation our operator $T$ has no (generalized) modulation invariance and as a consequence no translation invariance symmetry for its associated symbol. This in turn provides the $0$ frequency with a favorite (central) role in the time-frequency analysis of $T$. As a further consequence we have the natural decomposition
	\begin{equation}\label{nonzdec}
		T=:T_{0}\,+\,T_{osc}\:,
	\end{equation}
	where here
	\begin{itemize}
		\item $T_0$ represents the \emph{low frequency} component whose multiplier--as the name suggests--has essentially no oscillation. Usually the study of $T_0$ can be reduced to a well understood maximal/singular operator appearing in the classical harmonic analysis literature;

		\item $T_{osc}$ represents the \emph{high frequency} component corresponding to a multiplier whose phase is highly oscillatory. Then, one decomposes
		\begin{equation}\label{nonzdec1}
			T_{osc}=\sum_{m\in\N} T_{m}\:,
		\end{equation}
		with each $T_m$ representing the component that has the property that the phase of multiplier associated with $T_{osc}$ has the size $\approx 2^{\mu m}$, where here $\mu>0$ is some parameter that depends on the specific structure of the original $T$. Once here, the generic goal is to show that there exists a $\d(\mu,p)>0$ such that
		\begin{equation}\label{nonzdec2}
			\|T_{m}\|_{L^p}\lesssim_p 2^{-\d(\mu,p)\,m}\:.
		\end{equation}
		This latter relation is achieved in the classical approach via stationary phase analysis, $TT^{*}$-method/almost orthogonality arguments and interpolation techniques. Once at this point, as in the zero-curvature case, the control over $T_{osc}$ is achieved via a telescoping argument.
	\end{itemize}
\end{itemize}

Now, as mentioned in the introductory section, even if originally some specific topics within the zero and non-zero curvature realms evolved mutually independently and with distinct motivations, in more recent years most of the problems considered within the non-zero curvature realm were formulated in direct relation with their zero-curvature counterparts and often intended as toy-models for the latter. However, the methods involved for treating the non-zero curvature situations were until recently all based on basic methods in the spirit of Calderon-Zygmund theory (wavelet expansion). Such an approach becomes obstructive when one intends to design non-zero curvature problems as toy models for some suitable zero-curvature problems since the latter are known to be incompatible with wavelet type decompositions. The situation becomes even more obscure when the considered problems are of \emph{hybrid} nature--see e.g. the case of the Polynomial Carleson operator, of the  Bilinear Hilbert-Carleson operator or that of the problem(s) addressed in the present paper.\footnote{Another relevant example of a problem of hybrid nature is that of the maximally modulated singular Radon transform discussed in Section 1.3.4. of \cite{BBLV21}.} Indeed, in such problems the zero and nonzero curvature features coexist and interact each other requiring thus a method that is compatible with and, moreover, unifies both facets.

Motivated by all these and consistent with the discussion in Section \ref{cont}, we are now naturally led towards this next topic.

\subsection{A comparative discussion}\label{compdisc}

In what follows we discuss antithetically the two unifying methods employed in order to provide a global approach to the zero-nonzero curvature paradigm:
\begin{itemize}\label{dichotomy}
	\item the \emph{LGC method} involving linear wave-packet analysis;
	
	\item the \emph{relational time-frequency analysis} relying on higher order wave-packet analysis.
\end{itemize}

In order to exhibit an enhanced contrast between the two methods above we will use as a prototype for our discussion the following
\medskip

\noindent \underline{\textsf{Model hybrid problem.}} \emph{Investigate the $L^2$ behavior of the Polynomial Carleson type operator:
\begin{equation}\label{CarlPolyn}
	C_{a,b}^{\a} f(x):=Tf(x)=\int_{\R} f(x-t)\,e^{i\,(a(x)\,t\,+\,b(x)\,t^{\a})}\,\frac{dt}{t}\:,
\end{equation}
where here $a(\cdot)$ and $b(\cdot)$ are measurable real functions and $\a\in\{2,3\}$.}
\medskip

Our approach of \eqref{CarlPolyn} will focus for most of the discussion on the applicability of the LGC method depending on the values of the parameter $\a$  and of $a(x)$, saving thus only for the very end the references to/necesity of the second method.

We start our analysis of \eqref{CarlPolyn} by focusing on the nonlinear component of the phase and, thus,  apply the natural decomposition
\begin{equation}\label{DecT}
	T=:T_0\,+\,\sum_{{m\in\N}\atop{j\in\Z}}T_{j,m}\:,
\end{equation}
where, for $m\in\N$ and $j\in\Z$, we set
\begin{equation}\label{CarlPolyn1}
	T_{j,m} f(x):=\left(\int_{\R} f(x-t)\,e^{i\,(a(x)\,t\,+\,b(x)\,t^{\a})}\,\rho_j(t)\,dt\right)\,\phi\left(\frac{b(x)}{2^{m+\a j}}\right)\:,
\end{equation}
with $\rho$, $\phi\in C_0^{\infty}(\R)$, $\textrm{supp}\,\rho,\,\textrm{supp}\,\phi\subset\{\frac{1}{4}<|t|<4\}$,
$\rho_j(t)=2^j\,\rho(2^j t)$ and
$$\forall\:t\in \R\setminus\{0\}\,:\qquad\frac{1}{t}=\sum_{j\in\Z} \rho_j(t)\qquad\textrm{and}\qquad 1=\sum_{j\in\Z} \phi(2^j t)\,.$$

Once at this point, we notice that $T_0$ can be essentially reduced to the classical Carleson operator and hence, it is enough to focus our attention on $T_{j,m}$, which, in the multiplier setting, can be rewritten as
$$T_{j,m} f(x):=\int_{\R} \hat{f}(\xi)\,\mathfrak{m}_j(x,\xi)\,e^{i\,\xi\,x}\,d\xi\,,$$
with
\begin{equation}\label{multipl}
	\mathfrak{m}_j(x,\xi):=\left(\int_{\R} e^{-i \xi t}\,e^{i\,(a(x)\,t\,+\,b(x)\,t^{\a})}\,\rho_j(t)\,dt\right)\,\phi\left(\frac{b(x)}{2^{m+\a j}}\right)\:.
\end{equation}
Now, an application of the stationary phase principle gives
\begin{equation}\label{multipl1}
	\mathfrak{m}_j(x,\xi)=\frac{1}{2^{\frac{m}{2}}}\,e^{i\,c_{\a}\, \frac{(\xi-a(x))^{\frac{\a}{\a-1}}}{b(x)^{\frac{1}{\a-1}}}} \,\tilde{\rho}\left(\frac{2^j\,(\xi-a(x))^{\frac{1}{\a-1}}}{(\a\,b(x))^{\frac{1}{\a-1}}}\right)\,\phi\left(\frac{b(x)}{2^{m+\a j}}\right)\,+\,\textrm{Error Term}\:,
\end{equation}
where here $c_{\a}\in\R$ is a nonzero constant depending only on $\a$, and $\tilde{\rho}$ is a function having similar properties with $\rho$.

Denoting now the phase of our multiplier by $\Psi_x(\xi):=c_{\a}\, \frac{(\xi-a(x))^{\frac{\a}{\a-1}}}{b(x)^{\frac{1}{\a-1}}}$  we notice that, in order to have $\mathfrak{m}_j(x,\xi)\not=0$, the following must hold:
\begin{equation}\label{multipl2}
	|\Psi_x(\xi)|\approx 2^m\qquad\textrm{and}\qquad \left|\frac{\partial^2}{\partial \xi^2}\Psi_x(\xi)\right|\approx 2^{-m-2j}\:.
\end{equation}

Implementing now the \emph{first stage} of the LGC method--the phase linearization of the multiplier--we deduce from \eqref{multipl2} the necessity of dividing the frequency axis into intervals of length $2^{\frac{m}{2}+j}$, which, via the Heisenberg principle, is equivalent with dividing the time (space) axis into intervals of length $2^{-\frac{m}{2}-j}$.  Thus, in order to apply this linearization procedure in space, we rewrite \eqref{CarlPolyn1} in the form

\begin{equation}\label{CarlPolyn01}
	T_{j,m} f(x):=\left(\int_{\R} f(t)\,e^{i\,(a(x)\,(x-t)\,+\,b(x)\,(x-t)^{\a})}\,\rho_j(x-t)\,dt\right)\,\phi\left(\frac{b(x)}{2^{m+\a j}}\right)\:,
\end{equation}
and, as explained, divide the $t$-spatial space representation in intervals of length $2^{-\frac{m}{2}-j}$ or, equivalently, write
$$\rho_j(x-t)=2^j\,\rho(2^j(x-t))\approx 2^j\sum_{p\sim 2^{\frac{m}{2}}} \rho(2^{j+\frac{m}{2}}(x-t)-p)\,.$$
Now, on the support of $\rho(2^{j+\frac{m}{2}}(x-t)-p)$, the phase linearization produces:
\begin{equation}\label{Spphase}
\end{equation}
$$\Omega_x(t):=a(x)\,(x-t)\,+\,b(x)\,(x-t)^{\a}$$
$$=a(x)\,(x-t)\,+\,\a\,\left(\frac{p}{2^{j+\frac{m}{2}}}\right)^{\a-1}\,b(x)\,(x-t)\,+\,\Omega_x^{j,m}(p)\,+\,O(1)\,,$$
where here $\Omega_x^{j,m}(p)$ stands for a suitable function depending only on $j,\,m,\, p$ and $x$.

At this point we reach the \emph{second stage} of the LGC method--the adapted Gabor decomposition of the input function--which amounts to the linear wave-packet discretization
\begin{equation}\label{Gabor}
	f(t)=\sum_{l,w\in\Z} \langle f,  \check{\phi}_{l}^w\rangle\, \check{\phi}_{l}^w(t)\,,
\end{equation}
where here we let
\begin{equation}\label{Gabor1}
	\phi_{l}^w(\xi):=\frac{1}{2^{\frac{j}{2}+\frac{m}{4}}}\,\phi\Big(\frac{\xi}{2^{j+\frac{m}{2}}}-w\Big)\,
	e^{i\,\frac{\xi}{2^{j+\frac{m}{2}}}\,l}\,.
\end{equation}
Putting together \eqref{CarlPolyn01}--\eqref{Gabor1} we deduce
\begin{equation}\label{CarlPolynest}
\end{equation}
$$|T_{j,m} f(x)|\lesssim 2^{\frac{3j}{2}+\frac{m}{4}}\,\sum_{{l,w\in\Z}\atop{p\sim 2^{\frac{m}{2}}}} | \langle f,  \check{\phi}_{l}^w\rangle|\,\phi\left(\frac{b(x)}{2^{m+\a j}}\right)\times$$
$$\left|\int_{\R} \check{\phi}(2^{j+\frac{m}{2}}\,t-l)\,e^{i\,2^{j+\frac{m}{2}}\,w\,t}\,e^{i\,(a(x)\,(x-t)\,+\,\a\,\left(\frac{p}{2^{j+\frac{m}{2}}}\right)^{\a-1}\,b(x)\,(x-t))}\,
\rho(2^{j+\frac{m}{2}}(x-t)-p)\,dt\right|\,.$$
Now, after a change of variable and integration by parts, we reach the \emph{third stage} of the LGC-methodology, expressed in the following time-frequency correlation governing the behavior of our operator:

\begin{equation}\label{CarlPolynestKey}
	|T_{j,m} f(x)|\lesssim \sum_{{l,w\in\Z}\atop{p\sim 2^{\frac{m}{2}}}} \frac{2^{\frac{j}{2}-\frac{m}{4}}\,|\langle f,  \check{\phi}_{l}^w\rangle|\,\rho(2^{j+\frac{m}{2}}x-l-p)}
	{\left\lfloor\frac{a(x)\,+\,\a\,\big(\frac{p}{2^{j+\frac{m}{2}}}\big)^{\a-1}\,b(x)\,-\,w\,2^{j+\frac{m}{2}}}{2^{j+\frac{m}{2}}}\right\rfloor^2}\,
	\phi\left(\frac{b(x)}{2^{m+\a j}}\right)\:.
\end{equation}

Finally, one can remove the presence of the $p-$parameter by rewriting \eqref{CarlPolynestKey} as

\begin{equation}\label{CarlPolynestKeynop}
	|T_{j,m} f(x)|\lesssim \sum_{l,w\in\Z} \frac{2^{\frac{j}{2}-\frac{m}{4}}\,|\langle f,  \check{\phi}_{l}^w\rangle|\,\rho(2^{j}x-\frac{l}{2^{\frac{m}{2}}})}
	{\left\lfloor\frac{a(x)\,+\,\a\,\big(x-\frac{l}{2^{j+\frac{m}{2}}}\big)^{\a-1}\,b(x)\,-\,w\,2^{j+\frac{m}{2}}}{2^{j+\frac{m}{2}}}\right\rfloor^2}\,
	\phi\left(\frac{b(x)}{2^{m+\a j}}\right)\:.
\end{equation}
Once at this point, it becomes apparent that the above expression enjoys suitable spatial and frequency almost-orthogonality properties that allow us to restrict our problem to the regime $x\in [2^{-j},2^{-j+1}]$ and  $l,w\sim 2^{\frac{m}{2}}$. We can now reinterpret our problem in the following manner\footnote{The informal presentation below---which, for expository reasons,  is stripped here of any technical definitions and detailed reasonings---will be made precise in Section \ref{Hywithout}.}:
\begin{itemize}
\item We are given a collection of pairs  $\tilde{\r}_m=\{(l,w)\}$, $\#\tilde{\r}_m\sim 2^m$ with each such pair associated to a local Fourier (Gabor) coefficient $\langle f,  \check{\phi}_{l}^w\rangle$.
\item Via a standard normalization procedure, we have the $L^2$ information
$$\sum_{(l,w)\in \tilde{\r}_m}|\langle f,  \check{\phi}_{l}^w\rangle|^2\leq 1\,.$$
\item Then, our task of bounding the local $L^2$-average
\begin{equation}\label{L2norm}
\int_{[2^j, 2^{j+1}]}|T_{j,m} f|^2
\end{equation}
essentially amounts to a good control over a sequence of weights $\{a_{(l,w)}\}_{(l,w)\in\tilde{\r}_m}$ with each $a_{(l,w)}$ representing the $x-$average behavior of the expression in the RHS of \eqref{CarlPolynestKeynop} that multiplies the term $|\langle f,  \check{\phi}_{l}^w\rangle|$.
\end{itemize}
The above line of thought invites us to naturally consider a \emph{light-heavy dichotomy}:
\begin{itemize}
\item we say that $(l,w)\in \tilde{\r}_m$ is a \emph{light} pair if the associated weight $a_{(l,w)}$ is suitably ``small"; in such a situation, the corresponding local Fourier coefficient $|\langle f,  \check{\phi}_{l}^w\rangle|$ has a small contribution to \eqref{L2norm}. Most of the Fourier coefficients of $f$ will end up in this light case.

\item naturally, we say that $(l,w)\in \tilde{\r}_m$ is a \emph{heavy} pair if the associated weight $a_{(l,w)}$ is suitably ``large"; in such a situation the corresponding local Fourier coefficient $|\langle f,  \check{\phi}_{l}^w\rangle|$ has a significant contribution to \eqref{L2norm}. Our aim is to show that there are only ``few" such Fourier coefficients of $f$.
\end{itemize}

Now, as it turns out, it is easy to treat \eqref{L2norm} in the light case, and thus, the main interest focusses on the heavy case: notice that a pair $(l,w)$/coefficient $\langle f,  \check{\phi}_{l}^w\rangle$ is heavy iff, essentially, for ``most" of
$x\in [2^{-j},2^{-j+1}]$ the following holds:
\begin{equation}\label{weight1}
\left|\frac{a(x)\,+\,\a\,\big(x-\frac{l}{2^{j+\frac{m}{2}}}\big)^{\a-1}\,b(x)\,-\,w\,2^{j+\frac{m}{2}}}{2^{j+\frac{m}{2}}}\right|\approx O(1)\,.
\end{equation}
Based on \eqref{weight1} we can further reinterpret the task of evaluating the number of heavy pairs as a (weighted) ``variable curve"-``point" discretized incidence problem in which our duty is to control the number of ``heavy incidences"---\textit{i.e.}, incidences that remain stable for ``most" $x'$s---between
\begin{itemize}
\item $2^{j+\frac{m}{2}}-$neighborhoods of $x-$variable\footnote{Throughout this section by $x-$variable curves (graphs) we understand a curve of the form $\g_x(t)$ with $t$ being the actual variable and $x$ being a parameter. Thus in the case when $\a\in\N$ the curve $\g_x(t)$ may be identified with a real polynomial in the $t$ variable having as coefficients real measurable functions in $x$.} families of graphs of the form
\begin{equation}\label{famcurv}
\left\{\left(t,a(x)\,+\,\a\,\big(x-t\big)^{\a-1}\,b(x)\,-\,w\,2^{j+\frac{m}{2}}\right)\Big|_{t\sim 2^{-j}}\right\}_{w\sim 2^{\frac{m}{2}}}\;
\end{equation}
\item and a family of $2^{-j-\frac{m}{2}}\times 2^{j+\frac{m}{2}}$ rectangles of the form
\begin{equation}\label{fampoint}
\{I_l\times J_w\}_{(l,w)\in \tilde{\r}_m}
\end{equation}
with $I_l=[\frac{l}{2^{j+\frac{m}{2}}},\,\frac{l+1}{2^{j+\frac{m}{2}}}]$ and $J_w=[w 2^{j+\frac{m}{2}},\, (w+1)2^{j+\frac{m}{2}}]$.
\end{itemize}

As expected, the key role in controlling these heavy incidences is played by the ``amount" of \emph{transversality} present among the various families of curves within \eqref{famcurv}, which depends fundamentally on the \emph{structure} of these curves.

With these done, we are now ready to advance our antithetical discussion of the two unifying methods by gradually increasing the level of complexity:

$\newline$
\noindent\textbf{I. The (purely) non-zero curvature case: $a(x)\equiv 0$, $\a\in\{2,3\}$.} \textsf{[LGC applies]}
$\newline$

\vspace{-.5cm}
\begin{figure}[h]\centering
	\resizebox{0.63\textwidth}{!}{
		\begin{picture}(300,650)
			\put(25,335){\includegraphics[scale=.75]{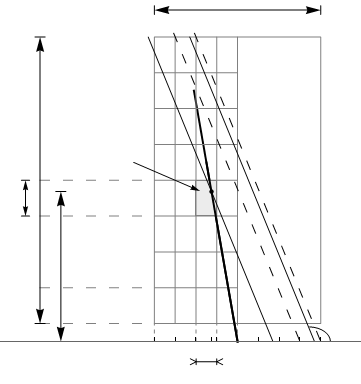}}
			\put(183,625){\text{\Large$ |I| \sim 2^{-j} $}}		\put(33,605){\text{\huge $\alpha=2$}}
			\put(160,600){\text{\Large $\tau$}}
			\put(5,520){\text{\Large$ |J| {\sim} 2^{j+m} $}}
			\put(100,508){\text{\Large$\langle f,  \check{\phi}_{l}^w\rangle$}}
			\put(25,472){\text{\Large$ J_w $}}
			\put(75,420){\text{\large $\approx\! 2 b(x)(x{-}t)$}}
			\put(270,405){\text{\large $\tan\beta_x{=}{-}2b(x)$}}
			\put(273,377){\text{\Large $\beta_x$}}
			\put(182,357){\text{\Large$ t $}}
			\put(228,357){\text{\Large$ x $}}
			\put(280,330){\text{\large $J_w{=}[w\, 2^{j+\frac{m}2},(w{+}1)2^{j+\frac{m}2}]$}}
			\put(285,310){\text{\large $I_l{=}\Big[\frac{l}{2^{j+\frac{m}2}},\frac{l+1}{2^{j+\frac{m}2}}\Big]$}}
			\put(174,335){\text{\Large$ |I_l|\sim 2^{-j-\frac{m}2 }$}}
			\put(26,0){\includegraphics[scale=.75]{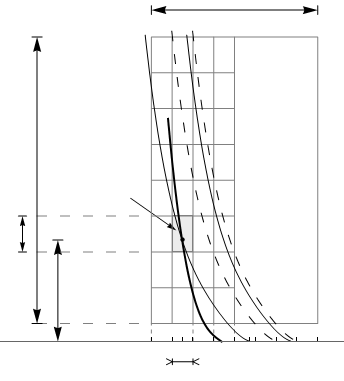}}
			\put(183,288){\text{\Large$ |I| \sim 2^{-j} $}}
			\put(32,270){\text{\huge $\alpha=3$}}
			\put(160,265){\text{\Large $\tau$}}
			\put(4,190){\text{\Large$ |J| {\sim} 2^{j+m} $}}
			\put(98,145){\text{\Large$\langle f, \check{\phi}_{l}^w\rangle$}}
			\put(23,107){\text{\Large$ J_w $}}
			\put(72,68){$\approx\! 3 b(x)(x{-}t)^2$}
			\put(161,20){\text{\Large$ t $}}
			\put(210,20){\text{\Large$ x $}}
			\put(156,0){\text{\large$ |I_l|\sim 2^{-j-\frac{m}2 }$}}
		\end{picture}
	}
	\caption{\textsf{Few heavy incidences:} if two curves corresponding to two well separated values of $x's$ are incident to the same rectangle $I_l\times J_w$ then they must intersect transversally.}\label{fig1}
\end{figure}

\medskip
In this situation the $a(x)-$vertical shift term for the graphs in \eqref{famcurv} is missing:

\begin{equation}\label{famcurv1}
\left\{\left(t,\a\,\big(x-t\big)^{\a-1}\,b(x)\,-\,w\,2^{j+\frac{m}{2}}\right)\Big|_{t\sim 2^{-j}}\right\}_{w\sim 2^{\frac{m}{2}}}\,,
\end{equation}
and thus---see Figure \ref{fig1}---irrespective of the value of $\a\in\{2,3\}$, if two variable curves corresponding to two distinct $x_1$ and $x_2$ in \eqref{famcurv1} are incident to the same rectangle $I_l\times J_w$, and, if $x_1$ and $x_2$ are well separated, \textit{i.e.} $\textrm{dist}\,(x_1,x_2)>>2^{-j-\frac{m}{2}}$, then the two curves are transversal. This key property enforces the desired goal: there are only few heavy incidences and hence only few heavy pairs $(l,w)$.

This last statement exploits in a crucial fashion the structure of the time-frequency correlations, as revealed by the more general, purely non-zero curvature setting treated in Section 10 of \cite{Lie19} (see also Section \ref{Hywithout} in the present paper). As a consequence, one immediately deduces

\begin{proposition} \label{purelycurv} Assume that $a(x)\equiv0$ in \eqref{CarlPolyn} and set
\begin{equation}\label{CarlPolynestKey1}
	\mathcal{L}_{j,m}^{curv} f(x):=\sum_{l,w\in\Z} \frac{2^{\frac{j}{2}-\frac{m}{4}}\,|\langle f,  \check{\phi}_{l}^w\rangle|\,\rho(2^{j}x-\frac{l}{2^{\frac{m}{2}}})}
	{\left\lfloor\frac{\a\,\left(x-\frac{l}{2^{j+\frac{m}{2}}}\right)^{\a-1}\,b(x)\,-\,w\,2^{j+\frac{m}{2}}}{2^{j+\frac{m}{2}}}\right\rfloor^2}\,
	\phi\left(\frac{b(x)}{2^{m+\a j}}\right)\:.
\end{equation}
Then, there exists $\d>0$ such that uniformly in $j\in\Z$ and $m\in\N$, one has that
\begin{equation}\label{CarlPolynestKey1purelycurv}
		\|\mathcal{L}_{j,m}^{curv} f\|_2\lesssim 2^{-\d\, m}\,\|f*\check{\phi}_{j+m}\|_2\:,
\end{equation}
	where here $\phi_{m+j}(\xi):=\phi(\frac{\xi}{2^{j+m}})$. Moreover,
	\begin{equation}\label{CarlPolynestKey1purelycurvG}
		\|\sum_{{j\in\Z}\atop{m\in\N}}T_{j,m} f\|_2\lesssim \|\sum_{{j\in\Z}\atop{m\in\N}} \mathcal{L}_{j,m}^{curv} f\|_2\lesssim \|f\|_2\:,
	\end{equation}
	and hence
	\begin{equation}\label{CarlPolynestKey1purelycurvG0}
		\| C_{0,b}^{\a} f\|_2\lesssim \|f\|_2\:.
	\end{equation}
\end{proposition}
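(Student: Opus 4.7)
The plan is to square $\mathcal{L}_{j,m}^{curv}f$, apply Cauchy--Schwarz in the $(l,w)$ indices, and reduce \eqref{CarlPolynestKey1purelycurv} to controlling, for each pair $(l,w)$ in the effective range, the weighted spatial average
\[
\mathcal{A}_{l,w}:=\int_{\R}\frac{\rho(2^{j}x-l/2^{m/2})\,\phi\!\bigl(b(x)/2^{m+\a j}\bigr)}{\left\lfloor\frac{\a(x-l/2^{j+m/2})^{\a-1}b(x)-w\,2^{j+m/2}}{2^{j+m/2}}\right\rfloor^{4}}\,dx.
\]
The spatial cutoff localizes $x$ to an interval of length $\sim 2^{-j-m/2}$, while the rescaled denominator records the normalized distance between the graph $t\mapsto \a(x-t)^{\a-1}b(x)$ and the frequency cell centred at $w\,2^{j+m/2}$. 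Plancherel applied to the adapted Gabor frame yields
\[
\sum_{(l,w)}\bigl|\langle f,\check{\phi}_{l}^{w}\rangle\bigr|^{2}\lesssim\|f\ast\check{\phi}_{j+m}\|_{2}^{2},
\]
so matters reduce to estimating the weighted sum $\sum_{(l,w)}\mathcal{A}_{l,w}$ with an extra decay of $2^{-\delta m}$.

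To extract this gain I would implement the sparse-uniform (``light/heavy'') dichotomy outlined between \eqref{L2norm} and \eqref{weight1}. Declare $(l,w)$ to be \emph{light} when the normalized denominator above stays $\gtrsim 2^{\epsilon m}$ on most of the relevant $x$-interval; the light contribution is then bounded trivially by the resulting polynomial gain. For \emph{heavy} pairs, where an $O(1)$-fraction of $x$ produces an $O(1)$ denominator, the problem transforms into a weighted incidence count between the one-parameter family \eqref{famcurv1} and the tiles $I_{l}\times J_{w}$. Since $a\equiv 0$, the tangent to the graph at $t\sim 2^{-j}$ has slope $-\a(\a-1)(x-t)^{\a-2}b(x)$, whose dependence on $x$ (through $b(x)$ and, when $\a=3$, through $x-t$) is nondegenerate on the support of $\phi(b(x)/2^{m+\a j})$, where $|b(x)|\sim 2^{m+\a j}$. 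Consequently, as depicted in Figure \ref{fig1}, any two graphs indexed by well-separated $x_{1},x_{2}$ (distance $\gtrsim 2^{-j-m/2}$) that are both incident to the same tile $I_{l}\times J_{w}$ must cross transversally. Counting the resulting incidences caps the number of heavy pairs by roughly $2^{m/2}$ instead of the total $2^{m}$, producing the factor $2^{-\delta m}$ for some $\delta>0$.

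Combining the light and heavy estimates establishes \eqref{CarlPolynestKey1purelycurv}. The first inequality in \eqref{CarlPolynestKey1purelycurvG} follows from the pointwise majorization $|T_{j,m}f|\lesssim \mathcal{L}_{j,m}^{curv}f$ implicit in the passage from \eqref{CarlPolynestKey} to \eqref{CarlPolynestKeynop}. The sum over $m\in\N$ is absorbed by the geometric factor $2^{-\delta m}$, while the sum over $j\in\Z$ is handled by Littlewood--Paley orthogonality for the frequency projections $f\ast\check{\phi}_{j+m}$, noting that the outer cutoff $\phi(b(x)/2^{m+\a j})$ restricts, for each $x$ and each $m$, the relevant values of $j$ to an $O(1)$ set.

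The hard part is making the heavy-case incidence count quantitatively sharp---that is, converting the qualitative transversality of Figure \ref{fig1} into an explicit $2^{-\delta m}$ saving that holds uniformly in $j$ and in the measurable function $b(\cdot)$. This is precisely where the analysis of the \emph{time-frequency correlation set} via a sparse-uniform dichotomy enters and constitutes the true LGC core of the argument. It is also this same step that is expected to break down once a linear term $a(x)t$ is restored alongside $\a=2$: the vertical shift $a(x)$ can then compensate for the $x$-dependence of the curves' positions, destroying transversality and forcing recourse to the relational time-frequency methodology previewed in Section \ref{H2discussion}.
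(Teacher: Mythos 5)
Your opening step---Cauchy--Schwarz in $(l,w)$ decoupling the Gabor coefficients from the weights, followed by trying to bound $\sum_{(l,w)}\mathcal{A}_{l,w}$ with a gain $2^{-\delta m}$---cannot work, because $\sum_{(l,w)}\mathcal{A}_{l,w}$ exhibits \emph{no} $m$-decay at all. To see this, set
\[
Q_l(x):=\frac{\a\bigl(x-\tfrac{l}{2^{j+m/2}}\bigr)^{\a-1}b(x)}{2^{j+m/2}}\;.
\]
On the support of $\rho\bigl(2^{j}x-l/2^{m/2}\bigr)\,\phi\bigl(b(x)/2^{m+\a j}\bigr)$ one has $|x-l/2^{j+m/2}|\sim 2^{-j}$ and $|b(x)|\sim 2^{m+\a j}$, whence $Q_l(x)\sim 2^{m/2}$; since for every such $x$ there is always some $w\in\Z$ within unit distance of $Q_l(x)$, the $w$-summation obeys $\sum_{w}\lfloor Q_l(x)-w\rfloor^{-4}\approx 1$ pointwise, and therefore
\[
\sum_{w}\mathcal{A}_{l,w}\approx\int\rho\bigl(2^j x-\tfrac{l}{2^{m/2}}\bigr)\,\phi\Bigl(\tfrac{b(x)}{2^{m+\a j}}\Bigr)\,dx\approx 2^{-j}\quad\text{for each }l\,.
\]
Summing in $l\sim 2^{m/2}$ gives $\sum_{l,w}\mathcal{A}_{l,w}\approx 2^{\frac{m}{2}-j}$, so once the $2^{j-\frac{m}{2}}$ prefactor is restored the Cauchy--Schwarz bound saturates at $O(1)$. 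The light/heavy dichotomy applied \emph{afterwards} cannot repair this: as $x$ sweeps over its $2^{-j}$-interval the curve $Q_l(x)$ traverses $\sim 2^{m/2}$ distinct $w$-cells, so even when every pair is light they still collectively carry the full $\approx 2^{-j}$ mass per $l$. In short, the quantity $\sum\mathcal{A}_{l,w}$ forgets which $w$ was near the curve at each $x$, and that is precisely the information the gain lives on.

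The remedy is to interchange the two steps, which is what the paper does (see Lemma~38 of \cite{Lie19}, whose mechanism reappears in \eqref{Aset}--\eqref{keyrelHere} of Section~\ref{Hywithout}). One first partitions $(l,w)$ into light and heavy pairs according to the measure of the level set $\{x:|Q_l(x)-w|\lesssim 1\}$, \emph{without} decoupling $|\langle f,\check{\phi}_l^w\rangle|$ from its weight. For a light pair the weight multiplying that single coefficient is small, so the light contribution to $\|\mathcal{L}_{j,m}^{curv}f\|_2^2$ is small. For heavy pairs one applies Cauchy--Schwarz only over the heavy indices and the gain comes from the cardinality bound $\#\mathcal{H}\lesssim 2^{(\frac{1}{2}-\mu)m}\ll 2^{\frac{m}{2}}$ on the time-frequency correlation set---the transversality/incidence input you correctly isolate. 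The rest of your sketch---the Gabor--Parseval bound $\sum_{l,w}|\langle f,\check{\phi}_l^w\rangle|^2\lesssim\|f*\check{\phi}_{j+m}\|_2^2$, the pointwise domination $|T_{j,m}f|\lesssim\mathcal{L}_{j,m}^{curv}f$, the observation that $\phi(b(x)/2^{m+\a j})$ forces $O(1)$ overlap in $j$ at fixed $(x,m)$, and the $j$- and $m$-summations for \eqref{CarlPolynestKey1purelycurvG}--\eqref{CarlPolynestKey1purelycurvG0}---is sound once the core single-scale estimate is derived with the correct ordering.
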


\noindent\textbf{II. The hybrid case \underline{without} higher order modulation invariance: $\a=3$.} \textsf{[LGC is still effective]}
$\newline$

In this situation the $a(x)-$vertical shift term is present in the family \eqref{famcurv}, but this comes together with a non-zero curvature feature in the $t$-variable:
\begin{equation}\label{famcurv1}
\left\{\left(t,a(x)+3\,\big(x-t\big)^{2}\,b(x)\,-\,w\,2^{j+\frac{m}{2}}\right)\Big|_{t\sim 2^{-j}}\right\}_{w\sim 2^{\frac{m}{2}}}\,.
\end{equation}
As a consequence---see Figure \ref{fig2}---the same conclusion as in the first case above holds: if two variable curves in \eqref{famcurv1} corresponding to two distinct and well separated $x_1$ and $x_2$ are incident to the same rectangle, then the two curves are transversal, forcing thus only few heavy incidences.

\begin{figure}[h]\centering
	\resizebox{.6\textwidth}{!}{
		\begin{picture}(300,310)
			\put(0,10){\includegraphics[scale=.75]{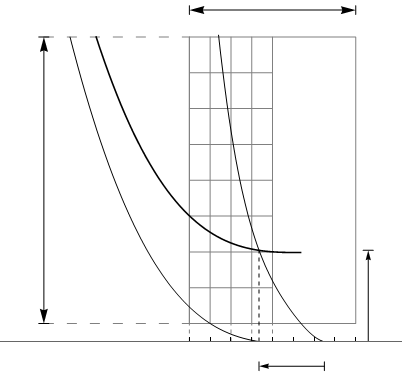}}
			\put(188,305){\text{\Large$ |I| \sim 2^{-j} $}}
			\put(-20,160){\text{\Large$ |J| {\sim} 2^{j+m} $}}
			\put(190,36){\text{\Large$ x_1 $}}
			\put(237,36){\text{\Large$ x_2 $}}
			\put(278,75){\text{\parbox{2cm}{\centering\large Vertical shift}}}
			\put(192,3){\text{\parbox{2cm}{\centering\large Horizontal shift}}}
		\end{picture}
	}
	\caption{\textsf{Few heavy incidences:} due to the non-zero curvature in the $t$ parameter, the presence of the vertical shift does not alter the transversal intersection property for two curves arising from well separated values of $x$.}\label{fig2}
\end{figure}

Hence, by properly quantifying the time-frequency correlations--see Section \ref{Hywithout}--we obtain the following

\begin{proposition} \label{hybcurv3} Let $\a\not=2$ in \eqref{CarlPolyn} and set
\begin{equation}\label{CarlPolynestKey2}
	\mathcal{L}_{j,m}^{hyb,3} f(x):=\sum_{l,w\in\Z} \frac{2^{\frac{j}{2}-\frac{m}{4}}\,|\langle f,  \check{\phi}_{l}^w\rangle|\,\rho(2^{j}x-\frac{l}{2^{\frac{m}{2}}})}
	{\left\lfloor\frac{a(x)\,+\,3\,\left(x-\frac{l}{2^{j+\frac{m}{2}}}\right)^{2}\,b(x)\,-\,w\,2^{j+\frac{m}{2}}}{2^{j+\frac{m}{2}}}\right\rfloor^2}\,
	\phi\left(\frac{b(x)}{2^{m+3j}}\right)\:.
\end{equation}
Then, there exists $\d>0$ such that, uniformly in $j\in\Z$ and $m\in\N$, one has
\begin{equation}\label{CarlPolynestKey1hybcurv3}
		\|\mathcal{L}_{j,m}^{hyb,3} f\|_2\lesssim 2^{-\d\, m}\,\|f\|_2\:.
\end{equation}
Moreover,
	\begin{equation}\label{CarlPolynestKey1purelycurvGH}
		\|\sum_{{j\in\Z}\atop{m\in\N}}T_{j,m} f\|_2\lesssim \|\sum_{{j\in\Z}\atop{m\in\N}} \mathcal{L}_{j,m}^{hyb,3} f\|_2\lesssim \|f\|_2\:,
	\end{equation}
and hence
	\begin{equation}\label{CarlPolynestKey1purelycurvGH0}
		\| C_{a,b}^{3} f\|_2\lesssim \|f\|_2\:.
	\end{equation}
\end{proposition}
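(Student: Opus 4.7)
The plan is to run the three-stage LGC scheme exactly as set up in \eqref{CarlPolyn01}--\eqref{CarlPolynestKeynop} and close the single-scale estimate \eqref{CarlPolynestKey1hybcurv3} via the heavy/light dichotomy suggested by Figure \ref{fig2}; the global bound \eqref{CarlPolynestKey1purelycurvGH} will then follow by summing in $m$ and $j$. By the space/frequency localization provided by the cutoffs $\rho(2^{j}x - l/2^{m/2})$ and $\phi(b(x)/2^{m+3j})$, it suffices to prove \eqref{CarlPolynestKey1hybcurv3} after restricting to $x \in [2^{-j}, 2^{-j+1}]$ and $(l,w) \sim 2^{m/2}$, since the remainder of the sum is controlled by a standard almost-orthogonality argument across such dyadic windows.

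For the single-scale estimate, I would expand $\|\mathcal{L}_{j,m}^{hyb,3} f\|_2^{2}$, apply $2|a||b| \leq a^{2}+b^{2}$ on the off-diagonal interaction between $|\langle f,\check\phi_{l}^{w}\rangle|$ and $|\langle f,\check\phi_{l'}^{w'}\rangle|$, and use Bessel's inequality for the Gabor system $\{\check\phi_{l}^{w}\}$, thus reducing the task to bounding a weighted sum
\begin{equation*}
\|\mathcal{L}_{j,m}^{hyb,3} f\|_2^{2} \lesssim \sum_{(l,w)} |\langle f,\check\phi_{l}^{w}\rangle|^{2}\, A_{l,w},
\end{equation*}
where $A_{l,w}$ is the $L^{1}_{x}$-norm of the square of the kernel appearing in \eqref{CarlPolynestKey2}. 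Fixing a threshold $2^{-\eta m}$ with $\eta>0$ small, I would call $(l,w)$ \emph{heavy} when $A_{l,w} > 2^{-\eta m}$. The light contribution is immediately bounded by $2^{-\eta m}\|f\|_{2}^{2}$, which already yields the desired decay.

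The main obstacle is counting the heavy pairs via transversality. For each heavy $(l,w)$, let
\begin{equation*}
E_{l,w} := \Bigl\{x \in [2^{-j},2^{-j+1}] : \bigl|a(x) + 3b(x)\bigl(x - \tfrac{l}{2^{j+m/2}}\bigr)^{2} - w\,2^{j+m/2}\bigr| \lesssim 2^{j+m/2}\Bigr\},
\end{equation*}
which has measure $\gtrsim 2^{-j-\eta m}$. Writing $\g_{x}(t) := a(x) + 3b(x)(x-t)^{2}$, the slope at $t = l/2^{j+m/2}$ is $-6b(x)(x - l/2^{j+m/2})$, and, since $|b(x)| \sim 2^{m+3j}$ on the support of the cutoff, this slope varies with $x$ at rate $\sim 2^{m+3j}$. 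Hence two graphs $\g_{x_{1}}$ and $\g_{x_{2}}$ with $|x_{1}-x_{2}| \gg 2^{-j-m/2}$ intersect transversally — this is precisely where $\a\neq 2$ is essential, for when $\a=2$ the slope is $x$-independent and the vertical shift $a(x)$ absorbs all transversality. It follows that $E_{l,w}$ consists of $O(1)$ intervals of length $\lesssim 2^{-j-m/2}$, and a variable-curve-vs-rectangle incidence count in the spirit of Section 10 of \cite{Lie19} yields $\#\{\text{heavy pairs}\} \lesssim 2^{m/2+O(\eta)m}$. Combined with Cauchy--Schwarz on the heavy contribution, this produces the decay $2^{-\d m}$ for some $\d = \d(\eta) > 0$.

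Finally, for the global bound \eqref{CarlPolynestKey1purelycurvGH} (and hence \eqref{CarlPolynestKey1purelycurvGH0}), I would sum over scales using (i) the frequency localization of the multiplier coming from the stationary-phase formula \eqref{multipl1}--\eqref{multipl2}, which makes the family $\{T_{j,m}\}_{j}$ almost orthogonal in $L^{2}$ for each fixed $m$, and (ii) the geometric decay in $m$ established above. The residual low-frequency piece $T_{0}$ is, as indicated in the paragraph following \eqref{DecT}, essentially a classical Carleson-type operator and is $L^{2}$-bounded by standard means; adding this to the summed dyadic pieces completes the proof.
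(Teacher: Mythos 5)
Your three-stage LGC outline and the heavy/light dichotomy match the paper's approach in spirit (the paper reduces this proposition to the quantified time-frequency correlation estimate of Theorem \ref{cjmbound} in Section \ref{Hywithout}), but the heavy-pair count---the crux---has a genuine gap. First, the assertion that ``$E_{l,w}$ consists of $O(1)$ intervals of length $\lesssim 2^{-j-m/2}$'' is false for arbitrary measurable $a(\cdot),b(\cdot)$: take $b(x)\equiv 2^{m+3j}$ and $a(x):=w\,2^{j+m/2}-3b(x)\bigl(x-\tfrac{l}{2^{j+m/2}}\bigr)^{2}$, and then $E_{l,w}=[2^{-j},2^{-j+1}]$. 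Transversality of the two $t$-curves $\gamma_{x_1},\gamma_{x_2}$ does not control the measure of the $x$-sublevel set $E_{l,w}$, and in fact if $|E_{l,w}|\lesssim 2^{-j-m/2}$ held uniformly there would be no heavy pairs at all; what must be shown is that heavy pairs are few, not absent. Second, even granting $\#\{\text{heavy}\}\lesssim 2^{m/2+O(\eta)m}$, this alone does not yield $L^2$ decay by Cauchy--Schwarz, since the Gabor coefficients $|\langle f,\check{\phi}_{l}^{w}\rangle|$ could concentrate entirely on the heavy pairs. Third, for $\alpha=2$ the slope $\gamma_x'(t)=-2b(x)$ is $t$-independent, not $x$-independent as you write; the true degeneracy (Figure \ref{fig3}) is that the curve is a line in $t$ with the two free functional parameters $a(x),b(x)$, enough to absorb both the value and slope constraints simultaneously.

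The mechanism you gesture at (``an incidence count in the spirit of Section 10 of \cite{Lie19}'') is, in the paper's treatment, a delicate contradiction argument carried out in the proof of Theorem \ref{cjmbound}: one assumes $\#\mathcal{H}_{q,\delta}\gtrsim 2^{(\frac12-\nu)m}$, extracts via Lemma 50 of \cite{Lie19} a separated subcollection of $l$-indices sharing a large common set $X$ of $x$'s, then uses Lemma 51 of \cite{Lie19} and a Taylor expansion in $(x-x_0)$ to iteratively bound $|\tilde a_k(x)|$ on nested sets $X\supset X_1\supset\dots\supset X_d$, finally contradicting the normalization $\|q^{NL}(x)\|\approx 1$. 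The hypothesis $\alpha\neq 2$ enters precisely at the Taylor step (Observation \ref{quadratict}): when $\alpha_{k_0}=2$ the degree-zero coefficient collapses to $\tilde a_0(x)-\tilde a_0(x_0)+(x-x_0)\tilde a_{k_0}(x_0)$, coupling the two unknowns and blocking the inductive reduction. The subsequent conversion of ``$\#\mathcal{H}_{q,\delta}$ small'' into the $2^{-\delta m}$ operator bound relies in turn on Lemma 38 of \cite{Lie19}, and your closing Cauchy--Schwarz does not reproduce that step.
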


$\newline$
\noindent\textbf{III. The hybrid case \underline{with} higher order modulation invariance: $\a=2$.} \textsf{[Relational time-frequency analysis needed; (Rank I) LGC no longer applicable]}\footnote{Throughout this section, unless otherwise explicitly stated, the LGC method refers to Rank I LGC--for meaning/definitions please consult \cite{HL23}.}
$\newline$

\begin{figure}[h]\centering
	\resizebox{.6\textwidth}{!}{
		\begin{picture}(300,310)
			\put(0,10){\includegraphics[scale=.75]{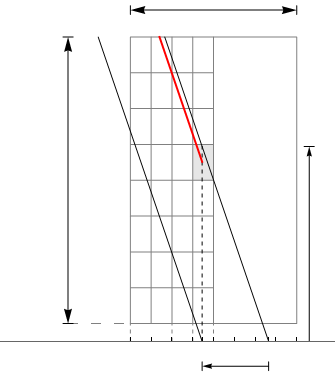}}
			\put(141,298){\text{\Large$ |I| \sim 2^{-j} $}}
			\put(-8,160){\text{\Large$ |J| \sim 2^{j+m} $}}
			\put(150,31){\text{\Large$ x_1 $}}
			\put(198,31){\text{\Large$ x_2 $}}
			\put(230,110){\text{\parbox{2cm}{\centering\large Vertical shift}}}
			\put(150,0){\text{\parbox{2cm}{\centering\large Horizontal shift}}}
		\end{picture}
	}
	\caption{\textsf{Many heavy incidences are possible:} due to the zero curvature in the $t$ parameter, the presence of the vertical shift allows non-transversal intersection for curves arising from well separated values of $x$.}\label{fig3}
\end{figure}

In this situation, the family of curves in  \eqref{famcurv} encapsulates both the $a(x)-$vertical shift term and the zero curvature feature in the $t$-variable:
\begin{equation}\label{famcurv2}
\left\{\left(t,a(x)+2\,\big(x-t\big)\,b(x)\,-\,w\,2^{j+\frac{m}{2}}\right)\Big|_{t\sim 2^{-j}}\right\}_{w\sim 2^{\frac{m}{2}}}\,.
\end{equation}
As a consequence---see Figure \ref{fig3}---the transversality property for curves associated with well separated $x'$s is no longer true and a scenario with ``many" heavy incidences is possible. This is consistent with the fact that the analysis of the time-frequency correlations stops being efficient in controlling the number of heavy pairs $(l,w)$---see Observation \ref{quadratict}. Thus, the LGC-method is no longer applicable.\footnote{This conclusion holds \emph{per se} for Rank I LGC method. If one allows non-absolutely summable models then, with a highly increased level of complexity, Rank II LGC might still be applicable--for a more detailed discussion please see \cite{HL23}.}
\medskip

In contrast with all of the above, one can embrace a relational time-frequency point of view (see \cite{Lie09},\cite{Lie09Thesis} \cite{Lie20}) and apply a different discretization of the original operator $C_{a,b}^{2}$ in \eqref{CarlPolyn}--recall the spirit of \eqref{zdec}--as revealed by the algorithm below:
\begin{itemize}
\item Consider the time-frequency plane with the horizontal axis representing the spatial (time) variable and  the vertical axis representing the Fourier (frequency) variable and choose on both axis the canonical dyadic grid.
\item Let $\P$ be the collection of all triples $P=(\a,\o, I)$ with $\a, \o$ dyadic frequency intervals and $I$ a dyadic spatial interval such that $|\a|=|\o|=|I|^{-1}$; notice that by slightly abusing notation one can naturally identify each such $P=(\a,\o,I)$ with the unique parallelogram $\tilde{P}$ in the time-frequency plane such that i) the dyadic frequency intervals $\a, \o$  represent the projection onto the frequency axis of the vertical edges of $\tilde{P}$, and, ii) the dyadic interval $I$ represents the projection onto the time axis of the other two edges of $\tilde{P}$.
\item Denote by $l_x(t):=a(x)+2 b(x) t$ the line obtained by taking the first derivative in $t$ of the phase appearing in \eqref{CarlPolyn} and introduce the following convention\footnote{Here and throughout the paper we are using the following standard notation: if $I$ is a (dyadic) interval then $l(I)$ and $r(I)$ stand for the left and right endpoint of $I$, respectively.}:
\begin{equation}\label{lineP}
l_x\in P=(\a,\o, I)\in\P\quad\textrm{iff}\quad l_x(l(I))\in \a\:\:\wedge\:\:l_x(r(I))\in\o\,.
\end{equation}
Notice that $l_x\in P$ is geometrically equivalent with the assertion that the line $l_x$ intersects both vertical edges of the parallelogram $P$.
\item For each $P=(\a,\o, I)\in\P$ define the set $E(P):=\{x\in I\,|\,l_x\in P\}$ and introduce the mass of a tile $P$ as
\begin{equation}\label{massP}
A_0(P):=\frac{|E(P)|}{|I|}\,.
\end{equation}
\item Finally, using the above notations and conventions, we introduce the discretization
\begin{equation}\label{zdecquad}
		C_{a,b}^{2}=\sum_{P\in\P} C_{P}\:,
\end{equation}
where, if $P=(\a,\o, I)\in\P$ with $|I|=2^{-j}$, then
\begin{equation}\label{zdecquad1}
		C_{P}f(x):=\left(\int_{\R} f(x-t)\,e^{i\,(a(x)\,t\,+\,b(x)\,t^{2})}\,\rho_j(t)\,dt\right)\chi_{E(P)}(x)\,.
\end{equation}
\end{itemize}

Now, taking into account the magnitude of the mass parameter \eqref{massP} one properly groups the tiles $P\in\P$ (and hence their respective associated operators $C_P$) within certain structured collections--\textit{i.e.}, trees and then forests--in order to gradually obtain good control over increasingly larger collection of tiles until one is able to conclude the desired global bounds on the whole operator $C_{a,b}^{2}$.  This corresponds essentially to the process \eqref{zdec}--\eqref{zdec2} described at the beginning of this section.

Putting together all the elements presented within this case III, to which we also add the conclusion of \cite{Lie09}, we have the following stark contrast:

\begin{proposition} \label{hybcurv2} Let $\a=2$ in \eqref{CarlPolyn} and set
\begin{equation}\label{CarlPolynestKey3}
	\mathcal{L}_{j,m}^{hyb,2} f(x):=\sum_{l,w\in\Z} \frac{2^{\frac{j}{2}-\frac{m}{4}}\,|\langle f,  \check{\phi}_{l}^w\rangle|\,\rho(2^{j}x-\frac{l}{2^{\frac{m}{2}}})}
	{\left\lfloor\frac{a(x)\,+\,2\,\left(x-\frac{l}{2^{j+\frac{m}{2}}}\right)\,b(x)\,-\,w\,2^{j+\frac{m}{2}}}{2^{j+\frac{m}{2}}}\right\rfloor^2}\,
	\phi\left(\frac{b(x)}{2^{m+2j}}\right)\:.
\end{equation}
Then, uniformly in $j\in\Z$ and $m\in\N$, one has
	\begin{equation}\label{CarlPolynestKey1hybcurv2}
		1\lesssim \|T_{j,m}\|_{2\rightarrow 2}\lesssim \|\mathcal{L}_{j,m}^{hyb,2} \|_{2\rightarrow 2}\lesssim 1\:.
	\end{equation}
Moreover,
	\begin{equation}\label{CarlPolynestKey1purelycurvGH2(L)}
		\|\sum_{{j\in\Z}\atop{m\in\N}} \mathcal{L}_{j,m}^{hyb,2}\|_{2\rightarrow 2}=\infty\:,
	\end{equation}
	while
	\begin{equation}\label{CarlPolynestKey1purelycurvGH2(T)}
		\|\sum_{{j\in\Z}\atop{m\in\N}} T_{j,m}\|_{2\rightarrow 2},\:\|C_{a,b}^{2}\|_{2\rightarrow 2}\lesssim 1\:.
	\end{equation}
\end{proposition}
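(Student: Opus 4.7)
The plan is to treat the four assertions \eqref{CarlPolynestKey1hybcurv2}--\eqref{CarlPolynestKey1purelycurvGH2(T)} in sequence, following the LGC reduction \eqref{CarlPolyn01}--\eqref{CarlPolynestKeynop} that was already carried out above for general $\alpha$, and then identifying the one step at which $\alpha=2$ obstructs the argument. The dominance $\|T_{j,m}\|_{2\to 2}\lesssim \|\mathcal{L}_{j,m}^{hyb,2}\|_{2\to 2}$ would come directly from \eqref{CarlPolynestKeynop} specialized to $\alpha=2$, a derivation that never used the non-resonance condition. For the companion bound $\|\mathcal{L}_{j,m}^{hyb,2}\|_{2\to 2}\lesssim 1$, my plan is a Cauchy--Schwarz in $(l,w)$ combined with the weight summability $\sum_w\lfloor\cdot\rfloor^{-2}\lesssim 1$ and Plancherel on the Gabor frame $\{\check{\phi}_l^w\}_{l,w}$. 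The conspicuous absence of the $2^{-\delta m}$ decay present in Propositions \ref{purelycurv} and \ref{hybcurv3} is the first visible footprint of the quadratic resonance.

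For the lower bound $\|T_{j,m}\|_{2\to 2}\gtrsim 1$, my plan is to test against a quadratic chirp designed to cancel the phase. Fixing $b(x)\equiv b_0$ with $|b_0|\sim 2^{m+2j}$ and $a(x)\equiv -2b_0 x$, the cutoff $\phi(b(x)/2^{m+2j})$ is identically $1$, while the phase $a(x)t+b(x)t^2$ collapses to $b_0(t-x)^2 - b_0 x^2$. Testing against $f(y)=e^{-ib_0 y^2}\chi(y)$ for an $L^2$-normalized bump $\chi$ at spatial scale $2^{-j}$, the chirp in $f$ exactly cancels the $b_0(t-x)^2$ factor, leaving $T_{j,m}f(x)\approx e^{-ib_0 x^2}(\chi\ast \rho_j)(x)$, whose $L^2$ norm is $\sim\|f\|_2$. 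Crucially, this chirp cancellation is the concrete manifestation of the approximate quadratic modulation symmetry of the operator, a feature invisible to any linear wave packet decomposition and the first signal that Rank I LGC cannot deliver a decaying estimate.

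For the divergence \eqref{CarlPolynestKey1purelycurvGH2(L)}---the structural heart of the proposition---my plan is to exhibit a pair $(a,b)$ and a test function $f$ for which the positive majorants $\mathcal{L}_{j,m}^{hyb,2}f$ reinforce each other across infinitely many scales. The mechanism, depicted in Figure \ref{fig3}, is the $\alpha=2$ failure of transversality of the family of linearized frequency lines \eqref{famcurv2}: two lines coming from well-separated $x$'s need not intersect transversally, so a single Gabor atom $\check{\phi}_l^w$ can remain heavy for $\mathcal{L}_{j,m}^{hyb,2}$ across many distinct $(j,m)$. Since the majorants are pointwise positive, these heavy contributions cannot cancel, and by choosing $a$ affine and $b$ constant of appropriate magnitude so as to align the heavy incidences coherently, a single wave packet $f$ produces contributions of size $\gtrsim 1$ at an unbounded number of scales; summing them forces $\|\sum_{j,m}\mathcal{L}_{j,m}^{hyb,2}\|_{2\to 2}=\infty$.

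Finally, \eqref{CarlPolynestKey1purelycurvGH2(T)} is inherited from the quadratic Polynomial Carleson theorem of \cite{Lie09}, which supplies $\|C_{a,b}^{2}\|_{2\to 2}\lesssim 1$; the low-frequency remainder $T_0$ reduces to the classical Carleson operator, so by the triangle inequality $\|\sum_{j,m}T_{j,m}\|_{2\to 2}\lesssim 1$ as well. The \textbf{hardest part} of the whole proposition is the divergence step above: converting the qualitative geometric non-transversality of \eqref{famcurv2} into a quantitative coherent-incidence blow-up requires care in the joint choice of the witnesses $(a,b,f)$. It is precisely this contrast---the bounded behavior of $\sum T_{j,m}$ against the divergent behavior of its linearized majorant $\sum\mathcal{L}_{j,m}^{hyb,2}$---that mandates the relational time-frequency framework of \cite{Lie09} and \cite{Lie20} as the appropriate substitute for Rank I LGC at $\alpha=2$.
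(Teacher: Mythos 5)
Your proposal aligns in spirit with the paper's implicit reasoning: Proposition~\ref{hybcurv2} is not accompanied by an explicit proof in the text, but is presented as a consequence of the prologue discussion (non-transversality of the linearized curves \eqref{famcurv2}, Figure~\ref{fig3}), the chirp obstruction elaborated in Section~\ref{H2discussion}, and the $L^2$ bound of \cite{Lie09}. Your treatment of the upper bounds via \eqref{CarlPolynestKeynop} and Cauchy--Schwarz plus Plancherel, the $\|C_{a,b}^2\|_{2\to 2}\lesssim 1$ invocation, and the chirp test function $f(y)=e^{-ib_0y^2}\chi(y)$ against $a(x)=-2b_0x$, $b(x)=b_0\sim 2^{m+2j}$ for the lower bound $\|T_{j,m}\|_{2\to 2}\gtrsim 1$, are all correct and in the right spirit (the last mirrors the $e_n$-chirp construction of Section~\ref{H2discussion}).

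There is, however, a genuine gap in the divergence step \eqref{CarlPolynestKey1purelycurvGH2(L)}. You assert that ``a single wave packet $f$ produces contributions of size $\gtrsim 1$ at an unbounded number of scales.'' This is false: if $f=\check{\phi}_{l_0}^{w_0}$ is a single Gabor atom sitting on the heavy diagonal $w_0\approx -2l_0$, then for each $(j,m)$ with $m+2j\approx\log_2 b_0$ (so the Gabor scale $N=2^{j+\frac m2}$ is fixed and equal across all such pairs) one finds
\begin{equation*}
\mathcal{L}_{j,m}^{hyb,2}f(x)\approx 2^{\frac j2-\frac m4}\,\rho\!\big(2^j(x-l_0/N)\big)\,,\qquad\textrm{hence}\qquad \|\mathcal{L}_{j,m}^{hyb,2}f\|_2\approx 2^{-\frac m4}\,,
\end{equation*}
which is summable in $m$. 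The correct test function is the very chirp you introduce for the lower bound: $f(y)=e^{-ib_0 y^2}\chi(y)$ with $b_0\approx N^2$. Its Gabor coefficients $\langle f,\check{\phi}_l^w\rangle$ are supported on the diagonal $w\approx -2l$ with size $\sim N^{-1/2}$ for $\sim N$ values of $l$; feeding this coherent superposition into $\mathcal{L}_{j,m}^{hyb,2}$ makes the $2^{m/2}$-many valid $l$'s all carry heavy mass, so the prefactor $2^{\frac j2-\frac m4}$ is exactly compensated and $\|\mathcal{L}_{j,m}^{hyb,2}f\|_2\gtrsim 1$ uniformly for the $\sim\log_2 b_0$ scales with $0\leq m\lesssim\log_2 b_0$ (for $m\gg\log_2 b_0$ the contribution decays again). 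Positivity of the majorants then gives $\|\sum_{j,m}\mathcal{L}_{j,m}^{hyb,2}f\|_2\gtrsim\sqrt{\log_2 b_0}$, and letting $b_0\to\infty$ (or equivalently taking $b(\cdot)$ with $b(x)\to\infty$ so a fixed pair $(a,b)$ witnesses the blow-up) yields \eqref{CarlPolynestKey1purelycurvGH2(L)}. Thus the mechanism of divergence is not the heaviness of one atom propagating, but the resonance between a coherent quadratic chirp---the same function that realizes the approximate quadratic modulation symmetry of $T_{j,m}$---and the entire heavy diagonal; failing to use this coherence, your argument as written yields a convergent sum.
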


\subsection{Conclusion}

We end this comparative discussion by addressing the natural question arising as a consequence of Proposition \ref{hybcurv2}:
\medskip

\noindent\underline{\textsf{Question.}} \emph{Why is there this sharp contrast between the suitability of the relational time-frequency method as opposed to the lack thereof for the LGC-method when approaching the Quadratic Carleson operator?}
\medskip

\vspace{-0.3cm}
\begin{figure}[h]\centering
	\resizebox{.7\textwidth}{!}{
		\begin{picture}(300,400)
			\put(100,0){\includegraphics[scale=.6]{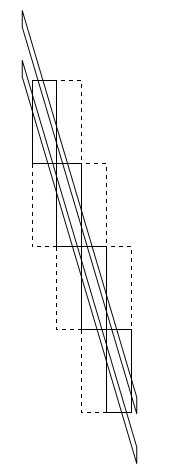}}
		\end{picture}
	}
	\caption{The LGC localization is blurring the relational time-frequency Heisenberg localization of a generalized wave-packet thus destroying the almost orthogonality between the thinner tubes.}\label{fig4}
\end{figure}

\noindent\underline{\textsf{Answer:}} The key relies in the fabric of the time-frequency localization cuttings associated to a generalized wave-packet\footnote{Here, $x\in I$ is treated as a fixed parameter.}
\begin{equation}\label{quadwavep}
\psi_{x}(t):=e^{i\,(a(x)\,t\,+\,b(x)\,t^{2})}\,\rho_j(t)\qquad\textrm{and}\:\:|b(x)|\sim 2^{m+2j}
\end{equation}
with the former corresponding precisely to the $t$-integrand in \eqref{zdecquad1}:

\begin{itemize}
\item On the one hand, in the LGC method the underlying philosophy rests on a \emph{linearization procedure} that has as a direct effect the partition of the spatial interval $I$ having length $2^{-j}$--see the first picture in Figure \ref{fig1}--into $2^{\frac{m}{2}}$ intervals of equal length. Via the Heisenberg principle, this forces a time-frequency localization region for \eqref{quadwavep} that consists of $2^{\frac{m}{2}}$ boxes of size $2^{-j-\frac{m}{2}}\times 2^{j+\frac{m}{2}}$ arranged along the central line\footnote{Of course, this is precisely the first derivative of the phase in \eqref{quadwavep}.} $l_x(t)=a(x)+2 b(x) t$. Thus, in a nutshell, the LGC localization of \eqref{quadwavep} represents a
    $2^{j+\frac{m}{2}}-$thick tube around $l_x$ that is spatially restricted to $I$.

\item On the other hand, in the relational time-frequency approach---see the detailed explanations in Section 2 of \cite{Lie09} and in particular Figure 5 therein---the time-frequency localization of \eqref{quadwavep} is obtained as
    a $2^{j}-$thick tube around $l_x$ that is spatially restricted to $I$.
\end{itemize}

As a consequence, the LGC derived localization enlarges by a $2^{\frac{m}{2}}$ factor the true area one Heisenberg localization region obtained via the relational time-frequency approach. This $2^{\frac{m}{2}}-$blurring effect destroys the almost orthogonality between ``nearby" generalized wave-packets--see Figure \ref{fig4} above, almost orthogonality that, in the absence of the linearizing procedure, is the expression of Lemma 0 in \cite{Lie09}.

Thus, the main concluding---and otherwise completely intuitive---informal message of the above discussion is that in the time-frequency decomposition of a generalized modulation invariant operator $T$ the constitutive elementary building blocks must involve the same type of generalized modulation structure, that is:
\begin{itemize}
 \item if the operator $T$ has no higher than linear modulation invariance then the elementary building blocks are chosen to be standard (linear) wave-packets and hence the LGC method applies;

\item if the operator $T$ enjoys higher than linear modulation invariance then the elementary building blocks must encapsulate same order generalized  wave-packets and thus the relational time-frequency approach is required.
\end{itemize}
\medskip
Finally, the entire discussion of this section may be regarded as an embodiment of the philosophical principle stated in \cite{Lie20}:
\medskip

\noindent\underline{\textsf{A heuristic symmetry principle:}} \emph{The classes of symmetries of an operator are responsible for the nature of the approach/techniques to be involved in the analysis of its boundedness properties}.

\section{The Bilinear Hilbert transform $H_{a,b}^{\a}$, $\a\in \R_{+}\setminus\{1,2\}$: the initial decomposition of its associated form $\L=\L^{NSI}\,+\,\L^{LO}\,+\,\L^{NS}\,+\,\L^{S}$}\label{BilHilb}

In what follows we focus our attention on the bilinear Hilbert function along $\g(t)=at+bt^\a$ defined by
\begin{equation}\label{maintwo}
	H(f,g)(x)\equiv H_{a,b}^{\a}(f,g)(x):= \int_\R f(x-t)\,g(x+at+bt^\a)\frac{dt}{t}\,,
\end{equation}
where $a\in\R\setminus\{-1\}$, $b\in\R\setminus\{0\}$ and $\a\in\R_+\setminus\{1,\,2\}$. Our later reasonings will exploit a certain symmetry between $H$ and the corresponding dual bilinear operators and hence it will be convenient to perform our analysis directly on the trilinear form
\begin{equation}\label{trilin}
	\L(f,g,h):=\int H(f,g)(x)\,h(x)\,dx= \int_{\R^2} f(x-t)\,g(x+at+bt^\a)\,h(x)\,\frac{dt}{t}\,dx\,.\quad
\end{equation}

Notice that on the Fourier side, \eqref{trilin} may be rewritten as
\begin{equation}\label{multform}
	\L(f,g,h)=\int_\R\int_\R \hat{f}(\xi)\,\hat{g}(\eta)\,\hat{h}(\xi+\eta)\,m(\xi,\eta)\,d\xi \,d\eta\,,
\end{equation}
with the multiplier given by
\begin{equation}\label{mult}
	m(\xi,\eta):=\int_\R e^{-i\xi t}e^{i\eta (at+bt^\a)}\frac{dt}{t}.
\end{equation}

In its first stage, the analysis of our form concerns three types of discretization corresponding to:
\begin{itemize}
\item the spatial scale of the kernel -- this accounts for the $j$-parameter below;
\item the structure of the multiplier's phase -- this requires a more elaborate reasoning based on (non)stationary phase analysis that involves two other parameters $m$ and $n$;
\item the phase-space localization properties of the subforms resulted from the decompositions performed at the first two items -- this amounts to a discussion on the relative positions of the parameters $j$, $m$ and $n$.
\end{itemize}

\begin{observation}\label{const}[\textsf{Hybrid regime}, I] We end this brief introductory paragraph by mentioning that while the first two items each focus on only one side of the phase-space analysis (\textit{i.e.} space for the first and frequency for the second) is the third item that combines the two sides as a direct consequence of the competing behavior between the linear and nonlinear terms in $\g(t)=at+bt^\a$. Thus, while the discretization addressing the first two items can be traced to the purely non-zero curvature case treated in \cite{Li13} and \cite{Lie15} the one corresponding to the third item marks a novel feature of our present paper that captures the transition from the non-zero to the zero curvature regimes.

Finally, it is worth saying the following:
\begin{itemize}
\item on the one hand, the analysis developed in our present paper uses as a black-box the results addressing the purely zero regime $b=0$ (see \cite{LT97} and \cite{LT99}) and thus we are not adding anything new in this regime;

\item on the other hand, our methods provide a straightforward handling of the purely non-zero regime $a=0$ treated earlier in \cite{Li13} and \cite{Lie15}.
\end{itemize}

As a consequence of the above, throughout the reminder of the paper, we will always assume
\begin{equation}\label{abnl}
		b\not=0\,.
	\end{equation}
Regarding the parameter $a\in\R\setminus\{-1\}$ we will focus on the case $a\not=0$ as this is the truly novel part of our Theorem \ref{main1}, though, with just trivial modifications, our proof also covers the case $a=0$.
\end{observation}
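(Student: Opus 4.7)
The item is an expository \emph{observation} justifying two standing assumptions for the remainder of the paper, rather than a theorem requiring proof; there is no proof to plan, only three reduction claims whose validity I would verify in turn. I would organize the verification around the two degenerate regimes $b=0$ and $a=0$ and then isolate the genuinely hybrid setting where the real work begins.

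For the claim that the purely zero-curvature regime $b=0$ is subsumed by existing literature, the plan is simply to record that with $b=0$ one has $\g(t)=a t$, so
\[
H_{a,0}^{\a}(f,g)(x) = \int_{\R} f(x-t)\, g(x + a t)\, \frac{dt}{t}
\]
is, after the affine change of variable $u = a t$ inside the argument of $g$, a rescaling of the classical bilinear Hilbert transform treated in \cite{LT97, LT99}. Since the present paper makes no attempt to reprove or sharpen those bounds (the modulation-invariant regime being precisely the one outside the reach of the LGC approach, cf.\ the dichotomy of Section \ref{Unif}), the assumption $b\neq 0$ recorded in \eqref{abnl} costs nothing.

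For the claim that the purely non-zero curvature regime $a=0$ is handled both by prior work and by the present machinery, the plan has two complementary components. First, I would invoke \cite{Li13, Lie15}, where the full bilinear $L^p$ theory for $\g(t)=b t^{\a}$ is established. Second, and more relevant to the organization of the paper, I would observe that the three-parameter discretization described just above in $j$, $m$, $n$ degenerates cleanly when $a=0$: the parameter $n$, whose role is precisely to monitor the competition between the linear term $a t$ and the nonlinear term $b t^{\a}$ inside the phase of the multiplier \eqref{mult}, is forced into a single trivial value, and the decomposition $\L=\L^{NSI}+\L^{LO}+\L^{NS}+\L^{S}$ collapses to exactly the analysis carried out in \cite{Lie15}. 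Consequently the $a=0$ case is recovered as a trivial specialization, so the focus can indeed be placed on $a\neq 0$ without loss of generality.

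With those two boundary regimes accounted for, what remains is the genuinely novel hybrid setting $a\neq 0$, $b\neq 0$, $\a\in(0,\infty)\setminus\{1,2\}$ of Theorem \ref{main1}. I do not anticipate any obstacle to verifying the observation itself: both reductions are either immediate specializations of the forthcoming argument or direct appeals to the cited literature. The actual difficulty highlighted by the observation is not the content of the observation but of its aftermath, namely the third item of the discretization scheme, which demands a joint phase-space treatment of the linear and nonlinear scales that becomes nontrivial precisely when $a$ and $b$ are both nonzero and $\a\neq 2$.
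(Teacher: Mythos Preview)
Your assessment is correct in its main thrust: Observation \ref{const} is purely expository, and the paper supplies no proof for it either. Your reductions for $b=0$ (classical bilinear Hilbert transform, hence \cite{LT97,LT99}) and $a=0$ (purely curved case, hence \cite{Li13,Lie15}) are the right ones and match the paper's intent.

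One small inaccuracy worth flagging: your description of the parameter $n$ as the one ``monitoring the competition between the linear term $a t$ and the nonlinear term $b t^{\a}$'' is not quite how the paper uses it. In the decomposition \eqref{decmjn}, both $m$ and $n$ are frequency-side parameters measuring the sizes of $\frac{\xi-a\eta}{2^j}$ and $\frac{\a\eta}{2^{\a j}}$ respectively; the spatial competition between $at$ and $bt^{\a}$ is instead captured by the \emph{relative position of $j$ against $m,n$}, which is precisely the content of the ``third item'' (the phase-space decomposition of Section \ref{sphase}). Consequently, when $a=0$ the parameter $n$ does not collapse to a single value; rather, the hybrid/transitional regimes in $j$ become vacuous or merge with the nonlinear regime, which is the actual simplification. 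This does not affect your overall conclusion, but the mechanism you describe for the $a=0$ degeneration is slightly off.
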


\begin{observation}\label{Unifb}[\textsf{Hybrid regime}, II]\footnote{This observation is due to Martin Hsu.} Let $a\in\R\setminus\{-1\}$ and $\a\in(0,\,\infty)\setminus\{1\}$ be fixed and, for $\l>0$, denote by $D_{\l}$ the $L^{\infty}-$normalized dilation symmetry $D_{\l}f(x):=f(\l x)$, $x\in\R$. Given now any $b\in\R$ and based on \eqref{maintwo} we immediately have
\begin{equation}\label{dillambda}
		D_{\l} H_{a,b}^\a(f,g)= H_{a, \l^{\a-1} b} (D_{\l} f, D_{\l} g)\,.
\end{equation}
Deduce then that by taking any two $b_1, b_2\in \R\setminus\{0\}$ with the same signature and any triple $(p,q,r)$ as in the statement of Theorem \ref{main1} we have
\begin{equation}\label{bunif}
		\|H_{a,b_1}^\a\|_{L^{p}\times L^{q}\rightarrow L^{r}}=\|H_{a,b_2}^\a\|_{L^{p}\times L^{q}\rightarrow L^{r}}\,.
\end{equation}
As a consequence of \eqref{bunif} we further deduce:
\begin{itemize}
\item the bounds obtained in both \eqref{mainrmon} and \eqref{mainrmonmax} are \emph{uniform} in the $b$ parameter.

\item via a Fatou argument, for any $b\in\R$ we have $\|H_{a,0}^\a\|_{L^{p}\times L^{q}\rightarrow L^{r}}\leq \|H_{a,b}^\a\|_{L^{p}\times L^{q}\rightarrow L^{r}}$.
\end{itemize}
The last item above reinforces the hybrid regime label used in this paper since any of the members within the class $\{H_{a,b}^\a\}_{b\in \R}$ controls and encapsulates the behavior of the classical Bilinear Hilbert transform $H_{a,0}^\a$.

Finally, based on the above comments, without loss of generality we may assume from now that
$$b=1\,.$$
\end{observation}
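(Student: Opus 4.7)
My plan is to establish \eqref{dillambda} by a direct change of variables, derive \eqref{bunif} from it by a homogeneity count, and obtain the two itemized consequences via the main theorems together with a Fatou argument. For \eqref{dillambda}, I unfold the definitions:
\begin{equation*}
D_{\l} H_{a,b}^\a(f,g)(x) \,=\, \int_{\R} f(\l x - t)\,g(\l x + at + b t^\a)\,\frac{dt}{t},
\end{equation*}
and substitute $t = \l s$. Since $dt/t = ds/s$ is dilation-invariant, and since the argument of $g$ factors as $\l\bigl(x + as + \l^{\a-1} b\, s^\a\bigr)$, the integrand becomes $(D_\l f)(x-s)\,(D_\l g)(x + as + \l^{\a-1} b\, s^\a)\,ds/s$, which is precisely $H_{a, \l^{\a-1} b}^\a(D_\l f, D_\l g)(x)$.

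Next, for \eqref{bunif}, given $b_1, b_2\in\R\setminus\{0\}$ with the same sign the ratio $b_2/b_1$ is positive, so $\l := (b_2/b_1)^{1/(\a-1)}$ is a well-defined positive real (here the assumption $\a \neq 1$ is crucial). Then \eqref{dillambda} gives $D_\l H_{a,b_1}^\a(f,g) = H_{a,b_2}^\a(D_\l f, D_\l g)$. Taking $L^r$ norms and using $\|D_\l h\|_{L^s} = \l^{-1/s}\|h\|_{L^s}$ for each relevant $L^s$ norm, the Hölder relation $1/p + 1/q = 1/r$ makes all the scaling factors cancel, yielding the stated norm equality. An identical dilation computation, in which the averaging parameter $\ep$ and the factor $1/(2\ep)$ rescale compatibly, works verbatim for the maximal operator $M_{a,b}^\a$.

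The two itemized consequences then follow directly. For the first bullet, Theorems \ref{main1} and \ref{main2} furnish bounds for some particular value of $b$ of each sign, and \eqref{bunif} promotes them to bounds uniform in $b \in \R \setminus\{0\}$. For the second bullet, fix Schwartz $f, g$ and any sequence $b_n \to 0$ of constant sign; I would argue $H_{a,b_n}^\a(f,g)(x) \to H_{a,0}^\a(f,g)(x)$ pointwise a.e., then apply Fatou to deduce $\|H_{a,0}^\a(f,g)\|_{L^r} \leq \liminf_n \|H_{a,b_n}^\a(f,g)\|_{L^r}$ and invoke the uniformity from the first bullet. The only subtlety, and the \emph{hard part} such as it is, is justifying the pointwise limit across the principal-value singularity at $t=0$. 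I would handle it by writing
\begin{equation*}
H_{a,b_n}^\a(f,g) - H_{a,0}^\a(f,g) \,=\, \int_{\R} f(x-t)\bigl[g(x + at + b_n t^\a) - g(x + at)\bigr]\,\frac{dt}{t},
\end{equation*}
estimating the bracket by $|b_n|\,|t|^\a\,\|g'\|_\infty$ near $t=0$ so that the apparent singularity is integrable, and controlling the tail using the decay of $f$ and $g$; density of Schwartz functions then closes the argument.
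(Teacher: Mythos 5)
Your proof is correct and fills in precisely the computation the paper leaves implicit: the change of variables $t\mapsto\l s$ verifying \eqref{dillambda}, the homogeneity count $\|D_\l h\|_{L^s}=\l^{-1/s}\|h\|_{L^s}$ combined with $1/p+1/q=1/r$ to cancel the scaling factors and yield \eqref{bunif}, and the Fatou/dominated-convergence argument for the second bullet (where your split near $t=0$ via $|g(x+at+b_nt^\a)-g(x+at)|\lesssim|b_n||t|^\a\|g'\|_\infty$ correctly tames the principal-value singularity). This is essentially the intended argument, so no further comment is needed.
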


\subsection{The spatial scale decomposition}

In this section we perform a spatial (time) decomposition relative to the $t$-parameter. For this,
we decompose the kernel $\frac{1}{t}$ as
\begin{equation}\label{1overt}
	\frac{1}{t}=\sum_{j\in\Z}\rho_j(t) \quad \text{for all } t\in\R\setminus\{0\},
\end{equation}
with $\rho_j(t)=2^j\,\rho(2^j t)$ a suitable $\rho\in C^\infty_0(\R)$ odd function such that $\supp\rho\subseteq\{t\in\R\:|\:\frac{1}{4}<|t|<4\}$ obeys the \emph{mean zero condition}
\begin{equation}\label{mean-zero}
	\int_{\R}\rho(t)\,dt=0\,.
\end{equation}

Now putting together \eqref{mult} and  \eqref{1overt} we have
\begin{equation*}
	m(\xi,\eta)=\sum_{j\in\Z} m_j(\xi,\eta)\,
\end{equation*}
with
\begin{equation}\label{rhoj}
	m_j(\xi,\eta):=\int_\R e^{-i(\xi-a\eta) t}e^{i \eta t^\a}\rho_j(t)dt=\int_\R e^{-i\frac{\xi-a\eta}{2^j}  t}e^{i\frac{\eta}{2^{\a j}}t^\a}\rho(t)dt\:.
\end{equation}

Letting $\L_j$ be the form whose multiplier is given by $m_j$, we notice that
\begin{equation}\label{trilinj}
	\L_j(f,g,h)=\int_\R\int_\R f\Big(x-\frac{t}{2^j}\Big)\,g\Big(x+a\frac{t}{2^j}+\frac{t^\a}{2^{\a j}}\Big)\,h(x)\,\rho(t)\,dt\,dx\,.
\end{equation}

Once at this point, we make the following

\begin{observation}\label{trim}[\textsf{Split between non-singular and singular behavior}] In what follows, for a suitable choice of a constant $C_0>0$ depending only on the curve $\g$ (\textit{i.e.} on $a,b$ and $\a$),  we decompose our form $\L$ in two components\footnote{The specific choice of $C_0$ will be clarified in Section \ref{NSIterm}. This split will help us to avoid some technical difficulties when treating Theorem \ref{thm:mS}.}
\begin{itemize}

\item the non-singular form

\begin{equation}\label{nsing}
\L^{NSI}:=\sum_{|j|<C_0} \L_j\,;
\end{equation}

\item the singular form

\begin{equation}\label{sing}
\L^{SI}:=\sum_{|j|\geq C_0} \L_j\,.
\end{equation}
\end{itemize}
\end{observation}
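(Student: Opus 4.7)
Observation \ref{trim} is a definitional split rather than a theorem, so my ``proof'' consists of three tasks: verifying the identity $\Lambda = \Lambda^{NSI} + \Lambda^{SI}$, explaining why the two pieces merit separate treatment, and outlining the rule for choosing $C_0$. The identity itself is immediate once one accepts that $\Lambda = \sum_{j\in\Z}\Lambda_j$, which follows from the kernel decomposition \eqref{1overt}, Fubini, and a standard principal-value argument relying on the mean-zero condition \eqref{mean-zero}; after that, the partition $\Z = \{|j|<C_0\} \sqcup \{|j|\geq C_0\}$ gives the decomposition tautologically. Thus my plan focuses mostly on the \emph{role} of the split and the \emph{rule} for selecting $C_0$.

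The non-singular piece $\Lambda^{NSI}$ is a sum of at most $2C_0+1$ forms of the shape \eqref{trilinj} with $|j|<C_0$. Each such form has $t$-support in the fixed compact annulus $\{1/4 \leq |t| \leq 4\}$ and involves a phase $at + t^\alpha$ that lives on this bounded set as well. Consequently, a raw H\"older estimate combined with a change of variables yields boundedness from $L^p \times L^q \to L^r$ with constants depending only on $a,\alpha$, and $C_0$. No curvature information, no oscillatory analysis, and no LGC machinery are needed for this piece; it will be dispatched in Section \ref{NSIterm} as a warm-up and then absorbed into the implicit constants of Theorem \ref{main1}.

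The singular piece $\Lambda^{SI}$ is where the genuine work happens: it will be further decomposed in $m$ (phase size) and $n$ (regime of dominance between the linear term $at$ and the nonlinear term $t^\alpha$ in the phase of \eqref{rhoj}) as described before Observation \ref{const}, and then handled by the LGC time-frequency analysis. The main --- indeed only --- subtlety in the present observation is the selection of $C_0$. The plan is to take $C_0$ large enough, as a function of $|a|$ and $\alpha$, to ensure that for $|j| \geq C_0$ the two phase contributions $(\xi-a\eta)/2^j$ and $\eta/2^{\alpha j}$ in \eqref{rhoj} exhibit a clean dominance structure, so that neither the forthcoming $(m,n)$-decomposition nor the sparse-uniform dichotomy (Theorem \ref{thm:mS}) acquires artificial degenerate cases near the transition $|j|\approx C_0$. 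The precise numerical value is forced by that later normalization step, and the main ``obstacle'' here is simply the discipline of deferring that specification until the sparse-uniform argument pins it down --- a deferral that is harmless precisely because $C_0$ will be a constant depending only on $\gamma$, whence $\Lambda^{NSI}$ remains a finite, manifestly bounded object.
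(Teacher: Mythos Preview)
Your commentary is correct and matches the paper's intent: Observation \ref{trim} is purely definitional, and the paper offers no proof beyond the tautological partition of $\Z$. Two minor refinements are worth noting. First, your description of the $\Lambda^{NSI}$ treatment as ``a raw H\"older estimate combined with a change of variables'' slightly undersells Section \ref{NSIterm}: the Jacobian $K(t)=a+1+\alpha b t^{\alpha-1}$ of that change of variables can vanish at a unique $t_0$, and the paper handles the corresponding scale $j=j_0$ via a Whitney decomposition around $t_0$ together with interpolation (Case 3 in the proof of Theorem \ref{mainsinglsecale}). Second, the paper's actual use of $|j|\geq C_0$ in the LGC analysis is more concrete than ``clean dominance structure'': it is invoked to guarantee that certain phase-derivative quantities such as $|\partial_u\vartheta_v(u)|\approx 1$ (see the comment after \eqref{growth}) hold, with $C_0$ chosen large relative to the constants $c_1,c_2,\alpha$ arising from the stationary-phase expansion. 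Your high-level rationale is correct, but the precise mechanism is pinned down by these derivative bounds rather than by a phase-dominance heuristic.
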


With these we obtain the initial decomposition
\begin{equation}\label{lj}
	\L=\sum_{j\in\Z} \L_j=\L^{NSI}\,+\,\L^{SI}\,.
\end{equation}

\subsection{The phase-dependent decomposition}\label{Phdepdec}

In this section we perform a second decomposition that focuses exclusively on the frequency properties of the multiplier associated to $\L$. To this end we decompose
\begin{equation}\label{keydec}
	\L^{SI}=\L^{LO}\,+\,\L^{NS}\,+\,\L^{S}\,,
\end{equation}
where
\begin{itemize}
	
\item $\L^{LO}$ stands for the \emph{low oscillatory} component: this treats the case when the phase of the multiplier has essentially no oscillation;
	
\item $\L^{NS}$ stands for the \emph{non-stationary} component: this is the regime in which the phase of the multiplier does oscillate but has no stationary points;
	
\item $\L^{S}$ stands for the \emph{stationary} component: this corresponds to the regime when the phase of the multiplier oscillates and has stationary points.
\end{itemize}

Given the context provided by the above itemization,\footnote{Throughout this section we assume that $|j|\geq C_0$.} we isolate the phase of the multiplier $m_j$ in \eqref{rhoj}:
\begin{equation}\label{phasefunction}
	\varphi_{j,\xi,\eta}(t):=-\frac{\xi-a\eta}{2^j}\,t + \frac{\eta}{2^{\a j}}\,t^\a\,.
\end{equation}
Taking its derivative
\begin{equation*}
	\frac{d}{dt}\varphi_{j,\xi,\eta}(t)=-\frac{\xi-a\eta}{2^j}+\frac{\a\,\eta}{2^{\a j}}\,t^{\a-1}
\end{equation*}
we notice that at the heuristic level
\begin{equation}\label{derivative-heuristic}
	\frac{d}{dt}\varphi_{j,\xi,\eta}(t)\approx -\frac{\xi-a\eta}{2^j}+\frac{\a\,\eta}{2^{\a j}}.
\end{equation}
Expression \eqref{derivative-heuristic} invites us to consider the magnitude of the terms $-\frac{\xi-a\eta}{2^j}$ and $\frac{\a,\eta}{2^{\a j}}$, and as a matter of convenience we will accomplish this relative to powers of two (dyadic scale).

In order to do so, we let $\phi$ be a non-negative even Schwartz function with $\supp\phi\subseteq\{x\in\R\,|\,\frac{1}{4}<|x|<4\}$ that generates the partition of unity
\begin{equation}\label{ref:unity-phi}
	\sum_{n\in\Z}\phi\Big(\frac{x}{2^n}\Big)=1\qquad\textrm{for all}\:x\in\R\setminus\{0\}\,.
\end{equation}

Using now \eqref{ref:unity-phi}, we decompose each component $m_j$ of the multiplier $m$ as
\begin{equation}\label{decmjn}
	m_j(\xi,\eta)=\sum_{(m,n)\in \Z^2}\hspace{-.2cm} m_{j,m,n}(\xi,\eta) :=\sum_{(m,n)\in \Z^2} \hspace{-.2cm}  m_j(\xi,\eta)\, \phi\Big(\frac{\xi-a\eta}{2^{j+m}}\Big)\,\phi\Big(\frac{\a\,\eta}{2^{\a j+n}}\Big).
\end{equation}

Once at this point, for the same $C_0$ defined earlier, we let
\begin{equation}\label{diag}
\Delta:=\{(m,n)\in\Z^2:m,n\ge0,|m-n|\le C_0\}\,,
\end{equation}
and decompose our multiplier into three components:
\begin{itemize}
	\item the \textit{low oscillatory} component:
	\begin{align}
		m_j^{LO}&:=\sum_{(m,n)\,\in\, \Z_{-}\times \Z_-} m_{j,m,n}\,;\label{ld}
	\intertext{\item the \textit{non-stationary} component:}
		m_j^{NS}&:=\sum_{(m,n)\,\in\, \Z^2\setminus ((\Z_{-}\times \Z_-)\cup \Delta)} m_{j,m,n}\,;\label{lnd-s}
	\intertext{\item the \textit{stationary} component:}\label{highfreqon}
		m_j^{S}&:=\sum_{(m,n)\in\Delta} m_{j,m,n}\,.
	\end{align}
\end{itemize}

With this, defining $\L_{j,m,n}$ as the form whose multiplier is given my $m_{j,m,n}$, and similarly for $\L_{j}^{LO}$, $\L_{j}^{NS}$, $\L_{j}^{S}$ we finally set
\begin{equation}\label{formdefc}
	\L^{*}(f,g,h):=\sum_{|j|\geq C_0} \L_{j}^{*}(f,g,h)\,,
\end{equation}
where $\{*\}\in\{LO, NS, S\}$.

This achieves the desired decomposition \eqref{keydec}.

\subsection{The phase-space decomposition}\label{sphase}

In this section we will reorganize the information carried within the forms $\L^{LO}$, $\L^{NS}$, $\L^{S}$ depending on the relative position of the parameters $j,m,n$ on the real axis. The philosophy behind this reorganization is revealed by the following:
\begin{observation}\label{dominance}[\textsf{Asymptotic behavior}] Let $j\in\Z$, $m,\,n\in\N$ and assume $\a>1$. Departing from the formula
\begin{equation}\label{mjmn}
	m_{j,m,n}(\xi,\eta):=\left(\int_\R e^{-i\frac{\xi-a\eta}{2^j}  t}e^{i\frac{\eta}{2^{\a j}}t^\a}\rho(t)dt\right)\,\phi\Big(\frac{\xi-a\eta}{2^{j+m}}\Big)\phi\Big(\frac{\a\,\eta}{2^{\a j+n}}\Big)
\end{equation}
we envision three possible scenarios that mold the behavior of the associated trilinear form $\L_{j,m,n}$:\footnote{When $m$ or $n$ (or both) take negative values we will need to properly adjust this reasoning - see the approach for the terms $\L^{NS}$ and $\L^{LO}$.}
\begin{itemize}
		\item  If $j$ is suitably small depending on $m$ and $n$ then the linear factor $\frac{t}{2^j}$ is dominated by the nonlinear expression $\frac{t^\a}{2^{\a j}}$;\footnote{This is further explained/detailed in \eqref{NLS}--\eqref{saljm20n1}.} such a situation is referred to as the \underline{nonlinear} regime;
        \item If $j$ is suitably large depending on $m$ and $n$ then the factor $\frac{t}{2^j}$ dominates $\frac{t^\a}{2^{\a j}}$ at the expense of a suitable nonlinear modulation factor;\footnote{This is to be soon clarified in \eqref{HS}--\eqref{saljm21}.} such a situation is referred to as the \underline{hybrid} regime;
        \item  If $j$ is neither part of the hybrid or nonlinear regimes then no relative dominance can be exploited between the linear $\frac{t}{2^j}$ and nonlinear $\frac{t^\a}{2^{\a j}}$ factors and thus we leave the expression of $\L_{j,m,n}$ unaltered;  this is referred to as the \underline{transitional} regime.
	\end{itemize}
\end{observation}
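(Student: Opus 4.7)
The content of the observation is a bookkeeping statement about the phase $\varphi_{j,\xi,\eta}$ from \eqref{phasefunction} restricted to the Fourier-side support of $m_{j,m,n}$, so the plan is to make this precise by evaluating the two phase contributions on the compact $t$-support of $\rho$. On that support and on the localization imposed by the two $\phi$-factors in \eqref{mjmn}, one has $|\xi-a\eta|/2^j\sim 2^m$ and $|\alpha\eta|/2^{\alpha j}\sim 2^n$, so that the linear and nonlinear contributions to $\varphi_{j,\xi,\eta}$ have sizes $\sim 2^m$ and $\sim 2^n$ respectively. Solving $\varphi'_{j,\xi,\eta}(t)=0$ then yields $t^{\alpha-1}\sim 2^{m-n}$, so a nondegenerate stationary point sits inside $\supp\rho$ if and only if $|m-n|\leq C_0$ for an appropriate $C_0=C_0(\alpha,a)$.

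From this threshold the three regimes follow as immediate corollaries, and each is naturally matched against the phase-dependent decomposition \eqref{ld}--\eqref{highfreqon}. When $n-m>C_0$, the nonlinear contribution $\frac{\eta}{2^{\alpha j}}t^\alpha$ dominates, there is no stationary point inside $\supp\rho$, and repeated integration by parts against the nonlinear phase converts $\L_{j,m,n}$ into a form of the type arising in the purely non-zero-curvature analysis of \cite{Li13,Lie15}. When $m-n>C_0$, the linear term dominates: integration by parts against $e^{-i\frac{\xi-a\eta}{2^j}t}$ produces arbitrary powers of $2^{-m}$, while the surviving bounded-phase nonlinear factor $e^{i\frac{\eta}{2^{\alpha j}}t^\alpha}$ is absorbed into the profile of $\rho$, leaving a uniformly modulated bilinear Hilbert transform controlled via \cite{LT97,LT99}. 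The remaining band $|m-n|\leq C_0$ is exactly the stationary/transitional regime collected in \eqref{highfreqon}, whose multiplier must be left unaltered precisely because neither phase contribution can be used in isolation to generate decay.

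The subsequent hard work, foreshadowed here but not part of the observation itself, will lie in this transitional band: both phase contributions balance, no single integration by parts is available, and the LGC methodology outlined in Section \ref{compdisc} is required in order to extract a quantitative $2^{-\delta m}$ gain. Consistent with the dichotomy codified by Proposition \ref{hybcurv2}, this is precisely the place where the hypothesis $\alpha\neq 2$ becomes structurally essential; the role of the present observation is therefore to isolate cleanly the transitional regime so that the LGC machinery can subsequently be deployed on it without interference from the easier non-stationary pieces.
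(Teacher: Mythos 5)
Your proposal analyses the wrong dichotomy. The thresholds you derive --- $t^{\alpha-1}\sim 2^{m-n}$ and $|m-n|\le C_0$ --- are governed entirely by the comparison of $m$ against $n$, and they reproduce exactly the earlier phase-dependent split of Section \ref{Phdepdec} into $\L^{LO}$, $\L^{NS}$, $\L^{S}$ (i.e.\ the set $\Delta$ of \eqref{diag}). Observation \ref{dominance} concerns something else: the phase-space decomposition of Section \ref{sphase}, which for a \emph{fixed} pair $(m,n)$ splits the range of $j$ into the nonlinear, transitional and hybrid zones, with cut points of the form $(\alpha-1)j<-n$ and $(\alpha-1)j>m$. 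Your argument therefore addresses the already-settled stationary/non-stationary question rather than the one at hand, and the mixing of the two is also what leads you astray in the final paragraph: LGC is deployed on the whole stationary form $\L^S$ in Section \ref{LGCimpl}, whereas the transitional zone of Observation \ref{dominance} --- only $O(m)$ values of $j$ --- is handled by the uniform single-scale bound \eqref{flrange}.

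The actual mechanism behind the observation is spatial, not oscillatory. On passing to the $x$-side form \eqref{trilinj}, the argument of $g$ is $x + a\frac{t}{2^j} + \frac{t^\alpha}{2^{\alpha j}}$, and the cutoff $\phi\big(\frac{\alpha\eta}{2^{\alpha j+n}}\big)$ forces $g*\check\phi_{\alpha j+n}$ to be essentially constant on intervals of length $2^{-\alpha j-n}$. Since $|t|\sim 1$ on $\supp\rho$, the linear displacement $a\frac t{2^j}$ is invisible at that resolution precisely when $2^{-j}\ll 2^{-\alpha j-n}$, i.e.\ $(\alpha-1)j<-n$; this is what licenses \eqref{redaln1} and defines the nonlinear regime. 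Symmetrically, after the refinement $\phi\big(\frac{\eta}{2^{j+m}}-l\big)$ of \eqref{saljmm12}, the relevant spatial resolution becomes $2^{-j-m}$, and the residue of the curved displacement becomes negligible (after extracting the modulation $e^{i\frac{l}{2^{(\alpha-1)j}}2^m t^\alpha}$ as in \eqref{redal}) exactly when $2^{-\alpha j}\ll 2^{-j-m}$, i.e.\ $(\alpha-1)j>m$; this defines the hybrid regime. The intermediate band where neither simplification is legitimate is the transitional regime. No stationary-phase analysis of the $t$-integral enters; the observation rests on a comparison of spatial increments against the spatial resolution dictated by the frequency cutoffs in \eqref{mjmn}.
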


\begin{remark}\label{settlealpha} Because the asymptotics of the polynomial $at+t^\a$ are reversed when $\a<1$, the analysis of our operator in this latter case is obtained by interchanging our arguments for the hybrid and the nonlinear regimes in the former case $\a>1$. As a consequence of this, throughout the reminder of this paper we restrict our attention to the case
	\begin{equation}\label{al}
		\a>1\,.
	\end{equation}
\end{remark}

We are now going to clarify the heuristic presented in Observation \ref{dominance}. For expository reasons, we start our analysis in the reverse order relative to the presentation made in Section \ref{Phdepdec}:

$\newline$
\noindent\textbf{The stationary component $\L^{S}$}\label{ls}
$\newline$

From \eqref{diag}, \eqref{highfreqon} and \eqref{formdefc} without loss of generality we may assume that
\begin{equation}\label{formdefcS}
	\L^{S}(f,g,h)\approx \sum_{|j|\geq C_0}\,\sum_{m\in\N} \L_{j,m,m}(f,g,h)\,.
\end{equation}

We now identify three regimes:

$\newline$
\noindent\textsf{The nonlinear stationary component $\L^{S}_{NL}$}
$\newline$

This is defined as
\begin{equation}\label{NLS}
	\L^{S}_{NL}(f,g,h)=\sum_{m\in\N}\L^{S}_{NL,m}(f,g,h):=\sum_{m\in\N} \sum_{{|j|\geq C_0}\atop{(\a-1)j<-m}} \L_{j,m,m}(f,g,h)\,.
\end{equation}

In this regime, fixing $m\in\N$ and $(\a-1)j<-m$, we have that
\begin{equation}\label{nsaljmm1}
m_{j,m,m}(\xi,\eta)\approx\left(\int_\R e^{-i\frac{\xi-a\eta}{2^j}  t}e^{i\frac{\eta}{2^{\a j}}t^\a}\rho(t)dt\right)\,
\phi\Big(\frac{\xi}{2^{j+m}}\Big)\,\phi\Big(\frac{\a\,\eta}{2^{\a j+m}}\Big)\,\phi\Big(\frac{\xi+\eta}{2^{j+m}}\Big)\,.
\end{equation}
Moreover, we have that the linear term is dominated by the nonlinear term in the expression of $\g$ appearing in the argument of $g$, \textit{i.e.}:
\begin{equation}\label{redaln1}
(g*\check{\phi}_{\a j+m})\Big(x+a\frac{t}{2^j}+\frac{t^\a}{2^{\a j}}\Big)\approx
(g*\check{\phi}_{\a j+m})\Big(x+\frac{t^\a}{2^{\a j}}\Big)\:.
\end{equation}
Deduce from \eqref{nsaljmm1} and \eqref{redaln1} that
\begin{align}
	\nonumber\\[-8ex]
	\nonumber\intertext{\begin{equation}
			\L_{j,m,m}(f,g,h)\approx \nonumber \end{equation}} \nonumber\\[-4ex]
	&\int_\R\int_\R  (f*\check{\phi}_{j+m})\Big(x-\frac{t}{2^j}\Big)\,
	(g*\check{\phi}_{\a j+m})\Big(x+\frac{t^\a}{2^{\a j}}\Big)\,
	(h*\check{\phi}_{j+m})(x)\,\rho(t)\,dt\,dx\,.\label{saljm20n1}
\end{align}

$\newline$
\noindent\textsf{The hybrid stationary component $\L^{S}_{H}$}\label{lm}
$\newline$

This is defined as
\begin{equation}\label{HS}
	\L^{S}_{H}(f,g,h):=\sum_{m\in\N}\L^{S}_{H,m}(f,g,h):=\sum_{m\in\N} \sum_{{|j|\geq C_0}\atop{(\a-1)j>m}} \L_{j,m,m}(f,g,h)\,.
\end{equation}

In this regime, fixing $m\in\N$ and $(\a-1)j>m$ with $j\geq C_0$, we have that
\begin{equation}\label{saljmm0}
		m_{j,m,m}(\xi,\eta)\approx\sum_{l\sim_{b,\a} 2^{(\a-1)j}} m_{j,m,m}^{l}(\xi,\eta)
\end{equation}
with\footnote{Here we abuse the notation and write the same $\phi\in C_{0}^{\infty}(\R)$ for the frequency location in $\xi$, $\eta$ and $\xi+\eta$ though, of course, strictly speaking these should be represented by three distinct smooth, compactly supported functions whose locations depend on the parameters $a, b$ and $\a$.}
\begin{equation}\label{saljmm12}
m_{j,m,m}^{l}(\xi,\eta)\!:=\!\Big(\int_\R\! e^{-i\frac{\xi-a\eta}{2^j}  t}e^{i\frac{b \eta}{2^{\a j}}t^\a}\!\!\rho(t)dt\Big)
	\phi\Big(\frac{\xi}{2^{j+m}}-a l\Big)\phi\Big(\frac{ \eta}{2^{j+m}}-l\Big)\phi\Big(\frac{\xi{+}\eta}{2^{j+m}}-l\left(a+1\right)\Big),
\end{equation}
and
\begin{equation}
(g*\check{\phi}_{j+m}^l)\Big(x+a\frac{t}{2^j}+\frac{t^\a}{2^{\a j}}\Big)\approx
e^{i\,\frac{l}{2^{(\a-1)j}}\,2^m\,t^\a}\,(g*\check{\phi}_{j+m}^l)\Big(x+a\frac{t}{2^j}\Big)\,,
\end{equation}
where here we set
\begin{equation}
		\check{\phi}_{j+m}^l(x):=\int_{\R} \phi\Big(\frac{\xi}{2^{j+m}}-l\Big)\,e^{i\,\xi\,x}\,d\xi=
		2^{j+m}\,e^{i\,2^{j+m}\,l\,x}\,\check{\phi}(2^{j+m}\,x)\,.
\end{equation}

As a consequence, we deduce
\begin{equation}\label{saljmsumm1}
\L_{j,m,m}(f,g,h)=\sum_{l\sim 2^{(\a-1)j}}\L^{l}_{j,m,m}(f,g,h)\,,
\end{equation}
with
\begin{align}
	\nonumber\\[-8ex]
	\nonumber\intertext{\begin{equation}
			\L^{l}_{j,m,m}(f,g,h)\approx \label{saljm21} \end{equation}} \nonumber\\[-4ex]
	&\int_{\R^2}  (f*\check{\phi}_{j+m}^{al})\Big(x-\sdfrac{t}{2^j}\Big)
	(g*\check{\phi}_{j+m}^l)\Big(x+a\sdfrac{t}{2^j}\Big)
	(h*\check{\phi}_{j+m}^{(a+1)l})(x) e^{i\,\frac{l}{2^{(\a-1)j}}\,2^m\,t^\a}\!\!\rho(t)\,dt\,dx\,. \nonumber
\end{align}

$\newline$
\noindent\textsf{The transitional stationary component $\L^{S}_{TR}$}
$\newline$

This is now simply defined as
\begin{equation}\label{TRS0}
	\L^{S}_{TR}(f,g,h):= \L^{S}_{TR,+}(f,g,h)\,+\,\L^{S}_{TR,-}(f,g,h)\,,
\end{equation}
where
\begin{align}
	\L^{S}_{TR,\pm}(f,g,h)= \sum_{m\in\N}\L^{S}_{TR,\pm,m}(f,g,h):=\sum_{m\in\N}
	\sum_{\substack{|j|\geq C_0\\ -m\leq (\a-1)j\leq m\\ \sgn j=\pm}}
	\hspace{-.4cm} \L_{j,m,m}(f,g,h)\,.\label{TRS1}
\end{align}

This way we have obtained the desired decomposition

\begin{equation}\label{formdefcSDec}
	\L^{S}=\L^{S}_{NL}\,+\,\L^{S}_{TR}\,+\L^{S}_{H}\,.
\end{equation}

$\newline$
\noindent\textbf{The non-stationary component $\L^{NS}$}\label{lns}
$\newline$

In this situation, recalling \eqref{rhoj} and \eqref{decmjn}, and following the outlined philosophy motivating the decomposition in \eqref{keydec}, we exploit the lack of stationary points of the phase in \eqref{phasefunction} and integrate by parts once in order to derive (the heuristic) formulation\footnote{Throughout the paper we set $m_{+}:=\max\{m,0\}$.}
\begin{equation}\label{decmjnT}
	m_{j,m,n}(\xi,\eta)\approx\frac{1}{2^{|m_{+}-n_{+}|}}\, m_j(\xi,\eta)\,\phi\Big(\frac{\xi-a\eta}{2^{j+m}}\Big)\,\phi\Big(\frac{\a\,\eta}{2^{\a j+n}}\Big).
\end{equation}

Once we obtain the extra decaying factor in \eqref{decmjnT}, one can track the motivation provided earlier for $\L^{S}$ and achieve the analogue decomposition
\begin{equation}\label{formdefcNSDec}
	\L^{NS}=\L^{NS}_{NL}\,+\,\L^{NS}_{TR}\,+\L^{NS}_{H}\,.
\end{equation}

In order to make explicit the nonlinear, transitional and hybrid nonstationary components for ${*}\in\{NL,\,TR,\,H\}$ we will write
\begin{equation}
 \L^{NS}_{*}=\L^{NS,I}_{*}\,+\,\L^{NS, II}_{*}\,+\,\L^{NS, III}_{*}\,+\,\L^{NS, IV}_{*}\,,
\end{equation}
and respectively
\begin{equation}
 \L^{NS}=\L^{NS,I}\,+\,\L^{NS, II}\,+\,\L^{NS, III}\,+\,\L^{NS, IV}\,,
\end{equation}
with
\begin{align}
\L^{NS,I} &=\L^{NS,I}_{NL}\,+\,\L^{NS,I}_{TR}\,+\,\L^{NS,I}_{H}=\sum_{n\geq 0}  \L^{NS,I}_{NL,n}\,+\,\sum_{n\geq 0}  \L^{NS,I}_{TR,n}\,+\,\sum_{n\geq 0}\L^{NS,I}_{H,n}\nonumber\\[1ex]
&:=\sum_{n\geq 0}\hspace{-.2cm} \sum_{\substack{m<0\\[.1em]|j|\ge C_0\\[.1em](\a-1)j\leq -n}}\hspace{-.3cm} \L_{j,m,n}\,\,+\,\,
\sum_{n\geq 0}\hspace{-.2cm} \sum_{\substack{m<0\\[.1em]|j|\ge C_0\\[.1em] -n<(\a-1)j<0}}\hspace{-.3cm} \L_{j,m,n}\,\,+\,\,\sum_{n\geq 0} \sum_{\substack{m<0\\[.1ex]|j|\ge C_0\\[.1ex] (\a-1)j\geq 0}} \hspace{-.1cm} \L_{j,m,n}\,,\raisetag{1\baselineskip}\label{ns11}\\[1ex]
\L^{NS,II}&=\L^{NS,II}_{NL}\,+\,\L^{NS,II}_{TR}\,+\,\L^{NS,II}_{H}\nonumber\\[1ex]
&=\sum_{0\leq m<n-C_0}  \L^{NS,II}_{NL,m,n}\quad+\,\,\sum_{0\leq m<n-C_0}  \L^{NS,II}_{TR,m,n}\quad+\,\,\sum_{0\leq m<n-C_0}  \L^{NS,II}_{H,m,n}\nonumber\\
&:=\hspace{-.1cm}\sum_{0\leq m<n-C_0}
\left(\:
\sum_{\substack{|j|\ge C_0\\[.5ex](\a-1)j\leq-n }}\hspace{-.2cm} \L_{j,m,n}+
\sum_{\substack{|j|\ge C_0\\[.5ex]-n<(\a-1)j<m}}\hspace{-.2cm} \L_{j,m,n}+
\sum_{\substack{|j|\geq C_0\\[.5ex] (\a-1)j\geq m}}\hspace{-.2cm} \L_{j,m,n}
\right)\,, \label{ns21}\\[1ex]
\L^{NS,III}&=\L^{NS,III}_{L}\,+\,\L^{NS,III}_{TR}\,+\,\L^{NS,III}_{H}\nonumber\\[1ex]
&=\sum_{0\leq n<m-C_0}  \L^{NS,III}_{NL,m,n}\quad+\,\,\sum_{0\leq n<m-C_0}  \L^{NS,III}_{TR,m,n}\quad+\,\,\sum_{0\leq n<m-C_0}  \L^{NS,III}_{H,m,n}\nonumber\\
&:=\sum_{0\leq n<m-C_0}
\left(\:
\sum_{\substack{|j|\ge C_0\\[.5ex](\a-1)j\leq-n}}\hspace{-.2cm} \L_{j,m,n}\,+\,
\sum_{\substack{|j|\ge C_0\\[.5ex]-n<(\a-1)j<m}}\hspace{-.3cm} \L_{j,m,n}\,+\,
\sum_{\substack{|j|\ge C_0\\[.5ex](\a-1)j\geq m}}\hspace{-.3cm} \L_{j,m,n}
\right)\,,\label{ns31}\\[1ex]
\L^{NS,IV}&=\L^{NS,IV}_{L}+\L^{NS,IV}_{TR}+\L^{NS,IV}_{H}=\sum_{m\geq 0}\L^{NS,IV}_{NL,m}+\sum_{m\geq 0}\L^{NS,IV}_{TR,m} +\sum_{m\geq 0}\L^{NS,IV}_{H,m} \nonumber\\[1ex]
&:=\sum_{m\geq 0}\,
\sum_{\substack{n<0\\[.2em]|j|\ge C_0\\[.2em] (\a-1)j\leq 0}}\hspace{-.1cm} \L_{j,m,n}\,\,+\,\,
\sum_{m\geq 0}\,\sum_{\substack{n<0\\[.2em]|j|\ge C_0\\[.2em] 0<(\a-1)j<m}}\hspace{-.2cm} \L_{j,m,n}\,\,+\,\,
\sum_{m\geq 0}\,\sum_{\substack{n<0\\[.2ex]|j|\ge C_0\\[.2ex] (\a-1)j\geq m}} \hspace{-.2cm} \L_{j,m,n}\,.\raisetag{1\baselineskip}\label{ns41}
\end{align}

$\newline$
\noindent\textbf{The low oscillatory component $\L^{LO}$}
$\newline$

In this situation, exploiting the fact that the phase $\varphi_{j,\xi,\eta}(t)$ is essentially constant (\emph{i.e.}, no oscillation is present), we will be able to reduce $m_j^{LO}$ to a sum of the form\footnote{For more on this the reader is invited to consult Section \ref{LOf}.}
\begin{equation}\label{lodescr}
	m_j^{LO}(\xi,\eta)\approx\phi\Big(\frac{\xi-a\eta}{2^{j}}\Big)\, \psi\Big(\frac{\a\,\eta}{2^{\a j}}\Big)\,+\,\psi\Big(\frac{\xi-a\eta}{2^{j}}\Big)\, \phi\Big(\frac{\a\,\eta}{2^{\a j}}\Big)\,,
\end{equation}
where here $\psi\in C_0^{\infty}(\R)$ is a function supported on $(-2,2)$ defined as
\begin{equation}\label{ref:unity-psi}
	\psi(x):=\sum_{n\in\Z_-} \phi\Big(\frac{x}{2^n}\Big)\,.
\end{equation}

Thus, in accordance with \eqref{lodescr}, it is natural to split
\begin{align}
\L^{LO} &=\L^{LO}_{D}\,+\,\L^{LO}_L\label{llo}\\[1ex]
&:=\sum_{j\le -C_0}\,\sum_{m,n<0}\, \L_{j,m,n}\,+\,\sum_{j\ge C_0}\,\sum_{m,n<0}\, \L_{j,m,n}\,,\nonumber
\end{align}
where
\begin{itemize}
\item $\L^{LO}_{D}$ stands for the \emph{degenerate low frequency} component that behaves essentially as a paraproduct;

\item $\L^{LO}_{L}$ stands for the \emph{linear low frequency} component that essentially represents a truncated version of the Bilinear Hilbert transform.
\end{itemize}

This concludes our initial decomposition, that can be summarized as follows:
\begin{center}
	\begin{forest}
		for tree={tier/.wrap pgfmath arg={tier #1}{level()},
			edge path={\noexpand\path[\forestoption{edge}] (\forestOve{\forestove{@parent}}{name}.parent anchor) -- +(0,-12pt)-| (\forestove{name}.child anchor)\forestoption{edge label};}
		}
		[$\L$
			[$\L^{NSI}$, before computing xy={s'-=120pt}
			]
			[$\L^{SI}$, before computing xy={s'=60pt}
				[$\L^{LO}$
					[$\L^{LO}_D$
					]
					[$\L^{LO}_L$
					]
				]
				[$\L^{NS}$
					[$\L^{NS}_{NL}$
					]
					[$\L^{NS}_{TR}$
					]
					[$\L^{NS}_H$
					]
				]
				[$\L^{S}$
					[$\L^S_{NL}$
					]
					[$\L^S_{TR}$
					]
					[$\L^S_H$
					]
				]
			]
		]
	\end{forest}
\end{center}
\if
and thus write
\begin{align}\label{keydeclnl}
	\L&=& &\L^{NSI}& &+& &\qquad\phantom{\L^{SI}}& &\phantom{+}& &\qquad\quad\L^{SI}& &\phantom{+}&  &\qquad\quad\phantom{\L^S}&\nonumber\\
	\,&=& &\L^{NSI}& &+& &\qquad\L^{LO}& &+& &\qquad\quad\L^{NS}& &+&  &\qquad\quad\L^S&\nonumber\\
	\,&=&&\L^{NSI}& &+&  &\L^{LO}_D\,+\,\L^{LO}_{L}&   &+& &\L^{NS}_{NL}+ \L^{NS}_{TR}+\L^{NS}_{H}& &+& &\L^S_{NL}+ \L^S_{TR}+\L^S_{H}\,.&\nonumber\\
\end{align}
\fi

In what follows we will separately discuss each of the terms in the decomposition
\begin{equation}
	\L=\L^{NSI}\,+\,\L^{LO}\,+\,\L^{NS}\,+\,\L^S\;.
\end{equation}

\section{The analysis of the stationary term $\L^{S}$: Implementation of the LGC methodology.}\label{LGCimpl}

In this section we focus on the most delicate component of our operator, that is the stationary term $\L^{S}$, which addresses the situation when the phase of the multiplier has stationary points. At the heart of the analysis of $\L^{S}$ stays the (Rank I) LGC-methodology introduced in \cite{Lie19}.

\subsection{Statements of the key results}\label{MR1}

The main result of this section is given by the following

\begin{theorem}\label{thm:mS}
With the previous notations\footnote{Recall \eqref{trilin} and \eqref{formdefcS}.} we have that for any $a\in\R\setminus\{-1\}$,  $\a\in (0,\infty)\setminus\{1,2\}$ and $1<p,\,q<\infty$ with $\frac{1}{p}+\frac{1}{q}=\frac{1}{r}$ and $r>\frac{2}{3}$ the following holds
\begin{equation}\label{thm:mSS}
	|\L^S(f,g,h)|\lesssim_{a,\a,p,q} \|f\|_p\,\|g\|_q\,\|h\|_{r'}\,.
\end{equation}
\end{theorem}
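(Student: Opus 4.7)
I would prove Theorem \ref{thm:mS} by treating separately the three constitutive pieces $\L^S_{NL}$, $\L^S_{TR}$ and $\L^S_H$ in the decomposition \eqref{formdefcSDec}, and, for each of them, establishing a uniform single-scale estimate with decay $2^{-\d m}$ for some $\d>0$ in the frequency parameter $m$. This is precisely the type of $m$-decaying $L^2$ bound advertised in Proposition \ref{L2mdecay} via Observation \ref{alfa2}. A geometric summation in $m\in\N$, combined with orthogonality in $j$ coming from the built-in frequency localizations in \eqref{NLS}, \eqref{HS}, \eqref{TRS1}, and a bilinear interpolation argument to reach the full range $r>\frac{2}{3}$, $p,q>1$, then delivers the claim.

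\textbf{The nonlinear stationary piece.} For $\L^S_{NL,m}$, after freezing $m$ and $j$ with $(\a-1)j<-m$, the representation \eqref{saljm20n1} shows that the linear part of $\g$ is negligible in the argument of $g$. This essentially reduces the analysis to the purely non-zero curvature Bilinear Hilbert transform along $t^\a$ restricted to a single frequency shell of size $\sim 2^{\a j+m}$. I would then invoke the stationary-phase plus $TT^*$/almost-orthogonality machinery of \cite{Li13} and \cite{Lie15}: the nondegeneracy of $\partial_t^2(t^\a)$ for $\a\neq 1$ directly yields the decay $2^{-\d m}$ at the single-scale level.

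\textbf{The hybrid stationary piece.} The main work concerns $\L^S_H$, for which the LGC methodology of \cite{Lie19} must be deployed. Starting from \eqref{saljmsumm1}--\eqref{saljm21}, the modulation factor $e^{i(l/2^{(\a-1)j})\,2^m\,t^\a}$ has amplitude $\sim 2^m$ and the inputs of $f$, $g$ and $h$ are essentially wave-packets whose frequency supports are determined linearly by $j,m$ and $l$. Following the three-stage algorithm illustrated in Section \ref{compdisc}, I would: first, partition the $t$-support into intervals of length $\approx 2^{-j-m/2}$ to linearize the modulation phase modulo $O(1)$; second, expand each of $f$, $g$, $h$ in a Gabor frame adapted to boxes of size $2^{-j-m/2}\times 2^{j+m/2}$; and third, analyze the resulting weighted trilinear coupling of Gabor coefficients via a sparse--uniform dichotomy. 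The light pairs yield decay immediately, whereas for the heavy pairs one uses the transversality among the variable families of curves entering the time-frequency correlation set to show that their count is bounded by $2^{(1-\d)m}$ rather than the trivial $2^m$.

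\textbf{The transitional piece and the main obstacle.} The transitional piece $\L^S_{TR,\pm,m}$ sits between the two previous regimes: the linear and nonlinear parts of the phase are comparable, but single-scale stationary phase still provides a $2^{-m/2}$ gain and the LGC linearization can be implemented at the same effective scale $2^{-j-m/2}$ as in the hybrid case, so that the $2^{-\d m}$ estimate follows by a straightforward adaptation of the hybrid argument. The main obstacle throughout the proof is step three in the hybrid piece, namely the quantitative control of the time-frequency correlation set via the light--heavy dichotomy. As illustrated in Figures \ref{fig2}--\ref{fig3} of Section \ref{compdisc}, the condition $\a\neq 2$ is essential here: for $\a=2$ the vertical shift induced by the linear part of $\g$ combines with the zero curvature of $t^{\a-1}$ to permit non-transversal coincidences and saturates the incidence count, while for $\a\neq 2$ the nontrivial curvature in $t$ guarantees transversality and delivers the desired exponential decay. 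Once this step is completed, a geometric summation in $m$, Littlewood--Paley in $j$ and bilinear interpolation recover the full range stated in the theorem.
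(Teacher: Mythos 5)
Your high-level plan of exploiting the LGC machinery to obtain an $m$-exponentially decaying bound, isolating $\a\neq2$ via the transversality analysis of the time-frequency correlation set, and then summing in $m$, is indeed the engine of the paper's argument for the hybrid piece, and your remarks on the role of $\a=2$ are accurate. However, there is a structural gap in how you propose to reach the claimed range $r>\frac{2}{3}$.

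The exponential decay $2^{-\d m}$ that the LGC/sparse--uniform mechanism produces is proved \emph{only} at $L^2\times L^2\times L^\infty$ --- that is the content of Proposition \ref{L2mdecay}, and the proof there crucially uses Parseval and a Cauchy--Schwarz argument that are not available at other exponents. Your sentence ``a geometric summation in $m$ ... and a bilinear interpolation argument to reach the full range'' presupposes estimates for $\L^S_m$ over the full range of exponents with at worst slowly growing constants in $m$; without those, you cannot interpolate your way below $r=1$ starting only from the $L^2\times L^2\times L^\infty$ decay. The paper supplies the missing complementary input as Proposition \ref{Lgennodecay}: a restricted weak-type bound over the entire triangle $r>\frac{2}{3}$, $p,q>1$, with only a \emph{linear} loss $\lesssim m$. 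This is proved by a very different route (dyadic wave-packet coefficients, exceptional sets built from Hardy--Littlewood superlevel sets of $F,G$, H\"older/geometric-mean tricks, and for $\L^S_{NL,m}$ a maximal $m$-shifted square function), not by the LGC correlation-set analysis. It is the interpolation of these two propositions --- exponential gain at the center, polynomial loss at the boundary --- followed by the telescoping sum in $m$, that closes the range. Without this second pillar your plan would only yield bounds near $L^2\times L^2\to L^1$.

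A smaller discrepancy: you propose treating $\L^S_{NL}$, $\L^S_{TR}$ and $\L^S_H$ separately within the $m$-decaying $L^2$ bound. The paper instead treats this bound \emph{globally} in $j$, splitting only by $\mathrm{sgn}\,j$ (so $\L^S_{m,+}=\L^S_{H,m}+\L^S_{TR,+,m}$ and $\L^S_{m,-}=\L^S_{NL,m}+\L^S_{TR,-,m}$), precisely to avoid adapting the argument to the relative position of $j$ and $m$. The three-way $NL/TR/H$ split is reserved for the extended-range Proposition \ref{Lgennodecay}, where the transitional term is handled not by ``adapting the hybrid LGC argument'' but by the $j$-uniform single-scale bound of Section \ref{NSIterm} summed over the $O(m)$ scales with $-m\leq(\a-1)j\leq m$, which is exactly the source of the $m$-polynomial loss.
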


This theorem will be an immediate consequence of the two next two propositions once one applies standard multilinear interpolation for sublinear forms (see \textit{e.g.} \cite{MS13}) and a telescoping sum argument.

In order to state these propositions we will make a brief detour to introduce some notation: recalling \eqref{highfreqon}, for expository reasons, we may assume without loss of generality that $m=n$ and hence that
\begin{equation}\label{mjS}
m_j^S(\xi,\eta)\approx\sum_{m\in\N}m_{j,m,m}(\xi,\eta):=
\sum_{m\in\N}m_j(\xi,\eta)\, \phi\Big(\frac{\xi-a\eta}{2^{j+m}}\Big)\,
\phi\Big(\frac{\a\,\eta}{2^{\a j+m}}\Big).
\end{equation}

Now, for $m\in\N$ fixed, we let
\begin{equation}\label{sjm}
	\L^S_{m}(f,g,h):=\sum_{|j|\ge C_0}\int_\R \int_\R \hat{f}(\xi)\,\hat{g}(\eta)\,\hat{h}(\xi+\eta)\,m_{j,m,m}(\xi,\eta)\,d\xi\,d\eta\,.
\end{equation}

With this, we have

\begin{proposition}\label{L2mdecay}

There exists $\d>0$ such that for any $a\in\R\setminus\{-1\}$ and $\a\in (0,\infty)\setminus\{1,2\}$ we have uniformly in $m\in\N$ that
\begin{equation}\label{LSm}
	|\L^S_{m}(f,g,h)|\lesssim_{a,\a} 2^{-\d m}\,\|f\|_2\,\|g\|_2\,\|h\|_{\infty}\,.
\end{equation}
\end{proposition}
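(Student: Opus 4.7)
The plan is to prove Proposition \ref{L2mdecay} by implementing the three-step LGC methodology (multiplier linearization, Gabor decomposition, time-frequency correlation analysis) separately on each of the three subregimes $\L^S_{NL,m}$, $\L^S_{TR,m}$, $\L^S_{H,m}$ identified in \eqref{formdefcSDec}. Since $h$ sits in $L^\infty$, we first dualize: writing $\L^S_m(f,g,h) = \int T^S_m(f,g)(x)\,h(x)\,dx$, it suffices to establish
\begin{equation}
\|T^S_m(f,g)\|_1 \lesssim 2^{-\delta m}\,\|f\|_2\,\|g\|_2
\end{equation}
for each component bilinear operator. The first gain of $2^{-m/2}$ comes essentially for free by applying stationary phase to the $t$-integral in \eqref{mjS}, following the computation sketched in \eqref{multipl}--\eqref{multipl2} of the Prologue. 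The goal is to promote this initial $2^{-m/2}$ into a full $2^{-\delta m}$ bound on the $L^2\times L^2\to L^1$ norm, by exploiting additional orthogonality in $j$ and $(\xi,\eta)$ for each regime.

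For the transitional regime $\L^S_{TR,m}$, the constraint $|(\alpha-1)j|\le m$ restricts the summation to $O(m)$ scales. Here I would estimate each $\L^S_{TR,\pm,m}$ by Cauchy--Schwarz followed by a single $TT^*$ argument per scale $j$, absorbing the $O(m)$ loss into a slightly worse but still geometric decay rate using the stationary phase gain. For the nonlinear regime $\L^S_{NL,m}$, the reduction \eqref{saljm20n1} shows that the argument of $g$ may be replaced by $x+t^\alpha/2^{\alpha j}$, so the component operator is essentially a truncated bilinear form along the purely curved monomial $t^\alpha$. I can therefore invoke the purely non-zero curvature machinery of \cite{Lie19}, which yields the required $2^{-\delta m}$ bound via an almost orthogonal decomposition in $j$ combined with the stationary phase decay.

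The heart of the proof is the hybrid regime $\L^S_{H,m}$, where $(\alpha-1)j>m$ and the linear term dominates but carries a highly oscillatory factor $e^{i(l/2^{(\alpha-1)j})\,2^m t^\alpha}$ of a secondary modulation. Starting from \eqref{saljmsumm1}--\eqref{saljm21}, I would first linearize the phase $\Omega_x(t)$ of the $t$-integral on spatial intervals of length $2^{-j-m/2}$, exactly as in \eqref{Spphase}, producing a further spatial cutoff indexed by $p\sim 2^{m/2}$. Next, I apply the Gabor frame expansion \eqref{Gabor}--\eqref{Gabor1} at scale $2^{j+m/2}$ to both $f$ and $g$; the bilinear nature of the problem means the correlation analysis now runs over a two-parameter family of pairs $(l_f,w_f)$ and $(l_g,w_g)$, rather than the single pair $(l,w)$ appearing in the Carleson discussion. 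By Plancherel on the $f$ and $g$ sides, controlling the resulting bilinear form reduces to bounding the number of heavy quadruples in a discretized incidence problem between a family of curves of the form $\{(t,\a(x-t)^{\alpha-1}b(x)+\text{linear shift})\}$ and a rectangular grid at scale $2^{-j-m/2}\times 2^{j+m/2}$.

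The main obstacle I expect is carrying out this incidence analysis and distilling it into the quantitative $2^{-\delta m}$ decay. The situation splits along the light/heavy dichotomy: light pairs contribute acceptable bounds directly from the weight estimate, while heavy pairs must be shown to be few in number. The key mechanism, exactly as in Case II of Section \ref{compdisc} and Figure \ref{fig2}, is the transversality of two curves corresponding to well-separated values of $x$, which fails precisely when $\alpha=2$ and holds for $\alpha\in(0,\infty)\setminus\{1,2\}$; this is why the hypothesis $\alpha\neq 2$ cannot be dropped. A careful incidence count, exploiting this transversality together with the integration against the cut-off $\phi(b(x)/2^{m+\alpha j})$, will yield the required gain of $2^{-\delta m}$ upon summation in $l,w$, $j$, and the secondary modulation parameter $l\sim 2^{(\alpha-1)j}$. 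Once this is established for each regime, combining the three bounds and summing geometrically in $m$ completes the proof.
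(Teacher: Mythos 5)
Your high-level picture is correct — dualize against $h\in L^\infty$, apply stationary phase to win an initial $2^{-m/2}$, run a Gabor discretization at scale $2^{j+m/2}$, and promote the gain to a genuine $2^{-\delta m}$ via a light/heavy (sparse/uniform) analysis of the resulting Gabor coefficients, with transversality (valid only for $\a\neq 2$) controlling the heavy contribution. However, there are two substantive problems with the way you propose to organize the proof.

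First, your treatment of the transitional component $\L^S_{TR,m}$ is a gap. You claim that a single Cauchy--Schwarz followed by a $TT^*$ argument per scale yields $m$-decay, with the $O(m)$ scales absorbed. But the $2^{-m/2}$ stationary-phase gain in the multiplier is exactly consumed by the $\sim 2^{m/2}$-fold linearization of the phase (or equivalently by the $\sim 2^{m/2}$ Gabor frequencies $u$ appearing in the correlation expression analogous to \eqref{ljm0a}): a naive Cauchy--Schwarz in $u,v,r$ applied to that single-scale form returns a bound of order $O(1)$, with no $m$-decay at all. It is precisely the sparse--uniform dichotomy on the Gabor coefficients, and the ensuing bound \eqref{keyrel} on the cardinality of the time-frequency correlation set, that converts the counting loss into a gain of $2^{-\delta_0 m}$. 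The paper deploys this machinery \emph{uniformly} over the entire range $j\ge C_0$ (and symmetrically over $j\le -C_0$), so the transitional scales receive the same treatment as the hybrid ones; they do not admit a shortcut. Indeed, the paper explicitly notes at the start of Section \ref{Propmdec} that the proof of this proposition is conducted ``for the \emph{global} cases $j\ge C_0$ and $j\le -C_0$ with no further subdivisions adapted to the relative positions of $j$ and $m$'' --- the NL/TR/H split you propose is used only later, for the weak-type/restricted estimate of Proposition \ref{Lgennodecay}, where $m$-decay is not required.

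Second, the incidence picture you sketch for the hybrid regime --- a ``bilinear incidence over quadruples $(l_f,w_f),(l_g,w_g)$'' against a curve-rectangle grid --- is closer to the Carleson heuristic of the Prologue than to the structure the bilinear form actually has. After the Gabor expansion, the time-frequency correlation condition \eqref{tjm1} collapses the spatial index of $f$ to be a function of $(u,v,r)$, and the problem is reduced to estimating, for a fixed measurable selection function $v(\cdot)$, the size of a \emph{single} correlation set $\mathcal{C}_{\ep,\d,+}^{R,\mathfrak{J}}$ of frequency fibers $u$ paired with spatial blocks $\kappa$ --- a linear, not quadrilinear, counting problem. The transversality input \eqref{key10}--\eqref{KEY} that you correctly anticipate then lands as $|u_1-u_2||r_1-r_2|\lesssim 2^{m/2}$, exactly the analogue of the curve-transversality for $\a\neq 2$. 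Your intuition about why $\a=2$ must be excluded is right, but carrying out a genuine four-parameter incidence count would be both harder than and different from what the proof actually needs.
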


The proof of the $m-$exponentially decaying bound in \eqref{LSm} will be presented in Section \ref{Propmdec}. The second proposition treated in Section \ref{Propgen} provides us with the general $L^p\times L^q\times L^{r'}$ range up to an admissible $m-$polynomial loss:

\begin{proposition}\label{Lgennodecay}
Let $m\in\N$ and $F,\,G,\,H$ be any measurable sets with finite (nonzero) Lebesgue measure. Then
\begin{equation}\label{Hprexist}
\exists\:H'\subseteq H\:\:\textrm{measurable with}\:\:|H'|\geq \frac{1}{2} |H|\,,
\end{equation}
such that, for any triple of functions $f,\,g$ and $h$ obeying
\begin{equation}\label{restrcitedweak}
|f|\leq \chi_{F},\qquad |g|\leq \chi_{G},\qquad |h|\leq \chi_{H'}\,,
\end{equation}
and any triple $(p,q,r')$ with $1<p,q\leq \infty$, $r>\frac{2}{3}$ and $\frac{1}{p}+\frac{1}{q}+\frac{1}{r'}=1$, one has that
\begin{equation}\label{goallsl1}
|\L^{S}_{m}(f,g,h)|\lesssim_{a,\a,p,q} m\, |F|^{\frac{1}{p}}\,|G|^{\frac{1}{q}}\,|H|^{\frac{1}{r'}}\,.
\end{equation}
\end{proposition}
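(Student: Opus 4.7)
The plan is to establish \eqref{goallsl1} in two stages. In the first stage, I would combine Proposition \ref{L2mdecay} with trivial $L^\infty$-type endpoint bounds and reach the full Banach range $r \ge 1$ via multilinear complex interpolation. In the second stage, the extension to $r > 2/3$ follows the Muscalu-Tao-Thiele restricted weak-type technology built on the wave-packet decomposition and the time-frequency correlation set described in Section \ref{compdisc}; this is where the exceptional set $H'\subset H$ comes into play.

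For the first stage I would record several crude bounds on $\L^S_m$. Starting from the representations \eqref{saljm20n1} (nonlinear regime) and \eqref{saljm21} (hybrid regime), a direct estimate of the $t$-integrand using $\|\rho\|_\infty$ and the kernel truncation yields $|\L^S_m(f,g,h)| \lesssim \|f\|_\infty \|g\|_\infty \|h\|_1$ with at most a polynomial overhead in $m$ coming from the number of stationary scales $|j| \lesssim m/(\a-1)$. Symmetric variants $L^1\times L^\infty \times L^\infty$ and $L^\infty \times L^1 \times L^\infty$ follow by the same token. Bilinear complex interpolation between these endpoints and the $L^2\times L^2 \times L^\infty$ bound \eqref{LSm} (with the $2^{-\d m}$ factor discarded in favor of $O(m)$) yields \eqref{goallsl1} for $1\le r <\infty$ with $H'=H$.

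For the second stage, relevant when $2/3 < r < 1$, I would construct
\begin{equation*}
E := \Big\{x : M(\chi_F)(x) > C\frac{|F|}{|H|}\Big\} \cup \Big\{x : M(\chi_G)(x) > C\frac{|G|}{|H|}\Big\},
\end{equation*}
with $M$ the Hardy-Littlewood maximal function and $C$ large enough so that $|E| \le |H|/2$, and then set $H' := H \setminus E$. For $h$ majorized by $\chi_{H'}$, I would implement the Gabor expansion \eqref{Gabor}-\eqref{Gabor1} of $f,g,h$ at the scale $2^{j+m}$ dictated by the stationary phase, reducing the estimate to a sum over the time-frequency correlation rectangles of the type \eqref{famcurv1}. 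A sparse-uniform dichotomy together with the transversality of the underlying curve family (valid because $\a\ne 2$) controls the number of ``heavy'' tiles polynomially in $m$; the ``light'' tiles are handled directly. The fact that $h$ is supported away from $E$ ensures that the dominant tile energies gain the powers of $|H|/|F|$ and $|H|/|G|$ that realize the below-$L^1$ scaling, exactly as in the Lacey-Thiele size/energy argument.

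The main obstacle I anticipate is maintaining linear rather than exponential dependence on $m$ when summing the correlation-set contributions. This hinges on the transversality depicted in Figures \ref{fig1}-\ref{fig2}: in the $\a\ne 2$ regime, well-separated values of the spatial parameter $x$ produce curves that meet only transversally, thus keeping the heavy-tile count polynomial in $m$. If one attempted the same strategy for $\a = 2$, the scenario of Figure \ref{fig3} would take effect and the argument would collapse, which is consistent with Proposition \ref{hybcurv2} and with the open problem highlighted in the Introduction.
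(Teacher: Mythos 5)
Your two-stage plan diverges from the paper's actual argument, and Stage 1 as written contains a genuine gap.

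\textbf{Stage 1 does not close the $j$-sum.} You claim $|\L^S_m(f,g,h)|\lesssim m\,\|f\|_\infty\|g\|_\infty\|h\|_1$ (and its symmetric variants) ``with polynomial overhead in $m$ coming from the number of stationary scales $|j|\lesssim m/(\a-1)$.'' That count is only correct for the \emph{transitional} component $\L^S_{TR,m}$, whose range of $j$ is $-m\le(\a-1)j\le m$. The hybrid component $\L^S_{H,m}$ sums over all $j$ with $(\a-1)j> m$ and the nonlinear component $\L^S_{NL,m}$ over all $j$ with $(\a-1)j<-m$; both ranges are infinite. A per-scale $L^\infty\times L^\infty\times L^1$ bound carries no decay in $j$, so the sum diverges. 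The paper avoids this precisely by never taking such crude products for the hybrid and nonlinear pieces: for $\L^S_{H,m}$ it passes through the wave-packet coefficient representation \eqref{lsl1} and sums the scales by Parseval (the $\xi,\eta$-supports at distinct $j$ are essentially disjoint), and for $\L^S_{NL,m}$ it invokes the maximal $m$-shifted square function \eqref{maxshift}. Both are $L^2$-orthogonality mechanisms; without them, the $j$-sum is not convergent. Interpolating from $L^2\times L^2\times L^\infty$ with any $L^\infty$-heavy endpoint that does not itself control the $j$-sum will not help.

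\textbf{Stage 2 takes a genuinely different route, and it is not clear it delivers.} Your exceptional set $E$ matches the paper's $\O$ in \eqref{om}, and choosing $H'=H\setminus E$ is exactly what the paper does. But from there you propose to re-run the sparse--uniform dichotomy and the transversality/time-frequency-correlation analysis at the LGC scale $2^{j+\frac{m}{2}}$ (equations \eqref{Gabor}--\eqref{Gabor1}; note your write-up says $2^{j+m}$, which is the coarser tile scale, not the Gabor scale set by the linearized phase). That machinery is what proves the $L^2\times L^2\times L^\infty$ estimate with exponential decay, i.e.\ Proposition \ref{L2mdecay}. It does not directly produce the rescaled $|F|^{1/p}|G|^{1/q}|H|^{1/r'}$ scaling below $L^1$; the correlation-set size bound \eqref{keyrel} is a frequency-counting statement, not a spatial size/energy estimate. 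The paper's actual proof of Proposition \ref{Lgennodecay} is substantially more elementary: from the wave-packet representation \eqref{lsl1} one passes to the per-interval local averages, applies the three-parameter geometric mean inequality \eqref{lslkey}, uses the level sets $\I_\b$ around $\O^c$ to extract the decay \eqref{FGal}--\eqref{Hal}, and closes with Parseval and multilinear interpolation. No sparse--uniform dichotomy or curve transversality is used at this stage; that analysis belongs exclusively to the $m$-decay estimate. Your instinct that $\a\ne 2$ matters here is natural but misplaced for this proposition: the restriction $\a\ne 2$ is needed for Proposition \ref{L2mdecay}, whereas Proposition \ref{Lgennodecay} holds by arguments that do not see the quadratic obstruction.
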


\subsection{The $L^2\times L^2\times L^\infty$ bound: Proof of Proposition \ref{L2mdecay}}\label{Propmdec}

An interesting aspect that differentiates the proof of this proposition relative to the treatment of Proposition \ref{Lgennodecay} and that of the component $\L^{NS}$ is that the present analysis is performed for the \emph{global} cases
\begin{itemize}
\item $j\ge C_0$ covering the component $\L^{S}_{m,+}:=\L^{S}_{H,m}+\L^{S}_{TR,+,m}$, and

\item $j\le -C_0$ covering the component $\L^{S}_{m,-}:=\L^{S}_{NL,m}+\L^{S}_{TR,-,m}$,
\end{itemize}
with no further subdivisions adapted to the relative positions of $j$ and $m$.

\subsubsection{Preparatives: analysis of the multiplier and reduction to the main term}

As a preliminary step, we would like to isolate the main term within our form $\L_m^{S}$. In order to do so, we recall \eqref{rhoj} and \eqref{mjS} and focus our attention on the structure of $m_{j,m,m}^S(\xi, \eta)$ assuming, of course, that $\xi,\, \eta\in \textrm{supp}\,m_{j,m,m}^S$. Let
\begin{equation}
t_c:=2^j\Big(\frac{\xi-a\eta}{\a\,\eta}\Big)^{\frac1{\a-1}}
\end{equation}
be the critical point of the phase function defined in \eqref{phasefunction} by
\begin{equation*}
\varphi_{j,\xi,\eta}(t)=-\frac{\xi-a\eta}{2^j}t+\frac{\eta}{2^{\a j}}t^\a.
\end{equation*}

Notice that since $|\varphi_{j,\xi,\eta}''(t)|\approx_\a 2^m$ for all $t\in\supp\rho$, we have
\begin{align*}
|\varphi_{j,\xi,\eta}(t)-\varphi_{j,\xi,\eta}(t_c)|=(t-t_c)^2\,\Big|\int_0^1\int_0^1 r\,\varphi_{j,\xi,\eta}''(t_c+s\,r\,(t-t_c))\, dr\,ds\,\Big|\\[1ex]
\le \|\varphi_{j,\xi,\eta}''\|_{L^\infty(\supp\rho)}\,(t-t_c)^2\lesssim_{\a,\rho} 2^m\,(t-t_c)^2,
\end{align*}
Thus $|\varphi_{j,\xi,\eta}(t)-\varphi_{j,\xi,\eta}(t_c)|\lesssim 1$ whenever $|t-t_c|\lesssim 2^{-\frac{m}2}$.

Next, we choose $\vartheta, \tilde{\vartheta} \in C_0^\infty(\R)$ with $\supp\vartheta\subseteq [-10,10]$ and $ \supp\tilde{\vartheta}\subseteq\{t|\frac{1}{100}<|t|<100\}$ satisfying
\[
1=\vartheta(t)+\sum_{k\in\N}\tilde\vartheta(2^{-k}t)\,\qquad \forall\, t\in\R.
\]
With these done, we write
\begin{equation}
m_{j,m,m}=\mathcal{A}_{j,m}(\xi,\eta)+\sum_{k\in\N}\B_{j,m}^k(\xi,\eta),
\end{equation}
where
\begin{align*}
\mathcal{A}_{j,m}(\xi,\eta)&=\bigg(\int_\R \vartheta(2^{\frac{m}{2}}(t-t_c)) e^{i\varphi_{j,\xi,\eta}(t)}\, \rho(t)dt\bigg)\,\phi\Big(\frac{\xi-a\,\eta}{2^{j+m}}\Big)\,\phi\Big(\frac{\a\,\eta}{2^{\a j+m}}\Big)\\
\intertext{and}
\B_{j,m}^k(\xi,\eta)&=\bigg(\int_\R \tilde{\vartheta}(2^{\frac{m}{2}-k}(t-t_c)) e^{i\varphi_{j,\xi,\eta}(t)}\, \rho(t)dt\bigg)\,\phi\Big(\frac{\xi-a\,\eta}{2^{j+m}}\Big)\,\phi\Big(\frac{\a\,\eta}{2^{\a j+m}}\Big).
\end{align*}

Following similar reasoning with those in \cite{Lie15} we deduce that there exist functions $\{\zeta_k\}_{k\ge 0}$ with $\zeta_k:[\frac1{10},10]\times[\frac1{10},10]\mapsto \R$ and $\|\zeta_k\|_{C^4}\lesssim_{\a,a} 2^{-k}$ such that
\begin{align}
m_{j,m,m}(\xi,\eta)=\sum_{k\ge 0} 2^{-\frac{m}{2}}\,e^{i\varphi_{j,\xi,\eta}(t_c)}\,\zeta_k\Big(\frac{\xi-a\eta}{2^{j+m}},\frac{\a\,\eta}{2^{\a j+m}}\Big) \phi\Big(\frac{\xi-a\eta}{2^{j+m}}\Big)\phi\Big(\frac{\a\,\eta}{2^{\a j+m}}\Big)\,.\raisetag{.3\baselineskip}
\end{align}

Once at this point, standard reasonings reduce matters to the main term $k=0$. Moreover, via a Fourier series argument, one can show that the function $\zeta_0$ can be essentially replaced by the constant function 1.\footnote{For more details on this, the reader in invited to consult Section 5.1 in \cite{Lie15}.} Hence, from now on, we may assume wlog that the multiplier $m^S_j$ is in fact given by the formula
\begin{equation*}
m_j^S(\xi,\eta)=\sum\limits_{m\in\N} m_{j,m,m}(\xi,\eta)=:\sum\limits_{m\in\N} m_{j,m}(\xi,\eta)\,,
\end{equation*}
with
\begin{equation}\label{mjm}
m_{j,m}(\xi,\eta) \approx 2^{-\frac{m}{2}}\,e^{i\,\varphi_{j,\xi,\eta}(t_c)}\,\phi\Big(\frac{\xi-a\eta}{2^{j+m}}\Big)\,\phi\Big(\frac{\a\,\eta}{2^{\a j+m}}\Big)\,.
\end{equation}

\subsubsection{The treatment of $\L^{S}_{m,+}$}\label{LGCplus}
In this section we focus on proving Proposition \ref{L2mdecay} with $\L^S_{m}$ replaced  in \eqref{LSm} by $\L^{S}_{m,+}$. For this purpose, fixing $j,\,m\in\N$ with $j\geq C_0$, we intend to obtain suitable $m$-decaying bounds for the single piece $\L^{S}_{j,m}$.

With this preface, we are now ready to implement the LGC methodology, that in the context of our paper--similar with the work in \cite{Lie19} and in contrast with the approaches in \cite{BBLV21} and \cite{HL23}--embraces a linearization procedure on the frequency side:

$\newline$
\noindent\textbf{Step I: Phase linearization.} Our methodology here starts with a frequency linearization process applied directly to the phase of our multiplier $m_{j,m}$ in \eqref{mjm}.

For this, we first isolate the phase function $\varphi_{j,\xi,\eta}(t_c)$ which, up to the constant $c_{\a}:=-\frac{\a-1}{\a}\cdot \frac{1}{(\a)^{\frac{1}{\a-1}}}$, may be written as
\begin{equation}\label{mainphase}
\psi(\xi,\eta):=\frac{(\xi-a\eta)^{\frac{\a}{\a-1}}}{\eta^{\frac{1}{\a-1}}}\:.
\end{equation}
Next, we analyze its second derivative behavior; for this, we first compute the first order derivatives
\begin{align}
	\partial_{\xi}\psi(\xi,\eta)&=\frac{\a}{\a-1}\,\frac{(\xi-a\eta)^{\frac{1}{\a-1}}}{\eta^{\frac{1}{\a-1}}}\,,\label{mainphase1}\\
	\intertext{and}
	\partial_{\eta}\psi(\xi,\eta)&=-\frac{a\,\a}{\a-1}\,\frac{(\xi-a\eta)^{\frac{1}{\a-1}}}{ \eta^{\frac{1}{\a-1}}}-\frac{1}{\a-1}\,\frac{(\xi-a\eta)^{\frac{\a}{\a-1}}}{\eta^{\frac{\a}{\a-1}}}\,,\label{mainphase2}
\end{align}
followed by the second order derivatives
\begin{align}
	\partial_{\xi}^2 \psi(\xi,\eta)&=\frac{\a}{(\a-1)^2}\,\frac{(\xi-a\eta)^{\frac{2-\a}{\a-1}}}{\eta^{\frac{1}{\a-1}}}\,,\label{mainphase21}\\[1ex]
	\partial_{\xi\eta}^2\psi(\xi,\eta)&=- \frac{\a}{(\a-1)^2}\:\frac{\xi}{\eta}\:\frac{(\xi-a\eta)^{\frac{2-\a}{\a-1}}}{\eta^{\frac{1}{\a-1}}}\,,\label{mainphase22}
	\intertext{and}
\partial_{\eta}^2\psi(\xi,\eta)&=\frac{\a}{(\a-1)^2}\:\frac{\xi^2}{\eta^2}\:\frac{(\xi-a\eta)^{\frac{2-\a}{\a-1}}}{\eta^{\frac{1}{\a-1}}}\,.\label{mainphase23}
\end{align}
Assume now that $\xi,\,\eta\in \textrm{supp}\,m_{j,m}$. Then, from \eqref{mjm} and \eqref{mainphase21}--\eqref{mainphase23} and based on our assumptions $j\in\N$ and $\a>1$ (recall Remark \ref{settlealpha})  we deduce
\begin{equation}\label{mainphasebd}
|\partial_{\xi}^2\psi(\xi,\eta)|,\,|\partial_{\xi\eta}^2\psi(\xi,\eta)|,\,|\partial_{\eta}^2\psi(\xi,\eta)|\lesssim_{a} \frac{|\a|+1}{(\a-1)^2}\,2^{-m-2j}\,.
\end{equation}
In order for the second order term in the Taylor expansion of the phase to be $O(1)$ we must impose
\begin{equation}\label{freqstep}
|\Delta \xi|\,,\,|\Delta \eta|\lesssim 2^{j+\frac{m}{2}}\,.
\end{equation}

Now, in accordance with \eqref{freqstep}, we discretize the frequency plane as follows:
\begin{align}
	\phi\Big(\frac{\xi-a\eta}{2^{j+m}}\Big)&\approx\sum_{u\sim 2^{\frac{m}{2}}} \phi\Big(\frac{\xi-a\eta}{2^{j+\frac{m}{2}}}-u\Big)\,,\label{discr1}
	\intertext{and}
	\phi\Big(\frac{\a\,\eta}{2^{\a j+m}}\Big)&\approx\sum_{\substack{\\v\sim_{{}_{\a}} 2^{\frac{m}{2}+(\a-1)j}}} \phi\Big(\frac{\eta}{2^{j+\frac{m}{2}}}-v\Big)\,.\label{discr2}
\end{align}
Consequently, from \eqref{mjm}, \eqref{discr1} and \eqref{discr2}, we deduce that
\begin{equation}\label{mjmndisc}
m_{j,m}(\xi,\eta) = \sum_{u\sim 2^{\frac{m}{2}}}\, \sum_{v\sim_{{}_{\a}} 2^{\frac{m}{2}+(\a-1)j}} m_{j,m,u,v}(\xi,\eta)\,,
\end{equation}
with
\begin{equation}
\label{mjmndisc1}
	m_{j,m,u,v}(\xi,\eta)\approx 2^{-\frac{m}{2}}\,e^{i\,c_{\a}\,\psi(\xi,\eta)}\,\phi\Big(\frac{\xi}{2^{j+\frac{m}{2}}}-(u+a v)\Big)\,\phi\Big(\frac{\eta}{2^{j+\frac{m}{2}}}-v\Big)\,.
\end{equation}

At this point we write
\begin{equation}\label{mainphases-jpos}
\psi(\xi,\eta)=2^{j+\frac{m}{2}}\,\frac{(\frac{\xi-a\eta}{2^{j+\frac{m}{2}}})^{\frac{\a}{\a-1}}}{(\frac{ \eta}{2^{j+\frac{m}{2}}})^{\frac{1}{\a-1}}}=:\,2^{j+\frac{m}{2}}\,\underline{\psi}\Big(\frac{\xi}{2^{j+\frac{m}{2}}}, \frac{\eta}{2^{j+\frac{m}{2}}}\Big)\,.
\end{equation}

Now on the support of the multiplier $m_{j,m,u,v}$ from the choice of our discretization -- see \eqref{mainphasebd} and \eqref{freqstep}
-- we deduce via a Taylor series development\footnote{Here we assume that $(\xi,\eta)$ satisfies the condition $m_{j,m,u,v}(\xi,\eta)\not=0$.} around the point $(u+a v, v)$ that\footnote{Here the function $\tilde{\psi}_{j,m}$ is a suitable (piecewise) smooth real function on $\R^2$.}
\begin{align}
\varphi_{j,\xi,\eta}(t_c)
&=c^1_{\a,a}\,\xi\,\frac{u^{\frac{1}{\a-1}}}{v^{\frac{1}{\a-1}}}-\eta\bigg(a\, c^1_{\a,a}\,\frac{u^{\frac{1}{\a-1}}}{v^{\frac{1}{\a-1}}}+c^2_{\a,a}\,\frac{u^{\frac{\a}{\a-1}}}{v^{\frac{\a}{\a-1}}}\bigg)+\tilde{\psi}_{j,m}(u,v)+O(1)\,.\label{keyphaseexpr}\raisetag{.3\baselineskip}
\end{align}
Finally, from \eqref{mjmndisc1} and \eqref{keyphaseexpr} we conclude that

\begin{align}
\nonumber\\[-11ex]
\nonumber\intertext{\begin{equation}
		m_{j,m,u,v}(\xi,\eta)\approx \nonumber \end{equation}} \nonumber\\[-4.5ex]
&2^{-\frac{m}{2}}c_{\a,a,u,v}\, e^{i\Big(c_1\, \xi\,\frac{u^{\frac{1}{\a-1}}}{v^{\frac{1}{\a-1}}}
	-\eta\,\big(a c_1\,\frac{u^{\frac{1}{\a-1}}}{v^{\frac{1}{\a-1}}}+
	c_2\,\frac{u^{\frac{\a}{\a-1}}}{v^{\frac{\a}{\a-1}}}\big)\!\Big)}
\phi\Big(\frac{\xi}{2^{j+\frac{m}{2}}}-(u+a v)\Big)\phi\Big(\frac{\eta}{2^{j+\frac{m}{2}}}-v\Big),\label{mjmndisc10}
\end{align}
where here, for notational simplicity we dropped the dependence of the constants $c_1$, $c_2$ on the parameters $\a,\, a$.

Then the form $\L_{j,m}$ with multiplier defined in \eqref{mjm} may be written as
\begin{align}
	\intertext{\begin{equation}\label{tjm}
			\L_{j,m}(f,g,h):= \end{equation}} \\[-4ex]
	&2^{-\frac{m}{2}}\hspace{-.5cm}
	\sum_{\substack{\\u\sim 2^{\frac{m}{2}}\\[.3ex] v\sim_{{}_{\a}} 2^{\frac{m}{2}+(\a-1)j }}}\hspace{-.5cm}
	\int_{\R^3}h(x)\,\hat{f}(\xi)\,\hat{g}(\eta) \: e^{i\Big(c_1\,\xi\,		\frac{u^{\frac{1}{\a-1}}}{v^{\frac{1}{\a-1}}} 		{-}\eta\Big(a c_1\,\tfrac{u^{\frac{1}{\a-1}}}{v^{\frac{1}{\a-1}}} {+}	c_2\frac{u^{\frac{\a}{\a-1}}}{v^{\frac{\a}{\a-1}}}\Big)\Big)}\,\times\nonumber\\[-4ex]
	&\hspace{4cm}\times\,e^{i(\xi{+}\eta)x}\, \phi\Big(\sdfrac{\xi}{2^{j{+}\frac{m}{2}}}{-}(u{+}a v)\Big)
	\phi\Big(\sdfrac{\eta}{2^{j{+}\frac{m}{2}}}{-}v\Big)\,d\xi\, d\eta\, dx.\nonumber
\end{align}

$\newline$
\noindent\textbf{Step II: Adapted Gabor frame discretization: conversion of the curvature into the time-frequency localization of the wave-packets}
$\newline$

Define the Gabor frame $\{\phi_{u}^p\}_{p,u\in\Z}$ by
\begin{equation}\label{GF}
\phi_{u}^p(\xi):=\frac{1}{2^{\frac{j}{2}+\frac{m}{4}}}\,\phi\Big(\frac{\xi}{2^{j+\frac{m}{2}}}-u\Big)\,e^{i\,\frac{\xi}{2^{j+\frac{m}{2}}}\,p}\,,
\end{equation}
and hence
\begin{equation}\label{GFf}
\widehat{\phi_{u}^p}(x)=2^{\frac{j}{2}+\frac{m}{4}}\hat{\phi}\big(2^{j+\frac{m}{2}}x-p\big)\,e^{-i\,(2^{j+\frac{m}{2}}x-p)u}\,.
\end{equation}


With this we decompose
\begin{align}
	\hat{f}(\xi)&=\sum_{w,p\, \in\Z} \langle f,  \check{\phi}_w^p\rangle\, \phi_w^p (\xi)\,,\label{gdf}
	\intertext{and}
	\hat{g}(\eta)&=\sum_{z,r\, \in\Z} \langle g,  \check{\phi}_z^r\rangle\, \phi_z^r (\eta)\,.\label{gdg}
\end{align}

Replacing \eqref{gdf} and \eqref{gdg} in \eqref{tjm} we get
\begin{align}
\intertext{\begin{equation}\label{tjm0}
		\L_{j,m}(f,g,h)\approx 2^{-\frac{m}{2}}
		\sum_{\substack{\\u\sim 2^{\frac{m}{2}}\\ p\in\Z}}\, \sum_{\substack{\\v\sim_{{}_{\a}} 2^{\frac{m}{2}+(\a-1)j}\\ r\in\Z}}\langle f,\check{\phi}_{u+a v}^p\rangle\, \langle g,  \check{\phi}_{v}^r\rangle\, \times \nonumber\end{equation}} \\[-4ex]
\intertext{\begin{equation}
		\times\int_{\R^3} h(x)\,\phi_{u+av}^p(\xi)\,\phi_v^r(\eta)\, e^{i\,\big(c_1\, \xi\,\tfrac{u^{\frac{1}{\a-1}}}{v^{\frac{1}{\a-1}}} -\eta\,\big(a c_1\,\tfrac{u^{\frac{1}{\a-1}}}{v^{\frac{1}{\a-1}}} + c_2\,\tfrac{u^{\frac{\a}{\a-1}}}{v^{\frac{\a}{\a-1}}}\big) \big)}e^{i(\xi+\eta)x} d\xi\, d\eta\, dx\nonumber \end{equation}}\nonumber\\[-4ex]
&\!\!=2^{-\frac{m}{2}}
\hspace{-.8cm}
\sum_{\substack{p, r\in\Z\\[.3ex] u\sim 2^{\tfrac{m}{2}}\\[.3ex] \:v\sim_{{}_{\a}}2^{\tfrac{m}{2}+(\a-1)j}}}\hspace{-.7cm}
\langle f,\check{\phi}_{u+a v}^p\rangle\,\langle g,\check{\phi}_{v}^r\rangle\! \int_{\R}\! h(x)\, \check{\phi}_{u+av}^p\big(\text{\scriptsize\({-}x{-}c_1\frac{u^{\frac{1}{\a{-}1}}}{v^{\frac{1}{\a{-}1}}}\)}\big)\, \check{\phi}_v^r\big(\text{\scriptsize\({-}x{+}a c_1\,\tfrac{u^{\frac{1}{\a{-}1}}}{v^{\frac{1}{\a{-}1}}}{+}
	c_2\,\tfrac{u^{\frac{\a}{\a{-}1}}}{v^{\frac{\a}{\a{-}1}}}\)}\big)\,dx\,. \raisetag{1.5\baselineskip}\label{ljm}
\end{align}

Now from \eqref{GFf} and the last line above we deduce that the main term in \eqref{tjm0} is achieved when the following
\emph{time-frequency correlation} condition takes place

\begin{equation}\label{tjm1}
	\boxed{\qquad -p-2^{j+\frac{m}{2}}\,c_1\,\frac{u^{\frac{1}{\a-1}}}{v^{\frac{1}{\a-1}}}=
	-r+2^{j+\frac{m}{2}}\,a\,c_1\,\frac{u^{\frac{1}{\a-1}}}{v^{\frac{1}{\a-1}}}+ 2^{j+\frac{m}{2}}\,c_2\,\frac{u^{\frac{\a}{\a-1}}}{v^{\frac{\a}{\a-1}}}\,+O(1)\:.}
\end{equation}

Now, based on \eqref{tjm1}, we conclude that the study of \eqref{ljm} reduces to\footnote{Here and throughout the reminder of the paper any sub/super index attached to a wave-packet is understood in the sense of entire part, \textit{i.e.}  $\phi_{e}^d\equiv \phi_{e}^{\lfloor d\rfloor}$.}

\begin{align}
&|\L_{j,m}(f,g,h)|\lesssim
2^{\frac{j}{2}{-}\frac{m}{4}}\,\hspace{-.2cm}\sum_{\substack{\\u\sim 2^{\frac{m}{2}}\\ r\in\Z}} \sum_{\substack{\\v\sim_{{}_{\a}} 2^{\frac{m}{2}{+}j(\a{-}1)}}}
\hspace{-.4cm}\Big|\Big\langle f,  \check{\phi}_{u+ av}^{r-2^{j+\frac{m}{2}}\,(1+a) c_1\,\frac{u^{\frac{1}{\a-1}}}{v^{\frac{1}{\a-1}}}-
	2^{j+\frac{m}{2}}\,c_2\,\frac{u^{\frac{\a}{\a-1}}}{v^{\frac{\a}{\a-1}}}}\Big\rangle\Big|\,\nonumber\\[-2ex]
&\hspace{5cm}\times
|\langle g,  \check{\phi}_{v}^r\rangle|\,
\Big|\Big\langle h,  \check{\phi}_{u+(1+a) v}^{r- 2^{j+\frac{m}{2}}\,a c_1\,\frac{u^{\frac{1}{\a-1}}}{v^{\frac{1}{\a-1}}}-
	2^{j+\frac{m}{2}}c_2\,\frac{u^{\frac{\a}{\a-1}}}{v^{\frac{\a}{\a-1}}}}\Big\rangle\Big|\,.\label{ljm0a}
\end{align}

\begin{remark}[\textsf{Zero-nonzero curvature paradigm via time-frequency correlations}]\label{tfrcor} We take now a moment in order to elaborate on the expressions \eqref{tjm1} and \eqref{ljm0a}. A preliminary inspection of \eqref{tjm1} reveals the presence of a suitable \emph{correlation} among the spatial parameters $p,\,r$ and the frequency parameters $u,\,v$. This time-frequency intertwining, is a direct expression of the zero--nonzero curvature paradigm encapsulated within the shape of $\g(t)=a\,t+b\,t^\a$ appearing in the original definition of $H_{a,b}^\a$---see \eqref{bht}. Indeed, we have the following:
\begin{itemize}
\item In the \emph{asymptotical/purely zero curvature regime} conceived as the limiting situation when $\a$ tends to $1$ (morally equivalent with the case $b=0$) we notice that the spatial and frequency parameters \emph{decouple} with \eqref{tjm1} becoming the simple spatial condition $p=r$. The latter,  transforms \eqref{ljm0a} into a suitably averaged and truncated form of the discretized wave-packet model corresponding to the classical Bilinear Hilbert transform. Of course, as expected, such a situation produces no decay in the parameter $m$ and thus no meaningful analog of Proposition \ref{L2mdecay} is possible.

\item In the \emph{purely non-zero curvature regime} obtained by setting $a=0$, the key time-frequency intertwining in \eqref{tjm1} remains valid while in \eqref{ljm0a} the frequency locations of the Gabor coefficients for $f$ and $g$ \emph{decouple}. In such a situation, the statement and proof of Proposition \ref{L2mdecay} hold true with many of the reasonings therein admitting significant simplifications.

\item In the \emph{hybrid curvature regime} when both $a$ and $b$ are non-zero (and $\a\in \R\setminus\{1\}$ fixed) the full force of our methods becomes employed; however, in a quite remarkable twist, and, in analogy to the situations encountered in the case of Polynomial Carleson (\cite{Lie09},\cite{Lie20}) or the Bilinear Hilbert--Carleson operators (\cite{BBLV21}), in order for Proposition \ref{L2mdecay} to hold we have to exclude the quadratic case $\a=2$. For more on the latter, the reader is invited to consult Section \ref{H2discussion}.
\end{itemize}

\end{remark}

$\newline$
\noindent\textbf{Step III: Cancelation via $TT^{*}$-method, phase-level set analysis and time-frequency correlations}\label{CaL}
$\newline$

We start our section with the following commentary: in view of Observations \ref{const}, \ref{Unifb} and Remark \ref{tfrcor} our main focus relies on the genuine hybrid case when both $a\not=0$ and $b\not=0$, and thus, without loss of generality,  we will assume from now on
\begin{equation}\label{abnlone}
		a=1\qquad\textrm{in addition to}\qquad b=1\,.
\end{equation}
As a consequence, in this new setting, relation \eqref{ljm0a} becomes:
\begin{align}
&|\L_{j,m}(f,g,h)|\lesssim
2^{\frac{j}{2}{-}\frac{m}{4}}\,\hspace{-.2cm}\sum_{\substack{\\u\sim 2^{\frac{m}{2}}\\ r\in\Z}} \sum_{\substack{\\v\sim_{{}_{\a}} 2^{\frac{m}{2}{+}j(\a{-}1)}}}
\hspace{-.4cm}\Big|\Big\langle f,  \check{\phi}_{u+v}^{r-2\cdot2^{j+\frac{m}{2}}c_1\,\frac{u^{\frac{1}{\a-1}}}{v^{\frac{1}{\a-1}}}-
	2^{j+\frac{m}{2}}\,c_2\,\frac{u^{\frac{\a}{\a-1}}}{v^{\frac{\a}{\a-1}}}}\Big\rangle\Big|\,\nonumber\\[-2ex]
&\hspace{5cm}\times
|\langle g,  \check{\phi}_{v}^r\rangle|\,
\Big|\Big\langle h,  \check{\phi}_{u+2v}^{r- 2^{j+\frac{m}{2}}\,c_1\,\frac{u^{\frac{1}{\a-1}}}{v^{\frac{1}{\a-1}}}-
	2^{j+\frac{m}{2}}c_2\,\frac{u^{\frac{\a}{\a-1}}}{v^{\frac{\a}{\a-1}}}}\Big\rangle\Big|\,,\label{ljm0}
\end{align}
We now perform the following change of variable\footnote{Throughout this section the constants $c_1,\,c_2$ are allowed to change from line to line.} (in the prescribed order)
$$r\longmapsto r+ 2^{j+\frac{m}{2}}\,c_1\,\frac{u^{\frac{1}{\a-1}}}{v^{\frac{1}{\a-1}}}+
2^{j+\frac{m}{2}}\,c_2\,\frac{u^{\frac{\a}{\a-1}}}{v^{\frac{\a}{\a-1}}}$$
$$\textrm{and}\qquad u+2v\longmapsto v\qquad\:\&\:\qquad u+v\longmapsto u $$
which, after a suitable splitting of the summation domain in $u$ and $v$, recasts the term $|\L_{j,m}(f,g,h)|$ as a superposition of at most $c(\a)$ terms of the form
\begin{align}
2^{\frac{j}{2}-\frac{m}{4}}\sum_{\substack{\\|v|\in \mathfrak{I}^{m,j}}} \sum_{\substack{\\|2u-v|\in \mathfrak{I}^{m,0}\\ r\in\Z}} &
\Big|\Big\langle f,  \check{\phi}_u^{r-2^{j+\frac{m}{2}}c_1\frac{(2u-v)^{\frac{1}{\a-1}}}{(v-u)^{\frac{1}{\a-1}}}}\Big\rangle\Big|\times\nonumber\\[-2ex]
&\Big|\Big\langle g,  \check{\phi}_{v-u}^{r+2^{j+\frac{m}{2}}c_1\frac{(2u-v)^{\frac{1}{\a-1}}}{(v-u)^{\frac{1}{\a-1}}}+
	2^{j+\frac{m}{2}}\,c_2\,\frac{(2u-v)^{\frac{\a}{\a-1}}}{(v-u)^{\frac{\a}{\a-1}}}}\Big\rangle\Big|\,
| \langle h,  \check{\phi}_{v}^{r}\rangle|\,,\nonumber
\end{align}
where here we set $\mathfrak{I}^{m,j}:=[2^{\frac{m}{2}+(\a-1)j},\,2^{\frac{m}{2}+(\a-1)j+1}]$ when $j\geq 0$.

Once at this point we remark
\begin{itemize}
\item the almost disjointness of the set of frequencies given by the set of indices $\big\{(u,v-u)\big\}_{(|2u-v|,|v|)\in \mathfrak{I}^{m,0}\times \mathfrak{I}^{m,j,\ell}}$\, as $\ell\in \{1,\ldots, 2^{(\a-1)j}\}$ where here $\mathfrak{I}^{m,j,\ell}:=\big[2^{\frac{m}{2}+(\a-1)j}+ 2^{\frac{m}{2}}(\ell-1),\,2^{\frac{m}{2}+(\a-1)j}+2^{\frac{m}{2}}\ell\big]$;

\item the almost disjointness of the set of spatial locations given by
\[
\Big\{\big(r- 2^{j{+}\frac{m}{2}}c_1\frac{(2u{-}v)^{\frac{1}{\a{-}1}}}{(v{-}u)^{\frac{1}{\a{-}1}}},\,r+2^{j{+}\frac{m}{2}}c_1\frac{(2u{-}v)^{\frac{1}{\a{-}1}}}{(v{-}u)^{\frac{1}{\a{-}1}}} +
	2^{j{+}\frac{m}{2}}c_2\frac{(2u{-}v)^{\frac{\a}{\a{-}1}}}{(v{-}u)^{\frac{\a}{\a{-}1}}}\big)\Big\}_{\!r\in R(r_0)}\,,
\]
\end{itemize}
as $r_0$ spans $\Z$, where $R(r_0):=[ 2^{\frac{m}{2}}r_0,\,2^{\frac{m}{2}}(r_0+1)]$.

Based on the above remark, setting $R=R(0)$ and
\begin{equation*}\label{IJ}
\mathfrak{J}:=\mathfrak{I}^{m,j,1}\!\quad\!\textrm{and}\quad
\tilde{\mathfrak{J}}:=\big[\tfrac{1}{2}c(\mathfrak{J})-2^{\frac{m}{2}+2}, \tfrac{1}{2}c(\mathfrak{J})-2^{\frac{m}{2}+1}\big]\cup \big[\tfrac{1}{2}c(\mathfrak{J})+2^{\frac{m}{2}+1}, \tfrac{1}{2}c(\mathfrak{J})+2^{\frac{m}{2}+2}\big]\,,
\end{equation*}
we deduce that it is enough to estimate the term
\begin{align}
\L_{j,m}^{R,\mathfrak{J}}(f,g,h)&:=2^{\frac{j}{2}{-}\frac{m}{4}}\sum_{\substack{u\in \tilde{\mathfrak{J}}\\ v\in \mathfrak{J}\\r\in R}}
\Big|\Big\langle f,  \check{\phi}_u^{r-2^{j+\frac{m}{2}}c_1\frac{(2u-v)^{\frac{1}{\a-1}}}{(v-u)^{\frac{1}{\a-1}}}}\Big\rangle\Big|\times\label{ljm1}\\[-1ex]
&\hspace{2cm}\times
\Big|\Big\langle g,  \check{\phi}_{v-u}^{r+2^{j+\frac{m}{2}}c_1\frac{(2u-v)^{\frac{1}{\a-1}}}{(v-u)^{\frac{1}{\a-1}}}+
	2^{j+\frac{m}{2}}\,c_2\,\frac{(2u-v)^{\frac{\a}{\a-1}}}{(v-u)^{\frac{\a}{\a-1}}}}\Big\rangle\Big|\,
| \langle h,  \check{\phi}_{v}^{r}\rangle|\,.\nonumber
\end{align}
The following single scale $m-$decaying estimates constitutes the key ingredient for proving Theorem \ref{thm:mS}:
$\newline$

\noindent\textbf{Main Proposition (+)} \emph{Fix $j, m\in\N$ with $j\geq C_0$ and let $R$, $\mathfrak{J}$ and $\tilde{\mathfrak{J}}$ be intervals as above and
\begin{equation}\label{DeltaIK}
\Delta_{j,\frac{m}{2}}^{R,\mathfrak{J}}{f}:=\left(\:\sum_{\substack{u\in \mathfrak{J}\\[.1ex] r\in R}}\:|\langle f,\check{\phi}_u^r\rangle|^2\:\right)^{\frac12}\,.	
\end{equation}
 Then, for $\L_{j,m}^{R,\mathfrak{J}}$ defined by \eqref{ljm1}, we have that there exist
$\d_0\in(0,\frac{1}{4})$ and $c=c(c_1,c_2)>0$ absolute constants such that the following holds uniformly in $j,m\in\N$:
\begin{equation}\label{ljmkL}
	\L_{j,m}^{R,\mathfrak{J}}(f,g,h)\lesssim_{\a} 2^{-\d_0 m}\, 2^{\frac{j}{2}}
	\Big(\Delta_{j,\frac{m}{2}}^{c R, 3\tilde{\mathfrak{J}}}f\Big)
	\Big(\Delta_{j,\frac{m}{2}}^{c R,3\tilde{\mathfrak{J}}} g\Big) \Big(\Delta_{j,\frac{m}{2}}^{R,\mathfrak{J}}h \Big)\:.	
\end{equation}
}

The proof of the above proposition will be performed in several steps.

$\newline$
\noindent\textbf{Step III.1. Frequency fiber foliation: a sparse--uniform dichotomy}\label{3p1}
$\newline$

Fix $\d\in (0,\frac{1}{4})$ to be chosen later and define the set of \emph{$(\d,f)$-sparse frequency fibers} associated to the spatial location $R$ and the frequency location $\mathfrak{J}$ as
\begin{equation}\label{corel}
	\boxed{\mathcal{S}_{\d}^{R,\mathfrak{J}}(f):=\big\{u\in \mathfrak{J}\,|\,\max_{r\in R}\,  |\langle f,  \check{\phi}_{u}^{r}\rangle|\geq 2^{-\d m}\,\Delta_{j,\frac{m}{2}}^{R, \mathfrak{J}}(f)\big\}\,.}
\end{equation}

Notice that from Chebyshev's inequality one has
\beq\label{corelcard}
\#\mathcal{S}_{\d}^{c R,\tilde{\mathfrak{J}}}(f)\leq 2^{2 \d m}\:.
\eeq

We now split
\beq\label{ljm2}
\L_{j,m}^{R,\mathfrak{J}}(f,g,h)= \L_{j,m,\textrm{sparse}}^{R,\mathfrak{J}}(f,g,h)\,+\,\L_{j,m,\textrm{unif}}^{R,\mathfrak{J}}(f,g,h)\,,
\eeq
where
\begin{itemize}
	\item the \emph{sparse} (heavy) component is defined as
\begin{align}
	\L_{j,m,\textrm{sparse}}^{R,\mathfrak{J}}(f,g,h)&:=2^{\frac{j}{2}-\frac{m}{4}}\,\sum_{\substack{u\in \mathcal{S}_{\d}^{c R,\tilde{\mathfrak{J}}}(f)\\[.1ex] v\in \mathfrak{J}\,,\: r\in R}}\Big(\ldots\Big)  \,,\label{ljm3}
	\intertext{\item the \emph{uniform} (light) component is given by}
	\L_{j,m,\textrm{unif}}^{R,\mathfrak{J}}(f,g,h)&:=2^{\frac{j}{2}-\frac{m}{4}}\,\sum_{\substack{u\in \mathcal{U}_{\d}^{cR,\tilde{\mathfrak{J}}}(f)\\[.1ex] v\in \mathfrak{J}\,,\: r\in R}}\Big(\ldots\Big)\,,\label{ljm4}
\end{align}
\end{itemize}
where here we set $\mathcal{U}_{\d}^{cR,\tilde{\mathfrak{J}}}(f):=\tilde{\mathfrak{J}}\:\setminus\: \mathcal{S}_{\d}^{c R,\tilde{\mathfrak{J}}}(f)$.

We start with the treatment of the easier term:

$\newline$
\noindent\textbf{Step III.2. The sparse component}\label{Th}
$\newline$

Applying first a Cauchy--Schwarz in $u$ followed by a Cauchy--Schwarz in $v,r$ and then followed by the change of variable
$r\longmapsto r + 2^{j+\frac{m}{2}}\,c_1\,\frac{(2u-v)^{\frac{1}{\a-1}}}{(u-v)^{\frac{1}{\a-1}}}$
we get
\beq\label{corrterm}
\L_{j,m,\textrm{sparse}}^{R,\mathfrak{J}}(f,g,h)\leq 2^{\frac{j}{2}-\frac{m}{4}}\,\left( \#\mathcal{S}_{\d}^{c R,\tilde{\mathfrak{J}}}(f)\right)^{\frac{1}{2}}\bigg(\sum_{{v\in \mathfrak{J}}\atop{r\in R}}  |\langle h,  \check{\phi}_{v}^{r}\rangle|^2\bigg)^{\frac{1}{2}}\times
\eeq
\[
\times\left(\sum_{\substack{u\in \tilde{\mathfrak{J}}\\r\in c R}}
|\langle f, \check{\phi}_{u}^{r}\rangle|^2\,
\left(\sum_{v\in \mathfrak{J}} \Big|\Big\langle g,  \check{\phi}_{v-u}^{r+2^{j+\frac{m}{2}+1}\,c_1\,\frac{(2u-v)^{\frac{1}{\a-1}}}{(v-u)^{\frac{1}{\a-1}}}+
2^{j+\frac{m}{2}}\,c_2\,\frac{(2u-v)^{\frac{\a}{\a-1}}}{(v-u)^{\frac{\a}{\a-1}}}}\Big\rangle\Big|^2\,\right)\right)^{\frac{1}{2}}.
\]

Using now \eqref{corelcard} we conclude that

\begin{equation}\label{ljmkLclust}
\L_{j,m,\textrm{sparse}}^{R,\mathfrak{J}}(f,g,h)\lesssim_{\a} 2^{-(\frac{1}{4}-\d) m}\, 2^{\frac{j}{2}}\,\Delta_{j,\frac{m}{2}}^{c R, 3\tilde{\mathfrak{J}}}\!(f)\: \Delta_{j,\frac{m}{2}}^{c R,3\tilde{\mathfrak{J}}}\!(g)\:\Delta_{j,\frac{m}{2}}^{R,\mathfrak{J}}\!(h)\:.	
\end{equation}

$\newline$
\noindent\textbf{Step III.3. The uniform component}\label{Tu}
$\newline$

The treatment of the uniform term relies fundamentally on the properties of the time-frequency correlation set \eqref{tfreqcor1} which are derived via a suitable phase level set analysis. The latter becomes the quintessential ingredient in our approach meant to exploit the non-zero curvature features\footnote{See also Remark \ref{tfrcor}.} of $\g$.

To properly set the context, we first consider an arbitrary (measurable) function
\beq\label{vmeas}
v:\,R\,\longrightarrow\,\mathfrak{J}
\eeq
whose meaning will become transparent momentarily.

$\newline$
\noindent\textsf{Step III.3.1. Time-frequency correlations}\label{TFC}
$\newline$

For $u\in \mathcal{U}_{\d}^{cR,\tilde{\mathfrak{J}}}\!(f)$  and $\kappa\in c R$ we define the $(u,\kappa)$-sets
\beq\label{tfreqcor}
\I_{u,\kappa}^{R,\mathfrak{J}}:=\left\{r\in R\,\Bigg|\,\Big|r- c_1\,2^{j+\frac{m}{2}}\,\frac{(2u-v(r))^{\frac{1}{\a-1}}}{(v(r)-u)^{\frac{1}{\a-1}}}-\kappa\Big|\leq 10\right\}\:.
\eeq
Let now $\ep\in (0,\frac{\d}{2})$ small (to be chosen later) and define the \emph{time-frequency correlation set} (depending on $v(\cdot)$) as
\beq\label{tfreqcor1}
\boxed{\mathcal{C}_{\ep,\d,+}^{R,\mathfrak{J}}:=\big\{u\in \mathcal{U}_{\d}^{cR,\tilde{\mathfrak{J}}}\!(f)\,|\,\exists\:\kappa=\kappa(u)\in c R\:\:\:\textrm{s.t.}\:\:\:\#\I_{u,\kappa}^{R,\mathfrak{J}}\gtrsim 2^{(\frac{1}{2}-\ep)\,m}\big\}\:.}
\eeq

Notice in particular that the above may be interpreted as the existence of a \emph{time-frequency correlation function}  $\kappa$
with
$$\kappa:\,\mathcal{U}_{\d}^{cR,\tilde{\mathfrak{J}}}\!(f)\,\mapsto\,c R\:\:(\textrm{measurable})$$
such that the size of the set $\I_{u,\kappa}^{R,\mathfrak{J}}$ is (suitably) large as $u$ covers $\mathcal{C}_{\ep,\d,+}^{R,\mathfrak{J}}$\,.

$\newline$
\noindent\textsf{Step III.3.2. Time-frequency correlation set analysis: exploiting the curvature}\label{TFCsize}
$\newline$

The crux of our argument relies on the following claim: \emph{uniformly in the choice of $v(\cdot)$ in \eqref{vmeas}, the size of the time-frequency correlation set is suitably small}, i.e.
\beq\label{keyrel}
\boxed{\exists\:\mu=\mu(\ep)>\frac{1}{2}-2\d\:\:\quad\textrm{s.t.}\quad\forall\:v\:\quad\#\mathcal{C}_{\ep,\d,+}^{R,\mathfrak{J}}\lesssim 2^{(\frac{1}{2}-\mu)\,m}\:.}
\eeq

\begin{proof}
In what follows we will show that there exists\footnote{Throughout this section the parameter $C$ is allowed to change from line to line.} $C=C(\a)>1$ such that
\beq\label{keyrel1}
\#\mathcal{C}_{\ep,\d,+}^{R,\mathfrak{J}}\leq C\,2^{4\ep\,m}\:,
\eeq
and hence one may choose in \eqref{keyrel} $\mu=\frac{1}{2}-4\ep$.

Assume by contradiction that \eqref{keyrel1} fails. If this were indeed the case, one could verify the hypothesis of Lemma 50 in \cite{Lie19} for $I_l \longrightarrow \tilde{\I}_{u_l,\kappa_l}^{R,\mathfrak{J}}(v)$ (with the latter playing the role of the $2^{-\frac{m}{2}}$ normalization of the set $\I_{u_l,\kappa_l}^{R,\mathfrak{J}}(v)$), $M=2^{\ep m}$,\, $N=\#\mathcal{C}_{\ep,\d,+}^{R,\mathfrak{J}}\geq C\,2^{4\ep\,m}$, and $n=2$ in order to deduce that there exists $u_1,\,u_2\in \mathcal{U}_{\d}^{cR,\tilde{\mathfrak{J}}}\!(f)$ with $|u_1-u_2|\geq C\,2^{2\ep m}$ such that
\beq\label{keyrel2}
\#\Big(\I_{u_1,\kappa_1}^{R,\mathfrak{J}}(v)\cap \I_{u_2,\kappa_2}^{R,\mathfrak{J}}(v)\Big)\geq 2^{(\frac{1}{2}-2\ep)\,m-1}\:.
\eeq
From this we further deduce that
\beq\label{KEY0}
\textit{there exist}\:\:|u_1-u_2|\geq C\,2^{2\ep m}\:\:\textit{and}\:\:|r_1-r_2|\geq 2^{(\frac{1}{2}-2\ep)\,m-1}\:\:\textit{such that}
\eeq
\beq\label{keyrel3}
\left|r_s- c_1\,2^{j+\frac{m}{2}}\,\frac{(2u_l-v(r_s))^{\frac{1}{\a-1}}}{(v(r_s)-u_l)^{\frac{1}{\a-1}}}-\kappa_l\right|\leq 10\qquad \forall\:s,l\in\{1,2\}\:.
\eeq

Subtracting for each fixed $l$ the corresponding two relations in $s$ we obtain:
\beq\label{keyrel4}
\left|r_2-r_1+ c_1\,2^{j+\frac{m}{2}}\,\sdfrac{(2u_l-v(r_2))^{\frac{1}{\a-1}}}{(v(r_2)-u_l)^{\frac{1}{\a-1}}}-
c_1\,2^{j+\frac{m}{2}}\,\sdfrac{(2u_l-v(r_1))^{\frac{1}{\a-1}}}{(v(r_1)-u_l)^{\frac{1}{\a-1}}}\right|\leq 20\:,\:\:\:\:l\in\{1,2\}\:.
\eeq
Define now the function $\zeta:\,\tilde{\mathfrak{J}}\,\longrightarrow\,\R$ by
\beq\label{keyrel5}
\zeta(u):=2^{j+\frac{m}{2}}\,\frac{(2u-v(r_2))^{\frac{1}{\a-1}}}{(v(r_2)-u)^{\frac{1}{\a-1}}}-
2^{j+\frac{m}{2}}\,\frac{(2u-v(r_1))^{\frac{1}{\a-1}}}{(v(r_1)-u)^{\frac{1}{\a-1}}}\:.
\eeq
Notice that \eqref{keyrel4} can be rewritten now as
\beq\label{keyrel6}
\left|r_2-r_1+ c_1\,\zeta(u_l)\right|\leq 20\:,\:\:\:\:l\in\{1,2\}\:,
\eeq
while if we subtract the two relations in \eqref{keyrel6} we deduce
\beq\label{keyrel7}
\left|\zeta(u_1)-\zeta(u_2)\right|\lesssim 1\:.
\eeq
At this point we remark that
\beq\label{keyrel8}
\zeta'(u):=\sdfrac{2^{j+\frac{m}{2}}}{\a-1}\,\sdfrac{v(r_2)}{(v(r_2)-u)^2}\,\sdfrac{(2u-v(r_2))^{\frac{2-\a}{\a-1}}}{(v(r_2)-u)^{\frac{2-\a}{\a-1}}}-
\sdfrac{2^{j+\frac{m}{2}}}{\a-1}\,\sdfrac{v(r_1)}{(v(r_1)-u)^2}\,\sdfrac{(2u-v(r_1))^{\frac{2-\a}{\a-1}}}{(v(r_1)-u)^{\frac{2-\a}{\a-1}}}\:.
\eeq
Define further the function
\beq\label{f1}
\vartheta(v):=\frac{v}{(v-u)^2}\,\frac{(2u-v)^{\frac{2-\a}{\a-1}}}{(v-u)^{\frac{2-\a}{\a-1}}}\,,
\eeq
and notice that
\begin{align}
\vartheta'(v)&=-\frac{v+u}{(v-u)^3}\,\Big(\frac{2u-v}{v-u}\Big)^{\frac{2-\a}{\a-1}}-\,\frac{2-\a}{\a-1}\, \frac{u\,v}{(v-u)^4}\, \Big(\frac{2u-v}{v-u}\Big)^{\frac{2-\a}{\a-1}-1}\,.\label{f3}
\end{align}

Now based on the range for our parameters $u,v$ we have
\beq\label{f4}
|v-u|,\,u,\,v\approx 2^{\frac{m}{2}+(\a-1)j}\:\:\:\textrm{and}\:\:\:|2u-v|\approx 2^{\frac{m}{2}}\,.
\eeq
Since we are in the setting $\a>1$ the last term in \eqref{f3} is the dominant one and
\beq\label{f5}
|\vartheta'(v)|\approx \frac{|2-\a|}{|\a-1|}\,\frac{1}{2^{m+j}}\,.
\eeq
Applying twice the mean value theorem and using \eqref{keyrel8}--\eqref{f5} we have that \eqref{keyrel7} implies
\beq\label{key10}
|u_1-u_2|\,|v(r_1)-v(r_2)|\lesssim_{\a} 2^{\frac{m}{2}}\,.
\eeq
Similar reasonings show that
\beq\label{keyrel10}
|\zeta(u)|\approx 2^{j+\frac{m}{2}}\,|v(r_2)-v(r_1)|\,\frac{|u|}{|v_0-u|^2}\frac{|2u-v_0|^{\frac{1}{\a-1}-1}}{|v_0-u|^{\frac{1}{\a-1}-1}}\:\:\:\textrm{for some}\:\:v_0\in[v(r_1),v(r_2)]\:.
\eeq
Using \eqref{f4}, one deduces
\beq\label{keyrel11}
|\zeta(u)|\approx |v(r_2)-v(r_1)|\:.
\eeq
Inserting now \eqref{keyrel11} in \eqref{keyrel6} and using \eqref{KEY0}, we have that
\beq\label{keyrel12}
|v(r_2)-v(r_1)|\approx|r_2-r_1|\:,
\eeq
and thus from \eqref{keyrel12} and \eqref{key10}, we conclude
\beq\label{KEY}
|u_1-u_2|\,|r_1-r_2|\lesssim_{\a} 2^{\frac{m}{2}}\,.
\eeq
However \eqref{KEY} contradicts the choice in \eqref{KEY0} (for a suitable choice of $C$) hence  \eqref{keyrel1} must hold.
\end{proof}

$\newline$
\noindent\textsf{Step III.3.3. Completing the puzzle}\label{Tuniffinal}
$\newline$

In what follows we list the key remaining arguments for completing our proof:

\begin{itemize}
\item Cauchy--Schwarz in the $u$ variable:
\begin{align}
	&\L_{j,m,\text{unif}}^{R,\mathfrak{J}}(f,g,h)\le 2^{\frac{j}{2}-\frac{m}{4}}\,\sum_{{v\in \mathfrak{J}}\atop{r\in R}} \left(\sum_{u\in \mathcal{U}_{\d}^{cR,\tilde{\mathfrak{J}}}(f)}  \Big|\Big\langle f,  \check{\phi}_{u}^{r- c_1\,2^{j+\frac{m}{2}}\,\frac{(2u-v)^{\frac{1}{\a-1}}}{(v-u)^{\frac{1}{\a-1}}}}\Big\rangle\Big|^2\right)^{\frac{1}{2}}\times\nonumber\\
	&\hspace{1cm}\times\left(\sum_{u\in \mathcal{U}_{\d}^{cR,\tilde{\mathfrak{J}}}(f)} \Big|\Big\langle g,  \check{\phi}_{v-u}^{r+2^{j+\frac{m}{2}}\,c_1\,\frac{(2u-v)^{\frac{1}{\a-1}}}{(v-u)^{\frac{1}{\a-1}}}+
		2^{j+\frac{m}{2}}\,c_2\,\frac{(2u-v)^{\frac{\a}{\a-1}}}{(v-u)^{\frac{\a}{\a-1}}}}\Big\rangle\Big|^2\right)^{\frac{1}{2}}\, |\langle h,  \check{\phi}_{v}^{r}\rangle|\,.\label{ljmunif1}
\end{align}

 \item $\ell^{\infty}\times \ell^2\times \ell^2$ H\"older inequality in the $v$ parameter:
\begin{align}
&\L_{j,m,\text{unif}}^{R,\mathfrak{J}}(f,g,h)\leq 2^{\frac{j}{2}-\frac{m}{4}}\,\sum_{r\in R} \left(\sup_{v\in \mathfrak{J}}\left(\sum_{u\in \mathcal{U}_{\d}^{cR,\tilde{\mathfrak{J}}}(f)}\hspace{-.3cm}  |\langle f,  \check{\phi}_{u}^{r- c_1\,2^{j+\frac{m}{2}}\,\frac{(2u-v)^{\frac{1}{\a-1}}}{(v-u)^{\frac{1}{\a-1}}}}\rangle|^2\right)^{\frac{1}{2}}\right)\times\nonumber\\
&\times\left(\sum_{v\in \mathfrak{J}}\sum_{u\in \tilde{\mathfrak{J}}} |\langle g,  \check{\phi}_{v-u}^{r+2^{j+\frac{m}{2}}\,c_1\,\frac{(2u-v)^{\frac{1}{\a-1}}}{(v-u)^{\frac{1}{\a-1}}}+
	2^{j+\frac{m}{2}}\,c_2\,\frac{(2u-v)^{\frac{\a}{\a-1}}}{(v-u)^{\frac{\a}{\a-1}}}}\rangle|^2\right)^{\frac{1}{2}}\, \left(\sum_{v\in \mathfrak{J}}|\langle h,  \check{\phi}_{v}^{r}\rangle|^2\right)^{\frac{1}{2}}\,.\label{ljmunif2}
\end{align}

\item for all $r\in R$ one has that
\beq\label{ljmunifg}
\sum_{v\in \mathfrak{J}}\sum_{u\in \tilde{\mathfrak{J}}} |\langle g,  \check{\phi}_{v-u}^{r+2^{j+\frac{m}{2}}\,c_1\,\frac{(2u-v)^{\frac{1}{\a-1}}}{(v-u)^{\frac{1}{\a-1}}}+
2^{j+\frac{m}{2}}\,c_2\,\frac{(2u-v)^{\frac{\a}{\a-1}}}{(v-u)^{\frac{\a}{\a-1}}}}\rangle|^2\lesssim \left(\Delta_{j,\frac{m}{2}}^{c R, 3\tilde{\mathfrak{J}}}(g)\right)^2\,.
\eeq
\end{itemize}
Indeed, in order to verify this claim one first applies the change of variable  $v\longmapsto v-u$ and then  taking the function
$$\vartheta_v(u):=c_1\,2^{j+\frac{m}{2}}\,\Big(\frac{u-v}{v}\Big)^{\frac{1}{\a-1}}\,+
\,c_2\,2^{j+\frac{m}{2}}\,\Big(\frac{u-v}{v}\Big)^{\frac{\a}{\a-1}}$$
under the assumption $v\sim 2^{\frac{m}{2}+(\a-1)\,j}$ and $u-v\sim 2^{\frac{m}{2}}$ one verifies that
\beq\label{growth}
\left|\frac{\partial}{\partial u}\,\vartheta_v(u)\right|\approx 1\,.
\eeq
It is worth noticing here that the validity of \eqref{growth} is a consequence of the condition $|j|\ge C_0$ for some $C_0$ large enough relative to the values of $c_1,\,c_2$ and $\a$.

\begin{itemize}
\item  Cauchy--Schwarz in the $r$ variable:
 \end{itemize}
\begin{equation}\label{ljmunif4}
\L_{j,m,\text{unif}}^{R,\mathfrak{J}}(f,g,h)\leq 2^{\frac{j}{2}{-}\frac{m}{4}}
\Delta_{j,\frac{m}{2}}^{c R,3\tilde{\mathfrak{J}}}\:\!\!(g)\, \Delta_{j,\frac{m}{2}}^{R,\mathfrak{J}} (h) \hspace{-.1cm}
\left(\!\!\sum_{\substack{u\in \mathcal{U}_{\d}^{cR,\tilde{\mathfrak{J}}}\!(f)\\r\in R }} \hspace{-.5cm}
|\langle f,  \check{\phi}_{u}^{\hspace{-.1cm} r{-} c_1 2^{j{+}\frac{m}{2}}\frac{(2u{-}v(r))^{\frac{1}{\a{-}1}}}{(v(r){-}u)^{\frac{1}{\a{-}1}}}
}\rangle|^2\! \right)^{\hspace{-.1cm}\frac{1}{2}},
\end{equation}
where here $v(\cdot)$ is a/the measurable function that attains the supremum in \eqref{ljmunif2}.

\begin{itemize}
\item We split the analysis depending on the relative position of the frequency location $u$ to the set $\mathcal{C}_{\ep,\d,+}^{R,\mathfrak{J}}$:
\end{itemize}

Denote by
\begin{align}
	S^{+}&:=\sum_{r\in R}\,\sum_{u\in \mathcal{U}_{\d}^{cR,\tilde{\mathfrak{J}}}\!(f)}  |\langle f,  \check{\phi}_{u}^{r- c_1\,2^{j+\frac{m}{2}}\, \frac{(2u-v(r))^{\frac{1}{\a-1}}}{(v(r)-u)^{\frac{1}{\a-1}}}}\rangle|^2\nonumber
	\intertext{and split}
	S^{+}&=S_1^{+}\,+\,S_2^{+}:=\sum_{r\in R}\,\sum_{u\in \mathcal{C}_{\ep,\d,+}^{R,\mathfrak{J}}}\left(\ldots\right)\,+\,\sum_{r\in R}\,\sum_{u\in \big(\mathcal{U}_{\d}^{cR,\tilde{\mathfrak{J}}}\!(f)\big)\setminus \mathcal{C}_{\ep,\d,+}^{R,\mathfrak{J}}}\left(\ldots\right)\:.\label{Ssplit}
	\intertext{Now}
	\qquad\qquad\qquad S_1^{+}&\lesssim 2^{\frac{m}{2}}\,(\#\mathcal{C}_{\ep,\d,+}^{R,\mathfrak{J}})\,2^{-2\d m}\,\left(\Delta_{j,\frac{m}{2}}^{c R, 3\tilde{\mathfrak{J}}}(f)\right)^2\:,\label{S1}
\end{align}
and
\beq\label{S2}
S_2^{+}\lesssim 2^{(\frac{1}{2}-\ep)\,m} \,\left(\Delta_{j,\frac{m}{2}}^{c R, 3\tilde{\mathfrak{J}}}(f)\right)^2\:.
\eeq

\begin{itemize}
\item  A closing variational argument:
\end{itemize}

Putting together \eqref{keyrel} and \eqref{ljmunif4}--\eqref{S2} we conclude
\beq\label{unifconcl}
\L_{j,m,\text{unif}}^{R,\mathfrak{J}}(f,g,h)\lesssim (2^{(\frac{1}{4}-\frac{\mu}{2}-\d)m}\,+\, 2^{-\frac{\ep}{2} m})\, 2^{\frac{j}{2}}\,\Delta_{j,\frac{m}{2}}^{c R, 3\tilde{\mathfrak{J}}}\!(f)\: \Delta_{j,\frac{m}{2}}^{c R,3\tilde{\mathfrak{J}}}\:\!(g)\: \Delta_{j,\frac{m}{2}}^{R,\mathfrak{J}} (h)\:,
\eeq
which for appropriate values of $\ep,\,\mu,\d$ gives us the desired exponential decay in $m$.

Indeed, from \eqref{ljm2}, \eqref{ljmkLclust} and \eqref{unifconcl} and for the choice $\mu=\frac{1}{2}-4\ep$, we deduce
\beq\label{CRUX}
|\L_{j,m}^{R,\mathfrak{J}}(f,g,h)|\lesssim  (2^{-m(\frac{1}{4}-\d)}\,+\,2^{(2\ep-\d)m}\,+\, 2^{-\frac{\ep}{2} m})\,2^{\frac{j}{2}}\,\Delta_{j,\frac{m}{2}}^{c R, 3\tilde{\mathfrak{J}}}\!(f)\: \Delta_{j,\frac{m}{2}}^{c R,3\tilde{\mathfrak{J}}}\:\!(g)\:\Delta_{j,\frac{m}{2}}^{R,\mathfrak{J}} (h)\,,\quad
\eeq
which, after a variational argument, for the choices $\d=\frac{5}{24}$ and $\ep=\frac{1}{12}$, implies
\beq\label{CRUX1}
|\L_{j,m}^{R,\mathfrak{J}}(f,g,h)|\lesssim_{\a}  2^{-\frac{m}{24}}\,2^{\frac{j}{2}}\,\Delta_{j,\frac{m}{2}}^{c R, 3\tilde{\mathfrak{J}}}(f)\: \Delta_{j,\frac{m}{2}}^{c R,3\tilde{\mathfrak{J}}}\:(g)\:\Delta_{j,\frac{m}{2}}^{R,\mathfrak{J}} (h)\,.
\eeq

\begin{remark}\label{l2l2intol1} It is now easy to see that our Main Proposition (+) immediately implies the analogue of \eqref{LSm} in Proposition \ref{L2mdecay} with $\L^{S}_{m,+}$ instead of $\L^S_{m}$. Indeed, based on a Cauchy-Schwarz argument, we have that
\beq\label{hsup}
\Delta_{j,\frac{m}{2}}^{R,\mathfrak{J}} (h)\lesssim 2^{-\frac{j}{2}}\, \|h\|_{\infty}\:,
\eeq
and hence from \eqref{CRUX1} one deduces that
\beq\label{CRUX10}
|\L_{j,m}^{R,\mathfrak{J}}(f,g,h)|\lesssim_{\a}  2^{-\frac{m}{24}}\,\Delta_{j,\frac{m}{2}}^{c R, 3\tilde{\mathfrak{J}}}\!(f)\: \Delta_{j,\frac{m}{2}}^{c R,3\tilde{\mathfrak{J}}}\:\!(g)\:\|h\|_{\infty}\,.
\eeq
Once at this point we use Cauchy-Schwarz and Parseval in order to get
\begin{align*}
	|\L_{j,m}(f,g,h)|&\leq \sum_{\substack{|R|=2^{\frac{m}{2}}\\[.1ex]\mathfrak{J}\subseteq \mathfrak{I}^{m,j}}}|\L_{j,m}^{R,\mathfrak{J}}(f,g,h)|\\[-2ex]
	&\lesssim_{\a}  2^{-\frac{m}{24}}
	\left( \sum_{{|R|=2^{\frac{m}{2}}}\atop{\mathfrak{J}\subseteq \mathfrak{I}^{m,j}}}|\Delta_{j,\frac{m}{2}}^{c R, 3\tilde{\mathfrak{J}}}(f)|^2\right)^{\frac{1}{2}} \left(\sum_{{|R|=2^{\frac{m}{2}}}\atop{\mathfrak{J}\subseteq \mathfrak{I}^{m,j}}}|\Delta_{j,\frac{m}{2}}^{c R,3\tilde{\mathfrak{J}}}\:(g)|^2\right)^{\frac{1}{2}}\:\|h\|_{\infty}\\[1ex]
	&\lesssim 2^{-\frac{m}{24}}\,\|\hat{f}\|_{L^2[2^{\frac{m}{2}+\a j-10}, 2^{\frac{m}{2}+\a j+10}]}\,\|\hat{g}\|_{L^2[2^{\frac{m}{2}+\a j-10}, 2^{\frac{m}{2}+\a j+10}]}\|h\|_{\infty}\,.
\end{align*}

Finally, by another Cauchy-Schwarz application, we get
\begin{equation}\label{Lcs}
	|\L^{S}_{m,+}(f,g,h)|\leq \sum_{j\ge  C_0}|\L_{j,m}(f,g,h)|\lesssim_{\a} 2^{-\frac{m}{24}}\,\|f\|_2\,\|g\|_2\,\|h\|_{\infty}\,.
\end{equation}
\end{remark}


\subsubsection{The treatment of $\L^{S}_{m,-}$}\label{LGCminus}

We pass now to the proof of Proposition \ref{L2mdecay} with $\L^S_{m}$ replaced in \eqref{LSm} by $\L^{S}_{m,-}$. In this situation, we fix $m\in\N$ and $j\in\Z_{-}$ with $j\leq - C_0$, and, for a fixed $\L^{S}_{j,m}$ component, we implement the (Rank I) LGC-methodology, choosing, as before, to perform our linearization process on the frequency side.

$\newline$
\noindent\textbf{Step I: Phase linearization}
$\newline$

In this situation, recalling \eqref{mainphase21}--\eqref{mainphase23}, we have that\footnote{Recall that throughout this section we assume $\a>1$ and $j<0$.}
\begin{equation}\label{mainphasebdneg}
	|\partial_{\xi}^2\psi(\xi,\eta)|\approx 2^{-m-2j},\quad |\partial_{\xi\eta}^2\psi(\xi,\eta)|\approx 2^{-m-(1+\a)j}\quad \text{and} \quad |\partial_{\eta}^2\psi(\xi,\eta)|\approx 2^{-m-2\a j}\,.
\end{equation}
We deduce thus that in the current context in order for the second order term in the Taylor expansion of the phase to be $O(1)$ we must impose
\beq\label{freqstepneg}
|\triangle \xi|,\,|\triangle \eta|\lesssim 2^{\a j+\frac{m}{2}}\,.
\eeq

In accordance to \eqref{freqstepneg}, we discretize our frequency space as follows:
\begin{align}
	\phi\Big(\frac{\xi-\eta}{2^{j+m}}\Big)&\approx\sum_{u\sim 2^{\frac{m}{2}-(\a-1)j}} \phi\Big(\frac{\xi-\eta}{2^{\a j+\frac{m}{2}}}-u\Big)\,,\label{discr1n}
	\intertext{and}
	\phi\Big(\frac{\a\, \eta}{2^{\a j+m}}\Big)&\approx\sum_{v\sim_{\a} 2^{\frac{m}{2}}} \phi\Big(\frac{\eta}{2^{\a j+\frac{m}{2}}}-v\Big)\,.\label{discr2n}
\end{align}
Consequently, from \eqref{mjm}, \eqref{discr1n} and \eqref{discr2n} we deduce that
\beq\label{mjmndiscn}
m_{j,m}(\xi,\eta) = \sum_{u\sim 2^{\frac{m}{2}-(\a-1)j}}\: \sum_{v\sim_{\a} 2^{\frac{m}{2}}} m_{j,m,u,v}(\xi,\eta)
\eeq
with
\beq\label{mjmndisc1n}
m_{j,m,u,v}(\xi,\eta)\approx 2^{-\frac{m}{2}} \,e^{i\,c_{\a}\,\psi(\xi,\eta)} \,\phi\Big(\frac{\xi}{2^{\a j+\frac{m}{2}}}-(u+v)\Big)\,\phi\Big(\frac{\eta}{2^{\a j+\frac{m}{2}}}-v\Big)\,.
\eeq

\if
At this point we write
\beq\label{mainphases-jneg}
\psi(\xi,\eta)= c_{\a,a}\,2^{\a j+\frac{m}{2}}\,\frac{(\frac{\xi-a\eta}{2^{\a j+\frac{m}{2}}})^{\frac{\a}{\a-1}}}{(\frac{a \eta}{2^{\a j+\frac{m}{2}}})^{\frac{1}{\a-1}}}=: c_{\a,a}\,2^{\a j+\frac{m}{2}}\,\underline{\psi}(\frac{\xi}{2^{\a j+\frac{m}{2}}}, \frac{a\eta}{2^{\a j+\frac{m}{2}}})\:.
\eeq

Now on the support of the multiplier $m_{j,m,u,v}$ from the choice of our discretization -- see \eqref{mainphasebd} and \eqref{freqstep} -- we have via a Taylor series argument that

\begin{align*}
\psi(\xi,\eta)&=\psi(2^{\a j+\frac{m}{2}}(u+v),\,2^{\a j+\frac{m}{2}}v)\,+(\xi-2^{\a j+\frac{m}{2}}(u+v))\,\partial_{\xi}\psi(2^{\a j+\frac{m}{2}}(u+v),\,2^{\a j+\frac{m}{2}} v)\,\\
&+\,(\eta-2^{\a j+\frac{m}{2}}v)\,\partial_{\eta}\psi(2^{\a j+\frac{m}{2}}(u+v),\,2^{\a j+\frac{m}{2}} v)\,+\,O(1)\,,
\end{align*}
and hence that
\begin{align*}
\psi(\xi,\eta)&= c_{\a,a}\,2^{\a j+\frac{m}{2}}\,\frac{u^{\frac{\a}{\a-1}}}{v^{\frac{1}{\a-1}}} \,+(\xi-2^{\a j+\frac{m}{2}}(u+v))\,c_{\a,a}\,\frac{\a}{\a-1}\,\frac{u^{\frac{1}{\a-1}}}{v^{\frac{1}{\a-1}}}\\
&-\,(\eta-2^{\a j+\frac{m}{2}}v)\,c_{\a,a}\,(\frac{a\a}{\a-1}\,\frac{u^{\frac{1}{\a-1}}}{v^{\frac{1}{\a-1}}}+
\frac{a}{\a-1}\,\frac{u^{\frac{\a}{\a-1}}}{v^{\frac{\a}{\a-1}}})\,+\,O(1)\,.
\end{align*}
\fi

From this, proceeding as for the $+$ component, we deduce that
\beq\label{keyphase exprn}
\psi(\xi,\eta)= c_{\a}^1\, \xi\,\frac{u^{\frac{1}{\a-1}}}{v^{\frac{1}{\a-1}}}
-\eta\Big(c_{\a}^1\,\frac{u^{\frac{1}{\a-1}}}{v^{\frac{1}{\a-1}}}+
c_{\a}^2\,\frac{u^{\frac{\a}{\a-1}}}{v^{\frac{\a}{\a-1}}}\Big)+\,\tilde{\psi}_{j,m}(u,v)\,+\,O(1)\,.
\eeq
Finally, from \eqref{mjmndisc1n} and \eqref{keyphase exprn}, we conclude that
\begin{align}
	\nonumber\\[-8ex]
	\nonumber\intertext{\begin{equation}
			m_{j,m,u,v}(\xi,\eta)\approx \nonumber \end{equation}} \\[-4ex]
			\nonumber\\[-4ex]
	& 2^{-\frac{m}{2}}c_{\a,u,v}\, e^{i\,\big(c_1\, \xi\,\frac{u^{\frac{1}{\a-1}}}{v^{\frac{1}{\a-1}}} -\eta\,\big( c_1\frac{u^{\frac{1}{\a-1}}}{v^{\frac{1}{\a-1}}}+
		c_2\frac{u^{\frac{\a}{\a-1}}}{v^{\frac{\a}{\a-1}}}\big)\big)}
	\phi\Big(\frac{\xi}{2^{\a j+\frac{m}{2}}}{-}(u+v)\Big)\phi\Big(\frac{a\, \eta}{2^{\a j+\frac{m}{2}}}{-}v\Big)\,,\label{mjmndisc10n}
\end{align}
where here, for notational simplicity we dropped the dependence of the constants $c_1$, $c_2$ on the parameter $\a$.

$\newline$
\noindent\textbf{Step II: Adapted Gabor frame discretization: conversion of the curvature into the time-frequency localization of the wave-packets}
$\newline$

We now define the Gabor frame $\{\phi_{u}^p\}_{p,u\in\Z}$ by
\begin{align}
	\phi_{u}^p(\xi)&:=\frac{1}{2^{\frac{\a j}{2}+\frac{m}{4}}}\,\phi\Big(\frac{\xi}{2^{\a j+\frac{m}{2}}}-u\Big)\,
	e^{i\,\frac{\xi}{2^{\a j+\frac{m}{2}}}\,p}\,,\label{GFn}
	\intertext{and hence}
	\hat{\phi}_{u}^p(x)&=2^{\frac{\a j}{2}+\frac{m}{4}}\,
	\hat{\phi}\big(2^{\a j+\frac{m}{2}}x-p\big)\,
	e^{-i\,\big(2^{\a j+\frac{m}{2}}x-p\big)\,u}\,.\label{GFfn}
\end{align}

Recalling now \eqref{tjm} and using the Gabor decompositions
\beq\label{gdfn}
\hat{f}(\xi)\approx\sum_{w,p\in\Z} \langle f,  \check{\phi}_{u}^p\rangle\, \phi_{u}^p (\xi)\,,
\eeq
\beq\label{gdgn}
\hat{g}(\eta)\approx\sum_{z,r\in\Z} \langle g,  \check{\phi}_{z}^r\rangle\, \phi_{z}^r (\eta)\,,
\eeq
we get
\begin{align}
	\nonumber\\[-8ex]
	\nonumber\intertext{\begin{equation}
			\L_{j,m}(f,g,h)\approx 2^{-\frac{m}{2}}\,\sum_{\substack{u\sim 2^{\frac{m}{2}-(\a-1)j}\\[.1ex] p\in\Z}}\: \sum_{\substack{v\sim_{\a} 2^{\frac{m}{2}}\\[.1ex] r\in\Z}}  \langle f,  \check{\phi}_{u+v}^p\rangle\,
			\langle g,  \check{\phi}_{v}^r\rangle \:\times\qquad\nonumber \end{equation}} \\[-4ex]
	\nonumber\\[-4ex]
	&\hspace{1cm}\times \int_{\R} h(x)\, \check{\phi}_{u+v}^p\Big(-x-c_1\,\frac{u^{\frac{1}{\a-1}}}{v^{\frac{1}{\a-1}}}\Big)\,\check{\phi}_{v}^r\Big(-x +c_1\,\frac{u^{\frac{1}{\a-1}}}{v^{\frac{1}{\a-1}}}+
	c_2\,\frac{u^{\frac{\a}{\a-1}}}{v^{\frac{\a}{\a-1}}}\Big)\,dx\:.\label{tjm0n}
\end{align}

Now from \eqref{GFfn} we deduce that the dominant term in \eqref{tjm0n} requires the \emph{time-frequency correlation} condition
\beq\label{tjm1n}
\boxed{-p-2^{\a j+\frac{m}{2}}\,c_1\,\frac{u^{\frac{1}{\a-1}}}{v^{\frac{1}{\a-1}}}=
-r+2^{\a j+\frac{m}{2}}\,c_1\,\frac{u^{\frac{1}{\a-1}}}{v^{\frac{1}{\a-1}}}+ 2^{\a j+\frac{m}{2}}\,c_2\,\frac{u^{\frac{\a}{\a-1}}}{v^{\frac{\a}{\a-1}}}+O(1)\:.}
\eeq
Based on the above we deduce
\beq\label{ljm0n}
|\L_{j,m}(f,g,h)|\lesssim
2^{\frac{\a j}{2}-\frac{m}{4}}\,\sum_{{u\sim 2^{\frac{m}{2}-(\a-1)
			j
		}}\atop{r\in\Z}} \sum_{v\sim_{\a} 2^{\frac{m}{2}}}
\eeq
$$ \Big|\langle f,  \check{\phi}_{u+v}^{r- 2^{\a j+\frac{m}{2}}\Big(2c_1\,\frac{u^{\frac{1}{\a-1}}}{v^{\frac{1}{\a-1}}}+
\,c_2\,\frac{u^{\frac{\a}{\a-1}}}{v^{\frac{\a}{\a-1}}}\Big)}\rangle\Big|\,
|\langle g,  \check{\phi}_{v}^r\rangle|\,\Big| \langle h,  \check{\phi}_{u+2v}^{r- 2^{\a j+\frac{m}{2}}\Big(c_1\,\frac{u^{\frac{1}{\a-1}}}{v^{\frac{1}{\a-1}}}+
\,c_2\,\frac{u^{\frac{\a}{\a-1}}}{v^{\frac{\a}{\a-1}}}\Big)}\rangle\Big|\:.$$

$\newline$
\noindent\textbf{Step III: Cancelation via $TT^{*}$-method, phase-level set analysis and time-frequency correlation}

In \eqref{ljm0n} we apply the change of variable $r\longmapsto r+ 2^{\a j+\frac{m}{2}}\,c_1\,\frac{u^{\frac{1}{\a-1}}}{v^{\frac{1}{\a-1}}}+
2^{\a j+\frac{m}{2}}\,c_2\,\frac{u^{\frac{\a}{\a-1}}}{v^{\frac{\a}{\a-1}}}$ followed by $u\longmapsto u-2v$ and then, after performing a suitable splitting of the $(u,v)-$summation domain, the term $|\L_{j,m}(f,g,h)|$ may be expressed as a superposition of at most $c(\a)$ terms of the form
\begin{align}
	2^{\!\!^{\frac{\a j}{2}-\frac{m}{4}}}\hspace{-.3cm}
	\sum_{\substack{u\in \mathfrak{I}^{m,j}\\[.1ex] v\in \mathfrak{I}^{m,0}\\[.1ex] r\in\Z }}\hspace{-.25cm}
	|\langle f, \check{\phi}_{u-v}^{\!r- c_1 2^{\a j+\frac{m}{2}}\!\frac{(u-2v)^{\frac{1}{\a-1}}}{v^{\frac{1}{\a-1}}}}\!\rangle|
	|\langle g, \check{\phi}_{v}^{\!r+2^{\a j+\frac{m}{2}}\! \big(c_1\frac{(u-2v)^{\frac{1}{\a-1}}}{v^{\frac{1}{\a-1}}}+
		c_2\frac{(u-2v)^{\frac{\a}{\a-1}}}{v^{\frac{\a}{\a-1}}}\big)}
	\!\rangle|
	|\langle h,\check{\phi}_{u}^{r}\rangle|\,,\raisetag{.9\baselineskip}\label{ljm1n}
\end{align}
where here we recall the notation
$\mathfrak{I}^{m,j}:=[2^{\frac{m}{2}-(\a-1)j},\,2^{\frac{m}{2}-(\a-1)j+1}]$.

As in Section \ref{CaL}, we remark
\begin{itemize}
\item the almost disjointness of the set of frequencies given by the set of indices $\{(u-v, v)\}_{(u,v)\in \mathfrak{I}^{m,j,l}\times \mathfrak{I}^{m,0}}$ as $l\in \{1,\ldots, 2^{-(\a-1)j}\}$ where here $\mathfrak{I}^{m,j,l}:=[2^{\frac{m}{2}-(\a-1)j}+(l-1) 2^{\frac{m}{2}},\,2^{\frac{m}{2}-(\a-1)j}+l\, 2^{\frac{m}{2}}]$;

\item the almost disjointness of the set of spatial locations given by
\[
\Big\{\big(r- 2^{\a j+\frac{m}{2}}c_1 \tfrac{(u{-}2v)^{\frac{1}{\a-1}}}{v^{\frac{1}{\a-1}}}, \,r+2^{\a j{+}\frac{m}{2}} c_1\tfrac{(u{-}2v)^{\frac{1}{\a-1}}}{v^{\frac{1}{\a-1}}}+
2^{\a j{+}\frac{m}{2}}c_2 \tfrac{(u{-}2v)^{\frac{\a}{\a-1}}}{v^{\frac{\a}{\a-1}}}\big)\Big\}_{r\in R(r_0)}\,,
\]
\end{itemize}
as $r_0$ spans $\Z$, where, as before,  $R(r_0):=[ 2^{\frac{m}{2}}r_0,\,2^{\frac{m}{2}}(r_0+1)]$.

Setting now $\mathfrak{J}:= \mathfrak{I}^{m,j,1}$, $\mathfrak{M}:=\mathfrak{I}^{m,0}$, it is enough to estimate the term
\begin{align}
	\intertext{\begin{equation}
			\L_{j,m}^{R,\mathfrak{J}}(f,g,h):= \nonumber\end{equation}} \\[-4ex]
	&2^{\!\!^{\frac{\a j}{2}{-}\frac{m}{4}}}\hspace{-.2cm} \sum_{{{u\in \mathfrak{J}}\atop{v\in \mathfrak{M}}}\atop{r\in R}}\!\!\!
	|\langle f, \check{\phi}_{u-v}^{\!r- c_1\,2^{\a j+\frac{m}{2}}\frac{(u{-}2v)^{\frac{1}{\a-1}}}{v^{\frac{1}{\a-1}}}}\!\!\rangle|
	|\langle g,  \check{\phi}_{v}^{\!r+2^{\a j+\frac{m}{2}}\! \big(c_1\frac{(u{-}2v)^{\frac{1}{\a-1}}}{v^{\frac{1}{\a-1}}}+
		c_2\frac{(u{-}2v)^{\frac{\a}{\a-1}}}{v^{\frac{\a}{\a-1}}}\big)}\!\rangle|
	|\langle h,  \check{\phi}_{u}^{r}\rangle|\,. \raisetag{\baselineskip}\label{ljm1n-1}
\end{align}

Mirroring the procedure for the $+$ component, the main ingredient will be provided by the $m-$exponentially decaying single scale estimate
$\newline$

\noindent\textbf{Main Proposition (-)} \emph{Fix $j\in\Z_{-}$ with $j\leq - C_0$, $m\in\N$ and let $R$, $\mathfrak{J}$ and $\mathfrak{M}$ as above. Given a function $f$, we set
\begin{equation}\label{DeltaIKmin}
\Delta_{\a j,\frac{m}{2}}^{R,\mathfrak{J}}{f}:=\Bigg(\sum_{\substack{u\in \mathfrak{J}\\r\in R}}|\langle f,\check{\phi}_u^r\rangle|^2 \Bigg)^{\frac12}\,.	
\end{equation}
Then, for $\L_{j,m}^{R,\mathfrak{J}}$ defined by \eqref{ljm1n}, we have that there exist
$\d_0\in(0,\frac{1}{4})$ and $c=c(c_1,c_2)>0$ absolute constants such that uniformly in $j$ and $m$ the following holds:
\begin{equation}\label{ljmkLmin}
\L_{j,m}^{R,\mathfrak{J}}(f,g,h)\lesssim_{\a} 2^{-\d_0 m}\, 2^{\frac{\a j}{2}}\Big(\Delta_{\a j,\frac{m}{2}}^{c R, 3\mathfrak{J}}f\Big) \Big(\Delta_{\a j,\frac{m}{2}}^{c R,\mathfrak{M}}g\Big)\Big(\Delta_{\a j,\frac{m}{2}}^{R, \mathfrak{J}} h\Big)\:.	
\end{equation}}

As in Section \ref{CaL}, we enumerate the main steps of our approach:

$\newline$
\noindent\textbf{Step III.1. Frequency fiber foliation:  a sparse--uniform dichotomy}\label{3p1m}
$\newline$

 We fix $\d\in (0,\frac{1}{4})$ to be chosen later and define the set of \emph{$(\d,g)$-sparse frequency fibers} associated to spatial location $R$ and frequency location $\mathfrak{M}$ as
\begin{equation}\label{corelmin}
	\boxed{\mathcal{S}_{\d}^{R,\mathfrak{M}}(g):=\Big\{v\in \mathfrak{M}\,|\,\max_{r\in R}\,  |\langle g,  \check{\phi}_{v}^{r}\rangle|\geq 2^{-\d m}\,\Delta_{\a j,\frac{m}{2}}^{R, \mathfrak{M}}(g)\Big\}}
\end{equation}
and notice that
\beq\label{corelcardmin}
\#\mathcal{S}_{\d}^{c R,\mathfrak{M}}(g)\leq 2^{2 \d m}\:.
\eeq

Next, we split
\beq\label{ljm2n}
\L_{j,m}^{R,\mathfrak{J}}(f,g,h)= \L_{j,m,\textrm{sparse}}^{R,\mathfrak{J}}(f,g,h)\,+\,\L_{j,m,\textrm{unif}}^{R,\mathfrak{J}}(f,g,h)\,,
\eeq
where
\begin{itemize}
\item the \emph{sparse} (heavy) component is defined as
\begin{align}
	\L_{j,m,\textrm{sparse}}^{R,\mathfrak{J}}(f,g,h)&:=
	\sum_{\substack{v\in \mathcal{S}_{\d}^{c R,\mathfrak{M}}(g)\\[.4ex] u\in \mathfrak{J}\,,\:r\in R }} \left(\ldots\right)\,,\label{ljm3n}
	\intertext{\item the \emph{uniform} (light) component is given by}
	\L_{j,m,\textrm{unif}}^{R,\mathfrak{J}}(f,g,h)&:=
	\sum_{\substack{v\in\mathcal{U}_{\d}^{c R,\mathfrak{M}}(g)\\[.4ex] u\in \mathfrak{J}\,,\:r\in R }}
	 \left(\ldots\right)\,,\label{ljm4n}
\end{align}
with $\mathcal{U}_{\d}^{c R,\mathfrak{M}}(g):=\mathfrak{M}\:\setminus\: \mathcal{S}_{\d}^{c R,\mathfrak{M}}(g)$.
\end{itemize}

$\newline$
\noindent\textbf{Step III.2. The sparse component}\label{Thm}
$\newline$

Applying first a Cauchy--Schwarz in $v$ and then in $u,r$, followed by the change of variable
$r\longmapsto r-2^{\a j+\frac{m}{2}}\,c_1\,\frac{(u-2v)^{\frac{1}{\a-1}}}{v^{\frac{1}{\a-1}}}-
2^{\a j+\frac{m}{2}}\,c_2\,\frac{(u-2v)^{\frac{\a}{\a-1}}}{v^{\frac{\a}{\a-1}}}$,
we get
\beq\label{corrtermn}
\L_{j,m,\textrm{sparse}}^{R,\mathfrak{J}}(f,g,h)\leq 2^{\frac{\a j}{2}-\frac{m}{4}}\,(\#\mathcal{S}_{\d}^{c R,\mathfrak{M}}(g))^{\frac{1}{2}}\,\Bigg(\sum_{{\atop{u\in \mathfrak{J}}}\atop{r\in R}}  |\langle h,  \check{\phi}_{u}^{r}\rangle|^2\Bigg)^{\frac{1}{2}}\times
\eeq
$$\times\,\left(\,\sum_{\substack{v\in \mathfrak{M}\\[.2ex] r\in c R}}  |\langle g,  \check{\phi}_{v}^{r}\rangle|^2\,
\left(\sum_{u\in \mathfrak{J}} |\langle f,  \check{\phi}_{u-v}^{r-2^{j+\frac{m}{2}+1}\,c_1\,\frac{(u-2v)^{\frac{1}{\a-1}}}{v^{\frac{1}{\a-1}}}-
2^{j+\frac{m}{2}}\,c_2\,\frac{(u-2v)^{\frac{\a}{\a-1}}}{v^{\frac{\a}{\a-1}}}}\rangle|^2\,\right)\right)^{\frac{1}{2}}\:.$$
Applying the same reasonings as for \eqref{ljmkLclust}, we deduce
\beq\label{corrterm1n}
\L_{j,m,\textrm{sparse}}^{R,\mathfrak{J}}(f,g,h)\lesssim 2^{-m(\frac{1}{4}-\d)}\, 2^{\frac{\a j}{2}}\,\Delta_{\a j,\frac{m}{2}}^{c R, 3\mathfrak{J}}(f)\: \Delta_{\a j,\frac{m}{2}}^{c R,\mathfrak{M}}(g)\:\Delta_{\a j,\frac{m}{2}}^{R,\mathfrak{J}} (h)\:.
\eeq

$\newline$
\noindent\textbf{Step III.3. The uniform component}\label{Tum}
$\newline$

As before, the treatment of the uniform term relies on the analysis of the time-frequency correlations:
\begin{itemize}
\item  Consider an arbitrary (measurable) function
\beq\label{vmeasmin}
u(\cdot):\,R\,\longrightarrow\,\mathfrak{J}\:.
\eeq
\item For $v\in \mathcal{U}_{\d}^{c R,\mathfrak{M}}(g)$  and $\kappa\in c R$ we define the $(v,\kappa)$-sets as
\begin{equation}\label{tfreqcorn}
	\J_{v,\kappa}^{R,\mathfrak{M}}(u)\!:=\!\left\{r\in R\,\Big|\,\big|r+2^{\a j+\frac{m}{2}} c_1\tfrac{(u(r)-2v)^{\frac{1}{\a-1}}}{v^{\frac{1}{\a-1}}}+
	2^{\a j+\frac{m}{2}} c_2\tfrac{(u(r)-2v)^{\frac{\a}{\a-1}}}{v^{\frac{\a}{\a-1}}} -\kappa\big|\leq 10\right\}\:.
\end{equation}
\item Let now $\e\in (0,\frac{\d}{2})$ small and define the \emph{time-frequency correlation set} (relative to $u(\cdot)$) as
\beq\label{tfreqcor1n}
\boxed{\mathcal{C}_{\ep,\d,-}^{R,\mathfrak{M}}:=\big\{v\in \mathcal{U}_{\d}^{c R,\mathfrak{M}}(g) \,|\,\exists\:\kappa=\kappa(v)\in c R\:\:\:\textrm{s.t.}\:\: \#\J_{v,\kappa}^{R,\mathfrak{M}}(u)\gtrsim 2^{(\frac{1}{2}-\ep)\,m}\big\}\:.}
\eeq
Then the key observation is that uniformly in $u(\cdot)$ given by \eqref{vmeasmin}, we have
\beq\label{keyreln}
\boxed{\exists\:\mu=\mu(\ep)>\frac{1}{2}-2\d\:\:\quad\textrm{s.t.}\quad\#\mathcal{C}_{\ep,\d,-}^{R,\mathfrak{M}}\lesssim 2^{(\frac{1}{2}-\mu)\,m}\:.}
\eeq
The proof of \eqref{keyreln} follows  similar (technical) reasonings with the ones used for the proof of \eqref{keyrel} and thus we will skip it here.\footnote{Again, here one has to use the condition $j\leq - C_0$ for $C_0>0$ large enough.}

\item Perform a Cauchy--Schwarz in the $v$ variable:
\begin{align}
	\L&_{j,m,\textrm{unif}}^{R,\mathfrak{J}}(f,g,h)\leq 2^{\frac{\a j}{2}-\frac{m}{4}}\,\sum_{\substack{u\in \mathfrak{J}\\[.2ex] r\in R }} \left(\sum_{v\in \mathcal{U}_{\d}^{c R,\mathfrak{M}}(g)}  |\langle f,  \check{\phi}_{u-v}^{r- c_1\,2^{\a j+\frac{m}{2}}\,\frac{(u-2v)^{\frac{1}{\a-1}}}{v^{\frac{1}{\a-1}}}}\rangle|^2\right)^{\frac{1}{2}}\times\nonumber\\
	&\times \left(\sum_{v\in \mathcal{U}_{\d}^{c R,\mathfrak{M}}(g)} |\langle g,  \check{\phi}_{v}^{r+2^{\a j+\frac{m}{2}}\,c_1\,\frac{(u-2v)^{\frac{1}{\a-1}}}{v^{\frac{1}{\a-1}}}+
		2^{\a j+\frac{m}{2}}\,c_2\, \frac{(u-2v)^{\frac{\a}{\a-1}}}{v^{\frac{\a}{\a-1}}}}\rangle|^2\right)^{\frac{1}{2}} |\langle h,  \check{\phi}_{u}^{r}\rangle|\:.\label{ljmunif1n}
\end{align}

\item Apply the $l^2\times l^{\infty}\times l^2$ H\"older inequality in the $u$ parameter:
\begin{align}
	&\L_{j,m,\textrm{unif}}^{R,\mathfrak{J}}(f,g,h)\leq 2^{\frac{\a j}{2}-\frac{m}{4}}\,\sum_{r\in R}
	\left(\sum_{\substack{v\in \mathcal{U}_{\d}^{c R,\mathfrak{M}}(g)\\[.2ex] u\in \mathfrak{J}}}
	|\langle f,  \check{\phi}_{u-v}^{r- c_1\,2^{j+\frac{m}{2}}\,\frac{(u-2v)^{\frac{1}{\a-1}}}{v^{\frac{1}{\a-1}}}}\rangle|^2\right)^{\frac{1}{2}}\times\nonumber\\
	&\times\! \left(\sup_{u\in \mathfrak{J}}\!\left(
	\sum_{v\in \mathcal{U}_{\d}^{c R,\mathfrak{M}}(g)}
	\hspace{-.5cm}
	|\langle g,  \check{\phi}_{v}^{r+2^{j+\frac{m}{2}}c_1\frac{(u-2v)^{\frac{1}{\a-1}}}{v^{\frac{1}{\a-1}}}+
		2^{j+\frac{m}{2}}c_2\frac{(u-2v)^{\frac{\a}{\a-1}}}{v^{\frac{\a}{\a-1}}}}\rangle|^2\!\right)^{\!\!\!\frac{1}{2}}\right)\!\!
	\left(\sum_{u\in \mathfrak{J}}|\langle h,  \check{\phi}_{u}^{r}\rangle|^2\!\right)^{\!\!\!\frac{1}{2}}.\raisetag{.5\baselineskip} \label{ljmunif2n}
\end{align}
At this point we remark that uniformly in $r\in R$ one has that
\beq\label{ljmunifgn}
\sum_{\substack{v\in \mathcal{U}_{\d}^{c R,\mathfrak{M}}(g)\\[.2ex] u\in \mathfrak{J}}}
  |\langle f,  \check{\phi}_{u-v}^{r- c_1\,2^{j+\frac{m}{2}}\,\frac{(u-2v)^{\frac{1}{\a-1}}}{v^{\frac{1}{\a-1}}}}\rangle|^2\lesssim
\big(\Delta_{\a j,\frac{m}{2}}^{cR, 3\mathfrak{J}} (f)\big)^2\,.
\eeq

\item Combine \eqref{ljmunifgn} with an application of Cauchy--Schwarz in order to deduce
\beq\label{ljmunif4n}
\L_{j,m,\textrm{unif}}^{R,\mathfrak{J}}(f,g,h)\leq  2^{\frac{\a j}{2}-\frac{m}{4}}\,\Delta_{\a j,\frac{m}{2}}^{cR, 3\mathfrak{J}} (f)\, \Delta_{\a j,\frac{m}{2}}^{R, \mathfrak{J}} (h)\,\times
\eeq
$$\times\left(\sum_{r\in R}\,\sum_{v\in \mathcal{U}_{\d}^{c R,\mathfrak{M}}(g)} |\langle g,  \check{\phi}_{v}^{r+2^{j+\frac{m}{2}}\,c_1\,\frac{(u(r)-2v)^{\frac{1}{\a-1}}}{v^{\frac{1}{\a-1}}}+
2^{j+\frac{m}{2}}\,c_2\,\frac{(u(r)-2v)^{\frac{\a}{\a-1}}}{v^{\frac{\a}{\a-1}}}}\rangle|^2\right)^{\frac{1}{2}}\:,$$
where $u(\cdot)$ is the measurable function that attains the supremum in \eqref{ljmunif2n}.

\item Denote with $$S^{-}:=\sum_{r\in R}\,\sum_{v\in \mathcal{U}_{\d}^{c R,\mathfrak{M}}(g)} \Big|\langle g,  \check{\phi}_{v}^{r+2^{j+\frac{m}{2}}\,c_1\,\frac{(u(r)-2v)^{\frac{1}{\a-1}}}{v^{\frac{1}{\a-1}}}+
2^{j+\frac{m}{2}}\,c_2\,\frac{(u(r)-2v)^{\frac{\a}{\a-1}}}{v^{\frac{\a}{\a-1}}}}\rangle\Big|^2$$ and split
\beq\label{Ssplitn}
S^{-}=S_1^{-}\,+\,S_2^{-}:=\sum_{r\in R}\,\sum_{v\in \mathcal{C}_{\ep,\d,-}^{R,\mathfrak{M}}}\left(\ldots\right)\,+\,\sum_{r\in R}\,\sum_{v\in
	\big(\mathcal{U}_{\d}^{c R,\mathfrak{M}}(g)\big)\setminus \mathcal{C}_{\ep,\d,-}^{R,\mathfrak{M}}}\left(\ldots\right)\:.
\eeq
Now
\beq\label{S1n}
S_1^{-}\lesssim 2^{\frac{m}{2}}\,(\#\mathcal{C}_{\ep,\d,-}^{R,\mathfrak{M}})\,2^{-2\d m}\,\big(\Delta_{\a j,\frac{m}{2}}^{c R,\mathfrak{M}}(g)\big)^2\:,
\eeq
and
\beq\label{S2n}
S_2^{-}\lesssim 2^{(\frac{1}{2}-\ep)\,m} \,\big(\Delta_{\a j,\frac{m}{2}}^{c R,\mathfrak{M}}(g)\big)^2\:.
\eeq
\item Putting together \eqref{keyreln} and \eqref{ljmunif4n}--\eqref{S2n} we conclude
\beq\label{unifconcln}
\L_{j,m,\textrm{unif}}^{R,\mathfrak{J}}(f,g,h)\lesssim (2^{(\frac{1}{4}-\frac{\mu}{2}-\d)m}\,+\, 2^{-\frac{\ep}{2} m})\,2^{\frac{\a j}{2}}\,\Delta_{\a j,\frac{m}{2}}^{c R, 3\mathfrak{J}}(f)\,\Delta_{\a j,\frac{m}{2}}^{c R,\mathfrak{M}}(g)\:\Delta_{\a j,\frac{m}{2}}^{R, \mathfrak{J}} (h)
\eeq
which for appropriate values of $\ep,\,\mu,\d$--the same as for the case $j>0$--gives us the desired exponential decay in $m$.
\end{itemize}

\begin{remark}\label{l2l2intol1min} In order to obtain the analogue of \eqref{LSm} for $\L^{S}_{m,-}$ one can follow similar steps with the ones in Remark \ref{l2l2intol1} and replace \eqref{hsup} by
\beq\label{hsupmin}
\Delta_{\a j,\frac{m}{2}}^{R,3\mathfrak{J}} (h)\lesssim 2^{-\frac{\a j}{2}}\, \|h\|_{\infty}\:.
\eeq
\end{remark}

\subsection{The extended boundedness range: Proof of Proposition \ref{Lgennodecay}} \label{Propgen}

$\quad$In what follows we split our analysis of $\L^{S}_{m}$ according to the three components $\L^{S}_{H,m}$, $\L^{S}_{NL,m}$ $\L^{S}_{TR,m}$.

\subsubsection{The treatment of $\L^{S}_{H,m}$}\label{LGCLfull}

From the definition of the hybrid term in Section \ref{lm} we have that\footnote{For notational convenience, throughout the reminder of the section we preserve the assumption \eqref{abnlone} and also omit specifying the $\a$-dependencies.}
\begin{equation}\label{lsl}
\L^S_H:=\sum_{m\in\N}  \L^{S}_{H,m}:=\sum_{m\in\N}  \sum_{{(\a-1) j\geq m}\atop{j\geq C_0}} \L_{j,m}\,,
\end{equation}
where the multiplier for each $\L_{j,m}$ (with $(\a-1) j\geq m$ and $j\ge C_0$) is given by
\begin{equation}\label{saljmm}
m_{j,m}(\xi,\eta):=\Big(\int_\R e^{-i\frac{\xi-\eta}{2^j}  t}e^{i\frac{\eta}{2^{\a j}}t^\a}\rho(t)dt\Big)
		\phi\Big(\frac{\xi-\eta}{2^{j+m}}\Big)\phi\Big(\frac{\eta}{2^{\a j+m}}\Big)\,.
\end{equation}

For reader's convenience, we elaborate more on \eqref{saljmm0}--\eqref{saljm21} in the setting prescribed by \eqref{abnlone};
for $\phi$ a suitable smooth compactly supported function, we have	
\begin{equation}\label{saljmm0r}
		m_{j,m}(\xi,\eta)\approx\sum_{l\sim 2^{(\a-1)j}} m_{j,m,l}(\xi,\eta)
\end{equation}
with
\begin{equation}\label{saljmm1r}
	m_{j,m,l}(\xi,\eta):=\left(\int_\R e^{-i\frac{\xi-\eta}{2^j}  t}e^{i\frac{\eta}{2^{\a j}}t^\a}\rho(t)dt\right)\,
	\phi\Big(\frac{\xi}{2^{j+m}}-l\Big)\,\phi\Big(\frac{\eta}{2^{j+m}}-l\Big)\,\phi\Big(\frac{\xi+\eta}{2^{j+m}}-2l\Big)\,.
\end{equation}

As a consequence, we deduce
\begin{equation}\label{saljmsummr}
\L_{j,m}(f,g,h)=\sum_{l\sim 2^{(\a-1)j}}\L^{l}_{j,m}(f,g,h)\,,
\end{equation}
with
\begin{equation}\label{saljm2r}
\L^{l}_{j,m}(f,g,h)\!:=\!\int_{\R^2}\! (f*\check{\phi}_{j+m}^l)\big(x-\tfrac{t}{2^j}\big)\,(g*\check{\phi}_{j+m}^l)\big(x+\tfrac{t}{2^j}+\tfrac{t^\a}{2^{\a j}}\big)\, (h*\check{\phi}_{j+m}^{2l})(x)\rho(t)\,dt\,dx\,.
\end{equation}

Now given that
\begin{equation}\label{s2jm3}
		\check{\phi}_{j+m}^l(x):=\int_{\R} \phi\big(\tfrac{\xi}{2^{j+m}}-l\big)\,e^{i\,\xi\,x}\,d\xi=
		2^{j+m}\,e^{i\,2^{j+m}\,l\,x}\,\check{\phi}(2^{j+m}\,x)\,
\end{equation}
and taking into account that $(\a-1)j\geq m$ we have
	\begin{align}
		&(g*\check{\phi}_{j+m}^l)\big(x+\tfrac{t}{2^j}+\tfrac{t^{\a}}{2^{\a j}}\big)\label{redg}\\
		&=\int_{\R} g(y)\,2^{j+m}\,e^{i\,2^{j+m}\,l\,(x+\frac{t}{2^j}+\frac{t^{\a}}{2^{\a j}}-y)}\,
		\check{\phi}\big(2^{j+m}\big(x+\tfrac{t}{2^j}-y\big)+\tfrac{t^{\a}}{2^{(\a-1)j-m}}\Big)\,dy\nonumber\\
		&\approx e^{i\,\frac{l}{2^{(\a-1)j}}\,2^m\,t^{\a}}\!\int_{\R} g(y)\,2^{j+m}\,e^{i\,2^{j+m}\,l\,(x+\frac{t}{2^j}-y)}\,
		\check{\phi}\big(2^{j+m}\big(x+\frac{t}{2^j}-y\big)\big)\,dy\nonumber
	\end{align}
from which we deduce\footnote{We ignore here the error term(s) derived from a standard Taylor series argument.}
\begin{equation}\label{redal}
(g*\check{\phi}_{j+m}^l)\Big(x+\frac{t}{2^j}+\frac{t^\a}{2^{\a j}}\Big)\approx
e^{i\,\frac{l}{2^{(\a-1)j}}\,2^m\,t^\a}(g*\check{\phi}_{j+m}^l)\Big(x+\frac{t}{2^j}\Big)\:.
\end{equation}
Thus, for $l\sim 2^{(\a-1)j}$, we have that \eqref{saljm2r} may be reduced to the simpler form:
\begin{align}
\intertext{\begin{equation}\label{saljm200}
			|\L^{l}_{j,m}(f,g,h)|\lesssim
			 \end{equation}} \\[-4ex]
	&=\int_{\R^2}  \big|(f*\check{\phi}_{j+m}^l)\big(x-\tfrac{t}{2^j}\big)\big|
	\big|(g*\check{\phi}_{j+m}^l)\big(x+\tfrac{t}{2^j}\big)\big|\,
	\big|(h*\check{\phi}_{j+m}^{2l})(x)\big|\,|\rho(t)|\,dt\,dx\nonumber\\[1ex]
	&\approx\frac{1}{2^m}\sum_{s\sim 2^m}\int_{\R}  \Big|(f*\check{\phi}_{j+m}^l)\big(x{-}\frac{s}{2^{j+m}}\big)\Big|\Big|(g*\check{\phi}_{j+m}^l)\big(x{+}\frac{s}{2^{j+m}}\big)\Big|
	\big|(h*\check{\phi}_{j+m}^{2l})(x)\big|\,dx\nonumber\\[1ex]
	&\approx\frac{1}{2^m} \sum_{\substack{z\in\Z\\s\sim 2^m }} \big|(f*\check{\phi}_{j+m}^l)\big(\frac{z-s}{2^{j+m}}\big)\big|\,\big|(g*\check{\phi}_{j+m}^l)\big(\frac{z+s}{2^{j+m}}\big)\big|\,
	\big|(h*\check{\phi}_{j+m}^{2l})(\frac{z}{2^{j+m}})\big|\,\frac{1}{2^{j+m}}\,.\nonumber
\end{align}
Set now
$$I_{j+m}^z:=\bigg[\frac{z}{2^{j+m}},\,\frac{z+1}{2^{j+m}}\bigg)\,,\qquad\o_{j+m}^l:=[l\,2^{j+m},\,(l+1)\,2^{j+m})\,,$$

$$P_{j+m}(z,l):=I_{j+m}^z \times \o_{j+m}^l\,,$$
and
$$\P_{j+m}:=\left\{P_{j+m}(z,l)\right\}_{{z\in\Z}\atop{l\sim 2^{(\a-1)j}}}\,,\qquad \qquad\P=\bigcup_{j\geq 0} \P_{j+m}\:.$$

With the above notations we can write (with the obvious adaptations for the analogue expressions involving $g$ and $h$)
\begin{equation}\label{fcoef}
(f*\check{\phi}_{j+m}^l)\Big(\frac{z-s}{2^{j+m}}\Big)\sim \frac{1}{|I_{j+m}|^{\frac{1}{2}}}\,\big\langle f,  \phi_{P_{j+m}(z-s,l)}\big\rangle\,,
\end{equation}
where $I_{j+m}^0$ is denoted by $I_{j+m}$, and $\phi_{P_{j+m}(z,l)}$ is an $L^2$ normalized wave-packet adapted to $P_{j+m}(z,l)$.

Putting now together \eqref{saljmsummr}, \eqref{saljm200} and \eqref{fcoef} we deduce that
\begin{align}
	\intertext{\begin{equation}
			|\L_{j,m}(f,g,h)|\lesssim\label{saljmsumwp}
	\end{equation}} \\[-4ex]
	&\frac{1}{2^m}\sum_{s\sim 2^m}\sum_{z\in\Z}\sum_{l\sim 2^{(\a-1)j}}\frac{1}{|I_{j+m}|^{\frac{1}{2}}}\,|\langle f,  \phi_{P_{j+m}(z-s,l)}\rangle|\,
	|\langle g,  \phi_{P_{j+m}(z+s,l)}\rangle|\,|\langle h,  \phi_{P_{j+m}(z,2l)}\rangle|\,.\nonumber
\end{align}
Combining now \eqref{saljmsumwp} with \eqref{lsl} we reach the final form\footnote{Here, once we exploited the approximation in \eqref{redal}, we slightly abuse the summation in \eqref{lsl} and, for mere notational convenience, we include in its range the first $\max\{C_0,\frac{m}{\a-1}\}$ terms.}
\begin{align}
	\intertext{\begin{equation}
			|\L^{S}_{H,m}(f,g,h)|\lesssim\label{lsl1}
	\end{equation}} \\[-5ex]
	& \frac{1}{2^m}\sum_{s\sim 2^m}\sum_{j\geq 0}\sum_{{z\in\Z}\atop{l\sim 2^{(\a-1)j}}}\frac{1}{|I_{j+m}|^{\frac{1}{2}}}\,|\langle f,  \phi_{P_{j+m}(z-s,l)}\rangle|\,
	|\langle g,  \phi_{P_{j+m}(z+s,l)}\rangle|\,|\langle h,  \phi_{P_{j+m}(z,2l)}\rangle|\,.\nonumber
\end{align}

Recalling now the goal stated in Proposition \ref{Lgennodecay}, we assume that $F,G$ and $H$ satisfy the hypothesis therein with $f$, $g$ obeying \eqref{restrcitedweak} and focus on proving \eqref{goallsl1}.

Take now
\begin{equation}\label{om}
\O:=\left\{M \chi_{F}\geq 100 \frac{|F|}{|H|}\right\}\cup \left\{M \chi_{G}\geq 100 \frac{|G|}{|H|}\right\}\quad \textrm{and}\quad H':=H\setminus \O\,.
\end{equation}
It is now immediate to see that $H'$ satisfies the requirements in \eqref{Hprexist}. Consider a function $h$ satisfying \eqref{restrcitedweak}. With these done, we rewrite \eqref{lsl1} as

\begin{equation}\label{lslre}
|\L^{S}_{H,m}(f,g,h)|\lesssim\sum_{j\geq 0}\sum_{{u\in\Z}\atop{l\sim 2^{(\a-1)j}}} \L^{S}[j,m,u,l](f,g,h)\,,
\end{equation}
where
\begin{align}
	\intertext{\begin{equation}\label{lslre1}
			\L^{S}[j,m,u,l](f,g,h):=
	\end{equation}} \\[-4ex]
	& \frac{1}{2^{\frac{m}{2}}}\,
	\sum_{I_{j+m}^v\subseteq I_{j}^u} \frac{1}{|I_{j}^u|^{\frac{1}{2}}}\,|\langle f,  \phi_{P_{j+m}(v,l)}\rangle|\,
	\sum_{s\sim 2^m} |\langle g,  \phi_{P_{j+m}(v+2s,l)}\rangle|\,|\langle h,  \phi_{P_{j+m}(v+s,2l)}\rangle|\,.\nonumber
\end{align}
Letting now
$$\Delta_{j,m}^{l,[I_j^u]} (g):=\Bigg(\sum_{I_{j+m}^v\subseteq I_{j}^u} |\langle g,  \phi_{P_{j+m}(v,l)}\rangle|^2 \Bigg)^{\frac{1}{2}}\,,$$
we deduce that \eqref{lslre1} may be reduced via a Cauchy--Schwarz argument to
\begin{equation}\label{lslre2}
\L^{S}[j,m,u,l](f,g,h)\lesssim
\left(\frac{1}{|I_{j}^u|}\,\int |f|\,\tilde{\chi}_{I_{j}^u}\right)\, \Delta_{j,m}^{l,[5 I_j^u]} (g)\: \Delta_{j,m}^{2l,[5 I_j^u]} (h)\:.
\end{equation}
Now, due to the symmetry in the input functions $f,g$ and $h$ in \eqref{lslre1}, one can obtain the obvious analogues of \eqref{lslre2}, which, after applying a geometric mean argument, upgrades  \eqref{lslre2} to
\begin{align}
	\L^{S}[j,m,u,l](f,g,h)&\lesssim \bigg(\frac{1}{|I_{j}^u|}\int |f|\,\tilde{\chi}_{I_{j}^u}\bigg)^{\theta_1}
	\bigg(\frac{1}{|I_{j}^u|}\int |g|\,\tilde{\chi}_{I_{j}^u}\bigg)^{\theta_2}
	\bigg(\frac{1}{|I_{j}^u|}\int |h|\,\tilde{\chi}_{I_{j}^u}\bigg)^{\theta_3}\times\nonumber\\[1ex]
	&\hspace{1cm}\times \big(\Delta_{j,m}^{l,[5 I_j^u]} (f)\big)^{1-\theta_1}\,\big(\Delta_{j,m}^{l,[5 I_j^u]} (g)\big)^{1-\theta_2}\,\big(\Delta_{j,m}^{2l,[5 I_j^u]} (h)\big)^{1-\theta_3}\,,\label{lslkey}
\end{align}
for any $0\leq \theta_1,\,\theta_2,\,\theta_3\leq 1$ with $\theta_1+\theta_2+\theta_3=1$.

Recalling now \eqref{om}, for $\b\in\N$, we define
\begin{equation}\label{setI}
\I_{\b}:=\bigg\{I_{j}^u\,\Big|\,1+\frac{\textrm{dist} (I_{j}^u,\,\O^{c}) }{|I_{j}^u|}\approx 2^{\b}\bigg\}
\end{equation}
and deduce that
\begin{equation}\label{lslreal}
|\L^{S}_{H,m}(f,g,h)|\lesssim\sum_{\b\in\N} \L^{S,\b}_{H,m}(f,g,h)\,,
\end{equation}
where
\begin{equation}\label{lslrealf}
 \L^{S,\b}_{H,m}(f,g,h):=\sum_{j\geq 0}\sum_{{I_j^u\in\I_{\b}}\atop{l\sim 2^{(\a-1)j}}} \L^{S}[j,m,u,l](f,g,h)\,.
\end{equation}

Now from \eqref{om} and \eqref{setI} we have that for any $I_j^u\in\I_{\b}$ the following holds:
\begin{equation}\label{FGal}
\frac{1}{|I_{j}^u|}\,\int |f|\,\tilde{\chi}_{I_{j}^u}\lesssim 2^{\b}\,\frac{|F|}{|H|}\,,\qquad\frac{1}{|I_{j}^u|}\,\int |g|\,\tilde{\chi}_{I_{j}^u}\lesssim 2^{\b}\,\frac{|G|}{|H|}\,,
\end{equation}
and for any $N\in\N$
\begin{equation}\label{Hal}
\frac{1}{|I_{j}^u|}\,\int |h|\,\tilde{\chi}_{I_{j}^u}\lesssim_{N} 2^{-\b\,N}\,.
\end{equation}
Putting now together \eqref{lslkey}--\eqref{Hal}, we deduce
\begin{align}
	\intertext{\begin{equation}
			 \L^{S,\b}_{H,m}(f,g,h)\lesssim\label{lslrealf1}
	\end{equation}} \\[-4ex]
	&\bigg( 2^{\b}\frac{|F|}{|H|}\bigg)^{\!\!\theta_1}\! \bigg( 2^{\b}\frac{|G|}{|H|}\bigg)^{\!\!\theta_2}\!2^{-\b\,N\,\theta_3}
	\hspace{-.4cm}\sum_{\substack{j\geq 0\\u\in\Z\\l\sim 2^{(\a-1)j}}}\hspace{-.3cm}
	\big(\Delta_{j,m}^{l,[5 I_j^u]} (f)\big)^{\!1-\theta_1}\big(\Delta_{j,m}^{l,[5 I_j^u]} (g)\big)^{\!1-\theta_2}\big(\Delta_{j,m}^{2l,[5 I_j^u]} (h)\big)^{\!1-\theta_3}\nonumber\\
	&\lesssim \left(\frac{|F|}{|H|}\right)^{\theta_1}\,\left(\frac{|G|}{|H|}\right)^{\theta_2}\,2^{\b\,(\theta_1+\theta_2-N\,\theta_3)}\,
	\Bigg(\sum_{j\geq 0}\sum_{{u\in\Z}\atop{l\sim 2^{(\a-1)j}}}
	\big(\Delta_{j,m}^{l,[5 I_j^u]} (f)\big)^2\Bigg)^{\frac{1-\theta_1}{2}}\times\nonumber\\
	&\hspace{1cm}\times \Bigg(\sum_{j\geq 0}\sum_{{u\in\Z}\atop{l\sim 2^{(\a-1)j}}}
	\big(\Delta_{j,m}^{l,[5 I_j^u]} (g)\big)^2\Bigg)^{\frac{1-\theta_2}{2}}\,
	\Bigg(\sum_{j\geq 0}\sum_{{u\in\Z}\atop{l\sim 2^{(\a-1)j}}}
	\big(\Delta_{j,m}^{2l,[5 I_j^u]} (h)\big)^2\Bigg)^{\frac{1-\theta_3}{2}}\:.\nonumber
\end{align}

Thus, appealing to Parseval, we have that
\begin{align}
	\L^{S,\b}_{H,m}(f,g,h)&\lesssim_{N} 2^{\b\,(\theta_1+\theta_2-N\,\theta_3)}\,\left(\frac{|F|}{|H|}\right)^{\theta_1}\,\left(\frac{|G|}{|H|}\right)^{\theta_2}\,
	|F|^{\frac{1-\theta_1}{2}}\,|G|^{\frac{1-\theta_2}{2}}\,|H|^{\frac{1-\theta_3}{2}}\nonumber\\[1ex]
	&\lesssim 2^{\b\,(1-(N+1)\,\theta_3)}\,
	|F|^{\frac{1+\theta_1}{2}}\,|G|^{\frac{1+\theta_2}{2}}\,|H|^{-\frac{1-\theta_3}{2}}\,.\label{lslrealf2}
\end{align}

Finally, from \eqref{lslreal} and \eqref{lslrealf2}, we conclude that\footnote{Choosing $N$ large enough.}
\begin{equation}\label{lslconcl}
|\L^{S}_{H,m}(f,g,h)|\lesssim |F|^{\frac{1+\theta_1}{2}}\,|G|^{\frac{1+\theta_2}{2}}\,|H|^{-\frac{1-\theta_3}{2}}\,,
\end{equation}
for any $0<\theta_3\leq 1$,\, $0\leq\theta_1,\,\theta_2<1$ with $\theta_1+\theta_2+\theta_3=1$ proving thus the analogue of \eqref{goallsl1} with $\L^{S}_{m}$ replaced by $\L^{S}_{H,m}$.

\begin{remark}\label{Shybrid} It is worth mentioning here that from the three components of $\L^{S}_{m}$, that is, $\L^{S}_{H,m}$, $\L^{S}_{NL,m}$ $\L^{S}_{TR,m}$, it is precisely the hybrid component $\L^{S}_{H,m}$ that is responsible for the restricted range stated in Proposition \ref{Lgennodecay}. Indeed, for the other two remaining components one can prove a better/maximal range as revealed in Proposition \ref{Lgennodecaynl} below. Also, in contrast with \eqref{goallsl1}, for $\L^{S}_{H,m}$ one can prove directly\footnote{\textit{I.e.}, not involving multilinear interpolation.} bounds that do not depend on the parameter $m$.
\end{remark}

\subsubsection{The treatment of $\L^{S}_{NL,m}$}\label{LGCNLfull} As already alluded, in this section we will prove that the non linear component $\L^{S}_{NL,m}$ obeys a wider boundedness range:

\begin{proposition}\label{Lgennodecaynl}
Let $m\in\N$ and $F,\,G,\,H$ be any measurable sets with finite (nonzero) Lebesgue measure. Then
\begin{equation}\label{Hprexistnl}
\exists\:H'\subseteq H\:\:\textrm{measurable with}\:\:|H'|\geq \frac{1}{2} |H|\,,
\end{equation}
such that for any triple of functions $f,\,g$ and $h$ obeying
\begin{equation}\label{restrcitedweaknl}
|f|\leq \chi_{F},\qquad |g|\leq \chi_{G},\qquad |h|\leq \chi_{H'}\,,
\end{equation}
and any triple $(p,q,r')$ with $1<p,q\leq \infty$, $r>\frac{1}{2}$ and $\frac{1}{p}+\frac{1}{q}+\frac{1}{r'}=1$ one has that
\begin{equation}\label{goallsl1nl}
|\L^{S}_{NL,m}(f,g,h)|\lesssim_{\a,p,q} m\,|F|^{\frac{1}{p}}\,|G|^{\frac{1}{q}}\,|H|^{\frac{1}{r'}}\,.
\end{equation}
\end{proposition}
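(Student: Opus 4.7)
The plan is to adapt the argument of Section \ref{LGCLfull} so as to recover the optimal range $r>\frac{1}{2}$ characteristic of the purely curved bilinear Hilbert transform treated in \cite{Li13}, \cite{Lie15}, \cite{LX16}. The guiding observation is that, by the reduction \eqref{saljm20n1}, in the nonlinear regime the linear shift $a\,t/2^j$ is dominated by $t^\alpha/2^{\alpha j}$, so the form essentially reduces to an instance of the purely curved BHT, now equipped with the additional stationary-phase gain in $m$ provided by Proposition \ref{L2mdecay}.

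First, I would perform a wave-packet discretization analogous to \eqref{saljmsumwp} but tailored to the nonlinear regime: since $f,h$ are frequency-localized at scale $2^{j+m}$ while $g$ is frequency-localized at the distinct scale $2^{\alpha j+m}$, one naturally associates to $f,h$ a dyadic grid at spatial scale $2^{-j-m}$ and to $g$ a dyadic grid at spatial scale $2^{-\alpha j-m}$. Crucially, and in contrast with the hybrid regime in Section \ref{LGCLfull}, no subdivision into a family of $2^{(\alpha-1)j}$ subfamilies indexed by an $l$-parameter is needed here, because $\xi$ and $\eta$ already live at genuinely separated scales. This yields a discretized model of the form
$$|\L^{S}_{NL,m}(f,g,h)|\lesssim\frac{1}{2^m}\sum_{s\sim 2^m}\sum_{(\alpha-1)j<-m}\sum_{z\in\Z}\frac{|\langle f,\phi_{P_{j+m}(z-s)}\rangle|\,|\langle g,\psi_{\alpha j+m}(z,s)\rangle|\,|\langle h,\phi_{P_{j+m}(z)}\rangle|}{|I_{j+m}|^{1/2}},$$
where $\psi_{\alpha j+m}(z,s)$ stands for a wave-packet at the scale appropriate to $g$, spatially centered near $z/2^{j+m}+s^\alpha/2^{\alpha j+m}$.

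Next, with the obvious analogues of the wave-packet square functions $\Delta_{j,m}^{[5I_j^u]}$ taken at their corresponding natural scales, I would apply Cauchy--Schwarz in the three symmetric ways and take a geometric mean with exponents $\theta_1+\theta_2+\theta_3=1$ as in \eqref{lslkey}, then organize the resulting dyadic sum via the Calder\'on--Zygmund stratification \eqref{om}--\eqref{setI} relative to the good set $H'=H\setminus\Omega$. Invoking Parseval on each of the three square-function sums now produces simply $|F|$, $|G|$, $|H|$ \emph{without} the extra factor of $2^{(\alpha-1)j}$ that was precisely the source of the $r>\frac{2}{3}$ obstruction in \eqref{lslconcl}. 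Once this loss has disappeared, the admissible region for $(\theta_1,\theta_2,\theta_3)$ opens up so as to cover the entire trilinear H\"older scale $\frac{1}{p}+\frac{1}{q}+\frac{1}{r'}=1$ with $1<p,q\le\infty$ and $r>\frac{1}{2}$.

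The main obstacle I anticipate is rigorously synchronizing the two different spatial scales, $2^{-j-m}$ for $f,h$ versus $2^{-\alpha j-m}$ for $g$, inside the good-set stratification: one must verify that when pairing against a $g$-wave-packet shifted by the linearized nonlinear step $s^\alpha/2^{\alpha j+m}$ with $s\sim 2^m$, the test interval still localizes inside a bounded dilate of the outer dyadic interval $I_j^u$ used to measure the averages of $f$ and $h$. The mild polynomial factor $m$ in \eqref{goallsl1nl} would then be absorbed through the finitely many ``critical'' $j$-scales at which the single-scale Calder\'on--Zygmund bound is tight (those with $|j|\lesssim m/(\alpha-1)$), combined--for the far-scale regime where $|j|\gg m/(\alpha-1)$--with the exponential $m$-decay from Proposition \ref{L2mdecay} and a standard multilinear interpolation (\textit{e.g.} as in \cite{MS13}) between the $L^2\times L^2\to L^1$ decaying bound and trivial $L^\infty$-type control.
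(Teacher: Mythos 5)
Your structural observations are partly on target: the nonlinear regime does dispense with the $l$-indexed subfamily structure (the $2^{(\a-1)j}$ overcount) that drove the $r>\frac{2}{3}$ barrier in the hybrid case, and your discretized model is essentially the paper's \eqref{lnslre}--\eqref{lnslre1}. But after that point your plan diverges from what the paper does and, more importantly, leaves the central difficulty unresolved. The paper does \emph{not} run the three-way Cauchy--Schwarz/geometric-mean argument of \eqref{lslkey} here. The crux of the nonlinear regime is the $s$-dependent spatial shift between the $f$- and $h$-coefficients (distance $s\sim 2^m$ units at scale $2^{-j-m}$, i.e.\ a full $j$-scale interval), and a direct symmetric Cauchy--Schwarz does not see how to sum this shift. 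The paper instead performs a Fubini and an $\ell^1$-$\ell^\infty$ H\"older in the $s$-parameter, pushing the $s$-average onto the $g$-coefficient and linearizing the remaining shift as a measurable function $s(I_{\a j+m}^u)$ on the $h$-coefficient; this leads to the \emph{maximal $m$-shifted square function} $S^{max}_{\I,s}$ of \eqref{maxshift}, whose $L^p$ bound \eqref{maxshift3} (proved in the spirit of \cite{Lie18}) carries the polynomial loss $m$. The conclusion then comes from the two restricted estimates in \eqref{keyn02} and multilinear interpolation between them. Your outline contains none of this: the shifted square function is the missing key lemma, and without it the $s$-sum is not controlled.

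Two further errors: first, your proposed source of the factor $m$ is incorrect. The nonlinear regime \eqref{NLS} is $(\a-1)j<-m$ with $|j|\geq C_0$, so the relevant scales are $j<-m/(\a-1)$ — an \emph{infinite} range — and there is no set of ``critical'' scales with $|j|\lesssim m/(\a-1)$ inside it. The $m$ factor in \eqref{goallsl1nl} comes from the shifted square function estimate \eqref{maxshift3}, not from counting scales. Second, invoking Proposition \ref{L2mdecay} ``for the far-scale regime'' inside this proof is circular: the architecture of Theorem \ref{thm:mS} is precisely to interpolate the $L^2\times L^2\times L^\infty$ $m$-exponentially decaying bound of Proposition \ref{L2mdecay} against the $m$-polynomial restricted-type bound of Proposition \ref{Lgennodecay} (of which \eqref{goallsl1nl} is a part); the latter must be established independently of the former. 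As stated, the proposal has a genuine gap.
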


We start with the definition of the nonlinear dominant behavior in Section \ref{Phdepdec}
\begin{equation}\label{nlsl}
\L^{S}_{NL}:=\sum_{m\in\N} \L^{S}_{NL,m}:=\sum_{m\in\N}\sum_{{(\a-1) j\leq -m}\atop{j\leq -C_0}} \L_{j,m}\,,
\end{equation}
where, recalling that $j<0$, $\a>1$ and we are in the setting \eqref{abnlone}, the multiplier for each $\L_{j,m}$ is given by
\begin{equation}\label{nsaljmm}
m_{j,m}(\xi,\eta)\approx\bigg(\int_\R e^{-i\frac{\xi-\eta}{2^j}  t}e^{i\frac{\eta}{2^{\a j}}t^\a}\rho(t)dt\bigg)
\phi\Big(\frac{\xi}{2^{j+m}}\Big)\phi\Big(\frac{\eta}{2^{\a j+m}}\Big)\phi\Big(\frac{\xi+\eta}{2^{j+m}}\Big)\,.
\end{equation}
As a consequence, we have
\begin{equation}\label{nsaljm2}
\L_{j,m}(f,g,h)\approx\int\limits_{\R^2}\! (f*\check{\phi}_{j+m})\Big(x-\sdfrac{t}{2^j}\Big)
(g*\check{\phi}_{\a j+m})\Big(x+\sdfrac{t}{2^j}+\sdfrac{t^\a}{2^{\a j}}\Big) (h*\check{\phi}_{j+m})(x) \rho(t)\,dt\,dx\,.
\end{equation}
Using that $(\a-1)\,j\leq -m$ and following similar reasonings with the ones in \eqref{redg}, we have
\begin{equation}\label{redaln}
(g*\check{\phi}_{\a j+m})\bigg(x+\frac{t}{2^j}+\frac{t^\a}{2^{\a j}}\bigg)\approx
(g*\check{\phi}_{\a j+m})\bigg(x+\frac{t^\a}{2^{\a j}}\bigg)\:.
\end{equation}
Deduce from here that
\begin{equation}\label{saljm20n}
	|\L_{j,m}(f,g,h)|\lesssim
\end{equation}
\\[-4ex]
$$\int_{\R^2}  \Big|(f*\check{\phi}_{j+m})\Big(x-\sdfrac{t}{2^j}\Big)\Big|\,
\Big|(g*\check{\phi}_{\a j+m})\Big(x+\sdfrac{t^\a}{2^{\a j}}\Big)\Big|\,
\Big|(h*\check{\phi}_{j+m})(x)\Big|\,|\rho(t)|\,dt\,dx$$
\\[-3ex]
$$\approx\!\sdfrac{1}{2^m}\!\!\sum_{\substack{s\sim 2^m\\[.2ex] z\in\Z}}\! \Big|(f*\check{\phi}_{j+m})\Big(\sdfrac{z-s}{2^{j+m}}\Big)\Big|
\Big|(g*\check{\phi}_{\a j+m})\Big(\sdfrac{z}{2^{j+m}}+\sdfrac{s^{\a}}{2^{\a j+\a m}}\Big)\Big|
\Big|(h*\check{\phi}_{j+m})\Big(\sdfrac{z}{2^{j+m}}\Big)\Big|\sdfrac{1}{2^{j+m}}\,.$$
Keeping the same notations as in  Section \ref{LGCLfull} and setting for notational simplicity $P_{j+m}(v)\equiv P_{j+m}(v,1)$ we have
\begin{equation}\label{lnslre}
|\L^{S}_{NL,m}(f,g,h)|\lesssim \frac{1}{2^m}\sum_{s\sim 2^m} \L^{S,s}_{NL,m}(f,g,h)\,,
\end{equation}
where
\begin{equation}\label{lnslre1}
	\L^{S,s}_{NL,m}(f,g,h):=
\end{equation}
$$\sum_{j<0}\sum_{u\in\Z}
\frac{1}{|I_{\a j+m}^u|^{\frac{1}{2}}}\,|\langle g,  \phi_{P_{\a j+m}(u+\frac{s^{\a}}{2^{(\a-1) m}})}\rangle|\,\sum_{I_{j+m}^v\subseteq I_{\a j+ m}^u} \hspace{-.3cm}
|\langle f,  \phi_{P_{j+m}(v-s)}\rangle|\,|\langle h,  \phi_{P_{j+m}(v)}\rangle|\,.$$

Using now a Fubini followed by an $l^1-l^{\infty}$ H\"older in the $s$ parameter, we deduce

\begin{align}\label{maxshift}
|\L^{S}_{NL,m}(f,g,h)|\lesssim \sum_{j<0}\sum_{u\in\Z}
\frac{1}{|I_{\a j+m}^u|^{\frac{1}{2}}}\,\left(\frac{1}{2^m}\sum_{s\sim 2^m} |\langle g,  \phi_{P_{\a j+m}(u+\frac{s^{\a}}{2^{(\a-1)m}})}\rangle|\right)\nonumber\\
\left(\sum_{I_{j+m}^v\subseteq I_{\a j+ m}^u}
|\langle f,  \phi_{P_{j+m}(v)}\rangle|\,|\langle h,  \phi_{P_{j+m}(v+s(I_{\a j+ m}^u))}\rangle|\right)\,,
\end{align}
where here
\begin{equation}\label{meas}
s(\cdot):\,\mathcal{I}\,\mapsto\,[2^m, 2^{m+1}]\cap \N\quad\textrm{measurable function}\,.
\end{equation}

Assume now $f,g,h$ obey \eqref{restrcitedweaknl} and set $H':=H\setminus \O_{m}$ where
\begin{equation}\label{oms}
\O_{m}:=\left\{M \chi_{G}\geq C\,\frac{|G|}{|H|}\right\}
\cup\left\{M \chi_{H}\geq C\,|H|\right\}\,.
\end{equation}
Notice that for a large enough $C>0$ the set $H'$ satisfies the requirements in \eqref{Hprexistnl}.

Next, for $\b\in\N$, we set
\begin{equation}\label{setIn}
\I_{\b}:=\left\{I_{\a j}^w\,\Big|\,1+\frac{\textrm{dist} (I_{\a j}^w,\,(\O_{m})^c) }{|I_{\a j}^w|}\approx 2^{\b}\right\}
\end{equation}
and deduce that
\begin{equation}\label{lslrealn}
|\L^{S}_{NL,m}(f,g,h)|\lesssim\sum_{\b\in\N} \L^{S,\b}_{NL,m}(f,g,h)\,,
\end{equation}
where
\begin{align}\label{maxshift1}
\L^{S,\b}_{NL,m}(f,g,h):=\qquad\qquad\qquad\qquad\qquad\qquad\qquad\qquad\nonumber\\
\sum_{j<0}\sum_{I_{\a j}^w\in \I_{\b,m}}
\left(\frac{1}{|I_{\a j}^w|} \int_{I_{\a j}^{w}}|g|\right)
\left(\sum_{{I_{j+m}^v\subseteq I_{\a j+ m}^u}\atop{I_{\a j+ m}^u}\subseteq I_{\a j}^w}
|\langle f,  \phi_{P_{j+m}(v)}\rangle|\,|\langle h,  \phi_{P_{j+m}(v+s(I_{\a j+ m}^u))}\rangle|\right)\,.
\end{align}
Let $\I$ be a collection of dyadic intervals and $s(\cdot)$ any function obeying \eqref{meas}. We define now the \emph{maximal $m$-shifted square function} relative to the pair $(\I,s)$ by
\begin{equation}\label{maxshift}
 S_{\I,s}^{max}(h)(x):=\left(\sum_{j<0}\sum_{I_{\a j}^w\in \I}
\sum_{{I_{j+m}^v\subseteq I_{\a j+ m}^u}\atop{I_{\a j+ m}^u}\subseteq I_{\a j}^w}
\frac{|\langle h,  \phi_{P_{j+m}(v+s(I_{\a j+ m}^u))}\rangle|^2}{|I_{j+m}^v|}\chi_{I_{j+m}^v}(x)\right)^{\frac{1}{2}}\,.
\end{equation}
From \eqref{setIn}, \eqref{maxshift1} and \eqref{maxshift} we immediately deduce that
\begin{equation}\label{maxshift2}
\L^{S,\b}_{NL,m}(f,g,h)\lesssim \max\left\{1,\,2^{\b}\,\frac{|G|}{|H|}\right\}\,\int_{\R} S(f)(x)\,S_{\I_{\b},s}^{max}(h)(x)\,dx\,.
\end{equation}
Proceeding now in the similar spirit with the approach in \cite{Lie18} one can show that
\begin{equation}\label{maxshift3}
\|S_{\I_{\b},s}^{max}(h)\|_{p}\lesssim_{n} m\, 2^{-n\b}\,|H|^{\frac{1}{p}}\,
\end{equation}
for any $n\in\N$ and $1\leq p<\infty$.

Putting together \eqref{setIn}--\eqref{maxshift3} we deduce that for any $\theta,\,\mu\in (0,1)$
\begin{equation}\label{keyn02}
\L^{S}_{NL,m}(f,g,h)\lesssim_{\theta\,\mu}\,\min\left\{m\,|F|^{\theta}\,|G|\,|H|^{-\theta},\,|F|^{\mu}\,|H|^{1-\mu}\right\}\,.
\end{equation}

The desired bound \eqref{goallsl1nl} follows now via multilinear interpolation.

\subsubsection{The treatment of $\L^{S}_{TR,m}$}\label{Trm}
For this it is enough to notice that--for a detailed account of this please see Section \ref{NSIterm}--\emph{we have uniformly in $j$} that
\begin{equation}\label{flrange}
|\L^{S}_{j,m}(f,g,h)|\lesssim_{\g,p,q} \|f\|_p\,\|g\|_q\,\|h\|_{r'}\,,
\end{equation}
for any $\frac{1}{p}+\frac{1}{q}+\frac{1}{r'}=1$ with $1\leq p,\,q\leq \infty$ and $r>\frac{1}{2}$.

\section{The analysis of the non-stationary term $\L^{NS}$}\label{HNL-2}

Recalling the notation and decompositions introduced in Section \ref{lns}, we will treat one by one each of the terms
$\L^{NS,I},\,\L^{NS, II},\,\L^{NS, III}$ and $\L^{NS, IV}$ that form $\L^{NS}$ by essentially reducing the matter--via an integration by parts procedure--to the techniques described in Section \ref{Propgen}.

\subsection{Treatment of $\boldmath{\L^{NS,I}}$}\label{lns1} In this situation, recalling \eqref{ns11}, we have

\begin{equation}\label{ns10}
\L^{NS,I}:=\sum_{n\geq 0}\L^{NS,I}_{n}:=\sum_{n\geq 0}\sum_{|j|\geq C_0} \L^{NS,I}_{j,n}:=\sum_{n\geq 0}\sum_{|j|\geq C_0}\:\sum_{m<0} \L_{j,m,n}\,.
\end{equation}
Notice now that for a fixed $n\in\N$, the multiplier associated with $\L^{NS,I}_{j,n}$ obeys
\begin{equation}
	m_{j,n}^{NS,I}(\xi,\eta)=\Big(\int_\R e^{-i\frac{\xi-\eta}{2^j}t+i\frac{\eta}{2^{\a j}}t^\a}\rho(t)dt\Big)\psi\Big(\frac{\xi-\eta}{2^j}\Big)\phi\Big(\frac{\a\,\eta}{2^{\a j+n}}\Big)\nonumber\,.
\end{equation}
Integrating now by parts once we obtain\footnote{Again, we ignore here the error terms.}
\begin{equation}\label{ns1}
	m_{j,n}^{NS,I}(\xi,\eta)\approx\frac{1}{2^n}\,\Big(\int_\R e^{-i\frac{\xi-\eta}{2^j}t+i\frac{\eta}{2^{\a j}}t^\a}\rho(t)dt\Big)\psi\Big(\frac{\xi-\eta}{2^j}\Big)\phi\Big(\frac{\a\,\eta}{2^{\a j+n}}\Big)\,.
\end{equation}

\subsubsection{The $\L^{NS,I}_{H}$ component}\label{lns1h}

In this situation, given that
\begin{align}\label{ns10L}
\L^{NS,I}_{H,n}:=\sum_{{(\a-1)j\geq 0}\atop{j\geq C_0}} \L^{NS,I}_{j,n}:=\sum_{{(\a-1)j\geq 0}\atop{j\geq C_0}}\:\sum_{m<0} \L_{j,m,n}\,,
\end{align}
we exploit the information  $(\a-1)j\geq 0\geq-n$ in order to discretize the multiplier in \eqref{ns11} as
\begin{equation}\label{saljmm0nl1}
		m_{j,n}^{NS,I}(\xi,\eta)\approx\sum_{l\sim 2^{(\a-1)j+n}} m_{j,n,l}^{NS,I}(\xi,\eta)
\end{equation}
with
\begin{equation}\label{saljmm1nl}
		m_{j,n,l}^{NS,I}(\xi,\eta):=\frac{1}{2^n}\left(\int_\R e^{-i\frac{\xi-\eta}{2^j}  t}e^{i\frac{\eta}{2^{\a j}}t^\a}\rho(t)dt\right)
		\phi\Big(\sdfrac{\xi}{2^{j}}-l\Big)\,\phi\Big(\sdfrac{\eta}{2^{j}}-l\Big)\,\phi\Big(\sdfrac{\xi+\eta}{2^{j}}-2l\Big)\,.
\end{equation}

As a consequence we have
\begin{equation}\label{saljmsumnl}
\L^{NS,I}_{j,n}(f,g,h)=\sum_{l\sim 2^{(\a-1)j+n}}\L^{NS,I,l}_{j,n}(f,g,h)\,,
\end{equation}
with\footnote{Recall the notation for $\phi_{j}^l$ in \eqref{s2jm3}.}
\begin{equation}\label{saljm2nl}
	\L^{NS,I,l}_{j,n}(f,g,h)\approx\frac{1}{2^n}\int_{\R^2}  (f*\check{\phi}_{j}^l)\Big(x-\sdfrac{t}{2^j}\Big)\,(g*\check{\phi}_{j}^l)\Big(x+\sdfrac{t}{2^j}+\sdfrac{t^\a}{2^{\a j}}\Big)\,(h*\check{\phi}_{j}^{2l})(x)\,\rho(t)\,dt\,dx\,.
\end{equation}

Once at this point, we can apply similar reasonings with the ones presented in \eqref{s2jm3}--\eqref{redal} to deduce that
\begin{equation}\label{saljm20nl}
	|\L^{NS,I,l}_{j,n}(f,g,h)(f,g,h)|\lesssim
\end{equation}
$$\frac{1}{2^n}\,\int_{\R^2}  \Big|(f*\check{\phi}_{j}^l)\Big(x-\sdfrac{t}{2^j}\Big)\Big|\,\Big|(g*\check{\phi}_{j}^l)\Big(x+\sdfrac{t}{2^j}\Big)\Big|\,
\left|(h*\check{\phi}_{j}^{2l})(x)\right|\,|\rho(t)|\,dt\,dx$$
$$\approx \sum_{z\in\Z}\frac{1}{2^n} \Big|(f*\check{\phi}_{j}^l)\Big(\frac{z}{2^{j}}\Big)\Big|\,\Big|(g*\check{\phi}_{j}^l)\Big(\frac{z}{2^{j}}\Big)\Big|\,
\Big|(h*\check{\phi}_{j}^{2l})\Big(\frac{z}{2^{j}}\Big)\Big|\,\frac{1}{2^{j}}\,.$$

Now, with the notations from Section \ref{LGCLfull}, we further deduce that
\begin{equation}\label{lnl}
|\L^{NS,I}_{H,n}(f,g,h)|\lesssim \frac{1}{2^n}\,\sum_{j\geq 0}\sum_{{z\in\Z}\atop{l\sim 2^{(\a-1)j+n}}}\frac{1}{|I_{j}|^{\frac{1}{2}}}\,|\langle f,  \phi_{P_{j}(z,l)}\rangle|\,
|\langle g,  \phi_{P_{j}(z,l)}\rangle|\,|\langle h,  \phi_{P_{j}(z,2l)}\rangle|\,.
\end{equation}

At this point it remains to notice that \eqref{lnl} may be regarded as the analogue of \eqref{lsl1} in the situation $m=0$ and with an extra decaying factor of $\frac{1}{2^n}$. Thus, following the same reasoning as in Section \ref{LGCLfull}, one concludes that
\begin{equation}\label{flrangenll}
|\L^{NS,I}_{H,n}|\lesssim_{p,q} \frac{1}{2^n}\,\|f\|_p\,\|g\|_q\,\|h\|_{r'}
\end{equation}
for any $\frac{1}{p}+\frac{1}{q}+\frac{1}{r'}=1$ with $1\leq p,\,q\leq \infty$ and $r>\frac{2}{3}$.

\subsubsection{The $\L^{NS,I}_{NL}$ component}
In this situation we have
\begin{align}\label{ns10NL}
\L^{NS,I}_{NL,n}:=\sum_{(\a-1)j<-n} \L^{NS,I}_{j,n}:=\sum_{(\a-1)j<-n}\:\sum_{m<0} \L_{j,m,n}\,,
\end{align}
and thus, since $(\a-1)j< -n$, the multiplier in \eqref{ns11} may be reduced to
\begin{equation}\label{ns11NL}
m_{j,n}^{NS,I}(\xi,\eta)\approx\frac{1}{2^n}\,\Big(\int_\R e^{-i\frac{\xi}{2^j}t+i\frac{\eta}{2^{\a j}}t^\a}\rho(t)\,dt\Big)\psi\Big(\frac{\xi}{2^j}\Big)\phi\Big(\frac{\eta}{2^{\a j+n}}\Big)\,.
\end{equation}
After some standard Taylor series arguments we deduce that the main term in \eqref{ns11NL} is given by an expression of the form\footnote{For notational simplicity, here as in several other places before, we abuse the notation and leave the functions $\psi$, $\phi$ unchanged, though, strictly speaking, this need not be the case.}
\begin{equation}\label{ns11NL1}
m_{j,n}^{NS,I}(\xi,\eta)\approx\frac{1}{2^n}\,\psi\Big(\frac{\xi}{2^j}\Big)\,\phi\Big(\frac{\eta}{2^{\a j+n}}\Big)\,.
\end{equation}
Once at this point, we appeal to an Abel summation argument in order to deduce
\begin{align}
\sum_{(\a-1)j<-n}\hspace{-.4cm} 2^n\,m_{j,n}^{NS,I}(\xi,\eta)&=\sum_{s\le -\frac{n}{\a-1}} \hspace{-.2cm} \theta\Big(\frac{\xi}{2^s}\Big) \sum_{j\le s}\,\phi\Big(\frac{\eta}{2^{\a j+n}}\Big)  + \psi\Big(\frac{\xi}{2^{-\frac{n}{\a-1}+1}}\Big)\hspace{-.2cm} \sum_{j\le -\frac{n}{\a-1}} \hspace{-.2cm} \phi\Big(\frac{\eta}{2^{\a j+n}}\Big)\nonumber\\
&= \sum_{s\le -\frac{n}{\a-1}} \theta\Big(\frac{\xi}{2^s}\Big) \nu\Big(\frac{\eta}{2^{s\a}}\Big)\,+\,\psi\Big(\frac{\xi}{2^{-\frac{n}{\a-1}+1}}\Big)\,
\nu\Big(\frac{\eta}{2^{-\frac{\a n}{\a-1}}}\Big)\nonumber\\
&=:2^n\, m^{NS,I,1}_{n}(\xi,\eta)\,+\, 2^n\,m^{NS,I,2}_{n}(\xi,\eta)\,,\label{mNLIIns}
\end{align}
where here we used the notation
\begin{align*}
	\psi\Big(\frac{t}{2^j}\Big) &= \sum_{s=j}^{-\frac{n}{\a-1}} \theta\Big(\frac{t}{2^s}\Big) + \psi\Big(\frac{t}{2^{-\frac{n}{\a-1}+1}}\Big)\,,
	\intertext{and}
	\nu(\eta)&:=\sum_{j\le 0} \phi\Big(\frac{\eta}{2^{\a j+n}}\Big)\,.
\end{align*}

From this point one can apply standard paraproduct theory in order to deduce that the analogue of
\eqref{flrangenll} holds with $\L^{NS,I}_{L,n}$ replaced by $\L^{NS,I}_{NL,n}$.

\subsubsection{The $\L^{NS,I}_{TR}$ component}\label{lns1tr}
This is a direct consequence of the work in Section \ref{NSIterm}. Indeed, given the range $-n\leq (\a-1)j<0$,  it is enough to notice that, \emph{uniformly for $j\in \Z$}, we have that
\begin{equation}\label{flrangenl}
|\L^{NS,I}_{j,n}|\lesssim_{p,q} \frac{1}{2^n}\,\|f\|_p\,\|g\|_q\,\|h\|_{r'}
\end{equation}
for any $n\in\N$ and $\frac{1}{p}+\frac{1}{q}+\frac{1}{r'}=1$ with $1\leq p,\,q\leq \infty$ and $r>\frac{1}{2}$.

\subsection{Treatment of $\L^{NS,II}$}\label{lns2}

In this situation we know that $0\leq m<n-C_0$ and hence applying an integration by parts on the integrand component of the multiplier
\begin{equation}\label{mjmn-NSII}
 m_{j,m,n}(\xi,\eta):=\left(\int_\R e^{-i\frac{\xi-\eta}{2^j}  t}e^{i\frac{\eta}{2^{\a j}}t^\a}\rho(t)dt \right) \phi\Big(\frac{\xi-\eta}{2^{j+m}}\Big)\phi\Big(\frac{\eta}{2^{\a j+n}}\Big)
\end{equation}
we get that
%
\begin{equation}
m_{j,m,n}(\xi,\eta)=\frac{1}{2^n}\sum\limits_{\ell=0}^\infty\frac{1}{2^{\ell(n-m)}} \Big(\int\limits_\R e^{i\varphi_{j,\xi,\eta}(t)}i\,\rho_{\a,\ell}(t)dt\Big)\widetilde{\phi_\ell}\big(\frac{\xi-\eta}{2^{j+m}}\big)\widetilde{\phi}_{_{-(\ell+1)}}\big(\frac{\eta}{2^{\a j+n}}\big)
\end{equation}
where $\tilde\phi_\ell(x):=x^\ell \phi(x)$ and $\rho_{\a,\ell}$ is a smooth functions with compact support on $\supp\rho$, and satisfies $\|\rho_{\a,\ell}\|_{L^\infty}\lesssim_{\a,\ell,\rho} C^\ell$ for some suitable $C>1$. Now due to the fast decay in $\ell$ it will be enough to only consider the term $\ell=0$. In conclusion it remains to focus on
\begin{equation}\label{mjmnaNS}
m_{j,m,n}(\xi,\eta)\approx\frac{1}{2^n} \left(\int_\R e^{-i\frac{\xi-\eta}{2^j}  t}e^{i\frac{\eta}{2^{\a j}}t^\a}\rho(t)dt\right)\,\phi\Big(\frac{\xi-\eta}{2^{j+m}}\Big)\phi\Big(\frac{\eta}{2^{\a j+n}}\Big)\:.
\end{equation}	

\subsubsection{The $\L^{NS,II}_{H}$ component}

In this situation we have that
\begin{align}\label{ns10NSL}
\L^{NS,II}_{H,m,n}:=\sum_{{(\a-1)j\geq m}\atop{j\geq C_0}} \L_{j,m,n}\,,
\end{align}
and proceeding as in Section \ref{LGCNLfull} we discretize the multiplier in \eqref{mjmnaNS} as
\begin{equation}\label{saljmm0nl1NSL}
		m_{j,m,n}(\xi,\eta)\approx\sum_{l\sim 2^{(\a-1)j+n-m}} m_{j,m,n,l}(\xi,\eta)
\end{equation}
with
\begin{equation}\label{saljmm1nlNSL}
		m_{j,m,n,l}(\xi,\eta)\!:=\frac{1}{2^n}\!\left(\int_\R e^{-i\frac{\xi-\eta}{2^j}  t}e^{i\frac{\eta}{2^{\a j}}t^\a}\!\rho(t)dt\right)\!
		\phi\Big(\sdfrac{\xi}{2^{j+m}}-l\Big)\phi\Big(\sdfrac{\eta}{2^{j+m}}-l\Big)\phi\Big(\sdfrac{\xi+\eta}{2^{j+m}}-2l\Big)\,.
\end{equation}
Since $(\a-1)j\geq m$, we have
\begin{equation}\label{saljmsumnlNSL}
\L_{j,m,n}(f,g,h)=\sum_{l\sim 2^{(\a-1)j+n-m}}\L_{j,m,n,l}(f,g,h)\,,
\end{equation}
with
\begin{equation}\label{saljm2nlNSL}
\L_{j,m,n,l}(f,g,h)\approx\frac{1}{2^n}\!\int_{\R^2} \! (f*\check{\phi}_{j+m}^l)\Big(x-\sdfrac{t}{2^j}\Big)(g*\check{\phi}_{j+m}^l)\Big(x+\sdfrac{t}{2^j}\Big)(h*\check{\phi}_{j+m}^{2l})(x)\rho(t)\,dt\,dx\,.
\end{equation}

Applying now similar reasonings with those in Section \ref{LGCNLfull} we deduce that
\begin{equation}\label{lnl2}
	\L^{NS,II}_{H,m,n}\lesssim
\end{equation}
$$\frac{1}{2^n} \cdot\frac{1}{2^m} \sum_{s\sim 2^m}\sum_{\substack{(\a-1)j\geq m\\[.1ex] z\in\Z\\[.1ex] l\sim 2^{(\a-1)j+n-m}}} \frac{1}{|I_{j+m}|^{\frac{1}{2}}}\, |\langle f,  \phi_{P_{j+m}(z-s,l)}\rangle|\,
|\langle g,  \phi_{P_{j+m}(z+s,l)}\rangle|\,|\langle h,  \phi_{P_{j+m}(z,2l)}\rangle|\,.$$

Not surprisingly, \eqref{lnl2} serves as the direct analogue of \eqref{lsl} with the expression $|\L^{S}_{H}(f,g,h)|$ therein modified such that $l\sim 2^{(\a-1)j}$ is substituted by $l\sim 2^{(\a-1)j+n-m}$ with an additional $\frac{1}{2^n}$ decaying factor. Thus, applying the same reasoning as in Section \ref{LGCLfull}, one concludes that
\begin{equation}\label{flrangenll1}
|\L^{NS,II}_{H,m,n}|\lesssim_{p,q} \frac{1}{2^n}\,\|f\|_p\,\|g\|_q\,\|h\|_{r'}
\end{equation}
for any $\frac{1}{p}+\frac{1}{q}+\frac{1}{r'}=1$ with $1\leq p,\,q\leq \infty$ and $r>\frac{2}{3}$.

\subsubsection{The $\L^{NS,II}_{NL}$ component}

In this situation we have that
\begin{align}\label{ns10NSNL}
\L^{NS,II}_{NL,m,n}:=\sum_{{(\a-1)j<-n}\atop{j\leq -C_0}} \L_{j,m,n}\,.
\end{align}
Based on \eqref{mjmnaNS} and $(\a-1)j<-n$ we deduce
\begin{equation}\label{saljm2nlNSNL}
\L_{j,m,n}(f,g,h)\approx\!\frac{1}{2^n}\!\int_{\R^2}\!  (f*\check{\phi}_{j+m})\Big(x-\sdfrac{t}{2^j}\Big)(g*\check{\phi}_{\a j+n})\Big(x+\sdfrac{t^{\a}}{2^{\a j}}\Big)(h*\check{\phi}_{j+m})(x)\rho(t)\,dt\,dx\,.
\end{equation}
From this point on, one can follow the steps as in Section \ref{LGCNLfull} and write
\begin{equation}\label{saljm20NSNL}
	|\L_{j,m,n}(f,g,h)|\lesssim
\end{equation}
$$\sdfrac{1}{2^{2n}}\!\!\sum_{\substack{z\in\Z\\s\sim 2^n}}\! \Big|(f*\check{\phi}_{j+m})\big(\sdfrac{z}{2^{j+m}}{-}\sdfrac{s}{2^{j+n}}\big)\Big|
\Big|(g*\check{\phi}_{\a j+n}) \big(\sdfrac{z}{2^{j+m}}{+}\sdfrac{s^{\a}}{2^{\a j+\a n}}\big)\Big|
\Big|(h*\check{\phi}_{j+m})\big(\sdfrac{z}{2^{j+m}}\big)\Big| \sdfrac{1}{2^{j+m}}\,,$$
which further implies
\begin{equation}\label{lnslrNSNL}
|\L^{NS,II}_{NL,m,n}(f,g,h)|\lesssim \frac{1}{2^n}\sum_{s\sim 2^n} \L^{NS,II,s}_{NL,m,n}(f,g,h)\,,
\end{equation}
where
\begin{equation}\label{lnslreNSNL}
	\L^{NS,II,s}_{NL,m,n}(f,g,h):=
\end{equation}
\\[-4.5ex]
$$\frac{1}{2^n}\sum_{\substack{(\a-1)j<-n\\ u\in\Z}}\hspace{-.3cm}
\frac{|\langle g,  \phi_{P_{\a j+n}(u+\frac{s^{\a}}{2^{(\a-1) n}})}\rangle|}{|I_{\a j+n}^u|^{\frac{1}{2}}} \hspace{-.2cm}\sum_{I_{j+m}^v\subseteq I_{\a j+ n}^u}\hspace{-.2cm}
|\langle f,  \phi_{P_{j+m}(v-\frac{s}{2^{n-m}})}\rangle|\,|\langle h,  \phi_{P_{j+m}(v)}\rangle|\,.$$

Now one can apply the same reasonings as in Section \ref{LGCNLfull}, see the analogy between \eqref{lnslrNSNL}-\eqref{lnslreNSNL} and \eqref{lnslre1}-\eqref{lnslre}, in order to obtain the desired result.

\subsubsection{The $\L^{NS,II}_{TR}$ component}
As before we get uniformly in $0\leq m\leq n-C_0$ and $j\in \Z$ that
\begin{equation}\label{flrangenlq}
|\L^{NS,II}_{j,,m,n}|\lesssim_{p,q} \frac{1}{2^n}\,\|f\|_p\,\|g\|_q\,\|h\|_{r'}
\end{equation}
for any $\frac{1}{p}+\frac{1}{q}+\frac{1}{r'}=1$ with $1\leq p,\,q\leq \infty$ and $r>\frac{1}{2}$. Conclude that
\begin{equation}\label{flrangenlq0}
|\L^{NS,II}_{TR,n}|\lesssim_{p,q} \frac{n}{2^n}\,\|f\|_p\,\|g\|_q\,\|h\|_{r'}\;,
\end{equation}
which trivially implies our desired control over $\L^{NS,II}_{TR}$.

\subsection{Treatment of $\L^{NS,III}$} In this situation we know that $0\leq n<m-C_0$ and hence applying the same steps as in Section \ref{lns2} (with the same notation) we have in a first instance
\begin{equation}
	m_{j,m,n}(\xi,\eta)=-\sdfrac{1}{2^m}\sum\limits_{\ell=0}^\infty\sdfrac{1}{2^{\ell(m-n)}} \bigg(\int\limits_\R e^{i\varphi_{j,\xi,\eta}(t)}\tilde\rho_{\a,\ell}(t)dt\bigg)\widetilde{\phi}_{-(\ell+1)}\Big(\sdfrac{\xi-\eta}{2^{j+m}}\Big) \widetilde{\phi_\ell}\Big(\sdfrac{\eta}{2^{\a j+n}}\Big)
\end{equation}
which, via standard reasonings, can be reduced to the main term expressed as
\begin{equation}\label{mjmnaNS4}
	m_{j,m,n}(\xi,\eta)\approx\frac{1}{2^m} \left(\int_\R e^{-i\frac{\xi-\eta}{2^j}  t}e^{i\frac{\eta}{2^{\a j}}t^\a}\rho(t)dt\right)\,\phi\Big(\frac{\xi-\eta}{2^{j+m}}\Big)\phi\Big(\frac{\eta}{2^{\a j+n}}\Big)\:.
\end{equation}	

From this point on the treatments of $\L^{NS,III}_{H}$, $\L^{NS,III}_{NL}$, $\L^{NS,III}_{TR}$ are similar to the ones of $\L^{NS,II}_{H}$, $\L^{NS,II}_{NL}$ and $\L^{NS,III}_{TR}$, respectively.

\subsection{Treatment of $\L^{NS,IV}$}
In this situation, for a fixed $m\in\N$, we have
\begin{align}\label{nsIV}
\L^{NS,IV}_{m}:=\sum_{j\in\Z} \L^{NS,IV}_{j,m}:=\sum_{j\in\Z}\:\sum_{n<0} \L_{j,m,n}\,.
\end{align}
Now, proceeding as at the beginning of Section \ref{lns1}, we may assume
\begin{equation}\label{nsNSL}
	m_{j,m}^{NS,IV}(\xi,\eta)(\xi,\eta)\approx\frac{1}{2^m}\,\Big(\int_\R e^{-i\frac{\xi-\eta}{2^j}t+i\frac{\eta}{2^{\a j}}t^\a}\rho(t)dt\Big)\phi\Big(\frac{\xi-\eta}{2^{j+m}}\Big)
\psi\Big(\frac{\eta}{2^{\a j}}\Big)\,.
\end{equation}

\subsubsection{The $\L^{NS,IV}_{H}$ component}

In this situation, we write
\begin{align}\label{ns4NSL}
\L^{NS,IV}_{H,m}:=\sum_{{(\a-1)j\geq m}\atop{j\geq C_0}} \L^{NS,IV}_{j,m}:=\sum_{{(\a-1)j\geq m}\atop{j\geq C_0}}\:\sum_{n<0} \L_{j,m,n}\,,
\end{align}

Following similar steps with those in Section \ref{lns1h}, one concludes
\begin{equation}\label{lnlNSNL}
	|\L^{NS,IV}_{L,m}(f,g,h)|\lesssim
\end{equation}
$$\frac{1}{2^{2m}}\sum_{s\sim 2^m}\sum_{\substack{(\a-1)j\geq m\\[.2ex] z\in\Z\\[.2ex] l\sim 2^{(\a-1)j-m} }}
\frac{1}{|I_{j+m}|^{\frac{1}{2}}}\,|\langle f,  \phi_{P_{j+m}(z-s,l)}\rangle|\,
|\langle g,  \phi_{P_{j+m}(z+s,l)}\rangle|\,|\langle h,  \phi_{P_{j+m}(z,2l)}\rangle|\,,$$
which immediately implies
\begin{equation}\label{flrangenllNSNL}
|\L^{NS,IV}_{L,m}|\lesssim_{p,q} \frac{1}{2^m}\,\|f\|_p\,\|g\|_q\,\|h\|_{r'}
\end{equation}
for $p,q$ and $r$ in the same range as in the statement of Proposition \ref{Lgennodecay}.

\subsubsection{The $\L^{NS,IV}_{NL}$ component}

In this situation we have
\begin{align}\label{ns4NSNL}
\L^{NS,IV}_{NL,m}:=\sum_{{(\a-1)j\leq 0}\atop{j\leq - C_0}} \L^{NS,IV}_{j,m}:=\sum_{{(\a-1)j\leq 0}\atop{j\leq - C_0}}\:\sum_{n<0} \L_{j,m,n}\,.
\end{align}
Using $(\a-1)j\leq 0$ the multiplier in \eqref{nsNSL} may be reduced to
\begin{equation}\label{ns4NL}
m_{j,m}^{NS,IV}(\xi,\eta)\approx\frac{1}{2^m}\,\Big(\int_\R e^{-i\frac{\xi}{2^j}t+i\frac{\eta}{2^{\a j}}t^\a}\rho(t)dt\Big)\phi\Big(\frac{\xi}{2^{j+m}}\Big)\psi\Big(\frac{\eta}{2^{\a j}}\Big)\,.
\end{equation}
Proceeding as in Section \ref{LGCNLfull}, we have that
\begin{equation}\label{saljm20n-1}
	|\L^{NS,IV}_{j,m}(f,g,h)|\lesssim
\end{equation}
$$\frac{1}{2^m}\,\int_{\R^2}  \left|(f*\check{\phi}_{j+m})\left(x-\sdfrac{t}{2^j}\right)\right|\,
\left|(g*\check{\psi}_{\a j})\left(x+\sdfrac{t^\a}{2^{\a j}}\right)\right|\,
\left|(h*\check{\phi}_{j+m})(x)\right|\,|\rho(t)|\,dt\,dx$$
$$\approx\frac{1}{2^{2m}}\sum_{{s\sim 2^m}\atop{z\in\Z}}\, \left|(f*\check{\phi}_{j+m})\left(\sdfrac{z-s}{2^{j+m}}\right)\right|\,\left|(g*\check{\psi}_{\a j})\left(\sdfrac{z}{2^{j+m}}\right)\right|\,
\left|(h*\check{\phi}_{j+m})(\frac{z}{2^{j+m}})\right|\,\frac{1}{2^{j+m}}\,.$$

From this point on one can follow a similar strategy with the one exposed at the previous steps in order to obtain the desired conclusion. We leave all these details to the interested reader.

\subsubsection{The $\L^{NS,IV}_{TR}$ component}

One follows the same approach as the one in Section \ref{lns1tr}.

%

\section{The analysis of the low oscillatory term $\L^{LO}$}\label{LOf}

\subsection{Multiplier analysis}
In this section we address the operator with multiplier defined as in \eqref{ld}. Since in the current context we have $|\frac{\a\,\eta}{2^{\a j}}|\lesssim 1$ one can apply a Taylor expansion, together  with \eqref{ref:unity-phi} and \eqref{ref:unity-psi} in order to write
\begin{align}
	m_j^{LO}&=\sum_{(m,n)\in \Z_{-}\times\Z_{-}}\hspace{-.5cm} m_j(\xi,\eta)\phi\Big(\sdfrac{\xi-\eta}{2^{j+m}}\Big)\phi\Big(\sdfrac{\eta}{2^{\a j+n}}\Big)\!=\!\sum_{n\in \Z_-} m_j(\xi,\eta)\,\psi\Big(\sdfrac{\xi-\eta}{2^{j}}\Big)\,\phi\Big(\sdfrac{\eta}{2^{\a j+n}}\Big)\nonumber\\
	&=\sum_{n\in\Z_-}\sum_{\ell=0}^\infty \frac{i^\ell}{\ell\,!}\,2^{n\ell} \Big(\int_\R e^{-i\frac{\xi-\eta}{2^j}t} \rho_{\a,\ell}(t)dt\Big)\, \psi\Big(\frac{\xi-\eta}{2^{j}}\Big)\,\widetilde{\phi}_\ell\Big(\frac{\eta}{2^{\a j+n}}\Big)\nonumber\\
	&=\hat{\rho} \Big(\sdfrac{\xi-\eta}{2^j}\Big)\psi\Big(\sdfrac{\xi-\eta}{2^{j}}\Big)\psi\Big(\sdfrac{\eta}{2^{\a j}}\Big)
	+ \sum_{\substack{n\in\Z_-\\ \ell\ge 1}}\! \frac{i^\ell}{\ell\,!}\, 2^{n\ell}\, \widehat{\rho_{\a,\ell}} \Big(\sdfrac{\xi-\eta}{2^j}\Big)\,\psi\Big(\sdfrac{\xi-\eta}{2^{j}}\Big)\, \widetilde{\phi}_\ell\Big(\sdfrac{\eta}{2^{\a j+n}}\Big), \raisetag{1\baselineskip}\label{ref:Taylor}
\end{align}
where $\rho_{\a,\ell}(t):=\big(\frac{t^\a}{\a}\big)^\ell \rho(t)$ and $\widetilde{\phi}_\ell(x)=x^\ell \phi(x)$. Since $|\widehat{\rho_{\a,\ell}}|\le C_{\a}^\ell$ for some positive constant $C_{\a}$, and $n\in\Z_-$, the sum in \eqref{ref:Taylor} is absolutely convergent. In particular, we write the multiplier $m_j^{LO}$ as
\begin{equation}\label{Taylor-mjL0}
	m_j^{LO}(\xi,\eta)=m_j^{LO,0}(\xi,\eta)+m_j^{LO,1}(\xi,\eta)\,,\\
\end{equation}
where
\begin{align}
	m_j^{LO,0}(\xi,\eta)&:=\hat{\rho} \Big(\frac{\xi-\eta}{2^j}\Big)\,\psi\Big(\frac{\xi-\eta}{2^{j}}\Big)\, \psi\Big(\frac{\eta}{2^{\a j}}\Big)\,,\label{ref:mjL0}
\end{align}
and
\begin{align}
	m_j^{LO,1}(\xi,\eta)&:=\sum_{n\in\Z_-} \sum_{\ell=1}^\infty \frac{i^\ell}{\ell\,!} 2^{n\ell}\, \widehat{\rho_{\a,\ell}} \Big(\frac{\xi-\eta}{2^j}\Big)\,\psi\Big(\frac{\xi-\eta}{2^{j}}\Big)\, \widetilde{\phi}_\ell\Big(\frac{\eta}{2^{\a j+n}}\Big)\,.\label{ref:mjLO1}
\end{align}

Due to the extra decay in $\ell$ in \eqref{ref:mjLO1}, and in $n$, one may assume without loss of generality that $\ell=1$ and $n=-1$, and thus, ignoring absolute constants, that\footnote{We maintain the same notation $\phi$ and disregard the dependence on $\ell=1$.}
\begin{equation}
	m_j^{LO,1}(\xi,\eta)=\widehat{\rho_{\a,1}} \Big(\frac{\xi-\eta}{2^j}\Big)\,\psi\Big(\frac{\xi-\eta}{2^{j}}\Big)\, \phi\Big(\frac{\eta}{2^{\a j}}\Big)\,.
\end{equation}

Finally, since $\rho$ obeys the mean zero condition, standard considerations reduce the analysis of the above multipliers to the following two situations:\footnote{Throughout this section we allow the functions $\psi$ and $\phi$ to change from line to line with the preservation of their key properties: both are smooth, compactly supported, and $0\notin\textrm{supp}\,\phi$.}
\begin{align}
	m_j^{LO,0}(\xi,\eta)&:=\phi\Big(\frac{\xi-\eta}{2^{j}}\Big)\, \psi\Big(\frac{\eta}{2^{\a j}}\Big)\,,\label{ref:mjL00}
	\intertext{and}
	m_j^{LO,1}(\xi,\eta)&:=\psi\Big(\frac{\xi-\eta}{2^{j}}\Big)\, \phi\Big(\frac{\eta}{2^{\a j}}\Big)\,.\label{ref:mjL01}
\end{align}

With the obvious correspondences we write
\begin{equation}
\L_{L}^{LO}=\L_{L}^{LO,0}\,+\,\L_{L}^{LO,1}\qquad\textrm{and}\qquad \L_{D}^{LO}=\L_{D}^{LO,0}\,+\,\L_{D}^{LO,1}\,.\label{LOtr}
\end{equation}

\subsection{Treatment of $\L_{L}^{LO}$}
The first component $\L_{L}^{LO,0}$ is essentially a ``scale-truncated" version of the  bili\-near Hilbert transform while the second component $\L_{L}^{LO,1}$ may be essentially reduced to a paraproduct (or, alternatively, one may proceed as for the treatment of $\L^{S}_{L,m}$, for $m=0$, in Section \ref{LGCLfull}).

In what follows we will only provide a very brief outline of the above.

\subsubsection{The $\L_{L}^{LO,0}$ component} In this case we write\footnote{Here we abuse the notation by allowing the first $C_0$ terms in the definition of \eqref{llo}.}
\begin{equation}\label{losl0}
\L_{L}^{LO,0}=\sum_{j\geq 0} \L_{L,j}^{LO,0}\,,
\end{equation}
where the multiplier for each $\L_{L,j}^{LO,0}$ is given by \eqref{ref:mjL00}. We next decompose
\begin{equation}\label{salojmm0}
		m_j^{LO,0}(\xi,\eta)\approx\sum_{|l|\leq 2^{(\a-1)j}} m_{j,l}^{LO}(\xi,\eta)
\end{equation}
with
\begin{equation}\label{saljmm1lo}
		m_{j,l}^{LO}(\xi,\eta):=
		\phi\Big(\frac{\xi}{2^{j}}-l-1\Big)\,\phi\Big(\frac{\eta}{2^{j}}-l\Big)\,\phi\Big(\frac{\xi+\eta}{2^{j}}-2l\Big)\,,
\end{equation}
where $\phi$ is a suitable smooth compactly supported function.	

As a consequence we deduce
\begin{equation}\label{saljmsumlo}
\L_{L,j}^{LO,0}(f,g,h)=\sum_{|l|\leq 2^{(\a-1)j}}\L_{L,j,l}^{LO,0}(f,g,h)\,,
\end{equation}
with
\begin{equation}\label{saljm2-2}
\L_{L,j,l}^{LO,0}(f,g,h):=\int_{\R^2}  (f*\check{\phi}_{j}^{l+1})\Big(x-\sdfrac{t}{2^j}\Big)\,(g*\check{\phi}_{j}^l)\Big(x+\sdfrac{t}{2^j}\Big)\,(h*\check{\phi}_{j}^{2l})(x)\,\rho(t)\,dt\,dx\,.
\end{equation}

Now, with the notations from Section \ref{LGCLfull}, we have
\begin{align}\label{saljmsumwplo}
|\L_{L,j}^{LO,0}(f,g,h)|\lesssim\sum_{\substack{z\in\Z\\[.2ex] |l|\leq 2^{(\a-1)j}}} \frac{1}{|I_{j}|^{\frac{1}{2}}}\,|\langle f,  \phi_{P_{j}(z,l+1)}\rangle|\,
|\langle g,  \phi_{P_{j}(z,l)}\rangle|\,|\langle h,  \phi_{P_{j}(z,2l)}\rangle|\,,\raisetag{1\baselineskip}
\end{align}
which is very close in spirit to \eqref{saljmsumwp} for the case $m=0$ with the key distinction that in the present situation $|l|\leq 2^{(\a-1)j}$ instead of $l\sim 2^{(\a-1)j}$ therein. This latter observation brings in the present situation an overlapping of the time-frequency localization of the input functions as we move through the scales hence the necessity to appeal to the tree structures. This is of course not surprising since $\L_{L}^{LO,0}$ defined by \eqref{losl0}--\eqref{saljm2-2} encapsulates precisely the Bilinear Hilbert transform type behavior.

From this point on one can apply the standard techniques---see e.g. \cite{LT97}, \cite{LT99}, \cite{MS13}---in order to obtain the desired bounds for $\L_{L}^{LO,0}$.

\subsubsection{The $\L_{L}^{LO,1}$ component}

In this case we have
\begin{equation}\label{losl01}
\L_{L}^{LO,1}=\sum_{j\geq 0} \L_{L,j}^{LO,1}\,,
\end{equation}
where the multiplier for each $\L_{L,j}^{LO,1}$ is given by \eqref{ref:mjL01}. We next decompose
\begin{equation}\label{salojmm01}
		m_j^{LO,1}(\xi,\eta)\approx\sum_{l\sim 2^{(\a-1)j}} m_{j,l}^{LO}(\xi,\eta)
\end{equation}
with $ m_{j,l}^{LO}$ defined by \eqref{saljmm1lo}.

Now following the same steps as the ones from the previous section we have
\begin{equation}\label{saljmsumwplo1}
|\L_{L}^{LO,1}(f,g,h)|\lesssim\sum_{j\ge 0} \sum_{\substack{z\in\Z\\l\sim 2^{(\a-1)j}}}\frac{1}{|I_{j}|^{\frac{1}{2}}}\,|\langle f,  \phi_{P_{j}(z,l+1)}\rangle|\,
|\langle g,  \phi_{P_{j}(z,l)}\rangle|\,|\langle h,  \phi_{P_{j}(z,2l)}\rangle|\,,
\end{equation}
which corresponds precisely to $\L^{S}_{L,m}$ for $m=0$ as displayed in \eqref{lsl1}.

\subsection{Treatment of $\L_D^{LO}$}

\subsubsection{The $\L_D^{LO,0}$ component}
Since in this situation we focus on the range $j<0$ (and also recall that we assume wlog that $\a>1$) we immediately deduce that
\begin{align}
	m_j^{LO,0}(\xi,\eta)\approx\phi\Big(\frac{\xi}{2^{j}}\Big)\, \psi\Big(\frac{\eta}{2^{\a j}}\Big)\,.\label{ref:mjL001}
\end{align}

Once here one can see an immediate parallelism with the situation treated in Section \ref{LGCNLfull}. Indeed, it is simple to see that our form $\L_{NL}^{LO,0}$ may be identified with $\L^{S,s}_{D,m}$ in \eqref{lnslre1} for the case $s=m=0$. From this point on one can follow line by line the reasoning therein in order to conclude the desired bounds.

\subsubsection{The $\L_D^{LO,1}$ component} In this situation one can reduce the shape of $m_j^{LO,1}$ in \eqref{ref:mjL01} to

\begin{equation}
	m_j^{LO,1}(\xi,\eta)=\psi\Big(\frac{\xi}{2^{j}}\Big)\, \phi\Big(\frac{\eta}{2^{\a j}}\Big)\,.\label{ref:mjL011}
\end{equation}
This is a direct analogue of the multiplier $\eqref{ns11NL1}$ in the situation $n=0$. Therefore, by applying an Abel summation argument we deduce that
\begin{align}
	\sum_{j<0} m_j^{LO,1}(\xi,\eta)&= \sum_{s\le 0} \theta\Big(\frac{\xi}{2^s}\Big) \nu\Big(\frac{\eta}{2^{s\a}}\Big)\,+\,\psi\Big(\frac{\xi}{2}\Big)\,\nu(\eta)\nonumber\\
	&=:m_{D,I}^{LO,1}(\xi,\eta)+m_{D,II}^{LO,1}(\xi,\eta)\,,\label{mD-I-II}
\end{align}
where
\begin{equation}\label{theta}
	\theta(t):=\psi(t)-\psi\Big(\frac{t}{2}\Big)\,,
\end{equation}
satisfies $\theta\in C_0^\infty$ with $\supp\theta\subset\{t:\frac{1}{4}<|t|<4\}$.

Also we set
\begin{equation}\label{nu-jNeg}
	\nu(t):=\sum_{j<0}\phi(2^{-\a j}\,t)\,,
\end{equation}
and notice that $\supp\nu\subseteq \{t:|t|<4\}$.

%

Now it is immediate to see that the term corresponding to the multiplier $m_{D,I}^{LO,1}$  can be treated in a similar fashion with $\L_{D}^{LO,0}$ (indeed, this is a consequence of simply inspecting the expression of $m_{D,I}^{LO,1}$ with that provided by \eqref{ref:mjL001}) while the second term corresponding to $m_{D,II}^{LO,1}$ is essentially behaving as $f(x)\,Tg(x)$ with $T$ a suitable Calderon-Zygmund operator.

\section{The analysis of the non-singular term $\L^{NSI}$}\label{NSIterm}

The main result of this section is the following

\begin{theorem}\label{mainsinglsecale} Let $a\in\R\setminus\{-1\}$, $b\in\R$, $\a\in(0,\,\infty)\setminus\{1\}$ and $j\in \Z$. Defining
\begin{equation}\label{bhtj}
H_{j,a,b}^\a(f,g)(x):= \int_\R f(x-t)\, g(x+a\,t+b\,t^\a)\,2^j\,\rho(2^j t)\,dt\:,
\end{equation}
we have that
\begin{equation}\label{mainrmonj}
		\|H_{j,a,b}^\a(f,g)\|_{L^r}\lesssim_{a,b,\a,p,q} \|f\|_{L^p}\,\|g\|_{L^q}\:,
\end{equation}
holds uniformly in $j$, where here $\frac{1}{p}+\frac{1}{q}=\frac{1}{r}$, $p,\,q\geq 1$ and $r>\frac{1}{2}$.
\end{theorem}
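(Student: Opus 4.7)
My proof plan proceeds in three steps: a scaling reduction to $j=0$; elementary Minkowski--H\"older bounds in the Banach range $r\geq 1$; and a single-scale adaptation of the decomposition from Section \ref{Phdepdec} to cover the quasi-Banach range $1/2<r<1$.

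For the scaling reduction, substituting $t=2^{-j}s$ and applying the $L^\infty$-normalized dilation $D_{2^j}h(x):=h(2^{-j}x)$ to both inputs gives
\begin{equation*}
H_{j,a,b}^\alpha(f,g)(2^{-j}x)=H_{0,a,b_j}^\alpha(D_{2^j}f,D_{2^j}g)(x),\qquad b_j:=b\cdot 2^{(1-\alpha)j}.
\end{equation*}
Under the hypothesis $1/p+1/q=1/r$, the three $L^r$-norms rescale consistently, and the uniform-in-$j$ claim becomes a uniform-in-$b$ claim at $j=0$; the dilation argument of Observation \ref{Unifb} further reduces this to $|b|\in\{0,1\}$. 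One is thus left with $T(f,g)(x)=\int f(x-t)g(x+at+bt^\alpha)\rho(t)\,dt$ for $\rho\in C_c^\infty$ supported in $\{1/4<|t|<4\}$. For $r\geq 1$, Minkowski in $t$ followed by H\"older pointwise in $t$ yields
\begin{equation*}
\|T(f,g)\|_{L^r}\leq\int|\rho(t)|\,\|f(\cdot-t)g(\cdot+\gamma(t))\|_{L^r}\,dt\lesssim\|f\|_{L^p}\|g\|_{L^q}.
\end{equation*}

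For the quasi-Banach range $1/2<r<1$, I decompose the multiplier $m(\xi,\eta)=\int e^{-i\xi t+i\eta(at+bt^\alpha)}\rho(t)\,dt$ along the lines of Section \ref{Phdepdec}, but at the fixed single scale $j=0$. This produces low-oscillatory, non-stationary, and stationary pieces, which may be treated by the corresponding machinery of Sections \ref{LGCimpl}--\ref{HNL-2}. The low-oscillatory component behaves as a paraproduct and is bounded in the full quasi-Banach range by Calder\'on--Zygmund theory, while the non-stationary component gains arbitrary polynomial decay in $(m,n)$ from integration by parts. The stationary component decomposes as $\sum_{m\in\N}T_m$, with each $T_m$ inheriting both the $L^2\times L^2\to L^1$ estimate of Proposition \ref{L2mdecay} (with exponential $2^{-\delta m}$ gain) and the polynomial-in-$m$ extended-range estimate of Proposition \ref{Lgennodecay}. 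Bilinear multilinear interpolation between these two endpoints furnishes $\|T_m\|_{L^p\times L^q\to L^r}\lesssim 2^{-\delta'm}$ for every $r>1/2$ with $1/p+1/q=1/r$, and summation in $m$ closes the argument.

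The main obstacle is the exponential $m$-decay for the stationary piece in the quasi-Banach range: Minkowski's inequality is reversed and direct size estimates fail, so one must extract cancellation from the curvature of $\gamma$. This is accomplished by the LGC framework of Section \ref{LGCimpl}---the sparse--uniform dichotomy of Section \ref{3p1} coupled with the time-frequency correlation analysis of Section \ref{Tu}, driven by the transversality of the curve family $\{(t,a+\alpha bt^{\alpha-1})\}_{t}$ guaranteed by $\alpha\neq 1$---which adapts verbatim to the single-scale setting and supplies the required $2^{-\delta m}$ gain.
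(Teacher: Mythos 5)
Your Step 1 (rescaling to $j=0$ and reducing to $|b|\in\{0,1\}$ via Observation \ref{Unifb}) and Step 2 (the Banach range $r\ge1$ via Minkowski and H\"older) are fine. Step 3 contains a genuine gap, for two interlocking reasons.

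First, the argument is circular with respect to how the paper is organized. Proposition \ref{Lgennodecay} treats the transitional stationary component $\Lambda^S_{TR,m}$ in Section \ref{Trm} precisely by appealing to the single-scale uniform bound of Section \ref{NSIterm}, which is the theorem you are trying to prove; the same is true for the transitional non-stationary pieces (e.g. Section \ref{lns1tr}). You therefore cannot invoke Proposition \ref{Lgennodecay} (or the associated extended-range interpolation scheme) to establish Theorem \ref{mainsinglsecale}.

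Second, and more fundamentally, the claim that the LGC machinery ``adapts verbatim to the single-scale setting'' fails at $j=0$. After your rescaling, the single scale $j=0$ is precisely the scale that the paper's framework \emph{excludes}: Main Propositions (+) and (--) are stated only for $|j|\ge C_0$, and that restriction is essential, not cosmetic. The non-degeneracy condition \eqref{growth}, which drives the time-frequency correlation analysis of Steps III.3.2--III.3.3 in Section \ref{LGCplus}, requires $|j|$ large relative to $c_1,c_2,\alpha$: for $v\sim 2^{m/2+(\alpha-1)j}$ and $u-v\sim 2^{m/2}$, the two terms of $\partial_u\vartheta_v(u)$ are of the same size when $(\alpha-1)j=O(1)$ and can cancel, so the exponential $m$-decay you invoke (Proposition \ref{L2mdecay} for a single $T_m$) is not available at $j=0$. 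There is no ``transversality of the curve family'' to exploit at this fixed scale because the two competing Gabor-phase terms no longer separate.

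The paper's actual proof of Theorem \ref{mainsinglsecale} avoids all of this. It works entirely in physical space with no multiplier decomposition: perform the bilinear change of variable $(x,t)\mapsto(y,z)=(x-t,\,x+at+bt^\alpha)$, whose Jacobian is $|K(t)|$ with $K(t)=a+1+\alpha b t^{\alpha-1}$. Since $\alpha\neq 1$, $K'$ has constant sign on $\mathbb R_+$, so $K$ has at most one zero $t_0$ and hence at most one problematic dyadic scale $j_0$. For $j\neq j_0$ the Jacobian is bounded below on $\supp\rho_j$ and Cauchy--Schwarz gives the $L^1\times L^1\to L^{1/2}$ endpoint directly, uniformly in $j$; interpolation with your Step 2 fills the range $r>1/2$. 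At $j=j_0$ one performs a Whitney decomposition of $\supp\rho_{j_0}$ about $t_0$: on the annulus $|t-t_0|\approx 2^{-s-j_0}$ the Jacobian is $\approx 2^{-s}$ but the annulus has proportionally smaller measure, and the two competing bounds \eqref{AIII1}--\eqref{AIII2} interpolate to give $2^{-\delta s}$ decay, which sums. If you want a proof of the single-scale theorem, this Jacobian argument is the right tool; the LGC framework is reserved for the multi-scale singular part $|j|\ge C_0$, where Theorem \ref{mainsinglsecale} serves as an input rather than an output.
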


\begin{proof}

Without loss of generality we may assume that $b\not=0$ as otherwise the proof below becomes straightforward. Define now the key quantity
\begin{equation}\label{K}
K(t):=a+1+\a\,b\,t^{\a-1}\:.
\end{equation}
By decomposing $\rho$ as a superposition of at most $10$ similar behaving functions we may assume wlog that
\begin{equation}\label{suprho}
\textrm{supp}\,\rho\subset \Big[\frac{3}{4},\,\frac{5}{4}\Big]\:.
\end{equation}
Since $K'(t)=\a(\a-1)\,b\,t^{\a-2}$ has no zero in $(0,\infty)$ the function $K$ is strictly monotone on $\R_{+}$ and thus there exists at most one $t_0>0$ such that $K(t_0)=0$; consequently, there exists at most one $j_0=j_0(a,b,\a)\in\Z$ such that $t_0\in \frac{11}{10}\,\big[\frac{3}{4}\,2^{-j_0},\,\frac{5}{4}\,2^{-j_0}\big]$.
\vspace{.2cm}

Fix now $j\in\Z$ and, given \eqref{suprho}, assume throughout our proof that
\begin{equation}\label{suprho1}
t\in\textrm{supp}\,\rho_{j}(\cdot)\subseteq J_j:=[2^{-j-1},\,2^{-j+1}]\:.
\end{equation}

Inspecting the arguments of $f$ and $g$ in \eqref{bhtj}, that is $x-t$ and $x+a\,t+b\,t^\a$, standard orthogonality arguments imply that for proving \eqref{mainrmonj} it is enough to restrict the range of $x$ within an interval $I_j$ of length $\max\{2^{-j},\,2^{-\a\,j}\}$ where in the last reasoning we made use of \eqref{suprho1}. Assume for the moment that $j\not=j_0$. Then, applying a Cauchy-Schwarz argument and the change of variable $y=x-t$ and $z=x+a\,t+b\,t^\a$, we deduce that\footnote{Here $c>0$ is some absolute constant depending only on $a,\,b,\,\a$. Also, for notational simplicity, throughout this section we ignore all the dependencies on the parameters $a,\,b,\,\a$.}
\begin{align}
	A_{j}:=\left(\int_{I_j}|H_{j,a,b}^\a(f,g)(x)|^{\frac{1}{2}}\,dx\right)^2
	&\lesssim |I_j|\,\iint\limits_{I_j\times J_j} |f(x-t)|\, |g(x+a\,t+b\,t^\a)|\,2^j\rho(2^j t)\,dt\,dx\nonumber\\[1ex]
	&\lesssim 2^j\,|I_j|\,\iint_{c I_j\times c I_j} |f(y)|\, |g(z)|\,\frac{dy\,dz}{|K(t(y,z)|)}\,,\label{AI}
\end{align}
where in the last line we used that the Jacobian satisfies
$$\mathcal{J}:=\left|\frac{\partial (y,z)}{\partial (x,t)}\right|=|K(t)|\,,$$
while $t=t(y,z)$ stands for the implicit function resulted from our change of variable.

We split now our discussion in three cases:

$\newline$
\noindent\textbf{Case 1:} $\:\:j<j_0$
$\newline$

In this situation, since $t\sim 2^{-j}$, we have that as $j$ approaches $-\infty$, $t$ becomes increasingly large and hence $t^\a$ dominates $t$. This further implies that
\begin{equation}\label{keyI}
|t|\lesssim 2^{-\a\,j} \approx |a\,t+b\,t^{\a}|\quad\textrm{hence}\quad |I_j|\approx 2^{-\a j} \quad\textrm{and}\quad |K(t)|\approx 2^{-(\a-1)\,j}\:.
\end{equation}

Putting together \eqref{AI} and \eqref{keyI} we get
\begin{equation}\label{AI1}
\|H_{j,a,b}^\a(f,g)\|_{\frac{1}{2}}\lesssim \|f\|_1\,\|g\|_1\,.
\end{equation}

$\newline$
\noindent\textbf{Case 2:} $\:\:j>j_0$
$\newline$

In this situation, as $j$ approaches $+\infty$, $t$ becomes increasingly small and hence $t^\a$ is dominated by $t$. Thus
\begin{equation}\label{keyII}
|t|\approx 2^{-\,j}\gtrsim |a\,t+b\,t^{\a}|\quad\textrm{hence}\quad |I_j|\approx 2^{-j}\quad\textrm{and}\quad |K(t)|\approx |a+1|\approx 1\:.
\end{equation}
Putting together \eqref{AI} and \eqref{keyII} we get the analogue of \eqref{AI1}
\begin{equation}\label{AI10}
\|H_{j,a,b}^\a(f,g)\|_{\frac{1}{2}}\lesssim \|f\|_1\,\|g\|_1\,.
\end{equation}

$\newline$
\noindent\textbf{Case 3:} $\:\:j=j_0$
$\newline$

This is a slightly more difficult case than the previous two due to the cancellation of the Jacobian at $t=t_0$. This requires a further Whitney decomposition of the expression $K(t)$ relative to its root $t_0$. Indeed using the fact that
\begin{equation}\label{Ks}
|K(t)-K(t_0)|=|\a\,b|\,|t^{\a-1}-t_0^{\a-1}|\approx |t-t_0|\, |t_0|^{\a-2}\approx |t-t_0|\:,
\end{equation}
for $s\in\N$, we set
\begin{equation}\label{Ds}
J_{j_0}^{s}:=\{t\in J_{j_0}\,|\,|t-t_0|\approx 2^{-s-j_0}\}\:,
\end{equation}
and notice that for $t\in J_{j_0}^{s}$ we have
\begin{equation}\label{keyIII}
|t|\approx 2^{-j_0}\approx 1\approx|a\,t+b\,t^{\a}|\quad\textrm{hence}\quad |I_{j_0}|\approx 1\quad\textrm{and}\quad |K(t)|\approx 2^{-s-j_0}\,2^{-j_0\,(\a-2)}\approx 2^{-s}\:.
\end{equation}
Let now
\begin{equation}\label{AIII}
B_{j_0}^{s}:= \iint_{I_{j_0}\times J_{j_0}^s} |f(x-t)|\, |g(x+a\,t+b\,t^\a)|\,2^{j_0}\,\rho(2^{j_0} t)\,dt\,dx\:.
\end{equation}
Then
\begin{itemize}
\item on the one hand, for any $1\leq p\leq \infty$ with $\frac{1}{p}+\frac{1}{p'}=1$, we have
\begin{equation}\label{AIII1}
B_{j_0}^{s}\lesssim \int_{J_{j_0}^{s}} \left(\int_{c I_{j_0}} |f(x)|^p\,dx\right)^{\frac{1}{p}}\,\left(\int_{c I_{j_0}} |g(x)|^{p'}\,dx\right)^{\frac{1}{p'}}\,2^{j_0}\,\rho(2^{j_0} t)\,dt\lesssim 2^{-s}\,\|f\|_p\,\|g\|_{p'}\:.
\end{equation}

\item on the other hand, proceeding as in cases 1 and 2, we have
\begin{equation}\label{AIII2}
B_{j_0}^{s}\lesssim  2^{j_0}\,|I_{j_0}|\,2^{s+(\a-1)j_0}\,\iint_{c I_j\times c I_j} |f(y)|\, |g(z)|\,dy\,dz \lesssim 2^{s}\, \|f\|_1\,\|g\|_{1}\:,
\end{equation}
from which we deduce
\begin{equation}\label{AIII3}
A_{j_0}^{s}:=\left(\int_{J_{j_0}^{s}}|H_{j_0,a,b}^\a(f,g)(x)|^{\frac{1}{2}}\,dx\right)^2
	\lesssim |J_{j_0}^{s}|\,B_{j_0}^{s}\lesssim \|f\|_1\,\|g\|_{1}\,.
\end{equation}
\end{itemize}
Applying now multilinear interpolation we deduce that there exists $\d=\d(p,q)>0$ such that
\begin{equation}\label{AIII4}
		\|H_{j_0,a,b}^\a(f,g)\|_{L^r(J_{j_0}^{s})}\lesssim_{a,b,\a,p,q} 2^{-\d\,s}\,\|f\|_{L^p}\,\|g\|_{L^q}\:,
\end{equation}
for any $\frac{1}{p}+\frac{1}{q}=\frac{1}{r}$, $p,\,q\geq 1$ and $r>\frac{1}{2}$.

Putting together \eqref{AI1}, \eqref{AI10} and \eqref{AIII4} we conclude the proof of \eqref{mainrmonj}.
\end{proof}

\section{The Bilinear Maximal Function $M_{a,b}^\a$, $\a\in\R_+\setminus\{1,\,2\}$}\label{BiMax}
In this section we study the (sub)bilinear Maximal function along $\bar{\g}(t)=a\,t+b\,t^\a$ defined in \eqref{mht} by
\begin{equation*}
	M_{a,b}^\a(f,g)(x):= \sup_{\e>0}\,\frac{1}{2\,\e}\int_{-\e}^\e |f(x-t)\, g(x+at+bt^\a)|dt\,,
\end{equation*}
where $a\in\R\setminus\{-1\}$, $b\in\R\setminus\{0\}$ and $\a\in\R_+\setminus\{1,\,2\}$.

Without lost of generality, we will assume that
$f$ and $g$ are non-negative. It will then be enough to study our maximal function with the supremum ranging over dyadic numbers, \textit{i.e.}, letting  $\e\sim 2^{j+1}$ with $j\in\Z$, we have that
\begin{equation}
	M_{a,b}^\a(f,g)(x)\approx\sup\limits_{j\in\Z}\,\frac{1}{2^{j+1}}\int\limits_{2^j\le|t|\le2^{j+1}}\hspace{-.2cm} f(x-t)\,g(x+at+bt^\a)\,dt.
\end{equation}

Let now $\urho\in C^\infty_0(\R)$ be a nonnegative, even function with $\supp \urho \subseteq\{t\in\R|\frac{1}{4}<|t|<4\}$ and
\begin{equation*}
	\int\urho(t)dt=1\,.
\end{equation*}

Then\footnote{For notational simplicity we omit the presence of $a, b$ and $\a$ in the definition of $M_j$.}
\begin{equation*}
	M_{a,b}^\a(f,g)(x)\approx\sup\limits_{j\in\Z}\,\int_\R f(x-t)g(x+at+bt^\a)\urho_j(t)dt =: \sup\limits_{j\in\Z}\:M_j(f,g)(x) \,,
\end{equation*}
where for $j\in\Z$ we define  $\urho_j(t)=2^j\urho(2^j t)$ (with $j\in\Z$).

On the Fourier side, we have that
\begin{equation*}\label{Mabalpha}
	M_{a,b}^\a(f,g)(x)=\sup\limits_{j\in\Z}\:M_j(f,g)(x)=\sup\limits_{j\in\Z}\: \int_\R\int_\R \hat{f}(\xi)\hat{g}(\eta)\um_j(\xi,\eta)e^{i\xi x}e^{i\eta x}d\xi\,,
\end{equation*}
where the multiplier is given by
\begin{equation}
	\um_j(\xi,\eta):=\int_\R e^{-i(\xi-a\eta) t}e^{i b\eta t^\a} \urho_j(t)dt=\int_\R e^{-i\frac{\xi-a\eta}{2^j} t}e^{i\frac{b\,\eta}{2^{\a j}}t^\a} \urho(t)dt.
\end{equation}

Via a linearization procedure we write
\begin{equation*}
	M(f,g)(x):=M_{a,b}^\a(f,g)(x)\approx M_{\jx}(f,g)(x)
\end{equation*}
where $\jx:\R\to\Z$ is a measurable function who assigns for each point $x\in\R$ a value $j\in \Z$ for which
\[
\int_\R\int_\R \hat{f}(\xi)\,\hat{g}(\eta)\,\um_j(\xi,\eta)\,e^{i\xi x}e^{i\eta x}d\xi d\eta
\]
is at least half of the value of $M(f,g)(x)$.

Like in the case of bilinear Hilbert transform, for a suitable choice of $C_0>0$ depending only on $a,b$ and $\a$, we decompose our operator in two components:
\begin{itemize}
	\item the non-singular component
	\begin{equation*}
		M^{NSI}:=\sup_{|j|<C_0} M_j\,;
	\end{equation*}
	\item the singular component
	\begin{equation*}\label{sing-m}
		M^{SI}:=\sup_{|j|\geq C_0} M_j\,.
	\end{equation*}
\end{itemize}

With these, we obtain the initial decomposition
\begin{equation*}
	M\le M^{NSI}\,+\,M^{SI}\,.
\end{equation*}

\subsection{The phase-dependent decomposition.} In this section we apply a further decomposition relative to the frequency of the multiplier. Consider the partition of unity defined on \eqref{ref:unity-phi}. Then, for every $j\in\Z$
\begin{equation*}
	\um_j=\sum_{m,n\in\Z} \um_{j,m,n},
\end{equation*}
where
\begin{equation}\label{mjmnM}
	\um_{j,m,n}(\xi,\eta):=\um_j(\xi,\eta)\,\phi\Big(\frac{\xi-a\eta}{2^{j+m}}\Big)\,\phi\Big(\frac{\a\,b\,\eta}{2^{\a j+n}}\Big)\,.
\end{equation}

Then, for  $C_0$ as above, we let
\begin{equation}\label{delta}
	\Delta:=\{(m,n)\in\Z_{+}^2\,|\,|m-n|\le C_0\}\,,
\end{equation}
and decompose our generic multiplier (for a fixed $j$) into three components:
\begin{itemize}
	\item the \textit{low oscillatory} component:\footnote{Here we include the case $(m,n)\in\Z_+\times \Z_-$ within the low oscillation component, though, strictly speaking, this regime corresponds to the non-stationary  oscillatory component.}
	\begin{equation}\label{max-mjL}
		\um_j^{LO}:=\sum_{(m,n)\in \Z\times\Z_{-}} \um_{j,m,n}\,;
	\end{equation}
	\item the \textit{non-stationary} component:
	\begin{equation}\label{max-mjNS}
		\um_j^{NS}:=\sum_{(m,n)\in (\Z\times\Z_{+})\setminus\Delta} \um_{j,m,n}\,;
	\end{equation}
	\item the \textit{stationary} component:
	\begin{equation}\label{max-mjS}
		\um_j^{S}:=\sum_{(m,n)\in\Delta} \um_{j,m,n}\,.
	\end{equation}
\end{itemize}

With this done, we define
\begin{equation*}
	M^*(f,g)(x):=\sup_{|j|\geq C_0} \big|M_j^\star(f,g)(x)\big|:= \bigg|\int_\R\int_\R \hat{f}(\xi)\hat{g}(\eta)\um_j^*(\xi,\eta)e^{i\xi x}e^{i\eta x}d\xi d\eta\bigg|\,,
\end{equation*}
with $\{*\}\in\{LO,NS,S\}$, and denote $M_{j,m,n}$ the operator with multiplier $\um_{j,m,n}$. Then
\begin{equation}
	M^{SI}\le M^{LO}+M^{NS}+M^S\,.
\end{equation}

Finally, as in Section \ref{sphase}, without loss of generality we may assume $\a>1$ and rearrange each component as follows:

$\newline$
\noindent\textbf{The stationary component $M^S$:}
$\newline$

From \eqref{delta} and \eqref{max-mjS} we have
\begin{equation}\label{MSmm}
	M^S(f,g)(x)\lesssim \sum_{m\in\N} \sup_{|j|\geq C_0} |M_{j,m,m}(f,g)(x)|=:\sum_{m\in\N} M^S_m(f,g)(x)\,.
\end{equation}

In approaching $M^S$ we rely on the fact that the mean zero condition of the function $\rho$ plays no role in the proof of Theorem \ref{thm:mS}. Consequently, one can identify the operator $M_{j,m}:=M_{j,m,m}$ with the operator having as a multiplier \eqref{mjm}. This will allow us later to transfer the theorems from Section \ref{LGCimpl} to our current settings. In order to prepare this ground, we adapt the decomposition of  $\L^S$ to our setting:
\begin{itemize}
	\item The \emph{hybrid stationary} component $M^{S}_{H}$: defined as
	\begin{equation}\label{MSH-dec}
		M^{S}_{H}:=\sum_{m\in\N} M^{S}_{H,m}:=\sum_{m\in\N} \sup_{{|j|\geq C_0}\atop{(\a-1)j>m}} |M_{j,m}|\,.
	\end{equation}
	\item The \emph{nonlinear stationary} component $M^{S}_{NL}:$
	\begin{equation}\label{MNLS}
		M^{S}_{NL}:=\sum_{m\in\N}M^{S}_{NL,m}:=\sum_{m\in\N} \sup_{{|j|\geq C_0}\atop{(\a-1)j<-m}}\hspace{-.2cm} |M_{j,m}|\,.
	\end{equation}
	\item The \emph{transitional stationary} component $M^{S}_{TR}$:
	\begin{equation*}
		M^{S}_{TR}= M^{S}_{TR,+}\,+\,M^{S}_{TR,-}\,,
	\end{equation*}
	where
	\begin{equation}\label{MTRS1}
		M^{S}_{TR,\pm}:= \sum_{m\in\N}M^{S}_{TR,\pm,m}:=\sum_{m\in\N} \sup_{{{|j|\geq C_0}\atop{-m\leq (\a-1)j\leq m}}\atop{\textrm{sgn}\,j=\pm}}\hspace{-.3cm} |M_{j,m}|\,.
	\end{equation}
	
	This way we have obtained
	\begin{equation}
		M^{S}\le M^{S}_{NL}\,+M^{S}_{TR}\,+M^{S}_{H}\,.
	\end{equation}
\end{itemize}

$\newline$
\noindent\textbf{The non-stationary component $M^{NS}$:}
$\newline$

As with the stationary case, we notice that the mean zero condition of the function $\rho$ is irrelevant in the analysis of $m^{NS}$. Consequently, we will be able to appeal to the theorems in Section \ref{HNL-2} which motivates the decomposition below:
\begin{equation*}
	M^{NS}\le M^{NS,I}\,+\,M^{NS, II}\,+\,M^{NS, III}\,,
\end{equation*}
where
\begin{align}
	&M^{NS,I}
	\le M^{NS,I}_{NL}\,+\,M^{NS,I}_{TR}\,+\,M^{NS,I}_{H}
	=\sum_{n\ge 0}\left(M^{NS,I}_{NL,0,n} \,+\, M^{NS,I}_{TR,0,n}\,+\, M^{NS,I}_{H,0,n}\right)\nonumber\\[1ex]
	&:=\sum_{n\ge 0}\Bigg(
	\sup_{\substack{|j|\ge C_0\\[.5ex](\a-1)j\le -n\\}}\hspace{-.25cm} |\sum_{\substack{m<0}}M_{j,m,n}|\,+
	\hspace{-.5cm} \sup_{\substack{|j|\ge C_0\\[.5ex]-n<(\a-1)j<0\\}} \hspace{-.25cm} |\sum_{\substack{m<0}}M_{j,m,n}|+
	\hspace{-.1cm} \hspace{-.15cm}\sup_{\substack{|j|\ge C_0\\[.5ex](\a-1)j\geq 0\\}}\hspace{-.2cm} |\sum_{\substack{m<0}}M_{j,m,n}|\hspace{-.05cm}\Bigg),\nonumber\\[3ex]
	&M^{NS,II}
	\le M^{NS,II}_{NL}{+}M^{NS,II}_{TR}{+}M^{NS,II}_{H}
	=\hspace{-.7cm} \sum_{\substack{(m,n)\in (\Z\times\Z_{+})\setminus\Delta\\[.5ex]0\leq m<n-C_0}}
	\hspace{-.2cm} \Bigg(M^{NS,II}_{NL,m,n}{+}M^{NS,II}_{TR,m,n}{+}M^{NS,II}_{H,m,n} \Bigg)\nonumber
	\\[1ex]
	&:=\hspace{-.2cm}\sum_{\substack{(m,n)\in (\Z\times\Z_{+})\setminus\Delta\\[.5ex] 0\leq m<n-C_0}}\Bigg(
	\sup_{\substack{|j|\geq C_0\\[.5ex](\a-1)j\le-n\\}} \hspace{-.2cm}  |M_{j,m,n}|+
	\hspace{-.4cm}\sup_{\substack{|j|\geq C_0\\[.5ex]-n<(\a-1)j<m\\}} \hspace{-.4cm} |M_{j,m,n}| +
	\hspace{-.2cm}\sup_{\substack{|j|\geq C_0\\[.5ex](\a-1)j\ge m\\}} \hspace{-.2cm} |M_{j,m,n}|
	\Bigg)\nonumber\\[3ex]
	&M^{NS,III}
	\le M^{NS,III}_{NL}{+}M^{NS,III}_{TR}{+}M^{NS,III}_{H}\!
	=\hspace{-.7cm} \sum_{\substack{(m,n)\in (\Z\times\Z_{+})\setminus\Delta\\[.5ex] 0\leq m<n-C_0}}\hspace{-.75cm} \left(M^{NS,III}_{NL,m,n}{+}M^{NS,III}_{TR,m,n}{+}M^{NS,III}_{H,m,n}
	\right)\nonumber\\[1ex]
	&:=\sum_{\substack{(m,n)\in (\Z\times\Z_{+})\setminus\Delta\\[.5ex] 0\leq n<m-C_0}}\Bigg(
	\sup_{\substack{|j|\geq C_0\\[.5ex](\a-1)j\le-m\\}} \hspace{-.2cm}  |M_{j,m,n}|+
	\hspace{-.4cm}\sup_{\substack{|j|\geq C_0\\[.5ex]-m<(\a-1)j<n\\}} \hspace{-.4cm} |M_{j,m,n}| +
	\hspace{-.2cm}\sup_{\substack{|j|\geq C_0\\[.5ex](\a-1)j\ge n\\}} \hspace{-.2cm} |M_{j,m,n}|
	\Bigg)\,.\label{MNS-dec}
\end{align}

$\newline$
\noindent\textbf{The low oscillatory component $M^{LO}$}
$\newline$

The component $M^{LO}$ essentially represents a truncated version of the Bilinear Maximal function - for more details, the reader is invited to consult Section \ref{mlo}.

\subsection{The analysis of the stationary component $M^S$}\label{MS-full}
In this section we discuss the  component whose phase of the multiplier has stationary points. The main result of this section is the following
\begin{theorem}\label{thm:MS}
	For any $\a\in (0,\infty)\setminus\{1,2\}$ and any $1<p,\,q\le \infty$ with $\frac{1}{p}+\frac{1}{q}=\frac{1}{r}$ and $r>\frac{2}{3}$, the following holds
	\begin{equation}
		\|M^S(f,g)\|_r\lesssim_{\a,a,b,p,q} \|f\|_p\,\|g\|_q\,.
	\end{equation}
\end{theorem}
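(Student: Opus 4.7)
The plan is to mirror exactly the treatment of the trilinear stationary form $\L^{S}$ developed in Section \ref{LGCimpl}, transferred to the maximal setting via linearization of the supremum. The starting point is the reduction introduced at the beginning of this section: writing $M^{S}(f,g)(x)\approx |M_{j(x),m}^{S}(f,g)(x)|$ for a suitable measurable function $j(\cdot):\R\to\Z$ and splitting $M^{S}=\sum_{m\in\N}M_{m}^{S}$ as in \eqref{MSmm}. Dualizing the $L^{r}$ norm of $M_{m}^{S}(f,g)$ against an $L^{r'}$ function and setting $E_{j}:=j^{-1}(\{j\})$, one gets the disjoint decomposition
\begin{equation}\nonumber
\|M_{m}^{S}(f,g)\|_{r}\approx \sup_{\|h\|_{r'}\le1}\,\sum_{|j|\ge C_{0}}\L_{j,m}(f,g,h_{j})\,,\qquad h_{j}:=h\,\chi_{E_{j}}\,,
\end{equation}
where the $h_{j}$'s have pairwise disjoint supports. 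This is the crucial observation that allows one to translate every estimate for the trilinear form into a maximal estimate.

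Next, I would prove the two cornerstone propositions for the maximal object, which are the direct analogues of Propositions \ref{L2mdecay} and \ref{Lgennodecay}. For the $L^{2}\times L^{2}\to L^{1}$ bound with exponential $m$-decay, I would apply the single-scale LGC output \eqref{CRUX1} (and its negative-$j$ counterpart \eqref{ljmkLmin}) with $h$ replaced by $h_{j}$. Summing first in $R$ and $\mathfrak{J}$ via Cauchy--Schwarz gives, for each $j$,
\begin{equation}\nonumber
|\L_{j,m}(f,g,h_{j})|\lesssim_{\a} 2^{-m/24}\,\|\hat{f}\|_{L^{2}(A_{j,m})}\,\|\hat{g}\|_{L^{2}(A_{j,m})}\,\|h_{j}\|_{\infty}\,,
\end{equation}
where $A_{j,m}$ is the relevant frequency annulus. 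Since the $E_{j}$ are disjoint, $\|h_{j}\|_{\infty}\le 1$ uniformly, while the $A_{j,m}$ have bounded overlap in $j$. A final Cauchy--Schwarz in $j$ then collapses the sum into $\|f\|_{2}\|g\|_{2}$, establishing the $m$-decay. The analogue of Proposition \ref{Lgennodecay} (a generalized restricted-type bound on the full range $r>2/3$ with $m$-polynomial loss) follows by replaying the arguments of Section \ref{Propgen} piece by piece on the three sub-components $M_{H,m}^{S}$, $M_{NL,m}^{S}$, $M_{TR,\pm,m}^{S}$. For the hybrid piece the wave-packet expansion \eqref{lsl1} applies verbatim with $h$ replaced by $h_{j}$ supported on $E_{j}$; the exceptional set $\O$ in \eqref{om} is then defined using the same maximal functions of $\chi_{F}$, $\chi_{G}$, and the good set $H'$ is extracted identically. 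The nonlinear piece is handled via the maximal shifted square function of Section \ref{LGCNLfull}, which in the maximal setting only requires the additional (harmless) supremum over $j$ to be absorbed into the disjoint $h_{j}$'s. The transitional piece is handled by invoking Theorem \ref{mainsinglsecale} uniformly in $j$ as in Section \ref{Trm}.

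Having the exponential-decay $L^{2}\times L^{2}\to L^{1}$ estimate and the $m$-polynomial-growth restricted-type estimate on the full range $r>2/3$, the full conclusion of Theorem \ref{thm:MS} follows by multilinear interpolation for sublinear forms (see \textit{e.g.} \cite{MS13}) to produce an $m$-exponentially decaying bound in an interior neighborhood of any given $(p,q,r)$ with $r>2/3$, followed by a telescoping sum in $m$.

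The main obstacle is the treatment of the hybrid sub-component $M_{H,m}^{S}$ in the full range: as already noted in Remark \ref{Shybrid}, it is $\L_{H,m}^{S}$ that forces the restriction $r>2/3$, and passing from trilinear form to maximal operator requires checking that the Whitney/exceptional-set machinery used in \eqref{lslrealf2} remains compatible with the $E_{j}$ decomposition. The key technical verification is that the bad set $\O$ in \eqref{om} depends only on $F,G,H$ and not on the linearizing function $j(\cdot)$, so that the extracted $H'$ is legitimate for the maximal operator; the rest of the argument in Section \ref{LGCLfull} then carries through unchanged once the tile collections $\P_{j+m}$ are intersected with the disjoint sets $E_{j}$.
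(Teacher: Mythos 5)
Your proposal is correct and follows the same overall architecture as the paper: prove an exponentially $m$-decaying $L^2\times L^2\to L^1$ bound, prove a restricted-type bound on the full range $r>2/3$ with polynomial $m$-loss, then combine via multilinear interpolation and a telescoping sum. The one place where you deviate from the paper's proof is in the mechanism used to tame the supremum over $j$: you linearize $\sup_j$ via a measurable $j(\cdot)$ and work with the disjointly-supported $h_j=h\chi_{E_j}$, whereas the paper dominates $\sup_j$ by the $\ell^2$-sum $(\sum_j|M_{j,m}|^2)^{1/2}$ and then dualizes pointwise against an $\ell^2$-normalized sequence $\{\e_j(x)\}$, leading to the test functions $\e_j h$ in \eqref{uLjm}. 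These two are essentially interchangeable here: the only property of the $\e_j$'s the paper uses downstream (e.g.\ after \eqref{uLSHm} and in \eqref{maxshiftA}--\eqref{maxshift3A}) is the pointwise bound $\sum_j|\e_j(x)|^2\le1$, which yields $\sum_j\|\e_j h\|_2^2\le\|h\|_2^2$; for your $\chi_{E_j}$ this holds with equality by disjointness, so every Parseval step and the maximal shifted square function bound go through unchanged. Your observation that the exceptional set $\O$ of \eqref{om}/\eqref{oms} depends only on $F,G,H$ and not on the linearizing function---hence that $H'$ is extracted legitimately before $j(\cdot)$ is introduced---is exactly the right thing to check and is indeed satisfied; this is what makes the restricted-type estimate valid for the maximal operator.
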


Theorem \ref{thm:MS} is an immediate consequence of the two propositions below.

Before, stating these propositions though, we prepare the ground by recalling the result in Section \ref{LGCplus}: For any $j\in\Z$ and $m\in\Z$ there is $\delta=\delta(\a)>0$ such that
\begin{equation}\label{Tjm-constant}
	\|M_{j,m}(f,g)\|_1\lesssim_{\a,a,b} 2^{-\delta m}\,\|f\|_2\,\|g\|_2\,.
\end{equation}

We claim that one can get an extension of \eqref{Tjm-constant} to the \emph{\underline{variable} case}, that is:

\begin{proposition}\label{prop:MS2}
	Let $m\in\N$, $\a\in (0,\infty)\setminus\{1,2\}$ and $M_m^S$ as defined in \eqref{MSmm}. Then, the following holds uniformly in $m\in\N$:
	\begin{equation}
		\|M_m^S(f,g)\|_{L^1(dx)}\lesssim_{\a} 2^{-\delta m}\, \|f\|_2\,\|g\|_2\,.
	\end{equation}
\end{proposition}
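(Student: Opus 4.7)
The plan is to derive Proposition \ref{prop:MS2} from the single-scale $L^2\times L^2\to L^1$ estimate \eqref{Tjm-constant} via a standard \emph{linearization and frequency-disjointness} procedure. The crucial observation is that \eqref{Tjm-constant} already contains the full $2^{-\delta m}$ gain uniformly in $j$, so the only remaining task is to pass from a fixed scale $j$ to the supremum over $j$ without losing this decay.

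First I would linearize the supremum: select a measurable function $j(\cdot):\R\to\{k\in\Z:\,|k|\geq C_0\}$ for which $M_m^S(f,g)(x)\lesssim |M_{j(x),m}(f,g)(x)|$, and set $E_k:=\{x:j(x)=k\}$. Since the $E_k$'s partition $\R$,
\[
\|M_m^S(f,g)\|_1\lesssim \sum_{|k|\geq C_0}\int_{E_k}|M_{k,m}(f,g)(x)|\,dx\leq \sum_{|k|\geq C_0}\|M_{k,m}(f,g)\|_1.
\]
Next, I would insert the natural Littlewood--Paley projections dictated by the frequency support of the multiplier $\um_{k,m,m}$ as given in \eqref{mjmnM}: the factor $\phi\bigl(\tfrac{a\eta}{2^{\a k+m}}\bigr)$ pins $\eta$ to the annulus $\{|\eta|\sim 2^{\a k+m}\}$, while the factor $\phi\bigl(\tfrac{\xi-a\eta}{2^{k+m}}\bigr)$ confines $\xi$ to an annulus whose scale is determined by $\max(\a k,k)+m$. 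Letting $Q_k$ be an $L^2$-bounded Fourier projection onto a suitable fattening of the former annulus and $P_k$ the analogue for the latter, one has the identity $M_{k,m}(f,g)=M_{k,m}(P_kf,Q_kg)$. Applying \eqref{Tjm-constant} yields
\[
\|M_{k,m}(f,g)\|_1=\|M_{k,m}(P_kf,Q_kg)\|_1\lesssim 2^{-\delta m}\,\|P_kf\|_2\,\|Q_kg\|_2.
\]

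Finally, summing in $k$ by Cauchy--Schwarz, and using Plancherel combined with the lacunarity of $\{|\eta|\sim 2^{\a k+m}\}_k$ (and of the corresponding $\xi$-annuli) -- which holds since $\a>0$ -- gives
\[
\sum_{|k|\geq C_0}\|P_kf\|_2\|Q_kg\|_2\leq \Bigl(\sum_k\|P_kf\|_2^2\Bigr)^{1/2}\Bigl(\sum_k\|Q_kg\|_2^2\Bigr)^{1/2}\lesssim \|f\|_2\|g\|_2,
\]
which completes the argument.

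The only subtle point is to verify almost-disjointness of the projections uniformly across the three sub-regimes $M^S_H$, $M^S_{TR}$, $M^S_{NL}$, since the governing $\xi$-annulus scales as $\a k+m$ in the hybrid regime, as $k+m$ in the nonlinear regime, and transitions between these in the transitional regime. In all three cases the hypothesis $\a>0$ guarantees lacunarity of the $k$-indexed annuli in $\eta$, and a parallel analysis gives lacunarity of the $\xi$-annuli (after splitting the transitional range into at most $O(1)$ sub-ranges on which $k$ or $\a k$ dominates). The exclusions $\a\in\{1,2\}$ in the statement are inherited from \eqref{Tjm-constant} itself and play no role in the linearization step.
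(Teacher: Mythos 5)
Your proposal is correct and follows essentially the same route as the paper: both reduce the supremum over scales $j$ to a sum (via $\sup\le\sum$), invoke the single-scale $L^2\times L^2\to L^1$ estimate \eqref{Tjm-constant} with its uniform $2^{-\delta m}$ gain, and then sum over $j$ by Cauchy--Schwarz using the Littlewood--Paley almost-orthogonality forced by the compact frequency support of $\um_{j,m,m}$. The only cosmetic differences are that the paper skips your explicit linearization step (it is redundant once $\sup\le\sum$ has been used) and organizes the frequency-disjointness discussion by the sign of $j$---writing the projections concretely as $\check{\phi}_{\a j+m}$ for both inputs when $j\ge C_0$, and as $\check{\phi}_{j+m}$, $\check{\phi}_{\a j+m}$ when $j\le -C_0$---rather than by the three sub-regimes $H$, $TR$, $NL$; this is exactly the split you arrive at by distinguishing where $k$ versus $\a k$ dominates.
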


In order to state the second proposition we need yet some more preparation: we first notice that for each $m\in\N$ and $x\in\R$
\begin{equation*}
	\,M^S_m(f,g)(x)\,\le \sum_{\{\star\}\in\{NL,\{TR,\pm\},H\}}\,M^S_{\star,m}(f,g)(x)\,,
\end{equation*}
and given $\{\star\}\in\{NL,\{TR,\pm\},H\}$ we have
\begin{equation*}\label{MSstarm-l2bound}
	M^S_{\star,m}(f,g)(x)\le\Big(\sum_{j\in\B_{\star,m}} |M_{j,m}(f,g)(x)|^2\Big)^{\frac12}\,,
\end{equation*}
where $\B_{\star,m}$ stands for the range of $j$ in the definitions of $M^{S}_{\star,m}$\,.

Now, for a given value of $m\in\N$ and $x\in\R$, one has
\begin{equation*}
	\Big(\sum_{j\in\B_{\star,m}} |M_{j,m}(f,g)(x)|^2\Big)^{\frac12} =\sup_{\substack{\{\e_j(x)\}\in\ell^2\\ \|\e_j(x)\|_{\ell^2}\le 1}}\: \sum_{j\in \B_{\star,m}} M_{j,m}(f,g)(x)\,\e_j(x)\,.
\end{equation*}

Then, there exist $\{\e_{\star,j}(x)\}\in\ell^2$ with $\big(\sum\limits_{j\in\B_{\star,m}} |\e_{\star,j}(x)|^2\,\big)^{\frac12}\le 1$, such that
\begin{equation*}
	M^S_m(f,g)(x)\,\le \sum_{\{\star\}\in\{NL,\{TR,\pm\},H\}}\: \sum_{j\in \B_{\star,m}} M_{j,m}(f,g)(x)\,\e_{\star,j}(x)\,.
\end{equation*}
Define now
\begin{equation}\label{uLstarS}
	\uL_\star^S(f,g,h)=\sum_{m\in\N}\:\sum_{j\in\B_{\star,m}}  \uL_{j,m}(f,g,h)=:\sum_{m\in\N}\uL_{\star,m}^S(f,g,h)\,,
\end{equation}
where\footnote{We omit the term ${*}$ in $\e_{\star,j}$ since we will treat each case separately.}
\begin{align}
	\uL_{j,m}(f,g,h)&:=\int_\R M_{j,m}(f,g)(x)\e_j(x)h(x)dx\nonumber\\
	&=\int_\R\int_\R \hat{f}(\xi)\,\hat{g}(\eta)\,\widehat{\e_j h}(\xi+\eta) \,\um_{j,m}(\xi,\eta)\,d\xi\,d\eta\,.\label{uLjm}
\end{align}

Let $\uL_m^S=\uL_{NL,m}^S+\uL_{TR,m}^S+\uL_{S,m}^S$; we will show the analogue of Proposition \ref{Lgennodecay}:

\begin{proposition}\label{prop:MSfull}
	Let $m\in\N$ and $F,\,G,\,H$ be measurable sets with finite (nonzero) Lebesgue measure. Then
	\begin{equation}\label{sets_hyp}
		\exists\:H'\subseteq H\:\:\textrm{measurable with}\:\:|H'|\geq \frac{1}{2} |H|\,,
	\end{equation}
	such that for any triple of functions $f,\,g$ and $h$ obeying
	\begin{equation}\label{func_hyp}
		|f|\leq \chi_{F},\qquad |g|\leq \chi_{G},\qquad |h|\leq \chi_{H'}\,,
	\end{equation}
	and any triple $(p,q,r')$ with $1<p,q\leq \infty$, $r>\frac{2}{3}$ and $\frac{1}{p}+\frac{1}{q}+\frac{1}{r'}=1$ one has that
	\begin{equation}\label{prop:MSfull-bound}
		|\uL^{S}_m(f,g,h)|\lesssim_{\a,a,b,p,q} m\,|F|^{\frac{1}{p}}\,|G|^{\frac{1}{q}}\,|H|^{\frac{1}{r'}}\,.
	\end{equation}
\end{proposition}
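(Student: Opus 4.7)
The plan is to adapt the argument of Proposition \ref{Lgennodecay} to the maximal setting by treating separately the three components $\uL^{S}_{NL,m}$, $\uL^{S}_{TR,m}$, $\uL^{S}_{H,m}$ arising from \eqref{uLstarS}--\eqref{uLjm}. The only structurally new feature relative to Section \ref{LGCimpl} is the presence of the measurable coefficients $\{\e_j(x)\}_{j\in\B_{\star,m}}$ satisfying the pointwise constraint $\sum_j|\e_j(x)|^2\leq 1$. From this I extract two facts: the pointwise estimate $|\e_j(x)|\leq 1$, which yields $|\e_j h|\leq|h|\leq\chi_{H'}$ and thus preserves both the exceptional-set construction and the scale-by-scale local averages of Section \ref{LGCLfull}; and the global $\ell^2$ identity
$$\sum_{j\in\B_{\star,m}}\|\e_j h\|_2^2\,=\,\int|h(x)|^2 \sum_j|\e_j(x)|^2\,dx\,\leq\,|H|,$$
which will supplant the frequency-disjointness-based Parseval that drives the proof of Section \ref{LGCLfull}.

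For the hybrid component, I first decompose
$$\uL^{S}_{H,m}(f,g,h)=\sum_{\substack{j\geq C_0\\(\a-1)j>m}} \L_{j,m}(f,g,\e_j h)$$
and apply, for each $j$, the wave-packet reduction leading to \eqref{lsl1}, with $h$ replaced by $\e_j h$. Define $\O$ and $H'=H\setminus\O$ exactly as in \eqref{om}; since $|\e_j h|\leq\chi_{H'}$, the local averages $\frac{1}{|I_j^u|}\int|\e_j h|\tilde{\chi}_{I_j^u}$ remain $\lesssim 2^{\b}$ on $I_j^u\in\I_\b$, matching \eqref{Hal}. The geometric-mean interpolation \eqref{lslkey} applies verbatim, and at the final Parseval-type step of \eqref{lslrealf1}--\eqref{lslrealf2} one substitutes
$$\sum_{j,u,l}\bigl(\Delta_{j,m}^{l,[5I_j^u]}(\e_j h)\bigr)^2\;\lesssim\;\sum_j\|\e_j h\|_2^2\;\leq\;|H|,$$
reproducing exactly the $|H|^{(1-\theta_3)/2}$ factor of \eqref{lslconcl} and delivering the stated range $r>\tfrac{2}{3}$.

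For $\uL^{S}_{NL,m}$, I would mirror Section \ref{LGCNLfull} by introducing an $\e$-weighted variant of the maximal $m$-shifted square function whose entries are $|\langle\e_j h,\phi_{P_{j+m}(v+s(I))}\rangle|^2$; the $\ell^2$ identity above collapses its $L^p$ norm onto that of the unweighted analogue, so the decay \eqref{maxshift3} combined with the interpolation \eqref{keyn02} covers the range $r>\tfrac{1}{2}$. For $\uL^{S}_{TR,m}$ it suffices to invoke the uniform-in-$j$ bound \eqref{flrange} for $\L_{j,m}$ via duality against $\e_j h$: since $\#\{j:|(\a-1)j|\leq m\}\sim m$, at most the linear factor in $m$ appearing in \eqref{prop:MSfull-bound} is absorbed. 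The principal subtlety is the Parseval step in the hybrid case: the non-maximal argument relies crucially on the mutual (near-)orthogonality of wave-packets across scales $j$, which is genuinely lost once $h$ is replaced by the $j$-dependent $\e_j h$; the replacement $\sum_j\|\e_j h\|_2^2\leq|H|$ via the pointwise $\ell^2$ constraint is what rescues the argument, and verifying that this substitution is compatible with the geometric-mean interpolation---in particular that the interpolation parameters $\theta_i$ need not be re-tuned---is the main technical point to be carried out.
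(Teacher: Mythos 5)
Your proposal matches the paper's proof essentially step for step: the same split into $\uL^S_{H,m}$, $\uL^S_{NL,m}$, $\uL^S_{TR,m}$; the same observation that $|\e_j|\le 1$ preserves the exceptional-set construction and the local averages of Section \ref{LGCLfull}; the same replacement of the cross-scale Parseval by $\sum_j\|\e_j h\|_2^2\le\|h\|_2^2\le|H|$; the same $\e$-weighted shifted square function in the nonlinear case; and the same counting of $\sim m$ scales for the transitional case. On the one point you flagged as needing verification — whether the interpolation parameters $\theta_i$ need re-tuning — they do not: the $f$- and $g$-contributions in \eqref{lslrealf1} are still handled by the original Parseval, only the $h$-sum is swapped for the $\ell^2$-in-$j$ bound, and since $\theta_1+\theta_2=1-\theta_3$ the exponent bookkeeping goes through unchanged, exactly as in \eqref{lslconcl}.
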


\subsubsection{The $L^2\times L^2\times L^\infty$ $m$-decaying bound: Proof of Proposition \ref{prop:MS2}}

$\newline$

For simplicity we consider $\xi>0$ and $\eta>0$. The reasonings for the remaining cases are of similar nature. As with the proof of Proposition \ref{L2mdecay}, the analysis of the operator is performed for the \emph{global} cases
\begin{itemize}
	\item $j\ge C_0$ covering the component $M^{S}_{m,+}:=M^{S}_{H,m}+M^{S}_{TR,+,m}$, and
	
	\item $j\le -C_0$ covering the component $M^{S}_{m,-}:=M^{S}_{NL,m}+M^{S}_{TR,-,m}$\,.
\end{itemize}

For $j\ge0$, we have\footnote{Wlog, we assume in what follows that $|a|>1$.}
\[
\supp\hat{f}\subset\Big[\frac a8\cdot\frac{2^{\a j+m}}{\a\,b},8a\cdot \frac{2^{\a j+m}}{\a\,b}\Big]\quad\text{ and }\quad\supp\hat{g}\subset \Big[\frac14\cdot\frac{2^{\a j+m}}{\a\,b},4 \frac{2^{\a j+m}}{\a\,b}\Big]\,.
\]
Then, for $j\in\N$,  we rewrite \eqref{Tjm-constant} as
\begin{equation}\label{Tjm-pos-supp}
	\|M_{j,m}(f,g)\|_1\lesssim_{\a,a,b} 2^{-\delta m} \|f*\check{\phi}_{\a j+m}\|_2\,\|g*\check{\phi}_{\a j+m}\|_2\,,
\end{equation}
where  $\phi\in C_0^\infty$ with $\supp\phi\subset[\frac18,8]$.

Thus, using the almost orthogonality of the family $\{\phi_{\a j+m}\}_{j}$, we have
\begin{align}
	\|M^{S}_{m,+}(f,g)\|_1&\le \sum_{j\ge C_0} \|M_{j,m}(f,g)\|_1
	\lesssim_\a 2^{-\delta m}\sum_{j\ge 0} \|f*\check{\phi}_{\a j+m}\|_2\,\|g*\check{\phi}_{\a j+m}\|_2 \nonumber\\
	&\le 2^{-\delta m}
	\Big(\sum_{j\ge 0}  \|f*\check{\phi}_{\a j+m}\|_2^2\Big)^{\frac12}\, \Big(\sum_{j\ge 0}  \|g*\check{\phi}_{\a j+m}\|_2^2\Big)^{\frac12}
	\lesssim 2^{-\delta m}\|f\|_2\|g\|_2\,.\nonumber
\end{align}
We turn our attention towards the case $j\le -C_0$. Here we have
\[
\supp\hat{f}\subset\big[\frac18\cdot2^{j+m},8\cdot 2^{j+m}\big]\quad\text{ and }\quad\supp\hat{g}\subset  \Big[\frac14\cdot\frac{2^{\a j+m}}{\a\,b},4 \frac{2^{\a j+m}}{\a\,b}\Big]\,.
\]
From \eqref{mjmnM} and \eqref{Tjm-constant}, in the setting $j\in\Z_-$, we have
\begin{equation}
	\|M_{j,m}(f,g)\|_1\lesssim_{\a,a,b} 2^{-\delta m}\,\|f*\check{\phi}_{j+m}\|_2\,\|g*\check{\phi}_{\a j+m}\|_2\,,
\end{equation}
and hence
\begin{align}
	&\qquad\qquad\qquad\|M^{S}_{m,-}(f,g)\|_1\le \sum_{j\le -C_0} \|M_{j,m}(f,g)\|_1\nonumber\\
\le_{\a,a,b} & 2^{-\delta m}\Big(\sum_{j\in\Z_-}  \|f*\check{\phi}_{j+m}\|_2^2\Big)^{\frac12}\,\Big(\sum_{j\in\Z_-} \|g*\check{\phi}_{\a j+m}\|_2^2\Big)^{\frac12}\lesssim 2^{-\delta m}\|f\|_2\|g\|_2\,.\label{Tjm-}
\end{align}

Putting these results together we deduce that
\begin{equation}
	\|M_m^S(f,g)\|_{L^1}\le \|M^{S}_{m,+}(f,g)\|_1+\|M^{S}_{m,-}(f,g)\|_1\nonumber\\
	\lesssim_{\a,a,b}2^{-\delta m}\|f\|_{L^2}\|g\|_{L^2}\,.\label{TjxS2}
\end{equation}

\subsubsection{The extended boundedness range: Proof of Proposition \ref{prop:MSfull}.}
$\newline$

We split the analysis of $\uL^{S}_{m}$ corresponding to $\uL^{S}_{H,m}$, $\uL^{S}_{NL,m}$ $\uL^{S}_{TR,m}$.

\paragraph{The $\uL^{S}_{H,m}$ term.}\label{uLSHm-range} Recall from \eqref{MSH-dec} and \eqref{uLstarS} that
\begin{equation*}
	\uL^S_H=\sum_{m\in\N}  \sum_{{(\a-1) j\geq m}\atop{j\geq C_0}} \uL_{j,m}=\sum_{m\in\N}  \uL^{S}_{H,m}\,.
\end{equation*}

Following the same steps as in Section \ref{LGCLfull}, we get
\vspace{-.7cm}\begin{align}
	\nonumber\intertext{\begin{equation}
			|\uL^{S}_{H,m}(f,g,h)|\lesssim\label{uLSHm}
	\end{equation}} \\[-6ex]
	& \frac{1}{2^m}\sum_{\substack{j\ge 0\\s\sim 2^m}}\sum_{{z\in\Z}\atop{l\sim 2^{(\a-1)j}}}\frac{1}{|I_{j+m}|^{\frac{1}{2}}}\,|\langle f,  \phi_{P_{j+m}(z-s,l)}\rangle|\,
	|\langle g,  \phi_{P_{j+m}(z+s,l)}\rangle|\,|\langle \e_j h,  \phi_{P_{j+m}(z,2l)}\rangle|\,.\nonumber
\end{align}

Let now $F,\,G,\,H$ be measurable sets satisfying \eqref{sets_hyp}, and let $f$, $g$ and $h$ functions obeying \eqref{func_hyp}.  Continuing the reasoning in Section \ref{LGCLfull}, one has for any $0<\theta_3\leq 1$, $0\leq\theta_1,\,\theta_2<1$ with $\theta_1+\theta_2+\theta_3=1$ and any $N\in\N$
\begin{align}
	&\qquad\qquad\qquad\qquad\qquad\qquad\qquad|\uL^{S}_{H,m}(f,g,h)|\lesssim\\
	&\sum_{\beta\in\N}2^{\b\,(\theta_1+\theta_2-N\,\theta_3)}\,
	|F|^{\frac{1+\theta_1}{2}}\,|G|^{\frac{1+\theta_2}{2}}\,|H|^{-(\theta_1+\theta_2)}
	\left(\sum_{j\geq 0}\sum_{{u\in\Z}\atop{l\sim 2^{(\a-1)j}}}
	(\Delta_{j,m}^{2l,[5 I_j^u]}\, (\e_j h))^2\right)^{\frac{1-\theta_3}{2}}\,.\nonumber
\end{align}
Our desired estimate follows if we show that
\begin{equation}
	\sum_{j\geq 0}\sum_{{u\in\Z}\atop{l\sim 2^{(\a-1)j}}}
	(\Delta_{j,m}^{2l,[5 I_j^u]}\, (\e_j h))^2\lesssim |H|\,.
\end{equation}
The latter is a direct consequence of
\begin{align*}
	&\sum_{j\geq 0}\sum_{{u\in\Z}\atop{l\sim 2^{(\a-1)j}}}
	(\Delta_{j,m}^{2l,[5 I_j^u]}\, (\e_j h))^2=
	\sum_{j\geq 0}\sum_{{u\in\Z}\atop{l\sim 2^{(\a-1)j}}}\sum_{I_{j+m}^v\subseteq 5I_{j}^u} |\langle \e_j h,  \phi_{P_{j+m}(v,2l)}\rangle|^2 \\
	&\lesssim \sum_{j\geq 0}\, \int\limits_{\xi\approx 2^{\a j+m}} \hspace{-.3cm} |(\widehat{\e_j h})(\xi)|^2d\xi \lesssim \sum_{j\geq 0} \int |(\e_j h)(x)|^2dx\lesssim \|h\|_2^2\le |H|\,,
\end{align*}
where in the last line above we used $\|\{\e_j\}\|_{\ell^2}\le 1$ and  $|h|\le \chi_{H'}$\,.

\paragraph{The $\uL^{S}_{NL,m}$ term.}\label{SNLm-range} From the definition of the nonlinear dominant behavior, following the same reasonings as in Section \ref{LGCNLfull},  we have
\begin{equation}
	\uL^{S}_{NL}=\sum_{m\in\N} \sum_{{|j|\geq C_0}\atop{(\a-1)j<-m}} \uL_{j,m}=\sum_{m\in\N}\uL^{S}_{NL,m}\,,
\end{equation}
with
\begin{equation*}
	|\uL_{NL,m}^S(f,g,h)|\lesssim\frac1{2^m}\sum_{s\sim 2^m}\uL_{NL,m}^{S,s}(f,g,h)\,
\end{equation*}
where
\begin{equation*}
\uL_{NL,m}^{S,s}(f,g,h):=\sum_{\substack{j<0\\u\in\Z}}
	\frac{|\langle g,  \phi_{P_{\a j+m}(u+\frac{s^{\a}}{2^{(\a-1) m}})}\rangle|}{|I_{\a j+m}^u|^{\frac{1}{2}}}\,\sum_{I_{j+m}^v\subseteq I_{\a j+ m}^u}
	|\langle f,  \phi_{P_{j+m}(v-s)}\rangle|\,|\langle \e_j h,  \phi_{P_{j+m}(v)}\rangle|\,.
\end{equation*}

Assuming now we are in the setting described by \eqref{sets_hyp} and \eqref{func_hyp}, we can follow (with the obvious modifications) the reasonings from \eqref{maxshift}--\eqref{maxshift1}, in order\footnote{Throughout this section we preserve the same notation from Section \ref{LGCLfull}.} to obtain the analogue of \eqref{maxshift2}:
\begin{equation}\label{maxshift2A}
\uL^{S,\b}_{NL,m}(f,g,h)\lesssim \max\left\{1,\,2^{\b}\,\frac{|G|}{|H|}\right\}\,\int_{\R} S(f)(x)\,S_{\I_{\b},s}^{max}(\e,h)(x)\,dx\,,
\end{equation}
where
\begin{equation}\label{maxshiftA}
S_{\I,s}^{max}(\e,h)(x):=\left(\sum_{j<0}\sum_{I_{\a j}^w\in \I}
\sum_{{I_{j+m}^v\subseteq I_{\a j+ m}^u}\atop{I_{\a j+ m}^u}\subseteq I_{\a j}^w}
\frac{|\langle \e_j h,  \phi_{P_{j+m}(v+s(I_{\a j+ m}^u))}\rangle|^2}{|I_{j+m}^v|}\chi_{I_{j+m}^v}(x)\right)^{\frac{1}{2}}\,.
\end{equation}
It is not hard to see now that the analogue of \eqref{maxshift3} holds, \textit{i.e.}
\begin{equation}\label{maxshift3A}
\|S_{\I_{\b},s}^{max}(\e,h)\|_{p}\lesssim_{n} m\, 2^{-n\b}\,|H|^{\frac{1}{p}}\qquad\forall\:n\in\N\:\:\:\&\:\:\:1\leq p<\infty\,.
\end{equation}

The desired bound \eqref{prop:MSfull-bound} follows now via multilinear interpolation.

\paragraph{The $\uL^{S}_{TR,m}$ term.}\label{STRm-range} From \eqref{MTRS1}, \eqref{MSstarm-l2bound} and Section \ref{NSIterm} we have
\begin{equation}
	\|M^S_{TR,\pm,m}(f,g)\|_r\le \sum_{\substack{|j|\geq C_0\\-m\leq (\a-1)j\leq m}} \hspace{-.5cm} \|M^S_{j,m}(f,g)\|_r\lesssim_{p,q} m\, \|f\|_p\,\|g\|_q
\end{equation}
for any $\frac{1}{p}+\frac{1}{q}+\frac{1}{r'}=1$ with $1\leq p,\,q\leq \infty$.

Proposition \ref{prop:MSfull} is now an immediate consequence of the main results proved in Sections \ref{uLSHm-range}, \ref{SNLm-range} and \ref{STRm-range}\,.

\subsection{The analysis of the non-stationary component $M^{NS}$}
In this section we treat each of the terms $M^{NS,I}$, $M^{NS,II}$ and $M^{NS,III}$. As before, given that the mean zero condition of $\rho$ is not required in the analysis of $\L_{NS}$, one can apply the results from Sections \ref{HNL-2} and \ref{MS-full} to our current setting.

We can identify the operators $M_{j,n}$ and $M_{j,m,n}$ defined below with the corresponding operators defined at Section \ref{HNL-2}:
\begin{itemize}
	\item for $n\in \Z_+$
	\begin{equation*}
		M_{j,0,n}(f,g)(x):=\sum_{m<0}\int_\R\int_\R\hat{f}(\xi)\hat{g}(\eta)\um_{j,m,n}(\xi,\eta)e^{i\xi x}e^{i\eta x} d\xi d\eta;
	\end{equation*}
	\item for values $(m,n)\in(\Z_+\times \Z_+)\setminus\Delta$
	\begin{equation*}
		M_{j,m,n}(f,g)(x):=\int_\R\int_\R\hat{f}(\xi)\hat{g}(\eta)\um_{j,m,n}(\xi,\eta)e^{i\xi x}e^{i\eta x} d\xi d\eta\,.
	\end{equation*}
\end{itemize}

Given $(m,n)\in (\{0\}\times\Z_+)\cup ((\Z_+\times \Z_+)\setminus\Delta)$, we define
\begin{equation}
	M^{NS}_{m,n}(f,g)(x):= \sup_{|j|\geq C_0} |M_{j,m,n}(f,g)(x)|\,.
\end{equation}

Recalling the notation in \eqref{MNS-dec} we have for any $x\in\R$
\begin{equation*}
	M^{NS}_{m,n}(f,g)(x)\,\le \sum_{\substack{\I\in\{I,II,III\}\\ \{\star\}\in\{NL,TR,H\}}}  M^{NS,I}_{\star,m,n}(f,g)(x)\,.
\end{equation*}

Next, given $\{\star\}\in\{NL,TR,H\}$, $\I\in\{I,II,III\}$ and $(m,n)\in\mathcal R_\I$, we have\footnote{Here $\mathcal R_I=\{(0,n):n\in\Z_+\}$ and for $\I\in\{II,III\}$, $\mathcal R_\I$ stands for the range of the pairs $(m,n)$ in the definitions of the terms $M^{NS,\I}$\,.}
\begin{equation}\label{MNSIstarmn-l2bound}
	M^{NS,\I}_{\star,m,n}(f,g)(x)\le\Big(\sum_{j\in\B_{\star,m,n}^\I} |M_{j,m,n}(f,g)(x)|^2\Big)^{\frac12}\,,
\end{equation}
where $\B_{\star,m,n}^\I$ stands for the range of $j$ in the definitions of $M^{NS,\I}_{\star,m,n}$\,.

As with the stationary case, we have
\begin{equation*}
	\Big(\sum_{j\in\B_{\star,m,n}^\I} |M_{j,m,n}(f,g)(x)|^2\Big)^{\frac12} =\sup_{\substack{\{\e_j(x)\}\in\ell^2\\ \|\e_j(x)\|_{\ell^2}\le 1}}\, \sum_{j\in \B_{\star,m,n}^\I} M_{j,m,n}(f,g)(x)\e_j(x)\,.
\end{equation*}

Then, there exist $\{\e_{\star,j}(x)\}\in\ell^2$ with $\big(\sum\limits_{j\in\B_{\star,m}} |\e_{\star,j}(x)|^2\big)^{\frac12}\le 1$, such that\footnote{Strictly speaking, the choice of $\e_j$ depends of the given values $m, n$. }
\begin{equation}
	M^{NS}_{m,n}(f,g)(x)\,\le \sum_{\substack{\I\,\in\,\{I,II,II\}\\\{\star\}\in\{NL,TR,H\}}}\,\sum_{j\in\B_{\star,m,n}^\I}M_{j,m,n}(f,g)(x)\e_{\star,j}(x)\,.
\end{equation}

With this done, we aim to employ the techniques developed in Section \ref{HNL-2}. Thus, similarly to \eqref{uLjm}, we let\footnote{We omit the term ${*}$ in $\e_{\star,j}$ since we will treat each case separately.}
\begin{align}
	\uL_{j,m,n}(f,g,h)&:=\int_\R M_{j,m,n}(f,g)(x)\e_j(x)h(x)dx\nonumber\\
	&=\int_\R\int_\R \hat{f}(\xi)\,\hat{g}(\eta)\,\widehat{\e_j h}(\xi+\eta) \,\um_{j,m,n}(\xi,\eta)\,d\xi\,d\eta\,,
\end{align}
and define the trilinear forms $\uL^{NS,\I}_{\star,m,n}$ and $\uL^{NS,\I}_{\star}$ with the obvious correspondences.

\subsubsection{The treatment of the term $\underline\Lambda^{NS,I}$}

In this setting, after an integration by parts, we obtain
\[
\um_j^{NS,I}=\sum_{n\in\N} \um_{j,0,n}\,,
\]
where (wlog we assume that $a=b=1$)
\begin{equation}\label{umjnNS1}
	\um_{j,0,n}(\xi,\eta)\approx \frac1{2^n} \Big(\int_\R e^{-i\frac{\xi-\eta}{2^j}t}e^{i\frac{t^\a}{2^{\a j}}}\urho(t)dt\Big)\psi\Big(\frac{\xi-\eta}{2^j}\Big)\phi\Big(\frac{\eta}{2^{\a j+n}}\Big)\,.
\end{equation}

\paragraph{The $\uL^{NS,I}_{H}$ term.}
In this case, we consider in \eqref{umjnNS1} the values $j\in\Z$ such that $(\a-1)j\ge -n$. We, then have
\begin{align}
	\um_{j,0,n}(\xi,\eta)\approx \frac{1}{2^n}\hspace{-.2cm}  \sum_{l\sim 2^{(\a-1)j+n}}\hspace{-.3cm}
	\Big(\int_\R e^{-i\frac{\xi-\eta}{2^j}t}e^{i\frac{\eta}{2^{\a j}}t^\a}\!\!\urho(t)dt\Big)	\phi\Big(\frac{\xi}{2^{j}}{-}l\Big) \phi\Big(\frac{\eta}{2^{j}}{-}l\Big)\phi\Big(\frac{\xi{+}\eta}{2^{j}}{-}2l\Big)\,,\raisetag{.3\baselineskip}
\end{align}
which invites us to consider
\begin{align*}
	&\qquad\qquad\qquad\qquad\qquad\qquad\qquad\qquad\uL_{j,0,n}(f,g,h):=\nonumber\\
	&\frac1{2^n}\sum_{l\sim 2^{(\a-1)j+n}}\int_{\R^2}(f*\check\phi_j^\ell)\Big(x-\frac{t}{2^j}\Big)(g*\check{\phi}_j^l)\Big(x+\frac{t}{2^j}+\frac{t^\a}{2^{\a j}}\Big)
	(\e_j h*\check{\phi}_j^{2l})(x)\,\urho(t)\,dt\,dx\,.
\end{align*}

Applying the methods from Section \ref{lns1h}, we get
\begin{align}\label{h0n}
	\big|\uL_{H,0,n}^{NS,I}(f,g,h)\big|\lesssim \frac1{2^n}\hspace{-.3cm} \sum_{\substack{j\ge0\\l\sim 2^{(\a-1)j+n}\\ z\in\Z}}\!\!\!\!
	\frac{1}{|I_j|^{\frac12}}\,|\langle f,  \phi_{P_j(z,l)}\rangle|\,
	|\langle g,  \phi_{P_j(z,l)}\rangle|\,|\langle \e_j h,  \phi_{P_j(z,2l)}\rangle|\,.\raisetag{\baselineskip}
\end{align}
But now \eqref{h0n} is the analogue of \eqref{uLSHm} with $m=0$ and the extra decaying factor of $\frac1{2^n}$. Thus, following the reasoning in Section \ref{uLSHm-range}, we conclude
\begin{equation}
	\big|\uL_{H,0,n}^{NS,I}(f,g,h)\big|\lesssim \frac{n}{2^n}\|f\|_p\|g\|_q\|h\|_{r'}\,,
\end{equation}
for any $\frac1p+\frac1q+\frac1r=1$ with $1\le p,\, q\le \infty$ and $r>\frac23$.

\paragraph{The $\uL^{NS,I}_{NL}$ term.}
In this case we take advantage of the signum of $j$ and instead of bounding $\uL_{NL,0,n}^{NS,I}$ we show that $M_{j,0,n}$ is bounded uniformly on $j$.

Since $(\a-1)j\le -n$, one can reduce the multiplier to
\begin{equation}
	\um_{j,0,n}(\xi,\eta)\approx\frac{1}{2^n}\,\Big(\int_\R e^{-i\frac{\xi}{2^j}t+i\frac{\eta}{2^{\a j}}t^\a}\urho(t)\,dt\Big)\psi\Big(\frac{\xi}{2^j}\Big)\phi\Big(\frac{\eta}{2^{\a j+n}}\Big)\,,
\end{equation}
which after some standard Taylor series arguments, up to suitable error terms, gives
\begin{equation}
	\um_{j,0,n}(\xi,\eta)\approx\frac{1}{2^n}\,\Big(\int_\R e^{i\frac{\eta}{2^{\a j}}t^\a}\urho(t)dt\Big) \psi\Big(\frac{\xi}{2^j}\Big)\,\phi\Big(\frac{\eta}{2^{\a j+n}}\Big)\,.
\end{equation}
With this we have
\begin{align}
	M_{j,0,n}(f,g)(x)&:=\int_\R\int_\R \hat{f}(\xi)\hat{g}(\eta)\um_{j,0,n}(\xi,\eta)e^{i(\xi+\eta)x}d\xi\, d\eta\\
	&\approx\frac1{2^n}
	\int_\R (f*\check{\psi}_j)(x)\,(g*\check{\phi}_{\a j+n})\Big(x+\frac{t^\a}{2^{\a j}}\Big)\,\urho(t)\,dt\,\nonumber\,.
\end{align}

Then
\begin{align}
	|M_{j,0,n}(f,g)(x)|&\lesssim\frac1{2^n}	(\M f)(x)\,
	\int_\R \Big|(g*\check{\phi}_{\a j+n})\Big(x+\frac{t^\a}{2^{\a j}}\Big)\,\urho(t)\Big|\,dt\,.
\end{align}
Next, via a change of variable, we have
\begin{align}
	\Big|\int_\R &(g*\check{\phi}_{\a j+n})(x+\frac{t^\a}{2^{\a j}}) \,\urho(t)\,dt\Big|\lesssim_{\urho} 2^{\a j}
	\int_{|t|\sim 2^{-\a j}} \big|(g*\check{\phi}_{\a j+n}) (x+t)\big|\,dt\nonumber\\[1ex]
	&\lesssim_\phi \frac1{2^{-\a j}} \int_{|t|\sim 2^{-\a j}}
	(\M g)(x+t)\,dt\lesssim \M(\M g)(x)\,.
\end{align}

Hence
\begin{align*}
	|M^{NS,I}_{NL,0,n}(f,g)(x)|=\sup_{{|j|\geq C_0}\atop{(\a-1)j<-n}} |M_{j,0,n}(f,g)(x)| \lesssim\frac1{2^n}	(\M f)(x)\,\M(\M g)(x)\,.
\end{align*}

By H\"older inequity, the boundedness of $\M$, and the decay in $n$, we conclude that $M^{NS,I}_{NL}$ is bounded from $L^p\times L^q$ into $L^r$ within the desired range.

\paragraph{The $\uL^{NS,I}_{TR}$ term.}

As a consequence of the reasonings in Section \ref{NSIterm}, for any $n\in\N$ and \emph{uniformly for $j\in \Z$}, we have that
\begin{equation}
	|\uL_{j,0,n}(f,g,h)|\lesssim_{\a, p,q} \frac{1}{2^n}\,\|f\|_p\,\|g\|_q\,\|h\|_{r'}
\end{equation}
for any $\frac{1}{p}+\frac{1}{q}+\frac{1}{r'}=1$ with $1\leq p,\,q\leq \infty$, we conclude that
\begin{equation}
	|\uL^{NS,I}_{TR,n}(f,g,h)|\lesssim_{\a, p,q} \frac{n}{2^n}\,\|f\|_p\,\|g\|_q\,\|h\|_{r'}\,.
\end{equation}

\subsubsection{The treatment of the term $\uL^{NS,II}$}
In this situation we have $0\le m<n-C_0$. Then, via integration by parts, the multiplier can be reduced to
\begin{equation}\label{umjmnlNSII}
	\um_{j,m,n}\approx\frac1{2^n}\Big(\int_\R e^{-i\frac{\xi-\eta}{2^j}t}e^{i\frac\eta{2^{\a j}}t^\a}\urho(t)dt\Big)\phi\Big(\frac{\xi-\eta}{2^{j+m}}\Big) \phi\Big(\frac{\eta}{2^{\a j+n}}\Big)\,.
\end{equation}

\paragraph{The $\uL^{NS,II}_H$ term.}
In this setting, following the steps at Section \ref{uLSHm-range}, we have
\begin{align}
	&\qquad\qquad\qquad\qquad\uL^{NS,II}_{H,m,n}(f,g,h) = \sum_{\substack{|j|\geq C_0\\(\a-1)j\ge m}} \hspace{-.2cm} \uL_{j,m,n}(f,g,h)\lesssim\nonumber\\
	&\frac1{2^{n+m}}\sum_{s\sim 2^m}\sum_{\substack{(\a-1)j\ge m\\l\sim 2^{(\a-1)j+n-m}\\ z\in\Z}}
	\frac{1}{|I_{j+m}|^{\frac12}}\,|\langle f,  \phi_{P_{j+m}(z-s,l)}\rangle|\,
	|\langle g,  \phi_{P_{j+m}(z+s,l)}\rangle|\,|\langle \e_j h,  \phi_{P_{j+m}(z,2l)}\rangle|\,.\raisetag{\baselineskip}
\end{align}

This represents precisely the analogue of \eqref{uLSHm} with the extra-decaying factor $\frac1{2^n}$ and the sum over $l\sim 2^{(\a-1)j}$ replaced by $l\sim 2^{(\a-1)j+n-m}$. Once, at this point, repeating the reasonings from Section \ref{uLSHm-range}, we conclude
\begin{equation}
	\big|\uL_{H,m,n}^{NS,II}(f,g,h)\big|\lesssim_{\a} \frac{n}{2^n}\|f\|_p\|g\|_q\|h\|_{r'}\,,
\end{equation}
for any $\frac1p+\frac1q+\frac1r=1$ with $1\le p,\, q\le \infty$ and $r>\frac23$.

\paragraph{The $\uL^{NS,II}_{NL}$ term.}
In this setting, \eqref{umjmnlNSII} reduces to
\begin{equation*}
	\um_{j,m,n}(\xi,\eta)\approx \frac{1}{2^n} \left(\int_\R e^{-i\frac{\xi-\eta}{2^j}  t}e^{i\frac{\eta}{2^{\a j}}t^\a}\rho(t)dt\right)\,\phi\Big(\frac{\xi}{2^{j+m}}\Big) \phi\Big(\frac{\eta}{2^{\a j+n}}\Big) \phi\Big(\frac{\xi+\eta}{2^{j+m}}\Big)\,,
\end{equation*}
which, given that we are in the regime $(\a-1)j\le -n$, implies
\begin{align}
	&\qquad\qquad\qquad\qquad\qquad\qquad\uL_{j,m,n}(f,g,h)\approx \\
	&\frac{1}{2^n}\int_{\R^2}
	(f*\check{\phi}_{j+m})\Big(x-\frac{t}{2^j}\Big)
	(g*\check{\phi}_{\a j+n})\Big(x+\frac{t^{\a}}{2^{\a j}}\Big)
	(\e_j h*\check{\phi}_{j+m})(x) \urho(t)\,dt\,dx\,.\nonumber \raisetag{.5\baselineskip}
\end{align}

From this point on, one can follow the same reasonings as the ones from Section \ref{SNLm-range}, in order to obtain (with the same notations)
\begin{equation}\label{maxshift2AB}
\uL^{NS,\b}_{NL,m}(f,g,h)\lesssim_{\a} \frac{1}{2^n}\,\max\left\{1,\,2^{\b}\,\frac{|G|}{|H|}\right\}\,\int_{\R} S(f)(x)\,S_{\I_{\b},s}^{max}(\e,h)(x)\,dx\,,
\end{equation}
which implies the desired conclusion.

\paragraph{The $\uL^{NS,II}_{TR}$ term.} In this case we have, uniformly in $0\leq m\leq n-C_0$ and $j\in \Z$, that
\begin{equation}
	|\uL_{j,m,n}(f,g,h)|\lesssim_{\a,p,q} \frac{1}{2^n}\,\|f\|_p\,\|g\|_q\,\|h\|_{r'}
\end{equation}
for any $\frac{1}{p}+\frac{1}{q}+\frac{1}{r'}=1$ with $1\leq p,\,q\leq \infty$. Hence
\begin{equation}
	|\uL^{NS,II}_{TR,n}(f,g,h)|\lesssim_{\a,p,q} \frac{n}{2^n}\,\|f\|_p\,\|g\|_q\,\|h\|_{r'}\;.
\end{equation}

\subsubsection{The treatment of the term $\uL^{NS,III}$}

In this case $0\le n<m-C(a,b,\a)$. Then by an integration by parts we have
\begin{equation}\label{umjmnlNSIII}
	\um_{j,m,n}\approx\frac1{2^m}\Big(\int_\R e^{-i\frac{\xi-\eta}{2^j}t}e^{i\frac\eta{2^{\a j}}t^\a}\urho(t)dt\Big)\phi\Big(\frac{\xi-\eta}{2^{j+m}}\Big) \phi\Big(\frac{\eta}{2^{\a j+n}}\Big)\,.
\end{equation}

From this point on the treatment of $\L^{NS,III}_\star$ is similar to the one of $\L^{NS,II}_\star$ for any $\{\star\}\in\{NL,TR,H\}$\,.

\subsection{The analysis of the low frequency component $M^{LO}$}\label{mlo}
This represents the operator whose multiplier, after the linearization process, is given by
\begin{equation}
	\um_j^{LO}=\sum_{(m,n)\in \Z\times\Z_{-}} \um_j(\xi,\eta)\phi\Big(\frac{\xi-a\eta}{2^{j+m}}\Big)\phi\Big(\frac{\a\,b\,\eta}{2^{\a j+n}}\Big)=
	\um_j(\xi,\eta)\,\psi\Big(\frac{\a\,b\,\eta}{2^{\a j}}\Big) \,.
\end{equation}

The upper bound for the linear dominant component may be reduced to
\begin{equation}\label{defmcomponentsupbd}
	M^{LO}(f,g)(x)\lesssim \sup_{j\in\Z}\left| \int_{\R} \int_{\R} \hat{f}(\xi)\,\hat{g}(\eta)\,
	\hat{\tilde{\rho}}\Big(\frac{\xi-a\eta}{2^{j}}\Big)\,\psi\Big(\frac{\a\,b\,\eta}{2^{\a j}}\Big)\,e^{i\xi x}\,e^{i\eta x}\,d\xi\,d\eta\right|\,,
\end{equation}
from which we further deduce
\begin{align}\label{defmcomponentsupbd1}
	M^{LO}(f,g)(x)&\lesssim \sup_{j\in\Z}\left| \int_{\R} f\big(x-\frac{t}{2^j}\big)\,(g*\check{\psi}_{j \a}) \big(x+\frac{a\,t}{2^j}\big)\,\tilde{\rho}(t)\,dt\right|\nonumber\\
	&\lesssim \sup_{j\in\Z}\int_{\R} |f|(x-t)\,\M g(x+ a t)\,2^{j}\,|\tilde{\rho}(2^{j}\,t)|\,dt\:.
\end{align}
This implies that
\begin{equation}\label{defmcomponentsupbd2}
	M^{LO}(f,g)\lesssim \underline{M}_{a} (|f|, \M g)\,,
\end{equation}
where here $\underline{M}_{a}$ stands for the maximal version of the Bilinear Hilbert transform. Based on the work in \cite{Lac00}, we conclude the $L^p\times L^q\,\rightarrow\,L^r$ boundedness of $M^{LO}$ as long as $\frac1p+\frac1q=\frac1r$ with $1<p,q\leq \infty$ and $r>\frac23$.

\subsection{The analysis of the non-singular component $M^{NSI}$}
In this case, we make use of Theorem \ref{mainsinglsecale}, in order to deduce
\begin{equation*}
	\|M^{NSI}(f,g)\|_{L^r}\leq \Big\|\sup_{|j|<C_0} M_j(f,g)\Big\|_{L^r}\leq j_0\, \|f\|_{L^p}\,\|g\|_{L^q}\lesssim \|f\|_{L^p}\,\|g\|_{L^q}\,,
\end{equation*}
whenever $\frac1p+\frac1q=\frac1r$ with $1<p,q\leq \infty$ and $r>\frac12$.

\section{Connections with the Bilinear-Hilbert Carleson operator $BC^{\a}$}\label{BHCalfa}

In this section we are going to discuss some interesting connections between
\begin{itemize}
\item the family of Bilinear Hilbert transform along curves $\{H_{a,b}^{\a}\}_{\a>0}$ given by \eqref{bht} and analyzed in the current paper, and

\item the class of Bilinear Hilbert-Carleson operators $\{BC^{\a}\}_{\a>0}$ introduced in \cite{BBLV21}  and defined by

\begin{equation}\label{bhc}
	BC^{\a}(f,g)(x):= \sup_{\l>0}\,\left|\int_{\R} f(x-t)\, g(x+t)\,e^{i\,\l\,t^{\a}}\,\frac{dt}{t}\right|\:.
\end{equation}

\end{itemize}

\subsection{The boundedness of the non-resonant Bilinear Hilbert-Carleson operator $BC^{\a}$, $\a\in (0,\infty)\setminus\{1,2\}$: an outline}

Building on the deep connections with historical, celebrated themes such as Carleson's Theorem (\cite{Car66}, \cite{Feff73}) and the Bilinear Hilbert transform (\cite{LT97}, \cite{LT99}), part of the relevance of the (non-resonant) Bilinear Hilbert-Carleson operator treated in \cite{BBLV21} relies on the fact that $BC^{\a}$, $\a\in (0,\infty)\setminus\{1,2\}$, is the first multi-linear operator in the literature that exhibits a \emph{hybrid} nature, encompassing both
\begin{itemize}
\item zero-curvature features: $BC^{\a}$ is invariant under the action of modulation:
$$BC^{\a}(M_cf,M_cg)=BC^{\a}(f,g)\,;$$
\item nonzero-curvature features: $BC^{\a}$ is a maximal singular integral operator whose kernel representation involves a maximal (generalized) modulation in the presence of curvature.
\end{itemize}

With these being said, we recall the main result in \cite{BBLV21}:

\begin{theorem} \label{main}  Let $\a\in(0, \infty)\setminus\{1, 2\}$ and assume $p, q, r$ are H\"older indices, \textit{i.e.} $\frac{1}{p}+\frac{1}{q}=\frac{1}{r}$, with $1<p, q \leq \infty$ and $\frac{2}{3}<r<\infty$. Then the non-resonant bilinear Hilbert--Carleson operator $BC^{\a}$ extends continuously from $L^p(\R)\times L^q(\R)$ into $L^r(\R)$ with
\beq\label{mainr}
\|BC^{\a}(f,g)\|_{L^r}\lesssim_{\a,p,q} \|f\|_{L^p} \|g\|_{L^q}.
\eeq
\end{theorem}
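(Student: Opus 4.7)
The plan is to combine the standard linearization of the Carleson-type supremum with the LGC methodology developed in Sections \ref{LGCimpl}--\ref{LOf} for the hybrid bilinear Hilbert transform $H_{a,b}^{\a}$. First I would replace the supremum in $\lambda>0$ by a measurable function $\lambda:\R\to (0,\infty)$ and study the trilinear form
\begin{equation*}
\Lambda(f,g,h):=\int_{\R}\int_{\R} f(x-t)\,g(x+t)\,e^{i\lambda(x)t^{\a}}\,h(x)\,\frac{dt}{t}\,dx
\end{equation*}
uniformly in $\lambda(\cdot)$. Next I would perform the dyadic spatial decomposition $\frac{1}{t}=\sum_{j\in\Z}\rho_j(t)$ together with the phase-dependent splitting that led to $\Lambda=\Lambda^{NSI}+\Lambda^{LO}+\Lambda^{NS}+\Lambda^{S}$, this time applied to the $x$-variable-phase multiplier
\begin{equation*}
\mathfrak{m}_{j}(x,\xi,\eta)=\int_{\R} e^{-i\frac{\xi-\eta}{2^j}t}\,e^{i\frac{\lambda(x)}{2^{\a j}}t^{\a}}\,\rho(t)\,dt\,.
\end{equation*}
As before, the low-oscillatory piece reduces essentially to a maximal bilinear Hilbert transform (handled by the result of Lacey used in Section \ref{mlo}), the non-singular and non-stationary pieces admit an $m$-decaying control via the integration-by-parts and paraproduct-style arguments of Sections \ref{HNL-2}--\ref{LOf}, and the only genuinely delicate contribution is the stationary piece $\Lambda^{S}_{m}$, obtained by localizing $|\xi-\eta|\sim 2^{j+m}$ and $|\lambda(x)|\sim 2^{\a j+m}$.

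The analysis of $\Lambda^{S}_{m}$ would proceed exactly as in Sections \ref{LGCplus}--\ref{LGCminus}. After stationary phase around the critical point $t_{c}\sim 2^{j}(\frac{\xi-\eta}{\a\lambda(x)})^{1/(\a-1)}$, the multiplier becomes, up to a $2^{-m/2}$ amplitude and lower-order Fourier-series terms,
\begin{equation*}
2^{-m/2}\,e^{i\,c_{\a}\frac{(\xi-\eta)^{\a/(\a-1)}}{\lambda(x)^{1/(\a-1)}}}\,\phi\Big(\tfrac{\xi-\eta}{2^{j+m}}\Big)\,\phi\Big(\tfrac{\lambda(x)}{2^{\a j+m}}\Big)\,.
\end{equation*}
Linearizing this phase on frequency boxes of size $2^{j+m/2}\times 2^{j+m/2}$ and expanding $f,g$ in the Gabor frame \eqref{GF}, one obtains the exact analogue of the time-frequency correlation condition \eqref{tjm1}, except that the parameter $v$ labeling the frequency of $g$ is now replaced by a parameter tied to the spatial localization $\lambda(x)\sim 2^{\a j+m}$. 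Running the sparse-uniform dichotomy of Section \ref{3p1}, and in particular the phase-level-set analysis of Section \ref{TFCsize}, the central quantitative input is still the growth $|\vartheta'(v)|\approx\frac{|2-\a|}{|\a-1|}\,2^{-m-j}$ from \eqref{f3}--\eqref{f5}, which embodies the transversality of the family of linearized curves. This is precisely where the non-resonance hypothesis $\a\not=2$ enters and guarantees $\#\mathcal{C}_{\ep,\d,\pm}^{R,\mathfrak{J}}\lesssim 2^{(\frac{1}{2}-\mu)m}$ for some $\mu>\frac{1}{2}-2\d$, delivering an $m$-exponentially decaying $L^{2}\times L^{2}\times L^{\infty}$ bound for $\Lambda^{S}_{m}$ uniformly in $\lambda(\cdot)$, in perfect analogy with Proposition \ref{L2mdecay}.

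Finally, the extended range $r>\frac{2}{3}$ follows by reproducing, within the linearized setting, the three separate treatments of the hybrid/nonlinear/transitional subcomponents of $\Lambda^{S}_{m}$ carried out in Section \ref{Propgen}: the hybrid piece yields $m$-independent restricted-type bounds with $r>\frac{2}{3}$ via the sparse/geometric-mean argument of Section \ref{LGCLfull}, the nonlinear piece yields the wider range $r>\frac{1}{2}$ via the maximal $m$-shifted square function of Section \ref{LGCNLfull}, and the transitional piece is controlled uniformly in $j$ by Theorem \ref{mainsinglsecale}. Then multilinear interpolation plus telescoping in $m$ recovers \eqref{mainr}. The hard part will be controlling the extra position dependence introduced by $\lambda(x)$: unlike in Theorem \ref{thm:mS}, where the stationary-phase level set $\{\eta:|\eta|\sim 2^{\a j+m}\}$ is a clean frequency annulus, here $\{x:|\lambda(x)|\sim 2^{\a j+m}\}$ is an essentially arbitrary measurable set, so one must ensure that the adapted Gabor expansion and the subsequent sparse-uniform dichotomy are performed relative to a simultaneous decomposition in frequency and in $\lambda$-level sets, and that the almost-orthogonality between distinct values of $m$ survives this refinement; the mechanism here mirrors the linearized approach to the Polynomial Carleson operator and is the conceptual reason the method is confined to the non-resonant regime $\a\not\in\{1,2\}$.
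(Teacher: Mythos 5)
Your proposal carries the skeleton of the argument for $H_{a,b}^{\alpha}$ over to $BC^{\alpha}$, and the \emph{local} single-scale piece (a fixed spatial scale $k$ with $\lambda(x)\sim 2^{\alpha(m-k)}$) is plausibly tractable via the sparse--uniform dichotomy you describe, though the proof in \cite{BBLV21} in fact also invokes number-theoretic Weyl-sum estimates at that step. The genuine gap is in how you close the argument. You assert that, after obtaining an $m$-exponentially decaying $L^2\times L^2\times L^\infty$ bound for each single scale, the sum over spatial scales follows ``in perfect analogy with Proposition \ref{L2mdecay}'' and can then be telescoped in $m$. This fails for $BC^{\alpha}$. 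In the proof of Proposition \ref{L2mdecay} the decisive step that closes the $j$-sum---see the chain of displays in Remark \ref{l2l2intol1}---rests on the fact that for $H_{a,b}^{\alpha}$ the active Gabor coefficients of both $f$ and $g$ at scale $j$ live in the annulus $|\xi|,|\eta|\approx 2^{\alpha j+m}$, so the annuli for distinct $j$ are essentially disjoint and one recovers $\|f\|_{2}\|g\|_{2}$ by Cauchy--Schwarz in $j$ and Parseval. For $BC^{\alpha}$ the arguments of $f$ and $g$ are the \emph{uncurved} shifts $x-t$ and $x+t$; the curvature sits entirely in the exponential factor $e^{i\lambda(x)t^{\alpha}}$. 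Consequently, at a given scale $k$ the multiplier localization lives only in the \emph{difference} $\xi-\eta$, never in $\xi$ and $\eta$ individually; $\hat f$ and $\hat g$ carry no scale-dependent frequency support, the operator enjoys a genuine modulation invariance $BC^{\alpha}(M_cf,M_cg)=BC^{\alpha}(f,g)$, and distinct scales interact. That is exactly the ``zero curvature feature characterizing the input structure of $BC^{\alpha}$'' referenced in the outline of Section \ref{BHCalfa}, and its resolution requires a multi-scale tree/forest analysis in the spirit of \cite{LT97}, \cite{LT99} to propagate the single-scale $m$-decay to a global one; Cauchy--Schwarz over scales plus telescoping in $m$ cannot replace it.

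A second, related inaccuracy: the splitting of $\Lambda^{S}_{m}$ into hybrid, nonlinear, and transitional subcomponents that you invoke to recover the range $r>\tfrac23$ has no counterpart for $BC^{\alpha}$. That trichotomy exists for $H_{a,b}^{\alpha}$ precisely because the competition between $at$ and $bt^{\alpha}$ inside the argument of $g$ changes dominance as $j$ varies. For $BC^{\alpha}$ the argument of $g$ is always the linear shift $x+t$, so there is no nonlinear or transitional regime to peel off---the entire stationary piece is ``hybrid,'' which is the very regime that demands tree analysis and produces the restricted range. Your concluding caveat about $\{x:|\lambda(x)|\sim 2^{\alpha j+m}\}$ being an arbitrary measurable set identifies a real but secondary technical point (the Carleson-type linearization of the supremum); the deeper obstacle to your strategy is the loss of frequency localization for the two input functions across scales.
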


In what follows, we present a brief outline of the methods employed for proving \eqref{mainr}: firstly, one applies a standard linearization procedure and splits
\beq\label{bcdec}
BC^{\a}=BC^{\a}_0\,+\,BC^{\a}_{osc}\,,
\eeq
where
\begin{itemize}

\item $BC^{\a}_0$ stands for the \emph{low oscillatory} component and is given by
\beq\label{bc0}
BC^{\a}_0(f,g)(x):=\int_{\R}  f(x-t)\, g(x+t)\,e^{i\,\l(x)\,t^{\a}}\,\chi(\l(x),t)\,\frac{dt}{t} \,,
\eeq
\item $BC^{\a}_{osc}$ stands for the \emph{high oscillatory} component given by
\beq\label{bcosc}
BC^{\a}_{osc}(f,g)(x):=\int_{\R}  f(x-t)\, g(x+t)\,e^{i\,\l(x)\,t^{\a}}\,(1-\chi(\l(x),t))\,\frac{dt}{t} \,,
\eeq
\end{itemize}
with $\l(x)$ an arbitrary measurable positive function and $\chi(\l(x),t)$ a sui\-table smooth version of the characteristic function of the set $|\l(x)\,t^{\a}|\gtrsim 1$.

Next, the strategy is as follows:
\begin{itemize}
\item the low oscillatory component $BC^{\a}_0$ behaves essentially as the maximal truncated Bilinear Hilbert transform, and thus, relying on Lacey's result in \cite{Lac00}, is under control.

\item the high oscillatory component $BC^{\a}_{osc}$ is firstly organized depending on the height of the multiplier's phase, that is
\beq\label{bcoscdecm}
BC^{\a}_{osc}=\sum_{m\in\N} BC^{\a}_{m}\,,
\eeq
with
\beq\label{bcoscdecmdef}
BC^{\a}_{m}:=\sum_{k\in\Z} BC^{\a}_{m,k}\,.
\eeq
This is further decomposed as
\beq\label{bcoscdec}
BC^{\a}_{osc}=\sum_{{m\in\N}\atop{k\in\Z}} BC^{\a}_{m,k}\,,
\eeq
with\footnote{Here $\tilde{\rho}\in C_0^{\infty}(\R)$ with $0\notin\textrm{supp}\,\tilde{\rho}$ and
$\sum_{k\in\Z} \tilde{\rho}(2^{-\a k}\,t)=1$ for all $t\in \R\setminus\{0\}$.}
\beq\label{bcoscdecmk}
 BC^{\a}_{m,k}(f,g)(x):=\tilde{\rho}(2^{-\a(m-k)}\l(x))\,\int_{\R}  f(x-t)\, g(x+t)\,e^{i\,\l(x)\,t^{\a}}\,2^{-k}\,\rho(2^{-k}\,t)\,dt\,.
\eeq
\end{itemize}

The key step is to obtain the \emph{global $m$-decaying} estimate
\beq\label{bcoscdecmdefkey}
\|BC^{\a}_{m}(f,g)\|_1\lesssim_{\a} 2^{-\d \a m}\,\|f_2\|\,\|g\|_2\,,
\eeq
for some small, absolute $\d>0$. This is approached in two stages, according to the above mentioned hybrid nature of $BC^{\a}$:

\begin{itemize}
\item the first stage consists of the \emph{(local) single scale $m$-decaying} estimate
\beq\label{bcoscdecmkdefkey}
\|BC^{\a}_{m,k}(f,g)\|_1\lesssim_{\a} 2^{-\d \a m}\,\|f_2\|\,\|g\|_2\,,
\eeq
which is essentially the content of Proposition 5.1., Section 5 in \cite{BBLV21}. The proof of \eqref{bcoscdecmkdefkey} exploits the \emph{non-zero curvature feature} of the oscillatory component within $BC^{\a}$ and relies on the LGC-methodology employed in \cite{Lie19} combined with elements of number theory (Weyl sums). Moreover, one has that \eqref{bcoscdecmkdefkey} holds iff $\a\notin\{1,2\}$.

\item the second stage seeks to propagate the information in \eqref{bcoscdecmkdefkey} from local to global level in order to deduce the overall $m$-decaying estimate \eqref{bcoscdecmdefkey}. This involves the \emph{zero curvature feature} characterizing the input structure of $BC^{\a}$, and thus, it requires a multi-scale analysis relying on a refinement of the time-frequency analysis developed in \cite{LT97} and \cite{LT99}.
\end{itemize}

In what follows we will only appeal to the \emph{non-zero curvature} feature discussed at the first item above, focusing our attention on re-interpreting \eqref{bcoscdecmkdefkey} in relation to the main topic of our present paper.

\subsection{$H_{a,b}^{2}-$a novel manifestation: quadratic \emph{quasi}-resonance}\label{H2discussion}

\indent In this section we discuss a novel type of behavior modeled by the $H_{a,b}^{2}$ operator: that of a mathematical object obeying a so-called ``\emph{higher order modulation quasi-invariance property}".\footnote{The meaning of this concept is clarified in Remark \ref{Quasi} at the end of this section, after we describe a certain parallelism between $H_{a,b}^{2}$ and $BC^2$.} As it turns out, this newly stated feature that will be explained momentarily constitutes the deep reason for which our Theorem \ref{main1} does not cover the case $\a=2$:
\medskip

\noindent\textbf{Claim.} \emph{No decay in the parameter $m$ is possible in Proposition \ref{L2mdecay} if $\a=2$.}

\begin{proof}

We start by presenting some circumstantial evidence for our claim that supports the special/singular role played by the quadratic case $\a=2$ within the family $\{H_{a,b}^{\a}\}_{\a}$. Indeed, we will first address the more elementary theme of \emph{where does the LGC-methodology implementation in Section \ref{LGCimpl} fail to provide the desired decay in \eqref{LSm}?}

The answer to the above stays in the lack of validity for the reasonings provided in Section \ref{LGCplus}, Step \ref{TFCsize}, more specifically in the failure of the key estimate \eqref{keyrel}. Indeed, inspecting closer the arguments therein, one notices that the original estimate \eqref{f5} becomes in the new context $|\vartheta'(v)|\approx \frac{1}{2^{m+2j}}$ which in turn transforms \eqref{key10} into
\begin{equation*}
|u_1-u_2|\,|v(r_1)-v(r_2)|\lesssim_{\a} 2^{\frac{m}{2}+j}\,,
\end{equation*}
thus preventing any contradiction from being reached within assumption \eqref{keyrel1} as long as $j>0$ is large enough relative to $m$. 

We end this introductory discussion by noticing that the contrast between the cases $\a=2$ and $\a\not=2$ manifests only when studying the hybrid stationary component $\L^{S}_{H,m}$ and not when addressing the non-linear stationary component $\L^{S}_{NL,m}$; this dichotomy is consistent with the asymptotic/dominant behavior within the expression of $\g(t)=t+ t^{\a}$ as the spatial parameter $|t|$ approaches $0$ or $\infty$, respectively.

With these done, we are now ready to provide the actual proof of our claim. Interestingly enough, our approach exploits the relationship between $\L^{S}_{H,m}$ and the Bilinear Hilbert-Carleson operator $BC^{\a}$, $\a=2$.
	
For expository reasons, as before, we take $a=b=1$ in \eqref{maintwo} and set
	\begin{equation}\label{bht20}
		H_{2}(f,g)(x):= \int_{\R} f(x-t)\, g(x+t+t^{2})\,\frac{dt}{t}\:.
	\end{equation}
	
Once here, it is not hard to see that the analogue of \eqref{sjm} for $\a=2$ is given by
	\begin{equation}\label{s2jm}
		H^{2,S}_{j,m}(f,g):=\int_\R\int_\R \hat{f}(\xi)\hat{g}(\eta)m_{j,m}^{2,S}(\xi,\eta)e^{i\xi x}e^{i\eta x}d\xi d\eta\,,
	\end{equation}
	with
	\begin{equation}\label{s2jmm}
		m_{j,m}^{2,S}(\xi,\eta):=\left(\int_\R e^{-i\frac{\xi-\eta}{2^j}  t}e^{i\frac{\eta}{2^{2j}}t^2}\rho(t)dt\right)\,
		\phi\Big(\frac{\xi-\eta}{2^{j+m}}\Big)\phi\Big(\frac{\eta}{2^{2j+m}}\Big)\,.
	\end{equation}
	Now following \eqref{saljmm0}--\eqref{saljm21} and assuming $j\geq 0$, simple considerations give that
	\begin{equation}\label{s2jmm0}
		m_{j,m}^{2,S}(\xi,\eta)\approx\sum_{l\sim 2^j} m_{j,m,l}^{2,S}(\xi,\eta)
	\end{equation}
	with
	\begin{equation}\label{s2jmm1}
		m_{j,m,l}^{2,S}(\xi,\eta):=\left(\int_\R e^{-i\frac{\xi-\eta}{2^j}  t}e^{i\frac{\eta}{2^{2j}}t^2}\rho(t)dt\right)\,
		\phi_1\Big(\frac{\xi}{2^{j+m}}-l\Big)\phi_2\Big(\frac{\eta}{2^{j+m}}-l\Big)\,,
	\end{equation}
	where here $\phi_1$, $\phi_2$ are functions having similar properties with $\phi$.
	
	Thus, with the obvious correspondences, we deduce that
	\begin{equation}\label{s2jm2}
		H^{2,S}_{j,m,l}(f,g)=\int_{\R}  (f*\check{\phi}_{1,j+m}^l)\Big(x-\sdfrac{t}{2^j}\Big)\,(g*\check{\phi}_{2,j+m}^l)\Big(x+\sdfrac{t}{2^j}+\frac{t^2}{2^{2j}}\Big)\,\rho(t)\,dt\,.
	\end{equation}
	where for $\iota\in\{1,2\}$ we have
	\begin{equation}\label{s2jm3-1}
		\check{\phi}_{\iota,j+m}^l(x):=\int_{\R} \phi_{\iota}\Big(\frac{\xi}{2^{j+m}}-l\Big)\,e^{i\,\xi\,x}\,d\xi=
		2^{j+m}\,e^{i\,2^{j+m}\,l\,x}\,\check{\phi}_{\iota}(2^{j+m}\,x)\,.
	\end{equation}
	Deduce based on \eqref{s2jm3-1} that, if $j\geq m$, then
	\begin{equation}\label{s2jm4ss}
		(g*\check{\phi}_{2,j+m}^l)\Big(x+\sdfrac{t}{2^j}+\sdfrac{t^2}{2^{2j}}\Big)\approx e^{i\,\frac{l}{2^{j}}\,2^m\,t^2}\,(g*\check{\phi}_{2,j+m}^l)\Big(x+\sdfrac{t}{2^j}\Big)
	\end{equation}

Thus, for $l\sim 2^j$ and $j\geq m$, we deduce that \eqref{s2jm2} can be rewritten as
\begin{equation}\label{h2S}
		H^{2,S}_{j,m,l}(f,g)(x)\approx\int_{\R}  (f*\check{\phi}_{1,j+m}^l)\Big(x-\frac{t}{2^j}\Big)
		(g*\check{\phi}_{2,j+m}^l)\Big(x+\frac{t}{2^j}\Big)\,e^{i\,\frac{l}{2^{j}}\,2^m\,t^2}\,\rho(t)\,dt\,.
\end{equation}

The above will provide us with the key insight into our proof, since from \cite{BBLV21}, we know that \eqref{bcoscdecmkdefkey} is false for $\a=2$. Indeed, taking first $\l(x)=l\, 2^j\, 2^m$, we set
\begin{equation}\label{Tmj}
T_{m,j}^{l,2}:= BC^{2}_{\frac{m}{2},-j}\,,
\end{equation}	
and notice by comparing \eqref{h2S} with \eqref{bcoscdecmk}, that, in the regime $j\geq m$, we have 	
\begin{equation}\label{Tmjeq}
H^{2,S}_{j,m,l}(f,g)\approx T_{m,j}^{l,2}(f*\check{\phi}_{1,j+m}^l,g*\check{\phi}_{2,j+m}^l)\,.
\end{equation}
		
Once at this point, let us \emph{assume by contradiction} that Proposition \ref{L2mdecay} holds for $\a=2$. Then, based on \eqref{Tmjeq}, there exists some $\d>0$ such that
\begin{equation}\label{Tmjeq1}
\|T_{m,j}^{l,2}(f*\check{\phi}_{1,j+m}^l,g*\check{\phi}_{2,j+m}^l)\|_1\lesssim 2^{-\d m}\,\|f\|_2\,\|g\|_2
\end{equation}	
uniformly in $m\in\N$, $j\geq m$, $l\sim 2^j$ and $f,\,g\in L^2(\R)$.	
	
For simplicity we take now $l=2^{j}$, and, for arbitrary $n>0$, let\footnote{Basic Fourier analysis provides the existence of $c\in\C$ such that
$\hat{e}_n(\xi)=\frac{c}{\sqrt{2^{n-1}}}\, e^{i\,\pi^2\frac{\xi^2}{2^{n-1}}}\,.$}
\begin{equation}\label{ek}
e_n(x):=e^{-i 2^{n-1} x^2}\,.
\end{equation}	

If \eqref{Tmjeq1} holds then, for $j\geq m$, the following should also hold:
\begin{equation}\label{Tmjtrue}
\|T_{m,j}^{2^j,2}((f\,e_{m+2j})*\check{\phi}_{1,j+m}^{2^j},(g\,e_{m+2j})*\check{\phi}_{2,j+m}^{2^j})\|_{L^1[0,2^{-j}]}\lesssim 2^{-\d m}\,\|f\|_2\,\|g\|_2\:.
\end{equation}	
Now for
\begin{equation}\label{xt}
x\in [0,2^{-j}]\:\:\textrm{and}\:\:t\in \textrm{supp}\,\rho\:\:(\textrm{hence}\:\:|t|\lesssim 1)
\end{equation}
we analyze the term:	
$$\big((f\,e_{m+2j})*\check{\phi}_{1,j+m}^{2^j}\big)\Big(x-\sdfrac{t}{2^j}\Big)= $$
$$e_{m+2j}\Big(x-\sdfrac{t}{2^j}\Big)
\int_{\R} f\Big(x-\frac{t}{2^j}-s\Big)\,e^{i 2^{2j+m} s}\,e^{i 2^{2j+m} \big(x-\frac{t}{2^j}\big) s }\,e_{m+2j}(s)\,2^{j+m}\,
\check{\phi}_{1}(2^{j+m} s)\,ds\,.$$
Using now \eqref{xt} and the fact that due to the time-frequency localization of $\phi_1$ the region of integration for $s$ is essentially reduced to $|s|\lesssim 2^{-j-m}$, one gets first that
\begin{equation}\label{xts}
|2^{m+2j}\,s^2|\lesssim 1\,,
\end{equation}
from which one further deduces
\begin{equation}\label{appr}
	\big((f\,e_{m+2j})*\check{\phi}_{1,j+m}^{2^j}\big)\Big(x-\sdfrac{t}{2^j}\Big)\approx
\end{equation}
$$e_{m+2j}\Big(x-\sdfrac{t}{2^j}\Big)\,
\int_{\R} f\Big(x-\sdfrac{t}{2^j}-s\Big)\,e^{i (2^j\,(x+1)-t)\, 2^{j+m}\,s }\,2^{j+m}\,\check{\nu}_{1}(2^{j+m} s)\,ds\,,$$
where $\nu_1\in C_{0}^{\infty}(\R)$ has the property $\check{\nu}_1(s)=\check{\phi}_{1}(s)$ on $|s|\leq C$ and $\textrm{supp}\,\nu_1\subseteq \{s\in\R\,|\,|s|\leq  2\,C\}$ for some suitable large $C>0$.

In a similar fashion, with the obvious correspondences, one gets
\begin{equation}\label{appr0}
	\big((g\,e_{m+2j})*\check{\phi}_{2,j+m}^{2^j}\big)\Big(x+\sdfrac{t}{2^j}\Big)\approx
\end{equation}
$$ e_{m+2j}\Big(x+\frac{t}{2^j}\Big)\,
\int_{\R} g\Big(x+\frac{t}{2^j}-u\Big)\,e^{i (2^j\,(x+1)+t)\, 2^{j+m}\,u}\,2^{j+m}\,\check{\nu}_{2}(2^{j+m} u)\,du\,.$$

For $x\in [0,2^{-j}]$ set now
\begin{equation}\label{Idef}
I(x):=\Big|T_{m,j}^{2^j,2}((f\,e_{m+2j})*\check{\phi}_{1,j+m}^{2^j},(g\,e_{m+2j})*\check{\phi}_{2,j+m}^{2^j})(x)\Big|\,.
\end{equation}

Combining now  \eqref{appr} and \eqref{appr0}, we deduce
\begin{align*}
	I(x)&\approx\left|\int_{\R}  ((f\,e_{m+2j})*\check{\phi}_{1,j+m}^{2^j})(x-\frac{t}{2^j})\,
	((g\,e_{m+2j})*\check{\phi}_{2,j+m}^{2^j})(x+\frac{t}{2^j})\,e^{i\,2^m\,t^2}\,\rho(t)\,dt\right|\\[1ex]
	&\approx \bigg|\int_{\R} \int_{\R^2} f\Big(x-\frac{t}{2^j}-s\Big)\,g\Big(x+\frac{t}{2^j}-u\Big)\,e^{i (2^j\,(x+1)+t)\, 2^{j+m}\,u}\times\\
	&\hspace{1.5cm}\times  e^{i (2^j\,(x+1)-t)\, 2^{j+m}\,s }\,2^{j+m}\,
	\check{\nu}_{1}(2^{j+m} s)\,\,2^{j+m}\,
	\check{\nu}_{2}(2^{j+m} u)\,ds\,du\,\rho(t)\,dt\bigg|\\[1ex]
	&=\bigg|\int_{\R^2} \hat{f}(\xi)\,\hat{g}(\eta)\int_{\R^2} e^{i\,\xi\,(x-s)}\,e^{i\,\eta\,(x-u)}\,e^{i (2^j(x+1))\, 2^{j+m}\,u}\,e^{i(2^j(x+1))\, 2^{j+m}\,s }\times\\
	&\hspace{1cm}\times 2^{j+m}\,\check{\nu}_{1}(2^{j+m} s)\,2^{j+m}\,\check{\nu}_{2}(2^{j+m} u)\, \hat{\rho}\Big(\sdfrac{\eta-\xi}{2^j}+2^{j+m}(u-s)\Big)\,ds\,du\,d\,\xi\,d\eta\,\bigg|\,.
\end{align*}
Choosing now $\hat{g}=\chi_{[2^{2j+m}+2^{j},\,2^{2j+m}+2^{j+1}]}$ and $\hat{f}=\chi_{[2^{2j+m},\,2^{2j+m}+2^{j}]}$ and making the change of variable $s\,\mapsto\,\frac{s}{2^{j+m}}$ and $u\,\mapsto\,\frac{u}{2^{j+m}}$ we have that the previous expression further satisfies
\begin{align}
	I(x)&\approx\left|\int_{\R^2} \hat{f}(\xi)\,\hat{g}(\eta)\,e^{i\,\xi\,x}\,e^{i\,\eta\,x}\,\nu_1\Big(2^j(x+1)-\sdfrac{\xi}{2^{j+m}}\Big)\,
	\nu_2\Big(2^j(x+1)-\sdfrac{\eta}{2^{j+m}}\Big)\,d\xi\,d\eta\right|\nonumber\\[1ex]
	&\approx  2^{2j}\,\big|\nu_1\big(2^j x\big)\, \nu_2\big(2^j x\big)\Big|\,.	\label{appr01}
\end{align}
Now, from \eqref{Idef} and \eqref{appr01}, we conclude that
$$\|T_{m,j}^{2^j,2}((f\,e_{m+2j})*\check{\phi}_{1,j+m}^{2^j},(g\,e_{m+2j})*\check{\phi}_{2,j+m}^{2^j})\|_{L^1[0,2^{-j}]}\approx 2^{j}\approx\|f\|_{L^2}\,\|g\|_{L^2}\,,$$
thus disproving \eqref{Tmjeq1} and finishing the proof of our claim.
\end{proof}

\begin{remark}\label{Quasi}[\textsf{Generalized modulation quasi-invariance}]  The operator $H_2$ defined by \eqref{bht20} does not satisfy any \emph{global modulation invariance per se}. However, at the \emph{local} level, the interaction between the physical scale $j$ (for large values relative to $m$) and the frequency scale $m$, produces within the structure of the hybrid stationary component \eqref{s2jm} a resonance in the form of a \emph{quadratic modulation quasi-invariance}: that is, asymptotically, $H^{2,S}_{j,m,l}$ approaches---in the sense described by \eqref{Tmj}--\eqref{Tmjeq}---an operator, $BC^{2}$, that does have genuine quadratic modulation invariance.
\end{remark}

\subsection{An alternative proof of the $L^2\times L^2\times L^\infty$ $m$-decaying bound for $\L^{S}_{H,m}$ in the case $\a\in (0,\infty)\setminus\{1,2\}$}

In this section we provide a different approach to the part of estimate \eqref{LSm} in Proposition \ref{L2mdecay} that concerns $\L^{S}_{H,m}$. To this end, following the general line of thought from Section \ref{H2discussion}, we rely on reducing the matters to the behavior of the operator $BC^{\a}$.

Throughout this section we fix $m\in\N$ and assume that $(\a-1)j>m$; as customary by now, for simplicity, we further assume $a=b=1$, $\a>1$ (hence $j\in\N$).

As before, we start from
	\begin{equation}\label{bht20a}
		H_{\a}(f,g)(x):= \int_{\R} f(x-t)\, g(x+t+t^{\a})\,\frac{dt}{t}\:,
	\end{equation}
then isolate its $(j,m)$-stationary component
\begin{equation}\label{saljm}
		H^{\a,S}_{j,m}(f,g):=\int_\R\int_\R \hat{f}(\xi)\hat{g}(\eta)m_{j,m}^{\a,S}(\xi,\eta)e^{i\xi x}e^{i\eta x}d\xi d\eta\,,
\end{equation}
with
\begin{equation}
		m_{j,m}^{\a,S}(\xi,\eta):=\left(\int_\R e^{-i\frac{\xi-\eta}{2^j}  t}e^{i\frac{\eta}{2^{\a j}}t^\a}\rho(t)dt\right)\,
		\phi\Big(\frac{\xi-\eta}{2^{j+m}}\Big)\phi\Big(\frac{\eta}{2^{\a j+m}}\Big)\,.
\end{equation}

Recalling now \eqref{HS}--\eqref{saljm21}, we deduce that
\begin{equation}\label{saljmsum}
H^{\a,S}_{j,m}(f,g)(x)=\sum_{l\sim 2^{(\a-1)j}}H^{\a,l,S}_{j,m}(f,g)(x)\,,
\end{equation}
where, preserving the notation from the previous section, we have
\begin{equation}\label{saljm2-3}
H^{\a,S}_{j,m,l}(f,g)=\int_{\R}  (f*\check{\phi}_{1,j+m}^l)\Big(x-\frac{t}{2^j}\Big)\,(g*\check{\phi}_{2,j+m}^l)\Big(x+\frac{t}{2^j}+\frac{t^\a}{2^{\a j}}\Big)\, \rho(t)\,dt\,,\quad
\end{equation}
which, in the usual fashion, can be further approximated by the expression
\begin{equation}\label{saljm4}
\int_{\R}  (f*\check{\phi}_{1,j+m}^l)\Big(x-\sdfrac{t}{2^j}\Big)\,
		(g*\check{\phi}_{2,j+m}^l)\Big(x+\sdfrac{t}{2^j}\Big)\,e^{i\,\frac{l}{2^{j (\a-1)}}\,2^m\,t^\a}\rho(t)\,dt\,.
\end{equation}
Thus, similarly to \eqref{Tmj}, we remark that
\begin{equation}\label{Tmjeqal}
H^{\a,S}_{j,m,l}(f,g)\approx T_{m,j}^{l,\a}(f*\check{\phi}_{1,j+m}^l,\,g*\check{\phi}_{2,j+m}^l)\,.
\end{equation}

Applying now \eqref{bcoscdecmkdefkey} we get
\begin{equation}\label{Tmjeq1al}
\big\|T_{m,j}^{l,\a}(f*\check{\phi}_{1,j+m}^l,\,g*\check{\phi}_{2,j+m}^l)\big\|_1\lesssim 2^{-\d m}\,\|f*\check{\phi}_{1,j+m}^l\|_2\,\|g*\check{\phi}_{2,j+m}^l\|_2\,,
\end{equation}	
which combined with \eqref{saljmsum}, \eqref{Tmjeqal} and a standard Cauchy-Schwartz gives
\begin{align}
	\|H^{\a,S}_{j,m}(f,g)\|_1&\leq \sum_{l\sim 2^{(\a-1)j}}\|H^{\a,l,S}_{j,m}(f,g)\|_1\nonumber\\
	&\leq \sum_{l\sim 2^{(\a-1)j}}2^{-\d m}\,\|f*\check{\phi}_{1,j+m}^l\|_2\,\|g*\check{\phi}_{2,j+m}^l\|_2\lesssim
	2^{-\d m}\,\|f\|_2\,\|g\|_2\,, \raisetag{.5\baselineskip}\label{comclal}
\end{align}
thus completing the desired alternative proof.

\section{The linear Hilbert transform $\H_{\g}$ and the $\g$-Carleson operator $C_{\g}$ along hybrid curves with no quadratic resonances}\label{Hywithout}

In this section we present an outline of the proof of Theorem \ref{main4}. As well known, see also \cite{Lie19}, the $L^2$  boundedness of $\H_{\g}$ is in fact equivalent to the $L^2$ boundedness of $C_{\g}$, and thus, it is enough to focus on the treatment of the latter. Recalling now the discussion and notation in Section \ref{Unif}, in the context offered by \eqref{C-gamma}, we define for $j\in\Z$ and $m\in\N$ the operator
\begin{equation}\label{C-gammajm}
		C_{j,m}(f)(x):=\left(\int_{\R} f(x-t)\,e^{i\,\g(x,t)}\,\rho_j(t)\,dt\right)\,\phi\left(\frac{\|\g^{NL}(x,2^{-j})\|}{2^m}\right)\,,
\end{equation}
where here $\|\g^{NL}(x,2^{-j})\|:=\sum_{k=1}^d |a_k(x)|\,2^{-j\,\a_k}$ measures the contribution of the nonlinear component of $\g$.

Letting now
\begin{equation}\label{C-gammadec}
	C_{\g}=:\sum_{m\geq 0}C_{m}=:C_0\,+\,\sum_{m\in\N}\sum_{j\in\Z} C_{j,m}\,,
\end{equation}
simple considerations reduce the behavior of the linear dominant component $C_0$ to the behavior of the standard Carleson operator. Thus, using the essential almost orthogonal character of the family of operators $\{C_{j,m}\}_{j\in\Z}$ for each given $m\in\N$, we deduce that Theorem \ref{main4} follows once we are able to prove the following

\begin{theorem}\label{cjmbound} Let $\g(x,t)$ be as in the statement of Theorem \ref{main4}. Then, there exists an absolute $\ep>0$ such that, given $j\in\Z$ and $m\in\N$, the operator $C_{j,m}$ defined by \eqref{C-gammajm} obeys
\begin{equation}\label{cjmdec}
		\|C_{j,m}f\|_{2}\lesssim 2^{-\ep\,m}\,\|f\|_2\,.
\end{equation}
\end{theorem}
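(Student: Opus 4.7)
The plan is to adapt the LGC implementation of Section \ref{LGCplus} (see also \cite{Lie19}) to the variable-coefficient Carleson setting, with the key input being that $\{\alpha_k\}_{k=1}^d \cap \{1,2\} = \emptyset$. First, after rescaling $t \mapsto t/2^j$ in \eqref{C-gammajm}, the localization $\phi(\|\gamma^{NL}(x,2^{-j})\|/2^m)$ selects the regime where the rescaled nonlinear phase has magnitude $\sim 2^m$. A dyadic pigeonhole on the terms $a_k(x) 2^{-j \alpha_k}$ reduces matters---via an almost-orthogonal superposition---to a single dominant nonlinear exponent $\alpha_* \in \{\alpha_k\}_{k=1}^d$ with associated coefficient $\tilde{b}(x) \sim 2^{m+\alpha_* j}$.

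Second, passing to the Fourier side, I would apply stationary phase to express the multiplier as $\mathfrak{m}_{j,m}(x,\xi) \approx 2^{-m/2} e^{i \psi_x(\xi)}$ with $|\partial_\xi^2 \psi_x(\xi)| \approx 2^{-m-2j}$ on its support. This second-derivative bound prescribes the LGC frequency partition at scale $2^{j+m/2}$ (equivalently, spatial scale $2^{-j-m/2}$). Expanding $f$ in an adapted Gabor frame $\{\phi_u^p\}$ and following the algebra of \eqref{CarlPolynestKey}--\eqref{CarlPolynestKeynop}, the operator $C_{j,m}$ is dominated pointwise by
\begin{equation*}
|C_{j,m} f(x)| \lesssim \sum_{l,w \in \Z} \frac{2^{j/2 - m/4}\, |\langle f, \check{\phi}_l^w\rangle|\, \rho(2^j x - 2^{-m/2} l)}{\bigl\lfloor 2^{-j-m/2}\bigl(a_0(x) + \alpha_* \tilde{b}(x)(x - 2^{-j-m/2} l)^{\alpha_* - 1} - w\, 2^{j+m/2}\bigr)\bigr\rfloor^2}\, \phi\Big(\tfrac{\tilde{b}(x)}{2^{m+\alpha_* j}}\Big).
\end{equation*}

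Third, a $TT^*$-type reduction of $\|C_{j,m}\|_{L^2 \to L^2}$, together with the frequency fiber foliation of Step \ref{3p1}, splits the analysis via a sparse-uniform dichotomy: a Gabor index $(l,w)$ is declared heavy when the denominator above is $O(1)$ for a substantial measure of $x$ in the relevant spatial window. The sparse/heavy contribution is controlled directly by Chebyshev and a Cauchy-Schwarz on the Gabor coefficients, giving a decay of order $2^{-(1/4 - \delta)m}$.

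Fourth, and this is where the main difficulty lies, the uniform/light contribution is handled through the cardinality of the time-frequency correlation set $\mathcal{C}_{\epsilon,\delta}$. That cardinality bound reduces in turn to a transversality statement for the family of variable curves $\Gamma_x(t) := a_0(x) + \alpha_* \tilde{b}(x)(x - t)^{\alpha_* - 1}$, with $x$ ranging over a dyadic interval of length $2^{-j}$. Since $\alpha_* \neq 2$, the $t$-derivative $\partial_t \Gamma_x(t) = -\alpha_*(\alpha_* - 1)\tilde{b}(x)(x - t)^{\alpha_* - 2}$ is non-constant in $t$, so for well-separated $x_1, x_2$ the graphs of $\Gamma_{x_1}$ and $\Gamma_{x_2}$ cannot be parallel affine lines that the vertical shift $a_0(x)$ is able to align (compare Case III of Section \ref{compdisc} and Figure \ref{fig3}). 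Mimicking the computations of \eqref{keyrel4}--\eqref{KEY} and invoking Lemma 50 of \cite{Lie19}, this transversality yields $\#\mathcal{C}_{\epsilon,\delta} \lesssim 2^{(1/2 - \mu)m}$ for some $\mu > 1/2 - 2\delta$, and a variational choice of $\delta, \epsilon$ delivers the desired $2^{-\epsilon m}$ decay. The principal obstacle is securing this transversality in the presence of several exponents $\alpha_k$; the preliminary dyadic pigeonhole isolates a single $\alpha_*$ on each piece, and the exclusion $\alpha_k \notin \{1, 2\}$ prevents both the paraproduct degeneracy ($\alpha_k = 1$) and the quadratic resonance ($\alpha_k = 2$) from spoiling the count.
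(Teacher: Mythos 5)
Your overall framework (phase linearization at scale $2^{-j-m/2}$, adapted Gabor frame, sparse--uniform dichotomy, correlation-set counting) is aligned with the paper's strategy, but there is a genuine gap in the very first structural move: the dyadic pigeonhole that you claim ``reduces matters to a single dominant nonlinear exponent $\a_*$.'' Pigeonholing which term $|a_k(x)|\,2^{-j\a_k}$ is largest localizes $x$, but it does \emph{not} simplify the phase: on the piece where, say, $\a_*=\a_1$ dominates, the other terms $a_k(x)\,(x-t)^{\a_k}$, $k\ge 2$, may still have size comparable to $2^m$ (e.g.\ $|a_2(x)|2^{-j\a_2}\approx\frac12|a_1(x)|2^{-j\a_1}$) and therefore contribute on an equal footing to the linearized phase and to the time-frequency correlation set. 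The pointwise bound you write down, which involves only $a_0(x)$ and the single $\tilde b(x)$, is thus not justified; the correct analogue (cf.\ \eqref{CarlPolynestKeyS}--\eqref{CarlPolynestKeyS1}) carries the full sum $\sum_{k=0}^d \a_k\big(x-\tfrac{l}{2^{j+m/2}}\big)^{\a_k-1}\tilde a_k(x)$ in the denominator. Consequently, your Step 4 invokes transversality for a one-parameter family $\Gamma_x(t)=a_0(x)+\a_*\tilde b(x)(x-t)^{\a_*-1}$, which is precisely the $d=1$ situation; it says nothing about the mutual cancellations that can occur among several coefficients $\tilde a_k(x)$ along the family, and this is exactly the hard case.

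The paper's proof avoids the pigeonhole and instead keeps the full phase $q(x,y)=\tilde a_0(x)+\sum_{k=1}^d\a_k\,y^{\a_k-1}\tilde a_k(x)$ throughout, reducing to the key estimate \eqref{CarlPolynestKeyS3} on $\Lambda(m,q,\mathbf{c})$. The correlation-set bound \eqref{keyrelHere} is then established by an \emph{iterative coefficient-extraction algorithm} (Lemmas 50 and 51 of \cite{Lie19}): starting from \eqref{diff}, one Taylor-expands in $x$ around $x_0$, identifies the coefficients of the distinct ``degrees'' $(x-l)^{\a_k-2}$ and of the degree-zero term $\tilde a_0(x)-\tilde a_0(x_0)$, and peels off bounds on $|\tilde a_1|,|\tilde a_2|,\dots,|\tilde a_d|$ on progressively smaller sets $X_1\supset X_2\supset\cdots\supset X_d$, ultimately contradicting the normalization $\|q^{NL}(x)\|\approx 1$. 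The hypothesis $\a_k\notin\{1,2\}$ is used precisely here: if some $\a_{k_0}=2$, the corresponding term $(x-l)^{\a_{k_0}-2}\equiv 1$ merges with the zero-degree term, yielding only \eqref{0degreeMod} and blocking the extraction of $\tilde a_0$ separately from $\tilde a_{k_0}$ --- this is Observation \ref{quadratict}. You correctly identify $\a_k\neq 2$ as the crucial non-degeneracy assumption, but your heuristic appeal to Figure \ref{fig3} is not a substitute for this inductive decoupling, which is the essential new ingredient required to handle $d>1$ monomials. To repair the argument you should drop the single-exponent reduction, reinstate the full $q$, and carry out the coefficient-extraction induction; the sparse/light dichotomy and the Chebyshev/Cauchy--Schwarz bookkeeping in your Steps 2--3 then go through as you describe.
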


\begin{proof}

In what follows, for the simplicity of the exposition, we assume without loss of generality that
\begin{equation}\label{tsrict}
 \a_0=1\quad\textrm{and}\quad \{\a_k\}_{k=1}^d\subset(0,\infty)\setminus\{1,2\}\:\:\textrm{strictly increasing}\,.
\end{equation}

Then, after a change of variable $t\,\longrightarrow\,x-t$ and up to a sign difference that for notational convenience we choose to ignore it, we have
\begin{equation}\label{C-gammajmc}
		C_{j,m}(f)(x)=\left(\int_{\R} f(t)\,e^{i\,\g(x,x-t)}\,\rho_j(x-t)\,dt\right)\,\phi\left(\frac{\|\g^{NL}(x,2^{-j})\|}{2^m}\right)\,.
\end{equation}
Once here, we follow the steps presented in Section \ref{Unif} and implement the (Rank I) LGC-methodology as follows:

$\newline$
\noindent\textsf{I. Phase linearization}
$\newline$

We decompose the time-frequency plane in boxes of area one of time-frequency size  $2^{-j-\frac{m}{2}}\times 2^{j+\frac{m}{2}}$ dictated by the desire to have a linearized behavior of the phase of the kernel/multiplier within each of these boxes. This is achieved via the space discretization (with the corresponding frequency discretization derived from the Heisenberg localization principle)
\begin{equation}\label{spatoial}
\rho_j(x-t)=2^j\,\rho(2^j(x-t))\approx 2^j\sum_{p\sim 2^{\frac{m}{2}}} \rho(2^{j+\frac{m}{2}}(x-t)-p)\,.
\end{equation}
Now on the support of $\rho(2^{j+\frac{m}{2}}(x-t)-p)$ and under the assumption $\phi\left(\frac{\|\g^{NL}(x,2^{-j})\|}{2^m}\right)\not=0$ we have indeed the linearized behavior:
\begin{equation}\label{SpphaseS}
\g(x,x-t)=\sum_{k=0}^d a_k(x)\,\a_k\,\left(\frac{p}{2^{j+\frac{m}{2}}}\right)^{\a_k-1}\,\left(x-t-\frac{p}{2^{j+\frac{m}{2}}}\right)\,
+\,\g_{j,m,p}(x)\,+\,O(1)\,,
\end{equation}
where here $\g_{j,m,p}(x)$ stands for an expression depending only on $\g$, $j,\,m,\, p$ and $x$.

$\newline$
\noindent\textsf{II. Adapted Gabor frame discretization}
$\newline$

Guided by the $2^{-j-\frac{m}{2}}$-discretization of the space, we apply an adapted linear wave-packet decomposition of the input function
\begin{equation}\label{GaborS}
f(t)=\sum_{l,w\in\Z} \langle f,  \check{\phi}_{l}^w\rangle\, \check{\phi}_{l}^w(t)\,,
\end{equation}
where here $\check{\phi}_{l}^w(t):=2^{\frac{j}{2}+\frac{m}{4}}\,\phi(2^{j+\frac{m}{2}}\,t\,-\,l)\,	e^{i\,2^{j+\frac{m}{2}}\,w\,t}$.

Putting together \eqref{C-gammajm}--\eqref{GaborS} we deduce
\begin{equation}\label{CarlPolynestS}
\end{equation}
$$|C_{j,m} f(x)|\lesssim 2^{\frac{3j}{2}+\frac{m}{4}}\,\sum_{{l,w\in\Z}\atop{p\sim 2^{\frac{m}{2}}}} | \langle f,  \check{\phi}_{l}^w\rangle|\,\phi\left(\frac{\|\g^{NL}(x,2^{-j})\|}{2^m}\right)\times$$
$$\left|\int_{\R} \check{\phi}(2^{j+\frac{m}{2}}\,t-l)\,e^{i\,2^{j+\frac{m}{2}}\,w\,t}\,e^{i\,\sum_{k=0}^d \a_k\,\left(\frac{p}{2^{j+\frac{m}{2}}}\right)^{\a_k-1}\,a_k(x)\,(x-t)}\,
  \rho(2^{j+\frac{m}{2}}(x-t)-p)\,dt\right|\,.$$
Finally, after a change of variable and integration by parts, we obtain the expression:

\begin{equation}\label{CarlPolynestKeyS}
  |C_{j,m} f(x)|\lesssim \sum_{{l,w\in\Z}\atop{p\sim 2^{\frac{m}{2}}}} \frac{2^{\frac{j}{2}-\frac{m}{4}}\,|\langle f,  \check{\phi}_{l}^w\rangle|\,\rho(2^{j+\frac{m}{2}}x-l-p)}
  {\left\lfloor\frac{\sum_{k=0}^d \a_k\,\left(\frac{p}{2^{j+\frac{m}{2}}}\right)^{\a_k-1}\,a_k(x)\,-\,w\,2^{j+\frac{m}{2}}}{2^{j+\frac{m}{2}}}\right\rfloor^2}\,
  \phi\left(\frac{\|\g^{NL}(x,2^{-j})\|}{2^m}\right)\:.
\end{equation}

Once at this point we massage a bit the RHS of \eqref{CarlPolynestKeyS} in order to put it in a form that is very close to the general setting appearing in \cite{Lie19}. Indeed, we notice that the RHS in \eqref{CarlPolynestKeyS} can be rewritten as

\begin{equation}\label{CarlPolynestKeyS1}
 \sum_{l,w\in\Z} \frac{2^{\frac{j}{2}-\frac{m}{4}}\,|\langle f,  \check{\phi}_{l}^w\rangle|\,\rho(2^{j}x-\frac{l}{2^{\frac{m}{2}}})}
  {\left\lfloor 2^{\frac{m}{2}}\,\left(\sum_{k=0}^d \a_k\,\left(2^j\,x-\frac{l}{2^{\frac{m}{2}}}\right)^{\a_k-1}\,\frac{a_k(x)}{2^{\a_k j\,+m}}\,-\,\frac{w}{2^{\frac{m}{2}}}\right)\right\rfloor^2}\,
  \phi\left(\frac{\|\g^{NL}(x,2^{-j})\|}{2^m}\right)\:.
\end{equation}
Taking now the square in \eqref{CarlPolynestKeyS1}, integrating in the $x$ variable, and, making the change of variable $x\,\rightarrow\,2^{-j}\,x$, we obtain the expression
\begin{equation}\label{CarlPolynestKeyS2}
\frac{1}{2^{\frac{m}{2}}}\,\int_{\R} \left(\sum_{l,w\in\Z} \frac{|\langle f,  \check{\phi}_{l}^w\rangle|\,\rho(x-\frac{l}{2^{\frac{m}{2}}})\,\phi\left(\frac{\|\g^{NL}(2^{-j}\,x,2^{-j})\|}{2^m}\right)}
{\left\lfloor 2^{\frac{m}{2}}\,\left(\sum_{k=0}^d \a_k\,\left(x-\frac{l}{2^{\frac{m}{2}}}\right)^{\a_k-1}\,\frac{a_k(2^{-j}\,x)}{2^{\a_k j\,+m}}\,-\,\frac{w}{2^{\frac{m}{2}}}\right)\right\rfloor^2}\right)^2\,dx\:.
\end{equation}
Since $j\in\Z$ and $m\in\N$ are fixed throughout our present reasonings we take $\tilde{a}_k(x):=\frac{a_k(2^{-j}\,x)}{2^{\a_k j\,+m}}$ and notice that $2^{-m}\,\|\g^{NL}(2^{-j}x,2^{-j})\|=\sum_{k=1}^d |\tilde{a}_k(x)|$.

Let now $\r_m$ represent the $2^{-\frac{m}{2}}$ equipartition of $[1,2]$, that is  $\r_m=\{\frac{u}{2^{\frac{m}{2}}}\}_{u=2^{\frac{m}{2}}}^{2^{\frac{m}{2}+1}}$ and $\textbf{c}=\{c_l^w\}_{l,w}$ with $c_l^{w}:=\langle f,  \check{\phi}_{l}^w\rangle$. Further on set
\begin{equation}\label{defq}
q(x,\,y):=\sum_{k=0}^d \a_k\,y^{\a_k-1}\,\tilde{a}_k(x)\qquad\textrm{and}\qquad \|q^{NL}(x)\|:=\sum_{k=1}^d |\tilde{a}_k(x)|\,.
\end{equation}

Given the form of \eqref{CarlPolynestKeyS2}, without loss of generality, we may assume from now on
\begin{equation}\label{CarlPolynestKeyS4}
\|q^{NL}(x)\|\approx 1\qquad\forall\:x\in \left[-\frac{1}{2},\,\frac{1}{2}\right]\:.
\end{equation}

Then, via some standard almost orthogonality arguments, we deduce that for proving \eqref{cjmdec} is enough to show that

\begin{equation}\label{CarlPolynestKeyS3}
\L(m, q, \textbf{c}):=\frac{1}{2^{\frac{m}{2}}}\,\int_{-\frac{1}{2}}^{\frac{1}{2}} \left(\sum_{l,w\in\r_m} \frac{|c_l^w|\,\phi(\|q^{NL}(x)\|)}
{\left\lfloor 2^{\frac{m}{2}}\,\left(q(x,\,x-l)\,-\,w\right)\right\rfloor^2}\right)^2\,dx\lesssim 2^{-2\,\ep\,m}\,\sum_{l,w\in\r_m}\,|c_l^w|^2\:.
\end{equation}

We now notice that \eqref{CarlPolynestKeyS3} represents the analogue of the estimate
\begin{equation}\label{previous}
\L(m, \tilde{q}, \textbf{c})\lesssim 2^{-2\ep m}\,\sum_{l,w\in\r_m} |c_l^w|^2,
\end{equation}
which is the main ingredient in the proof of Theorem 36---see Section 6 in \cite{Lie19}.

Once at this point, we briefly stop in order to point the following contrast:
\begin{itemize}
\item in the original setting of \cite{Lie19}, within \eqref{CarlPolynestKeyS3}--\eqref{previous} one only allows expressions of the form
    $$\tilde{q}(x,y)=\sum_{k=1}^d b_k(x)\,y^{\b_k}\quad\textrm{with}\quad \b_1<\ldots<\b_d\quad\textrm{and}\quad \{\b_k\}_{k=1}^d\subset\R\setminus\{0\}\,;$$

\item in the current setting, we allow in \eqref{previous} expressions that include linear monomials within the phase of multiplier, and hence
    $$q(x,\,y)=\tilde{a}_0(x)\,+\,\sum_{k=1}^d \a_k\,y^{\a_k-1}\,\tilde{a}_k(x)\,.$$
\end{itemize}

$\newline$
\noindent\textsf{III. Cancelation via time-frequency correlations}
$\newline$

Following the ideas in \cite{Lie19}--see the proof of Lemma 38 therein--for a suitable $\d=\d(\ep)>0$ chosen later and for each $l,w\in\r_m$, we define the set
\begin{equation}\label{Aset}
\A_{q,\d}(l,w):=\left\{x\in \left[-\frac{1}{2},\,\frac{1}{2}\right]\,\Big|\,|q(x,x-l)-w|\leq 2^{-(\frac{1}{2}-2\ep)\,m}\right\}\,,
\end{equation}
and further introduce
\begin{itemize}

\item the set of \emph{light} pairs as
\begin{equation}\label{light}
\mathcal{L}_{q,\d}:=\{(l,w)\in\r_m^2\,|\,|\A_{q,\d}(l,w)|\leq 2^{- 2\,\d\,m}\}\,;
\end{equation}

\item  the set of \emph{heavy} pairs as
\begin{equation}\label{heavy}
\mathcal{H}_{q,\d}:= \r_m^2\setminus \mathcal{L}_{q,\d}\,.
\end{equation}
\end{itemize}
In the same spirit with the key relation \eqref{keyrel} part of Step III.3.2. in Section \ref{LGCplus}, the crux of our present argument relies on obtaining a \emph{good control over the size of the time-frequency correlation set} represented here by the set of heavy pairs, \emph{i.e.}
\beq\label{keyrelHere}
\boxed{\exists\:\mu=\mu(\d)>0\quad\textrm{s.t.}\quad\#\mathcal{H}_{q,\d}\lesssim 2^{(\frac{1}{2}-\mu)\,m}\:.}
\eeq
If we assume for the moment that \eqref{keyrelHere} holds, then one can apply line by line the same reasonings as in the proof Lemma 38 in \cite{Lie19} addressing \eqref{previous} above in order to conclude that \eqref{CarlPolynestKeyS3} holds.

Thus, the only remaining task for us here is to show the validity of \eqref{keyrelHere}. Its proof follows in fact very closely the argumentation offered in the proof of Proposition 49 in \cite{Lie19}. Below we provide an outline of this:
\begin{itemize}
\item We assume by contradiction that
\beq\label{contrad}
\exists\:\nu>0\:\textrm{suitable small s.t.}\quad\#\mathcal{H}_{q,\d}\geq 2^{(\frac{1}{2}-\nu)\,m}\,.
\eeq
\item Choosing $\nu$ such that $\frac{1}{2}-\nu>2^{3d+10}\,\d$ we can find $\mathcal{H}_{q,\d}^{L,S}$ a \emph{maximal separated subset} of  $\{l\,|\,(l,w)\in \mathcal{H}_{q,\d}\}$ having the properties
\beq\label{distribcontr}
\#\mathcal{H}_{q,\d}^{L,S}=2^{2^{3d+5}\,\d\,m}\:\:\:\&\:\:\:\|\mathcal{H}_{q,\d}^{L,S}\|:=
\min_{{l_1,\,l_2\in \mathcal{H}_{q,\d}^{L,S}}\atop{l_1\not=l_2}}|l_1-l_2|\geq 2^{-(2^{3d+5}\,\d\,+\,\nu)\,m}\,.
\eeq
\item Introducing now $\A_l:=\A_{q,\d}(l,w_l)$ where $w_l$ is defined as $|\A_{q,\d}(l,w_l)|=\max_{w:\,(l,w)\in \mathcal{H}_{q,\d}} |\A_{q,\d}(l,w)|$ we apply Lemma 50 in \cite{Lie19} for $n=2^{2^{d+1}}$, $I_l=\A_l$, $M=2^{2\d m}$ and $N=\#\mathcal{H}_{q,\d}^{L,S}=2^{2^{3d+5}\,\d\,m}$ in order to deduce that there exist $\bar{\mathcal{H}}_q\subseteq \mathcal{H}_{q,\d}^{L,S}$ with $\bar{\mathcal{H}}_q=n$ and a set $X=\bigcap_{l\in \bar{\mathcal{H}}_q} A_{l}$ such that
\beq\label{KEY1}
|q(x,x-l)-w_l|\leq 2^{-(\frac{1}{2}-2\d)\,m}\:\:\textrm{for any}\:\:l\in \bar{\mathcal{H}}_q\:\:\textrm{and}\:\:x\in X\,,
\eeq
 and
 \beq\label{KEY2}
|X|\geq 2^{-2^{2d+3}\,\d\,m}\,.
\eeq
\item Applying Lemma 51 in \cite{Lie19} for $q$ as defined by \eqref{defq} above we get\footnote{Here is important to say that Lemma 51 holds for any $\tilde{q}(x,y)=\sum_{k=0}^d b_k(x)\,y^{\b_k}$ with $\{\b_k\}_{k=0}^d\subset\R$ thus covering the situation $\b_0=1$ represented by our $q$ here.}
\beq\label{coefcontrol}
\max_{0\leq k\leq d} |\a_k\,\tilde{a}_k(x)|\leq \frac{4^{2d\max_{k}|\a_k-1|}}{\|\bar{\mathcal{H}}_q\|^d\,\inf_{k}\,\prod_{0\leq i\not=k\leq d}\,|\a_k-\a_i|}\,.
\eeq
\item from \eqref{KEY1}, for any $x,\,x_0\in X$ and any $l\in \bar{\mathcal{H}}_q$, we have that
\beq\label{diff}
|\sum_{k=0}^d \a_k\,(x-l)^{\a_k-1}\,\tilde{a}_k(x)\,-\,\sum_{k=0}^d \a_k\,(x_0-l)^{\a_k-1}\,\tilde{a}_k(x_0)|\leq 2^{-(\frac{1}{2}-2\d)\,m}\:\,.
\eeq

\item Assuming for the moment that $l\in \bar{\mathcal{H}}_q$ is fixed, for $1\leq k\leq d$ we apply Taylor's formula around the point $x$ and deduce
 \beq\label{Tay}
(x_0-l)^{\a_k-1}= (x-l)^{\a_k-1}\,+(\a_k-1)\,(x-l)^{\a_k-2}\,+\,O(|x-x_0|^2)\,,
\eeq
 where in the above--exploiting \eqref{CarlPolynestKeyS3}--we make use of the fact that $|x-l|\approx 1$ for any $x\in X$ and $l\in \bar{\mathcal{H}}_q$.
\item Putting together \eqref{coefcontrol}--\eqref{Tay} we have that for any $l\in \bar{\mathcal{H}}_q$ and $x,\,x_0\in X$
 \beq\label{Tay}
 \eeq
$$\left|\left(\sum_{k=1}^d \left(\a_k\,(x-l)^{\a_k-1}\,(\tilde{a}_k(x)-\tilde{a}_k(x_0))\,-\,(\a_k-1)\,(x_0-x)\,(x-l)^{\a_k-2}\,\tilde{a}_k(x_0)\right)\right)\right.$$
$$+\,\Big(\tilde{a}_0(x)-\tilde{a}_0(x_0)\Big)\Big|\leq 2^{-(\frac{1}{2}-2\d)\,m}\,+\,O\left(\frac{|x-x_0|^2}{\|\bar{\mathcal{H}}_q\|^d}\right)\,.$$

\item At this point we use of the key hypothesis \eqref{tsrict} in order to deduce that
\begin{itemize}
\item the coefficient of the ``lowest nonzero degree" term $(x-l)^{\a_1-2}$ is precisely $-\,(\a_1-1)\,(x_0-x)\,\tilde{a}_1(x_0)$;

\item the coefficient of the zero degree term $(x-l)^{0}$ is precisely $\tilde{a}_0(x)-\tilde{a}_0(x_0)$.
\end{itemize}

As a consequence, applying Lemma 51 in \cite{Lie19} we get
\beq\label{a1}
|\tilde{a}_1(x)|,\,|\tilde{a}_1(x_0)|\lesssim_{d,\vec{\a}} \frac{2^{-(\frac{1}{2}-2\d)\,m}}{|x-x_0|\,\|\bar{\mathcal{H}}_q\|^{2d}}\,+\,
  \,O\left(\frac{|x-x_0|}{\|\bar{\mathcal{H}}_q\|^{3d}}\right)\,,
\eeq
and
\beq\label{0degree}
|\tilde{a}_0(x)-\tilde{a}_0(x_0)|\lesssim_{d,\vec{\a}} \frac{2^{-(\frac{1}{2}-2\d)\,m}}{\|\bar{\mathcal{H}}_q\|^{2d}}\,+\,
  \,O\left(\frac{|x-x_0|^2}{\|\bar{\mathcal{H}}_q\|^{3d}}\right)\,.
\eeq

\item Set $\I$ the collection of dyadic intervals having the length $2^{-\frac{m}{4}}$ that partition the interval $[-\frac{1}{2},\,\frac{1}{2}]$. Let now $\I_{1}$ be the collection of intervals $J\in \I_{1}$ for which
\beq\label{a2}
|X\cap J|\geq 2^{-5}\,2^{-2^{2d+3}\,\d\,m}\,|J|\,,
\eeq
and set
\beq\label{a3}
X_{1}:=\bigcup_{J\in \I_{1}} X\cap J\;.
\eeq
From \eqref{KEY2}, \eqref{a2} and \eqref{a3} one deduces immediately that
\beq\label{a4}
|X_{1}|\geq \frac{|X|}{2}\geq 2^{-2}\,2^{-2^{2d+3}\,\d\,m}\;.
\eeq
\item Applying now \eqref{a1} within each set $X\cap J$ with $J\in \I_{1}$ we notice for any $x\in X\cap J$ we can choose $x_0\in X\cap J$ such that $|x-x_0|\geq 2^{-5}\,2^{-2^{2d+3}\,\d\,m}\,|J|$ and hence
\beq\label{a5}
|a_1(x)|\lesssim_{d,\vec{\a}}   \frac{2^{-(\frac{1}{4}-2\d)\,m}\,2^{2^{2d+3}\,\d\,m}}
{\|\bar{\H}_{q}\|^{3d}}\:\;\:\:\textrm{for any}\:x\in X_{1}\:.
\eeq
\item Returning to \eqref{diff} and making use of \eqref{a5} we deduce that for any $l\in\bar{\H}_{q}$ and $x,\,x_0 \in X_1$ one has
\beq\label{diff1}
\eeq
$$\left|\left(\tilde{a}_0(x)-\tilde{a}_0(x_0)\right)\,+\,\left(\sum_{k=2}^d \a_k\,(x-l)^{\a_k-1}\,\tilde{a}_k(x)\,-\,\sum_{k=0}^d \a_k\,(x_0-l)^{\a_k-1}\,\tilde{a}_k(x_0)\right)\right|$$
$$\lesssim_{d,\vec{\a}} \frac{2^{-(\frac{1}{4}-2\d)\,m}\,2^{2^{2d+3}\,\d\,m}}
{\|\bar{\H}_{q}\|^{3d}}\,.$$

\item One can now repeat the above algorithm to deduce an upper bound for the size of $a_2(x)$ where here $x\in X_2$ with $X_2\subset X_1$ such that $|X_2|\geq \frac{1}{2}\,|X_1|$. Iterating this argument $d$ times we conclude that for any $1\leq k\leq d$ and $x\in X_d$ with $X_d\subset X$ and $|X_d|\geq \frac{1}{2^{d+1}}\,|X|$ the following holds:
\beq\label{a6}
|a_k(x)|\lesssim_{d,\vec{\a}}   \frac{2^{-(\frac{1}{2^{k+1}}-2\d)\,m}\,2^{2^{2d+3}\,k\,\d\,m}}{\|\bar{\H}_{q}\|^{3d k}}\:.
\eeq
\item Since from \eqref{distribcontr} we deduce that $\|\bar{\H}_{q}\|\geq \|\mathcal{H}_{q,\d}^{L,S}\|\geq 2^{-(2^{3d+5}\,\d\,+\,\nu)\,m}$ by choosing $\d,\,\nu>0$ small enough such that
\beq\label{a8}
\frac{1}{2^{d+1}}-2\d-2^{2d+3}\,d\,\d-3d^2(2^{3d+5}\,\d\,+\,\nu)>0\,,
\eeq
we deduce that for any $x\in X_d$ one has
\beq\label{a9}
\|q^{NL}(x)\|<<1\,,
\eeq
which contradicts the assumption \eqref{CarlPolynestKeyS4} thus concluding the proof of \eqref{keyrelHere} and hence of our Theorem \ref{cjmbound}.
\end{itemize}
\end{proof}

\begin{remark}
It is not hard to see now that by combining the results in \cite{Car66}, \cite{hu67} with the proofs of the Main Theorem in \cite{Lie19} and Theorem \ref{cjmbound} above, one immediately gets that $C_{\g}$ is $L^p$ to $L^p$ bounded for any $1<p<\infty$.
\end{remark}

\begin{observation}[\textsf{Break of the LGC-methodology in the presence of quadratic resonance}]\label{quadratict} Notice that if requirement \eqref{tsrict} is modified such that one allows the existence of some $k_0\in\{1,\ldots, d\}$ such that $\a_{k_0}=2$ then, instead of \eqref{a1} and \eqref{0degree}, relation \eqref{Tay} gives us just
\beq\label{0degreeMod}
\Big|\tilde{a}_0(x)\,-\,\tilde{a}_0(x_0)\,+\,(x-x_0)\,\tilde{a}_{k_0}(x_0)\Big|\lesssim_{d,\vec{\a}} \frac{2^{-(\frac{1}{2}-2\d)\,m}}{\|\bar{\mathcal{H}}_q\|^{2d}}\,+\,
  \,O\left(\frac{|x-x_0|^2}{\|\bar{\mathcal{H}}_q\|^{3d}}\right)\,.
\eeq
Relation \eqref{0degreeMod} reveals an interdependence between the behaviors of $\tilde{a}_0$ and $\tilde{a}_{k_0}$ that prevents us from refuting \eqref{contrad} via \eqref{a9}. Reinforcing the geometric argument described in Section \ref{Unif}, this explains why in the presence of quadratic resonances the analogue of Theorem \ref{cjmbound} is no longer true.
\end{observation}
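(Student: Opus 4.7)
The plan is to retrace the contradiction argument at the end of the proof of Theorem \ref{cjmbound} under the relaxed hypothesis that $\a_{k_0}=2$ for some $k_0\in\{1,\ldots,d\}$, and to identify precisely where the degeneracy occurs. All steps up to and including \eqref{diff} go through verbatim, since they only rely on the construction of the heavy set, the maximal separated subset $\bar{\mathcal{H}}_q$, and the existence of the large residual set $X$ with $|X|\geq 2^{-2^{2d+3}\d m}$ on which \eqref{KEY1} holds for every $l\in\bar{\mathcal{H}}_q$. The crux is the Taylor expansion \eqref{Tay}: this is where the resonant monomial $(x-l)^{\a_{k_0}-1}=(x-l)$ becomes a \emph{linear} polynomial in the $l$-variable, which then collapses onto the zero-degree ($l$-independent) coefficient after the expansion $(x_0-l)=(x-l)+(x_0-x)$.

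First, I would reorganize the inner sum in \eqref{diff} as a polynomial in $(x-l)$ using the pointwise Taylor formula
\[
(x_0-l)^{\a_k-1}=(x-l)^{\a_k-1}+(\a_k-1)(x_0-x)(x-l)^{\a_k-2}+O\Big(|x-x_0|^2\,|x-l|^{\a_k-3}\Big)
\]
for each $k\in\{1,\ldots,d\}$, keeping track of the fact that $|x-l|\approx 1$. When $\a_k\neq 2$ the exponents $\{\a_k-1,\,\a_k-2\}$ never hit the value $0$, so after summing in $k$ the coefficient of $(x-l)^0$ is still purely $\tilde{a}_0(x)-\tilde{a}_0(x_0)$, which is what allowed the clean conclusion \eqref{0degree}. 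In the presence of a $k_0$ with $\a_{k_0}=2$, however, the second-order term in the Taylor expansion of $(x_0-l)^{\a_{k_0}-1}$ is \emph{constant} in $l$ and equals $(x_0-x)\tilde{a}_{k_0}(x_0)$. Thus the coefficient of the $l$-free monomial $(x-l)^0$ becomes the \emph{sum}
\[
\bigl(\tilde{a}_0(x)-\tilde{a}_0(x_0)\bigr)\,+\,(x-x_0)\,\tilde{a}_{k_0}(x_0).
\]
Applying Lemma 51 of \cite{Lie19} to the polynomial in $(x-l)$ separates the coefficients of the distinct monomials $(x-l)^{\a_k-1}$ and $(x-l)^0$, so it controls this entire zero-degree coefficient, yielding precisely the bound \eqref{0degreeMod}. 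The isolated bound \eqref{a1} for $|\tilde{a}_1|$ remains intact, since the Taylor expansion at exponent $\a_1-1\neq 0$ does not interfere with any other coefficient, but the pristine estimate \eqref{0degree} for $\tilde{a}_0$ alone is destroyed.

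Next I would attempt to push the iteration through and show that it must fail. After the first pass, one has a control on $|\tilde{a}_1|$ on a large subset $X_1\subseteq X$; iterating to higher orders (as in the passage from \eqref{a5} to \eqref{a6}) produces, on a subset $X_d$ of proportional measure, bounds $|\tilde{a}_k(x)|\lesssim 2^{-\eta_k m}$ for every $k\in\{1,\ldots,d\}\setminus\{k_0,\,0\}$ with some $\eta_k>0$, while the available information on the resonant pair $(\tilde{a}_0,\tilde{a}_{k_0})$ remains only the coupled inequality \eqref{0degreeMod}. Because $\|q^{NL}(x)\|=\sum_{k=1}^d|\tilde{a}_k(x)|$ includes $|\tilde{a}_{k_0}(x)|$, one cannot conclude $\|q^{NL}(x)\|\ll 1$: indeed, if $\tilde{a}_{k_0}(x_0)$ is of order $1$ and $\tilde{a}_0(x)-\tilde{a}_0(x_0)$ absorbs the term $(x-x_0)\tilde{a}_{k_0}(x_0)$ up to admissible error, then \eqref{0degreeMod} is satisfied \emph{without} forcing $|\tilde{a}_{k_0}|$ to be small. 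This is exactly the obstruction alluded to in the statement, and it prevents the contradiction with \eqref{CarlPolynestKeyS4} that was used to refute \eqref{contrad}.

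The main obstacle in writing this out rigorously is not the algebra but the conceptual verification that no amount of additional bookkeeping within (Rank I) LGC can separate $\tilde{a}_0$ from $\tilde{a}_{k_0}$. To close the argument it suffices to exhibit an explicit family of coefficients $\{\tilde{a}_k\}$ saturating \eqref{0degreeMod} while keeping $\|q^{NL}\|\approx 1$ on a set of positive measure, say by choosing $\tilde{a}_{k_0}(x)\equiv a$ constant of size $1$ and $\tilde{a}_0(x)=a\,x+\psi(x)$ with $\psi$ having small oscillation, all other $\tilde{a}_k\equiv 0$. With this choice the heavy set $\mathcal{H}_{q,\d}$ is large, \eqref{0degreeMod} holds trivially, and the contradiction step \eqref{a9} fails. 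This construction simultaneously confirms that the obstruction is structural and reinforces the geometric interpretation from Section \ref{Unif}: the vertical shift $\tilde{a}_0$ and the linear-in-$t$ derivative produced by the quadratic monomial are indistinguishable from the point of view of the time-frequency correlation set, so the transversality mechanism underlying \eqref{keyrelHere} collapses exactly at $\a=2$.
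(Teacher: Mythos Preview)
Your proposal is correct and aligns with the paper's reasoning. Note that this is an \emph{Observation}, not a theorem: the paper does not supply a separate proof environment, and the entire explanation is already embedded in the statement itself (namely, that when $\a_{k_0}=2$ the Taylor step \eqref{Tay} collapses the first-order correction of the $k_0$-th monomial onto the zero-degree coefficient, producing the coupled bound \eqref{0degreeMod} in place of the separated bounds \eqref{a1} and \eqref{0degree}). Your write-up is a faithful and more detailed unpacking of exactly this mechanism, and your explicit counterexample with $\tilde{a}_{k_0}\equiv a$ and $\tilde{a}_0$ affine is a welcome addition that the paper only gestures at via the reference to Section~\ref{Unif}.

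One small refinement: your sentence ``The isolated bound \eqref{a1} for $|\tilde{a}_1|$ remains intact'' tacitly assumes $k_0\neq 1$. If $k_0=1$ (so $\a_1=2$), then $\a_1-2=0$ and the ``lowest nonzero degree'' term from the original argument is itself the one that merges with the constant; in that case it is $\tilde{a}_1=\tilde{a}_{k_0}$ that gets entangled with $\tilde{a}_0$, and there is no separate bound \eqref{a1} to preserve. This does not affect your conclusion---the coupling \eqref{0degreeMod} and the failure of \eqref{a9} occur regardless of which index $k_0$ carries the quadratic---but the phrasing should allow for both cases.
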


\section{Epilogue: A curved model for the triangular Hilbert transform revisited}\label{TrHC}

In this last section of our paper, inspired by the philosophy behind the LGC approach---see below---we provide a short, self-contained proof of the following smoothing inequality that is the crux of the result obtained in \cite{CDR20}:
\smallskip

\begin{proposition}\label{MSTrHC}
	Fix $m\in\N$ and define\footnote{Throughout this section, if $f$ is a (locally) integrable function of two variables and $\phi$ is a single variable bump function then $(f*_{1}\check{\phi})(x,y):=\int_\R f(x-s,y)\,\check{\phi}(s)\,ds$ with $\check{\phi}$ the inverse Fourier transform of $\phi$.}
	\begin{equation}\label{defTm}
		T_m(f,g)(x,y):=\int_{\R} (f*_{1}\check{\phi}_m)(x+t,y)\,(g*_2\check{\phi}_m)(x,y+t^2)\,\rho(t)\,dt,
	\end{equation}
	where  $\rho,\,\phi\in C_0^{\infty}(\R)$ with $\textrm{supp}\,\rho,\,\textrm{supp}\,\phi\subseteq\{\xi:\frac{1}{2}<|\xi|<2\}$ and $\phi_m(\xi):=\phi(\frac{\xi}{2^m})$.

Then, there exists  $\ep_0>0$---one may take $\ep_0=\frac{1}{75}$---such that
	\begin{equation}\label{decmtr}
		\|T_m(f,g)\|_{L^1(\R^2)}\lesssim 2^{-\ep_0 m}\,\|f\|_{L^2(\R^2)}\,\|g\|_{L^2(\R^2)}\;.
	\end{equation}
\end{proposition}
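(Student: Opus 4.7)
The plan is to import the three pillars of the (Rank I) LGC method — phase linearization, adapted Gabor-frame discretization, and sparse--uniform analysis of the associated time--frequency correlation set — into the present setting, where the novelty is that the two input functions are frequency-localized in \emph{different} coordinate directions. First, by duality it suffices to estimate the trilinear form $\Lambda_m(f,g,h) := \int T_m(f,g)\,h$ with $\|h\|_\infty \le 1$; standard localization then reduces matters to inputs supported on bounded sets. Passing to the Fourier side, the multiplier of $T_m$ factors through the oscillatory integral $M(\xi,\eta) = \int e^{i\xi t + i\eta t^2}\rho(t)\,dt$ restricted to $|\xi|,|\eta|\sim 2^m$. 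Stationary phase yields $|M|\lesssim 2^{-m/2}$ together with the linearized phase $\Psi(\xi,\eta)=-\xi^2/(4\eta)$, whose Hessian has size $\sim 2^{-m}$; hence $\Psi$ is effectively affine on $(\xi,\eta)$-boxes of size $2^{m/2}\times 2^{m/2}$.

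Next, following Steps I--II of Section \ref{LGCplus}, I would Gabor-discretize $f*_1\check\phi_m$ in the $x$-variable and $g*_2\check\phi_m$ in the $y$-variable, each at the matched scale $2^{m/2}$ in frequency and $2^{-m/2}$ in position. Substituting the resulting wavepacket expansions into $\Lambda_m$ and performing the stationary-phase $t$-integration produces a discrete sum over pairs of Gabor cells, indexed by $(k_1,n_1)$ for $f$ and $(k_2,n_2)$ for $g$, subject to a \emph{time--frequency correlation condition} dictated by the curve $t\mapsto(t,t^2)$: to leading order, the spatial parameters must satisfy a quadratic relation of the form $k_2\approx k_1^2/2^{m/2}$ (up to $O(1)$ slack), together with an analogous coupling between the frequency parameters $n_1,n_2$.

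I would then run the sparse--uniform dichotomy. Fix a small $\d>0$ to be optimized. Call a Gabor cell of $f$ (respectively $g$) \emph{sparse} if its normalized Gabor coefficient exceeds the threshold $2^{-\d m}$; by Chebyshev there are at most $2^{2\d m}$ such cells, and their contribution to $\Lambda_m$ is controlled by a direct Cauchy--Schwarz, giving $O(2^{-(1/4-\d)m})$. The complementary \emph{uniform} piece is handled via an estimate on the time--frequency correlation set $\mathcal{C}_{\ep,\d}$ consisting of uniform cells whose correlation condition is realized on a subset of spatial parameters of cardinality $\gtrsim 2^{(1/2-\ep)m}$. The key non-trivial input is the bound $\#\mathcal{C}_{\ep,\d}\lesssim 2^{(1/2-\mu)m}$ for some $\mu=\mu(\ep)>1/2-2\d$, obtained in the spirit of \eqref{KEY0}--\eqref{KEY}: assuming two distinct pairs realize the same correlation output forces a relation that, after applying the mean value theorem to the auxiliary function built from $\partial_\xi\Psi,\partial_\eta\Psi$, contradicts their imposed separation --- the transversality being supplied by the strict convexity of $t\mapsto t^2$.

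The main obstacle I anticipate is setting up this correlation-set estimate in the present context, the subtlety absent from Section \ref{LGCplus} being that the two Gabor frames live in \emph{orthogonal} spatial directions ($x$ for $f$, $y$ for $g$); the one-variable curvature of the parabola must therefore be translated into an effective transversality between the pairs $(k_1,n_1)$ and $(k_2,n_2)$ through the correct identification of the ``output'' coordinate of the correlation map. Once this estimate is in place, the standard variational balancing of $\d,\ep,\mu$ (analogous to \eqref{CRUX}--\eqref{CRUX1}) yields a positive $\ep_0$; a careful trace of the numerology produces the stated value $\ep_0 = 1/75$.
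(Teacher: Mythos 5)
Your proposal follows the \emph{standard} LGC template (phase linearization at the $2^{m/2}$ scale, adapted Gabor frames, sparse--uniform dichotomy on Gabor coefficients, time--frequency correlation set with transversality), but the paper deliberately departs from this template, and the departure is not cosmetic. The paper passes to the \emph{mixed} time--frequency representation $\F(\xi,y)=\widehat f^{\,1}(\xi,y)\phi_m(\xi)\chi_{[1,2]}(y)$, $\G(x,\eta)=\widehat g^{\,2}(x,\eta)\phi_m(\eta)\chi_{[1,2]}(x)$, and then performs what it explicitly calls a ``zero-order wave-packet discretization'': the frequency variables $\xi,\eta$ are cut into \emph{unit} intervals (not $2^{m/2}$-boxes) and the surviving physical variables $y$ (for $\F$) and $x$ (for $\G$) are cut at scale $2^{-m}$ (not $2^{-m/2}$). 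Correspondingly, the dichotomy is not a two-way sparse/uniform split on Gabor coefficients, but a three-way split on the local $L^2$ masses $f_{j,k}=\int|\F_{j,k}|^2$ into \emph{heavy fibers}, \emph{heavy masses}, and \emph{light masses}; and the heavy-mass and light-mass pieces are estimated by $TT^*$/stationary-phase arguments exploiting the explicit Gaussian-like phase $-\xi^2/(4\eta)$, not by a counting bound on a time-frequency correlation set.

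More importantly, the place where your plan is vague is exactly the place where it must do real work, and I do not think your outline resolves it. In the bilinear Hilbert transform setting of Sections \ref{LGCplus}--\ref{LGCminus}, $f$, $g$, $h$ are all functions of a single variable, the Gabor coefficients $\langle f,\check\phi_u^r\rangle$ are pure numbers, and the correlation condition \eqref{tjm1} couples a finite set of indices. Here, however, $(f*_1\check\phi_m)(\cdot,y)$ must be decomposed for each $y$ and $(g*_2\check\phi_m)(x,\cdot)$ for each $x$: the resulting ``Gabor coefficients'' are \emph{functions of the free variables $y$ and $x$}, and both of these are integrated over in the $L^1$ norm. Your proposed quadratic coupling ``$k_2\approx k_1^2/2^{m/2}$'' is a relation between a first-variable position index of $f$ and a second-variable position index of $g$, but these two positions are tethered to $x+t$ and $y+t^2$ respectively, so the relation is inseparably $(x,y)$-dependent; there is no single correlation set of index pairs to count, and the transversality lemma (\cite{Lie19}, Lemma 50) used in \eqref{keyrel1}--\eqref{KEY} has nothing to bite on. You flag this yourself as ``the main obstacle I anticipate,'' but the sentence ``the one-variable curvature\dots must be translated into an effective transversality\dots through the correct identification of the `output' coordinate of the correlation map'' is a restatement of the obstacle, not a resolution. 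The paper's choice of unit frequency boxes and $2^{-m}$ physical boxes, together with the local-constancy reduction \eqref{decmtrl22} (handled in full generality in Step 4), is precisely the mechanism that replaces the missing correlation-set analysis; without an analogue of that mechanism, your Step--III has no content, and the claim that the numerology ``produces the stated value $\ep_0=1/75$'' is not something your argument establishes.
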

\smallskip

\begin{proof}

Our proof relies on two key ingredients that, at the conceptual level, are shared with the LGC-method:
\begin{itemize}
\item a time-frequency/wave-packet discretization optimizing the interaction between the phase of the multiplier and the input function;

\item a sparse-uniform dichotomy.
\end{itemize}

$\newline$
\noindent\textbf{Step 1.} \textbf{Time-frequency discretization}
$\newline$

\noindent\underline{\textsf{Stage 1.1. Preliminary spatial discretization.}} Based on the observation that the $t$ integrand spans an interval of length $\approx1$, standard almost orthogonality arguments together with Cauchy-Schwarz show that \eqref{decmtr} is an immediate consequence of
\begin{equation}\label{decmtrl2}
	I_m:=\int_{[1,2]^2}|T_m(f,g)(x,y)|\, dx\,dy\lesssim 2^{-\,\ep_0\,m}\,\|f\|_{L^2(\R^2)}\,\|g\|_{L^2(\R^2)}\;,
\end{equation}
and thus we can reduce matters to the situation when essentially\footnote{Here, for expository reasons, we are slightly abusing the reference to a precise support location; in reality, one deals with a ``moral" support at which one adds suitable error terms involving tail-behavior. However, the treatment of the latter terms is straightforward relying on standard arguments.}
\begin{equation}\label{supp}
\textrm{supp}f(\cdot,\cdot),\,\textrm{supp}\,g(\cdot,\cdot)\subset[1,2]^2\,.
\end{equation}

\noindent\underline{\textsf{Stage 1.2. Mixed time-frequency behavior and spatial constancy propagation.}} \emph{The content of this subsection is merely a heuristic, but it provides a key insight for the discretization implemented at the next stage.} Our intuition is guided by the following observations:
\begin{itemize}
\item (I) the presence of the $t-$integral in \eqref{defTm} mixing the second variable information carried by $f(\cdot, \cdot)$ with the first variable information carried by $g(\cdot,\cdot)$, signals the relevance of considering the \emph{mixed space-frequency} behavior of $\widehat{f}^1(\xi\,,y)$ and $\widehat{g}^2(x\,,\eta)$.
\item (II) further inspecting \eqref{defTm}, we notice that $(f*_{1}\check{\phi}_m)(x,y)$ encapsulates a spatial averaging in the first variable at the level of the $2^{-m}$-scale  with a similar behavior for $(g*_2\check{\phi}_m)(x,y)$ in the second variable;
\end{itemize}
As a result, derived from (I), it becomes natural to consider
\begin{equation}\label{not}
	\F(\xi,y):=\widehat{f}^1(\xi\,,y)\,\phi_m(\xi)\,\chi_{[1,2]}(y)\qquad\textrm{and}\qquad
	\G(x,\eta):=\widehat{g}^2(x\,,\eta)\,\phi_m(\eta)\,\chi_{[1,2]}(x)\;.
\end{equation}
Indeed, notice that via \eqref{not}, one can rephrase\footnote{At the level of main term, \textit{i.e.}, up to fast decaying error term.} \eqref{defTm} in the form
\begin{equation}\label{reduct}
	T_m(f,g)(x,y)\equiv T(\F,\G)(x,y)\approx \frac{1}{2^{\frac{m}{2}}}\,\int_{\R^2} \F(\xi,y)\,\G(x,\eta)\,e^{i\,\xi\,x}\,e^{i\,\eta\,y}\,e^{-i\,\frac{\xi^2}{4 \eta}}\,d\xi\,d\eta\:,
\end{equation}
where here we used some standard (non)stationary phase arguments for computing the dominant term of the multiplier: for
$\xi,\eta\in \textrm{supp}\,\phi_m$, the stationary phase method gives (up to error terms)
\begin{equation}\label{statphase}
	\int_{\R} e^{i \xi t}\,e^{i \eta t^2}\rho(t)\,dt\approx \frac{1}{2^{\frac{m}{2}}}\,e^{-i\,\frac{\xi^2}{4 \eta}}\,\rho\Big(\frac{\xi}{2 \eta}\Big)\,.
\end{equation}

Combining now the effects of both (I) and (II) and joining the information obtained from \eqref{defTm} and \eqref{reduct}, we are naturally brought to the following
\bigskip

\noindent \textsf{Heuristic}:  \emph{the $x-y$ integration of $T_m(f,g)(x,y)$ should essentially be sensitive only to the spatial $2^{-m}$-average behavior of $\F(\xi,\cdot)$ and $\G(\cdot,\eta)$}.
\bigskip

The above heuristic, invites us to consider the following preliminary model:
\bigskip

\noindent\textbf{Model Problem/Assumption.} [\textsf{ $2^{-m}$ spatial blurring effect: local constancy}] \emph{Assume in what follows that
the following holds:
\begin{equation}\label{decmtrl22prel}
	\F(\xi,\cdot)\: {\text{ and }}\:\G(\cdot,\eta)\:\textrm{ are \underline{essentially} piecewise constant on }\: \big[\sdfrac{k}{2^m},\,\sdfrac{k+1}{2^m}\big]\quad\forall\:k\sim 2^m\,.
\end{equation}
\indent In effect, we can relax the above, and only require the weaker condition:
$\newline$
for any $\xi,\,\eta\in \textrm{supp}\,\phi_m$, $k\sim 2^m$ and $x,\,y\in \Big[\sdfrac{k}{2^m},\,\sdfrac{k+1}{2^m}\Big]$ the following holds\footnote{This is essentially equivalent to the existence of a spatially local constant majorant.}:}
\begin{equation}\label{decmtrl22}
	|\F(\xi,x)|\lesssim 2^m\,\int_{\frac{k}{2^m}}^{\frac{k+1}{2^m}} |\F(\xi,\tilde{x})|\,d\tilde{x}\: {\text{ and }}\:|\G(y,\eta)|\lesssim 2^m\,\int_{\frac{k}{2^m}}^{\frac{k+1}{2^m}} |\G(\tilde{y},\eta)|\,d\tilde{y}\,.
\end{equation}
\medskip

In what follows, our plan is to provide the proof of our Proposition \ref{MSTrHC} under the assumption\footnote{It is worth saying that we will use this assumption only in Step 3 of our proof.} \eqref{decmtrl22} and save only for the very end--see Step 4--the treatment of the general case, which, as it turns out, can be essentially reduced to the initial model problem.

$\smallskip$

\noindent\underline{\textsf{Stage 1.3. Zero-order wave-packet discretization.}} Relying on the intuition acquired at the second stage, in this third stage we will achieve the time-frequency discretization of our input function:

Firstly, in view of (I), (II) and our assumption \eqref{decmtrl22}, it is natural to split the $x$-spatial location in (dyadic) intervals of size $2^{-m}$. Secondly, guided by the expression $-\,\frac{\xi^2}{4 \eta}$ representing the phase of our multiplier, and by the already settled $x$-spatial grid, we are dividing the $\xi-\eta$ frequency plane in cubes of unit size. \emph{It is worth noticing here that, unlike the LGC-method that focuses on the phase linearization corresponding to first order Taylor approximations of the phase, our present approach highlights the zero-order Taylor approximation of the phase.}

With these being said, for $j,\,k\sim 2^m$ we let now
\begin{equation}\label{Ff}
	\mathcal{F}_{j,k}(\xi,y):=\mathcal{F}(\xi,y)\,\chi_{[\frac{k}{2^m},\,\frac{k+1}{2^m}]}(y)\,\chi_{[j,\,j+1]}(\xi)\,,
\end{equation}
and similarly
\begin{equation}\label{Gg}
	\mathcal{G}_{j,k}(x,\eta):=\mathcal{G}(x,\eta)\,\chi_{[\frac{k}{2^m},\,\frac{k+1}{2^m}]}(x)\,\chi_{[j,\,j+1]}(\eta)\,,
\end{equation}
and conclude our time-frequency discretization process with the final product
\begin{equation}\label{FGdec}
\mathcal{F}(\xi,y)=\sum_{j,k\sim 2^m} \mathcal{F}_{j,k}(\xi,y)\qquad\textrm{and}\qquad	\mathcal{G}(x,\eta)=\sum_{j,k\sim 2^m} \mathcal{G}_{j,k}(x,\eta)\,.
\end{equation}

$\newline$
\noindent\textbf{Step 2.} \textbf{A sparse-uniform dichotomy}
$\newline$

Inspired by the related reasoning within the time-frequency correlation analysis stage of the LGC-method--see Step III.1 in Section \ref{LGCplus}, we design a sparse-uniform dichotomy that, of course, must be properly tailored to our present context. Given the discretization performed in Step 1, we introduce the following quantities:
\begin{equation}\label{fg}
	f_{j,k}:=\int_{\R^2}|\mathcal{F}_{j,k}(\xi,y)|^2\,d\xi\,dy\quad\textrm{ and }\quad g_{j,k}:=\int_{\R^2}|\mathcal{G}_{j,k}(x,\eta)|^2\,dx\,d\eta\,.
\end{equation}
Normalizing now to one the $L^2(\R^2)$ norms of $\F$ and $\G$ we deduce that
\begin{equation}\label{fg1}
	\sum_{j,k\sim 2^m}f_{j,k}\lesssim 1\quad\textrm{and}\quad \sum_{j,k\sim 2^m}g_{j,k}\lesssim 1\,.
\end{equation}

Fix now $\d,\v\in (0,1)$ small to be chosen later with $0<2\v<\d$. In what follows, for each of the functions $\F$ and $\G$, we partition the set of indices
\begin{equation}\label{A}
	\A:=\{(j,k)\,|\,j,k\sim 2^m\}\,,
\end{equation}
in three components corresponding to the amount of information carried by the localized masses $f_{j,k}$ and $g_{j,k}$, respectively. More precisely, we partition
\begin{equation}\label{decA}
	\A:=\A_{\F}^{1}\cup \A_{\F}^{2}\cup \A_{\F}^{3} \,,
\end{equation}
with
\begin{itemize}
	\item the \emph{set of heavy $y$-fibers}
	\begin{equation}\label{A1}
		\A_{\F}^{1}:=\Big\{(j,k)\in\A\,|\,\sum_{l\sim 2^m} f_{l,k}\gtrsim 2^{-m+m\v},\:\:j\sim 2^m\Big\}\,;
	\end{equation}
	\item  the \emph{set of heavy masses}
	\begin{equation}\label{A2}
		\A_{\F}^{2}:=\Big\{(j,k)\in \A\setminus \A_{\F}^{1}\,|\,2^{-m+m\v-m\d}\lesssim f_{j,k}\lesssim 2^{-m+m\v}\Big\}\,.
	\end{equation}
	\item the \emph{set of light masses}
	\begin{equation}\label{A3}
		\A_{\F}^{3}:=\Big\{(j,k)\in \A\setminus \A_{\F}^{1}\,|\,f_{j,k}\lesssim 2^{-m+m\v-m\d}\Big\}\,.
	\end{equation}
\end{itemize}

With the obvious correspondences we also have
\begin{equation}\label{decAG}
	\A:=\A_{\G}^{1}\cup \A_{\G}^{2}\cup \A_{\G}^{3} \,.
\end{equation}
Finally, for $i\in\{1,2,3\}$, we set
\begin{equation}\label{FGi}
	\F^{i}:=\sum_{(j,k)\in \A_{\F}^{i}} \F_{j,k}\qquad\textrm{and}\qquad \G^{i}:=\sum_{(j,k)\in \A_{\G}^{i}} \G_{j,k}\,,
\end{equation}
and notice that
\begin{equation}\label{FGsum}
	\F=\F^{1}+\F^{2}+\F^{3}\qquad\textrm{and}\qquad\G=\G^{1}+\G^{2}+\G^{3}\,.
\end{equation}

Split now $I_m$ accordingly:
\begin{equation}\label{Im}
	I_m\leq  I_m^{1}\,+\,I_m^2\,+\,I_m^3\,,
\end{equation}
with
\begin{itemize}
	\item the \emph{heavy (spatial) fiber component} defined as
	\begin{align}
		I_m^{1}&:= \int|T_m(\F^1,\G)(x,y)|\,dx\,dy\,+\,\int|T_m(\F^2,\G^1)(x,y)|\, dx\,dy\label{I1}\\
		&\hspace{2cm}\,+\,\int|T_m(\F^3,\G^1)(x,y)|\, dx\,dy\,;\nonumber
	\end{align}
	\item the \emph{heavy mass component} given by
	\begin{align}
		I_m^{2}&:= \int|T_m(\F^2,\G^2)(x,y)|\, dx\,dy\,+\,\int|T_m(\F^2,\G^3)(x,y)|\,dx\,dy\label{I2}\\
		&\hspace{2cm}\,\,+\,\int|T_m(\F^3,\G^2)(x,y)| dx\,dy\,;\nonumber
	\end{align}
	\item the \emph{light (uniform) mass component} defined as
	\begin{equation}\label{I3}
		I_m^{3}:=\int|T_m(\F^3,\G^3)(x,y)|\, dx\,dy\,.
	\end{equation}
\end{itemize}

$\newline$
\noindent\textbf{Step 3.} \textbf{Control of the resulting three components}
$\newline$

In this section we will obtain the desired estimates on each of the three terms $I_m^1$, $I_m^2$ and $I_m^3$ by exploiting the properties of the decomposition \eqref{A1}--\eqref{A3} in conjunction with the oscillatory features of the expression in \eqref{reduct}.
$\medskip$

\noindent\underline{\textsf{Stage 3.1. Treatment of the heavy fiber component.}} The key fact employed in the treatment of $I_m^1$ will be the smallness of the physical support of $\F^1$ and $\G^1$.

Indeed, we first notice that due to symmetry it is enough to only treat the first term in \eqref{I1}, \textit{i.e.}, wlog we may assume, abusing the notation, that
\begin{equation}\label{I10}
	I_m^{1}\approx\int|T_m(\F^1,\G)(x,y)|\,dx\,dy\,.
\end{equation}
Now, using definition \eqref{A1} and Chebyshev's inequality, we notice that $\A_{\F}^{1}$ has a cross product structure with $\A_{\F}^{1}=\mathcal{J}\times \mathcal{K}$ such that $\mathcal{J}=[2^m, 2^{m+1}]\cap \N$, and, more importantly, $\# \mathcal{K}\lesssim 2^{m-m\v}$. Deduce from here that
\begin{equation}\label{supp}
	\textrm{supp}\,\F^1(\xi,y)\subseteq [2^m, 2^{m+1}]\times \mathcal{Y}\quad{with}\quad \mathcal{Y}\subseteq[0,1]\:\:\&\:\:|\mathcal{Y}|\lesssim 2^{-\v m}\,.
\end{equation}

Consequently, from Cauchy--Schwarz and \eqref{supp}, we have that
\begin{align*}
	(I_m^{1})^2&=\bigg|\int_{[1,2]\times \mathcal{Y}}|T(\F^1,\G)(x,y)|\, dx\,dy\bigg|^2\lesssim |\mathcal{Y}|\,\int_{[1,2]^2}|T(\F^1,\G)(x,y)|^2\,dx\,dy\\
	&\lesssim 2^{- \v m}\, \int_{[1,2]^2}\left|\int_{\R} {\F^1}^{\widecheck{}_1}(x+t,y)\,\G^{\widecheck{}_2}(x,y+t^2)\,\rho(t)\,dt\right|^2 dx\,dy\\
	&\lesssim 2^{-\v m}\, \int_{[1,2]^2}\bigg(\int_{[1,2]} |{\F^1}^{\widecheck{}_1}(t,y)|^2\,dt\bigg)
	\left(\int_{[1,2]}|\G^{\widecheck{}_2}(x,t)|^2\,dt\right)dx\,dy\,,
\end{align*}
from which we conclude that
\begin{equation}\label{cmestim}
	I_m^{1} \lesssim 2^{-\frac{\v}{2} m}\,\|\F\|_{L^2(\R^2)}\, \|\G\|_{L^2(\R^2)}\:.
\end{equation}
$\smallskip$

\noindent\underline{\textsf{Stage 3.2. Treatment of the heavy mass component.}} Proceeding as before, due to symmetry reasons, we can wlog pretend that
\begin{equation}\label{I20}
	I_m^{2}= \int|T(\F^2,\G)(x,y)|\, dx\,dy\,,
\end{equation}
where here $\G$ is a generic function as defined by \eqref{not} that may be thought of being any of $\G^1$, $\G^2$ or $\G^3$ or for that matter $\G^1+\G^2+\G^3=\G$.

In this setting, the two main ingredients in estimating $I_m^{2}$ are:
\begin{itemize}
\item the uniform control on the $k$-section set $\A_{\F}^{2}(k):=\left\{j\,|\,(j,k)\in \A_{\F}^{2}\right\}$:
\begin{equation}\label{keystage2}
\# \A_{\F}^{2}(k)\lesssim 2^{\d m}\,.
\end{equation}
\item a stationary phase analysis capturing the $(x,\eta)$ space-phase interaction (part of the mixed behavior of $\G$); this phase analysis though, can only be performed after a preliminary decoupling of the parameters $\xi$ and $\eta$ derived from the properties of the time-frequency discretization \eqref{FGdec}.
\end{itemize}

Indeed, from \eqref{reduct} and \eqref{Ff}, we first deduce that
\begin{equation}\label{reduct20}
	I_m^2\lesssim \frac{1}{2^{\frac{m}{2}}}\,\sum_{(j,k)\in \A_{\F}^{2}}\int_{[1,2]^2}\left|\int_{\R^2} \F_{j,k}(\xi,y)\,\G(x,\eta)\,e^{i\,\xi\,x}\,e^{i\,\eta\,y}\,e^{-i\,\frac{\xi^2}{4 \eta}}\,d\xi\,d\eta\right| dx\,dy\:.
\end{equation}

Using now the time-frequency localization properties of $\F_{j,k}$, we can decouple the $(\xi, \eta)$ frequency parameters, as follows\footnote{Here we ignore the error terms arising from the zero-order approximation.}:
\begin{equation}\label{reduct21}
	I_m^2\lesssim \frac{1}{2^{\frac{m}{2}}}\,\sum_{(j,k)\in \A_{\F}^{2}}\left(\int_{\R^2} \left|\F_{j,k}(\xi,y)\right| \,d\xi\,dy\right)\,\left(\int_{\R}\left|\int_{\R}\G(x,\eta)\,e^{i\,\eta\,\frac{k}{2^m}}\,e^{-i\,\frac{j^2}{4 \eta}}\,d\eta\right| dx\right)\:.
\end{equation}

Next, applying an $l^1-l^{\infty}$ H\"older argument in the $j$-parameter followed by a Cauchy-Schwarz argument in the $k$-parameter, we deduce

\begin{equation}\label{reduct211}
	I_m^2\lesssim \frac{1}{2^{\frac{m}{2}}}\,A_2(\F)^{\frac{1}{2}}\,B_2(\G)^{\frac{1}{2}}\,,
\end{equation}
with
\begin{equation}\label{reduct2a}
A_2(\F):=\sum_k \left(\sum_{j\in \A_{\F}^{2}(k)}\int_{\R^2} \left|\F_{j,k}(\xi,y)\right| \,d\xi\,dy\right)^2\,
\end{equation}
and
\begin{equation}\label{reduct2b}
B_2(\G):=\sum_k \left(\int_{\R}\left|\int_{\R}\G(x,\eta)\,e^{i\,\eta\,\frac{k}{2^m}}\,e^{-i\,\frac{j^2(k)}{4 \eta}}\,d\eta\right| dx \right)^2\:,
\end{equation}
where here $k\,\mapsto\,j(k)$ is a measurable function taking values from $[2^m,2^{m+1}]\cap \N$ into $[2^m,2^{m+1}]\cap \N$.

Now, for the first term, we use \eqref{A2}, \eqref{keystage2}, Cauchy-Schwarz and the time-frequency localization of $\F_{j,k}$ in order to deduce
\begin{equation}\label{reduct2a1}
A_2(\F)\lesssim \sum_k \left(\# \A_{\F}^{2}(k)\right) \sum_{j\in \A_{\F}^{2}(k)}2^{-m}\,\int_{\R^2} \left|\F_{j,k}(\xi,y)\right|^2 \,d\xi\,dy\lesssim 2^{-m}\,2^{m \d}\,.
\end{equation}

For the second term, we develop the square and apply successively Cauchy--Schwarz:
\begin{align}
	& B_2(\G)\lesssim \sum_{k\sim 2^m}\int_{\R}\int_{\R^2}\G(x,\eta)\,\overline{\G(x,\eta+s)}\,e^{-i\,s\,\frac{k}{2^m}}\,e^{-i\,j^2(k)\,(\frac{1}{4 \eta}-\frac{1}{4 (\eta+s)})}\,d\eta\,ds\,dx\nonumber \\[1ex]
	&\lesssim\int_{\R^2} |\G(x,\eta)| \bigg(\int_{\R}|\G(x,s)|^2\,ds\bigg)^{\frac{1}{2}}\times\nonumber\\
	&\hspace{3cm}\times \bigg(\int_{\R}\bigg|\sum_{k\sim 2^m}e^{-i\,s\,\frac{k}{2^m}}e^{-i\,j^2(k)\big(\frac{1}{4 \eta}-\frac{1}{4(\eta+s)}\big)}\bigg|^2 \phi\big(\sdfrac{s}{2^m}\big)ds\bigg)^{\frac{1}{2}}d\eta\,dx\nonumber\\[1ex]
	&\lesssim\bigg(\int_{\R}|\G(x,s)|^2\,ds\,dx\bigg)\times\nonumber\\
	&\hspace{2cm}\times
	\bigg(\int_{\R^2}\bigg|\sum_{k\sim 2^m} e^{-i\,s\,\frac{k}{2^m}} e^{-i\,j^2(k) \big(\frac{1}{4 \eta}-\frac{1}{4 (\eta+s)}\big)}\bigg|^2\phi\Big(\frac{s}{2^m}\Big)\phi\Big(\frac{\eta}{2^m}\Big)\,ds\,d\eta\bigg)^{\frac{1}{2}}\,.\nonumber
\end{align}
Thus, we deduce that
\begin{equation}\label{reduct22}
B_2(\G)\lesssim 2^{m}\,V_m^{\frac{1}{2}}\,,
\end{equation}
where here
\begin{equation}\label{V}
	V_m:=\int_{\R^2}\left|\sum_{k\sim 2^m}\,e^{-i\,s\,k}\,e^{-i\,\frac{j^2(k)}{2^m}\,\left(\frac{1}{4 \eta}-\frac{1}{4 (\eta+s)}\right)}\right|^2\phi(s)\,\phi(\eta)\,ds\,d\eta\:.
\end{equation}
Then
$$V_m=\sum_{k,k_1\sim 2^m}\,\int_{\R}\left(\int_{\R}\,e^{-i\,s\,(k-k_1)}\,e^{-i\,\frac{j^2(k)-j^2(k_1)}{2^m}\,\left(\sdfrac{1}{4 \eta}-\sdfrac{1}{4 (\eta+s)}\right)}\,\phi(s)\,ds\right)\phi(\eta)\,d\eta\:.$$
Taking now the phase
\begin{equation}\label{Ph}
	\Phi_{k,k_1,\eta}(s)=\Phi(s):=s\,(k-k_1)+ \frac{j^2(k)-j^2(k_1)}{2^m}\Big(\frac{1}{4 \eta}-\frac{1}{4 (\eta+s)}\Big)\:,
\end{equation}
we notice that
\begin{equation}\label{Ph1}
	\Phi'(s)=(k-k_1)+ \frac{j^2(k)-j^2(k_1)}{2^m}\,\frac{1}{4 (\eta+s)^2}\:,
\end{equation}
and
\begin{equation}\label{Ph2}
	\Phi''(s)=-\frac{j^2(k)-j^2(k_1)}{2^m}\,\frac{1}{2 (\eta+s)^3}\:.
\end{equation}
Focussing on the worst scenario (weakest decay) represented by the presence of stationary points in \eqref{Ph1}, we must have
\begin{equation}\label{Ph3}
	|k-k_1|\approx |j(k)-j(k_1)|\,,
\end{equation}
which implies that
\begin{equation}\label{Ph4}
	|\Phi''(s)|\approx |k-k_1|\:.
\end{equation}
As a consequence
\begin{equation}\label{VM}
	V_m\lesssim \sum_{k,k_1\sim 2^m}\,\frac{1}{1+\sqrt{|k-k_1|}}\lesssim 2^{\frac{3m}{2}}\:.
\end{equation}
Putting together \eqref{reduct211}, \eqref{reduct2a1}, \eqref{reduct22} and \eqref{VM} we conclude that
\begin{equation}\label{Concl2}
	I_m^{2}\lesssim 2^{-\frac{m}{2}\,(\frac{1}{4}-\d)}\:.
\end{equation}

$\smallskip$

\noindent\underline{\textsf{Stage 3.3. Treatment of the light (uniform) mass component.}}  In this last setting, the two main ingredients are:
\begin{itemize}
\item the smallness of $f_{j,k}$ for $(j,k)\in \A_{\F}^{3}$;

\item a stationary phase analysis capturing the \emph{global} $(\xi,\eta)$ phase interaction that is the manifestation of the twisted behavior of the integrand in \eqref{reduct}. This is the only stage in our proof in which we make use of assumption \eqref{decmtrl22}.
\end{itemize}

With these being said, we start by defining
\begin{equation}\label{F}
	F(\xi,\xi_1,\mu):=\int_{[1,2]} \F^3(\xi,y)\,\overline{\F^3(\xi_1,y)}\,e^{i \mu y}\,dy \;,
\end{equation}
and
\begin{equation}\label{G}
	G(\eta,\eta_1,\tau):=\int_{[1,2]} \G^3(x,\eta)\,\overline{\G^3(x,\eta_1)}\,e^{i \tau x}\,dx \;.
\end{equation}

Then, developing the square below and using \eqref{reduct} and Fubini we have
\begin{equation}\label{explsq}
	(I_m^3)^2\lesssim \int_{[1,2]^2}|T(\F^3,\G^3)(x,y)|^2\, dx\,dy=
\end{equation}
$$\frac{1}{2^m}\,\int_{\R^4} F(\xi,\xi_1,\eta-\eta_1)\,G(\eta,\eta_1,\xi-\xi_1)\,
e^{-i\,\frac{\xi^2}{4 \eta}}\,e^{i\,\frac{\xi_1^2}{4 \eta_1}}\,d\xi\,d\xi_1\,d\eta\,d\eta_1\,.$$

Making now the change of variable $\xi-\xi_1=\tau$ and $\eta-\eta_1=\mu$ and letting
\begin{equation}\label{FG}
	F_{\tau,\mu}(\xi):=F(\xi,\xi-\tau,\mu)\qquad\textrm{and}\qquad G_{\tau,\mu}(\eta):=G(\eta,\eta-\mu,\tau)\,,
\end{equation}
we can rewrite \eqref{explsq} as
\begin{equation}\label{explsq1}
	(I_m^3)^2\lesssim
	\frac{1}{2^m}\,\int_{\R^4} F_{\tau,\mu}(\xi)\,G_{\tau,\mu}(\eta)\,
	e^{-i\,\frac{\xi^2}{4 \eta}}\,e^{i\,\frac{(\xi+\tau)^2}{4(\eta-\mu)}}\,d\xi\,d\eta\,d\tau\,d\mu\,,
\end{equation}
which, via a Cauchy-Schwarz argument, becomes
\begin{equation}\label{IM3}
	(I_m^3)^2\lesssim A_3(\F)^{\frac{1}{2}}\,B_3(\G)^{\frac{1}{2}}\,,
\end{equation}
with
\begin{equation}\label{reduct3a}
A_3(\F):=\int_{\R^3} \left| F_{\tau,\mu}(\xi)\right|^2 d\xi\,d\tau\,d\mu\,
\end{equation}
and
\begin{equation}\label{reduct3b}
B_3(\G):=\frac{1}{2^{2m}}\,\int_{\R^3}\phi_m(\xi)\,\left|\int_{\R} G_{\tau,\mu}(\eta)\,
	e^{-i\,\frac{\xi^2}{4 \eta}}\,e^{i\,\frac{(\xi+\tau)^2}{4(\eta-\mu)}}\,d\eta\right|^2\,d\xi\,d\tau\,d\mu\,.
\end{equation}

For the first term, we claim that
\begin{equation}\label{l2F}
A_3(\F)\lesssim 2^{\v m}\,\left(\int_{[1,2]\times\R} |\F^3(\xi,y)|^2\,d\xi\,dy\right)^{2}\lesssim 2^{\v m}\,.
\end{equation}
Indeed, applying now Fubini and Parseval and using assumption \eqref{decmtrl22} together with \eqref{A3}, we have
\begin{align}\label{l2cont}
	\int_{\R^3} |F_{\tau,\mu}(\xi)|^2\,d\xi\,d\tau\,d\mu&= \int_{\R^3}\left|\int_{[1,2]} F^3(\xi,y)\,\overline{F^3(\xi-\tau,y)}\,e^{i \mu y}\,dy\right|^2\,d\mu\,d\xi\,d\tau\nonumber\\
	&= \int_{\R^2}\int_{[1,2]} \left|\F^{3}(\xi,y)\,\F^3(\xi-\tau,y)\right|^2 \,dy\,d\xi\,d\tau\nonumber\\[1ex]
	&\approx 2^{m}\,\sum_{(j,k),(j_1,k)\in\A_{\F}^{3}} f_{j,k}\,f_{j_1,k}\lesssim 2^{\v m}\,.
\end{align}

For the second term, opening up the square and using Fubini, we have that
\begin{equation}\label{VM1}
	B_3(\G){=}\sdfrac{1}{2^{2m}}\!\!\int_{\R^4}\!\! G_{\tau,\mu}(\eta)\,\overline{G_{\tau,\mu}(\eta_1)}\!
	\left(\int_{\R}\!e^{i\big(-\frac{\xi^2}{4 \eta}+ \frac{\xi^2}{4 \eta_1}+\frac{(\xi+\tau)^2}{4(\eta-\mu)}- \frac{(\xi+\tau)^2}{4(\eta_1-\mu)}\big)}\! \phi_m(\xi)\, d\xi\!\right)\!d\eta\, d\eta_1\, d\tau\, d\mu\,.
\end{equation}
Define now the phase function
\begin{equation}\label{Pht}
	\Psi_{\eta,\eta_1,\tau,\mu}(\xi)=\Psi(\xi):=-\frac{\xi^2}{4 \eta}+\frac{\xi^2}{4 \eta_1}+\frac{(\xi+\tau)^2}{4(\eta-\mu)}- \frac{(\xi+\tau)^2}{4(\eta_1-\mu)}\,.
\end{equation}
Applying now in \eqref{VM1} the change of variable $\xi\,\mapsto 2^{m}\,\xi$, $\eta\,\mapsto\,2^{m}\,\eta$, $\eta_1\,\mapsto\,2^{m}\,\eta_1$, $\tau\,\mapsto\,2^{m}\,\tau$, $\mu\,\mapsto\,2^{m}\,\mu$ and setting
$G^{m}_{\tau,\mu}(\eta):=G_{2^{m}\tau,\, 2^{m}\mu}(2^{m}\eta)$ we rewrite \eqref{VM1} in the form
\begin{equation}\label{VM2}
	B_3(\G)=2^{3m}\,\int_{\R^2}\,\left(\int_{\R^2} G^{m}_{\tau,\mu}(\eta)\,\overline{G^{m}_{\tau,\mu}(\eta_1)}\,
	\left(\int_{\R}e^{i\,2^m\,\Psi(\xi)}\,\phi(\xi)\,d\xi\right)\,d\eta,d\eta_1\right)\,d\tau\,d\mu\,.
\end{equation}
Define now
\begin{equation}\label{Pht0}
	J_m:=\int_{\R}e^{i\,2^m\,\Psi(\xi)}\,\phi(\xi)\,d\xi\:.
\end{equation}
Since
\begin{equation}\label{Pht1}
	2\,\Psi''(\xi)= -\frac{1}{\eta}+\frac{1}{\eta_1}+\frac{1}{\eta-\mu}-\frac{1}{\eta_1-\mu}\,,
\end{equation}
a simple computation shows that
\begin{equation}\label{Pht2}
	|2\,\Psi''(\xi)|= \left| \frac{\mu(\eta-\eta_1)(\eta+\eta_1-\mu)}{\eta\,\eta_1\,(\eta-\mu)\,(\eta_1-\mu)} \right|\gtrsim |\mu\,(\eta-\eta_1)| \,.
\end{equation}
Applying now the stationary phase principle we further deduce that
\begin{equation}\label{SPH}
	|J_m|\lesssim \frac{1}{(1+2^m\,|\mu\,(\eta-\eta_1)|)^{\frac{1}{2}}}\:.
\end{equation}
Thus inserting \eqref{SPH} in \eqref{VM2} and then applying the change of variable $\eta_1=\eta+s$ we deduce that
\begin{equation}\label{VM3}
	B_3(\G)\lesssim 2^{3m}\,\int_{\R^2}\,\left(\int_{\R^2} |G^{m}_{\tau,\mu}(\eta)|\,|G^{m}_{\tau,\mu}(\eta+s)|\,
	\frac{1}{(1+2^m\,|\mu\,s|)^{\frac{1}{2}}}\,d\eta\,ds\right)\,d\tau\,d\mu\,.
\end{equation}
Now a Cauchy-Schwarz argument gives that
\begin{equation}\label{VM4}
	B_3(\G)\lesssim 2^{3m}\,\int_{\R^3} |G^{m}_{\tau,\mu}(\eta)|^2\,\left(\int_{[0,1]}\,
	\frac{1}{(1+2^m\,|\mu\,s|)^{\frac{1}{2}}}\,ds\right)\,d\eta\,d\tau\,d\mu\,,
\end{equation}
which, after reversing the change of variable in $\eta, \mu$ and $\tau$, reduces to
\begin{equation}\label{VM5}
	B_3(\G)\lesssim \int_{\R^3} \frac{|G_{\tau,\mu}(\eta)|^2}{(1+|\mu|)^{\frac{1}{2}}}\,d\eta\,d\tau\,d\mu\,.
\end{equation}
Using now \eqref{G}, \eqref{FG} and \eqref{VM5} and appealing again to assumption \eqref{decmtrl22} together with \eqref{A3} and a Cauchy-Schwarz argument we deduce
\begin{align}
	B_3(\G)&\lesssim \int_{\R^3} \frac{|\int_{[1,2]} \G^3(x,\eta)\,\overline{\G^3(x,\eta-\mu)}\,e^{i \tau x}\,dx |^2}{(1+|\mu|)^{\frac{1}{2}}}\,d\eta\,d\tau\,d\mu\nonumber\\[1ex]
	&\lesssim \int_{[1,2]} \int_{\R}  |\G^3(x,\eta)|^2\,\left(\int_{\R}\frac{|\G^3(x,\mu)|^2}{(1+|\eta-\mu|)^{\frac{1}{2}}}\,d\mu\right) \,d\eta\,d x  \nonumber\\[1ex]	
	&\lesssim 2^{m}\,\sum_{(j,k),(j_1,k)\in\A_{\G}^{3}} \frac{g_{j,k}\,g_{j_1,k}}{1+|j-j_1|^{\frac{1}{2}}}  \nonumber\\
	&\lesssim 2^{m}\,m^{\frac{1}{2}}\,\sum_{(j,k)\in\A_{\G}^{3}} g_{j,k}\,
	\Bigg(\sum_{\substack{j_1\sim 2^m\\(j_1,k)\in\A_{\G}^{3}}} |g_{j_1,k}|^2\Bigg)^{\frac{1}{2}}\lesssim
m^{\frac{1}{2}}\,2^{\v m}\,2^{-\frac{\d}{2}m}\,. \label{VM6}
\end{align}

Putting now together \eqref{IM3}, \eqref{l2F} and  \eqref{VM6} we deduce that
\begin{equation}\label{IM3F}
	I_m^3\lesssim m\,2^{-\frac{m}{2}(\frac{\d}{4}-\ep)}\,.
\end{equation}
Finally, from \eqref{cmestim}, \eqref{Concl2} and \eqref{IM3F}, and making the choice $\d=8\v=\frac{2}{9}$, we conclude that under assumption \eqref{decmtrl22} the following relation holds:
\begin{equation}\label{Conc}
	I_m\lesssim m\,2^{-\frac{m}{72}}\,.
\end{equation}

$\newline$
\noindent\textbf{Step 4.} \textbf{The general case: getting around the local constancy/majorant assumption \eqref{decmtrl22}}
$\newline$

We start this section with several straightforward observations:
\begin{itemize}
\item a simple inspection of the proof--one can focus entirely on the arguments \eqref{l2cont} and \eqref{VM6} at Step 3--gives that our reasonings remain valid if one replaces assumption \eqref{decmtrl22} by its slightly weaker quadratic analog
\begin{equation}\label{decmtrl22q}
	|\F_{j,k}(\xi,x)|^2\lesssim 2^m\,\int_{\frac{k}{2^m}}^{\frac{k+1}{2^m}} |\F_{j,k}(\xi,\tilde{x})|^2\,d\tilde{x}\: {\text{ and }}\:|\G_{j,k}(y,\eta)|^2\lesssim 2^m\,\int_{\frac{k}{2^m}}^{\frac{k+1}{2^m}} |\G_{j,k}(\tilde{y},\eta)|^2\,d\tilde{y}\,,
\end{equation}
where the above is assumed to hold for any $j,\,k\sim 2^m$.

\item moreover, our proof continues to hold if one replaces the frequency-\emph{pointwise} condition \eqref{decmtrl22q} by its frequency-\emph{mean} analogue given by
\begin{align}\label{decmtrl22qf}
	&\int_{j}^{j+1}|\F_{j,k}(\xi,x)|^2\,d\xi\lesssim 2^m\,\int_{\frac{k}{2^m}}^{\frac{k+1}{2^m}}\int_{j}^{j+1} |\F_{j,k}(\xi,\tilde{x})|^2\,d\tilde{x}\,d\xi\quad {\text{ and }}\nonumber\\
&\int_{j}^{j+1} |\G_{j,k}(y,\eta)|^2\,d\eta\lesssim 2^m\,\int_{\frac{k}{2^m}}^{\frac{k+1}{2^m}}\int_{j}^{j+1}  |\G_{j,k}(\tilde{y},\eta)|^2\,d\tilde{y}\,d\eta\qquad\forall\:j\sim 2^m\,.
\end{align}
\end{itemize}
Combining now the intuition provided by \eqref{fg} with that offered by \eqref{decmtrl22qf} it is only natural to introduce the \emph{spatial-pointwise/frequency-mean} quantities:
\begin{equation}\label{fgmodif}
	f_{j,k}(y):=2^{-m}\,\int_{\R}|\mathcal{F}_{j,k}(\xi,y)|^2\,d\xi\quad\textrm{ and }\quad g_{j,k}(x):=2^{-m}\,\int_{\R}|\mathcal{G}_{j,k}(x,\eta)|^2\,d\eta\,.
\end{equation}
Notice now that the original coefficients defined in \eqref{fg} represent precisely the  mean values of the quantities introduced in \eqref{fgmodif}, that is
\begin{equation}\label{fgmodiff}
	f_{j,k}=\frac{1}{2^{-m}}\,\int_{\frac{k}{2^m}}^{\frac{k+1}{2^m}}f_{j,k}(y)\,dy\quad\textrm{ and }\quad g_{j,k}=\frac{1}{2^{-m}}\,\int_{\frac{k}{2^m}}^{\frac{k+1}{2^m}}g_{j,k}(x)\,dx.
\end{equation}

Once at this point the are at least two possible options\footnote{The investigation of these options served as an inspiration for the continuous approach developed in \cite{HsL24}.}:
\begin{itemize}
\item the first one, more intuitive but more pedestrian, is---at informal level---to simply cut off the spikes of $|f_{j,k}(y)|$ by collecting the set of exceptional values (\emph{i.e.} the values that are large relative to the mean $f_{j,k}$) into a set that has a controlled (small) size and apply an argument resembling the one at Stage 3.1. However this reasoning requires some extra-care since one has to remove the $j-$dependence of the $y-$exceptional set via a construction of a suitable mixed maximal-type function that is acting on the $y-$fibers of $f$. 

\item the second  one, is more direct and somehow surprising through its simplicity: in order to complete our proof, it is enough to adapt the earlier definitions and estimates to a \emph{spatial-pointwise} behavior instead of the previous \emph{spatial-mean} behavior!
\end{itemize}

For brevity and concreteness we provide a brief outline of the latter approach: 

The initial definitions in \eqref{decA}--\eqref{A3} have now to be modified to allow a spatial dependence; thus
\begin{equation}\label{decAmod}
	\A:=\A_{\F}^{1}[y]\cup \A_{\F}^{2}[y]\cup \A_{\F}^{3}[y] \,,
\end{equation}
with
\begin{itemize}
	\item the \emph{set of heavy $y$-fibers}
	\begin{equation}\label{A1mod}
		\A_{\F}^{1}[y]:=\Big\{(j,k)\in\A\,|\,\sum_{l\sim 2^m} f_{l,k}(y)\gtrsim 2^{-m+m\v},\:\:j\sim 2^m\Big\}\,;
	\end{equation}
	\item  the \emph{$y$-variable set of heavy masses}
	\begin{equation}\label{A2mod}
		\A_{\F}^{2}[y]:=\Big\{(j,k)\in \A\setminus \A_{\F}^{1}\,|\,2^{-m+m\v-m\d}\lesssim f_{j,k}(y)\lesssim 2^{-m+m\v}\Big\}\,.
	\end{equation}
	\item the \emph{$y$-variable set of light masses}
	\begin{equation}\label{A3mod}
		\A_{\F}^{3}[y]:=\Big\{(j,k)\in \A\setminus \A_{\F}^{1}\,|\,f_{j,k}(y)\lesssim 2^{-m+m\v-m\d}\Big\}\,.
	\end{equation}
\end{itemize}
The adaptations of the remaining definitions \eqref{decAG}--\eqref{I3} are now straightforward. With these, we are left with just a few remarks:
\begin{itemize}
\item the treatment of $I_m^1$ part of Stage 3.1 does not involve any significant changes;

\item the treatment of $I_m^2$ performed within Stage 3.2 requires just a bit more care: indeed, relation \eqref{reduct211} has now to be reshaped as
\begin{equation}\label{reduct211mod}
	I_m^2\lesssim \frac{1}{2^{\frac{m}{2}}} \int_{[1,2]}\,A_2(\F)^{\frac{1}{2}}(y)\,B_2(\G)^{\frac{1}{2}}(y)\,dy\,,
\end{equation}
with\footnote{Notice here that, for a fixed $y$, the expression in \eqref{reduct2amod} has only one $k-$term that is nonzero.}
\begin{equation}\label{reduct2amod}
A_2(\F)(y):=\sum_k \left(\sum_{j\in \A_{\F}^{2}[y]}\int_{\R} \left|\F_{j,k}(\xi,y)\right| \,d\xi\right)^2\,
\end{equation}
and
\begin{equation}\label{reduct2bmod}
B_2(\G)(y):=\sum_k \left(\int_{\R}\left|\int_{\R}\G(x,\eta)\,e^{i\,\eta\,\frac{k}{2^m}}\,e^{-i\,\frac{j(y)^2}{4 \eta}}\,d\eta\right| dx \right)^2\:,
\end{equation}
where now the measurable function $j(\cdot)$ depends on $y$ instead of $k$.

Once at this point, it is enough to notice that: (i) the key estimate \eqref{keystage2} remains valid for its $y-$dependent analogue $\A_{\F}^{2}[y]$, and, (ii) the reasonings involved in bounding $B_2(\G)$ can be transferred line by line to $B_2(\G)(y)$ since the $y-$dependence of the function $j(\cdot)$ is irrelevant.

\item finally, the treatment of $I_m^3$ at Stage 3.3, involves only two spatial-pointwise dependent relations: the first one--\eqref{l2cont}--becomes now
\begin{equation}\label{l2contmodif}
	\int_{\R^3} |F_{\tau,\mu}(\xi)|^2\,d\xi\,d\tau\,d\mu\lesssim 2^{m}\,\int_{[1,2]}\sum_{(j,k),(j_1,k)\in\A_{\F}^{3}[y]} f_{j,k}(y)\,f_{j_1,k}(y)\,dy\lesssim 2^{\v m}\,,
\end{equation}
where for the last inequality we used \eqref{A3mod}, while the second one given by \eqref{VM6}, becomes
\begin{align}\label{VM6mod}
	B_3(\G)\lesssim2^{m}\,\int_{[1,2]}\sum_{{(j,k)\in\A_{\G}^{3}[x]}\atop{(j_1,k)\in\A_{\G}^{3}[x]}} \frac{g_{j,k}(x)\,g_{j_1,k}(x)}{1+|j-j_1|^{\frac{1}{2}}}\,dx \qquad\qquad\qquad\nonumber\\
\:\:\:\:\:\lesssim 2^{m}\,m^{\frac{1}{2}}\,\int_{[1,2]}\sum_{(j,k)\in\A_{\G}^{3}[x]} g_{j,k}(x)\,
	\Bigg(\sum_{j_1\in\A_{\G}^{3}[x](k)} |g_{j_1,k}(x)|^2\Bigg)^{\frac{1}{2}}dx\lesssim
m^{\frac{1}{2}}\,2^{\v m}\,2^{-\frac{\d}{2}m}\,.
\end{align}
\end{itemize}

This completes our proof.

\end{proof}



\begin{thebibliography}{10}


\bibitem{BBLV21}
Cristina Benea, Frederic Bernicot, Victor Lie, and Marco Vitturi.
\newblock The non-resonant bilinear {H}ilbert--{C}arleson operator, 144 pp.
\newblock Arxiv: https://arxiv.org/abs/2106.09697, 2021.


\bibitem{Car66}
Lennart Carleson.
\newblock On convergence and growth of partial sums of {F}ourier series.
\newblock {\em Acta Math.}, 116:135--157, 1966.


\bibitem{CDR20}
Michael Christ, Polona Durcik, and Joris Roos.
\newblock Trilinear smoothing inequalities and a variant of the triangular
  {H}ilbert transform.
\newblock {\em Adv. Math.}, 390:Paper No. 107863, 60, 2021.



\bibitem{Feff73}
Charles Fefferman.
\newblock Pointwise convergence of {F}ourier series.
\newblock {\em Ann. of Math. (2)}, 98:551--571, 1973.


\bibitem{Fefunc}
Charles~L. Fefferman.
\newblock The uncertainty principle.
\newblock {\em Bull. Amer. Math. Soc. (N.S.)}, 9(2):129--206, 1983.


\bibitem{GL20}
Alejandra Gaitan and Victor Lie.
\newblock The boundedness of the (sub)bilinear maximal function along
  ``non-flat'' smooth curves.
\newblock {\em J. Fourier Anal. Appl.}, 26(4):Paper No. 69, 33, 2020.

\bibitem{hu67}
Richard~A. Hunt.
\newblock On the convergence of {F}ourier series.
\newblock In {\em Orthogonal {E}xpansions and their {C}ontinuous {A}nalogues
  ({P}roc. {C}onf., {E}dwardsville, {I}ll., 1967)}, pages 235--255. Southern
  Illinois Univ. Press, Carbondale, Ill., 1968.

\bibitem{HL23}
Bingyang Hu, and Victor Lie.
\newblock On the curved trilinear {H}ilbert transform.
\newblock {\em https://arxiv.org/abs/2308.10706}, 2023.


\bibitem{HsL24}
Martin Hsu, and Victor Lie.
\newblock Continuous versus discrete models in time-frequency analysis: The Bilinear Hilbert transform and Triangular Hilbert transform along curves.
\newblock {\em In preparation}.

\bibitem{Lac00}
Michael~T. Lacey.
\newblock {The bilinear maximal functions map into \(L^p\) for \(2/3 < p \leq
  1\).}
\newblock {\em {Ann. Math. (2)}}, 151(1):35--57, 2000.

\bibitem{LT97}
Michael Lacey and Christoph Thiele.
\newblock {$L^p$} estimates on the bilinear {H}ilbert transform for
  {$2<p<\infty$}.
\newblock {\em Ann. of Math. (2)}, 146(3):693--724, 1997.

\bibitem{LT99}
Michael Lacey and Christoph Thiele.
\newblock On {C}alder\'{o}n's conjecture.
\newblock {\em Ann. of Math. (2)}, 149(2):475--496, 1999.

\bibitem{Li13}
Xiaochun Li.
\newblock Bilinear {H}ilbert transforms along curves {I}: {T}he monomial case.
\newblock {\em Anal. PDE}, 6(1):197--220, 2013.

\bibitem{LX16}
Xiaochun Li and Lechao Xiao.
\newblock Uniform estimates for bilinear {H}ilbert transforms and bilinear
  maximal functions associated to polynomials.
\newblock {\em Amer. J. Math.}, 138(4):907--962, 2016.

\bibitem{Lie09}
Victor Lie.
\newblock The (weak-{$L^2$}) boundedness of the quadratic {C}arleson operator.
\newblock {\em Geom. Funct. Anal.}, 19(2):457--497, 2009.

\bibitem{Lie09Thesis}
Victor Lie.
\newblock Relational Time-Frequency Analysis
\newblock {\em Ph.D. Thesis}, 2009

\bibitem{Lie15}
Victor Lie.
\newblock On the boundedness of the bilinear {H}ilbert transform along
  ``non-flat'' smooth curves.
\newblock {\em Amer. J. Math.}, 137(2):313--363, 2015.

\bibitem{Lie18}
Victor Lie.
\newblock On the boundedness of the bilinear {H}ilbert transform along
  ``non-flat'' smooth curves. {T}he {B}anach triangle case {$(L^r,\ 1\leq
  r<\infty)$}.
\newblock {\em Rev. Mat. Iberoam.}, 34(1):331--353, 2018.

\bibitem{Lie20}
Victor Lie.
\newblock The polynomial {C}arleson operator.
\newblock {\em Ann. of Math. (2)}, 192(1):47--163, 2020.

\bibitem{Lie19}
Victor Lie.
\newblock A unified approach to three themes in harmonic analysis
  ({I}$\,\&\,${II}), 106 pp.
\newblock Arxiv: https://arxiv.org/abs/1902.03807, 2019, to appear in Adv. Math.


\bibitem{MS13}
Camil Muscalu and Wilhelm Schlag.
\newblock {\em Classical and multilinear harmonic analysis. {V}ol. {II}},
  volume 138 of {\em Cambridge Studies in Advanced Mathematics}.
\newblock Cambridge University Press, Cambridge, 2013.


\end{thebibliography}
\end{document}